\definecolor{dark-red}{rgb}{0.4,0.15,0.15}
\definecolor{dark-blue}{rgb}{0.15,0.15,0.4}
\definecolor{medium-blue}{rgb}{0,0,0.5}
\newcommand{\bigboxplus}{
	\mathop{
		\vphantom{\bigoplus} 
		\mathchoice
		{\vcenter{\hbox{\resizebox{\widthof{$\displaystyle\bigoplus$}}{!}{$\boxplus$}}}}
		{\vcenter{\hbox{\resizebox{\widthof{$\bigoplus$}}{!}{$\boxplus$}}}}
		{\vcenter{\hbox{\resizebox{\widthof{$\scriptstyle\oplus$}}{!}{$\boxplus$}}}}
		{\vcenter{\hbox{\resizebox{\widthof{$\scriptscriptstyle\oplus$}}{!}{$\boxplus$}}}}
	}\displaylimits 
}
\newcommand{\bigboxtimes}{
	\mathop{
		\vphantom{\bigotimes} 
		\mathchoice
		{\vcenter{\hbox{\resizebox{\widthof{$\displaystyle\bigotimes$}}{!}{$\boxtimes$}}}}
		{\vcenter{\hbox{\resizebox{\widthof{$\bigotimes$}}{!}{$\boxtimes$}}}}
		{\vcenter{\hbox{\resizebox{\widthof{$\scriptstyle\otimes$}}{!}{$\boxtimes$}}}}
		{\vcenter{\hbox{\resizebox{\widthof{$\scriptscriptstyle\otimes$}}{!}{$\boxtimes$}}}}
	}\displaylimits 
}
\newcommand{\A}{\mathbb{A}}
\newcommand{\Agp}{\mathrm{A}}
\newcommand{\As}{\mathrm{As}}
\newcommand{\Bgp}{\mathrm{B}}
\newcommand{\BF}{\mathrm{BF}}
\renewcommand{\C}{\mathbb{C}}
\newcommand{\dee}{\partial}
\newcommand{\e}{\varepsilon}
\newcommand{\GL}{\mathrm{GL}}
\newcommand{\Hgp}{\mathrm{H}}
\newcommand{\HH}{\mathcal{H}}
\newcommand{\Mgp}{\mathrm{M}}
\newcommand{\MM}{\mathcal{M}}
\newcommand{\Ngp}{\mathrm{N}}
\newcommand{\NN}{\mathcal{N}}
\newcommand{\Ogp}{\mathrm{O}}
\newcommand{\OO}{\mathcal{O}}
\newcommand{\Pgp}{\mathrm{P}}
\newcommand{\pp}{\mathfrak{p}}
\newcommand{\PP}{\mathcal{P}}
\newcommand{\Q}{\mathbb{Q}}
\newcommand{\R}{\mathbb{R}}
\newcommand{\SO}{\mathrm{SO}}
\newcommand{\Scr}{\mathscr{S}}
\newcommand{\Sgp}{\mathrm{S}}
\newcommand{\Sp}{\mathrm{Sp}}
\newcommand{\Ugp}{\mathrm{U}}
\newcommand{\ur}{\mathrm{ur}}
\newcommand{\WW}{\mathcal{W}}
\newcommand{\Z}{\mathbb{Z}}
\newcommand{\Zgp}{\mathrm{Z}}
\DeclareMathOperator{\antidiag}{antidiag}
\DeclareMathOperator{\blockdiag}{blockdiag}
\DeclareMathOperator{\diag}{diag}
\DeclareMathOperator{\Hom}{Hom}
\DeclareMathOperator{\Ind}{Ind}
\DeclareMathOperator{\Mat}{Mat}
\DeclareMathOperator*{\Res}{Res}
\DeclareMathOperator{\sgn}{sgn}
\DeclareMathOperator{\Tr}{Tr}
\DeclareMathOperator{\vol}{vol}
\numberwithin{equation}{section}
\newtheorem{theorem}[equation]{Theorem}
\newtheorem{corollary}[equation]{Corollary}
\newtheorem{lemma}[equation]{Lemma}
\newtheorem{proposition}[equation]{Proposition}
\newtheorem*{problem}{Weak Test Vector Problem}
\newtheorem*{strongproblem}{Strong Test Vector Problem}
\theoremstyle{remark}
\newtheorem{remark}[equation]{Remark}
\newtheorem{example}[equation]{Example}
\theoremstyle{definition}
\newtheorem{definition}[equation]{Definition}
\begin{document}

\title{Test Vectors for Archimedean Period Integrals}

\author{Peter Humphries and Yeongseong Jo}

\address{Department of Mathematics, University of Virginia, Charlottesville, VA 22904, USA}

\email{\href{mailto:pclhumphries@gmail.com}{pclhumphries@gmail.com}}

\urladdr{\href{https://sites.google.com/view/peterhumphries/}{https://sites.google.com/view/peterhumphries/}}

\address{Department of Mathematics and Statistics, The University of Maine, Orono, ME 04469, USA}

\email{\href{mailto:jo.59@buckeyemail.osu.edu}{jo.59@buckeyemail.osu.edu}}

\keywords{Archimedean newform theory, archimedean Rankin--Selberg integral, local and global period integrals, test vectors}

\subjclass[2020]{11F70 (primary); 11F67, 22E45, 22E50 (secondary)}

\begin{abstract}
We study period integrals involving Whittaker functions associated to generic irreducible Casselman--Wallach representations of $\GL_n(F)$, where $F$ is an archimedean local field. Via the archimedean theory of newforms for $\GL_n$ developed by the first author, we prove that newforms are weak test vectors for several period integrals, including the $\GL_n \times \GL_n$ Rankin--Selberg integral, the Flicker integral, and the Bump--Friedberg integral. By taking special values of these period integrals, we deduce that newforms are weak test vectors for Rankin--Selberg periods, Flicker--Rallis periods, and Friedberg--Jacquet periods. These results parallel analogous results in the nonarchimedean setting proven by the second author, which use the nonarchimedean theory of newforms for $\GL_n$ developed by Jacquet, Piatetski-Shapiro, and Shalika. By combining these archimedean and nonarchimedean results, we prove the existence of weak test vectors for certain global period integrals of automorphic forms.
\end{abstract}

\maketitle

\section{Introduction}

\subsection{Test Vectors for \texorpdfstring{$\GL_n \times \GL_n$}{GL\9040\231\80\327GL\9040\231} Rankin--Selberg Integrals}

A period integral of automorphic forms over a number field $F$ is said to be \emph{Eulerian} if it factorises as a product of period integrals over local fields. A quintessential example is the $\GL_n \times \GL_n$ Rankin--Selberg integral
\[\int\limits_{\Zgp_n(\A_F) \GL_n(F) \backslash \GL_n(\A_F)} \varphi_{\pi}(g) \varphi_{\sigma}(g) E(g,s;\Phi,\omega_{\pi}\omega_{\sigma}) \, dg\]
involving two automorphic forms $\varphi_{\pi}$ and $\varphi_{\sigma}$ lying in two automorphic representations $\pi$ and $\sigma$ of $\GL_n(\A_F)$ as well as a distinguished Eisenstein series $E(g,s;\Phi,\omega_{\pi}\omega_{\sigma})$ associated to a Schwartz--Bruhat function $\Phi \in \Scr(\A_F^n)$ and a product $\omega_{\pi}\omega_{\sigma}$ of central characters. If $\varphi_{\pi}$, $\varphi_{\sigma}$, and $\Phi$ are pure tensors, then by unfolding, this global period integral factorises as a product over all places $v$ of $F$ of local $\GL_n \times \GL_n$ Rankin--Selberg integrals
\[\Psi(s,W_{\pi_v},W_{\sigma_v},\Phi_v) \coloneqq \int\limits_{\Ngp_n(F_v) \backslash \GL_n(F_v)} W_{\pi_v}(g_v) W_{\sigma_v}(g_v) \Phi_v(e_n g_v) \left|\det g_v\right|_v^s \, dg_v,\]
where $W_{\pi_v}$ and $W_{\sigma_v}$ are the local Whittaker functions associated to $\varphi_{\pi}$ and $\varphi_{\sigma}$.

The local period integrals $\Psi(s,W_{\pi_v},W_{\sigma_v},\Phi_v)$ represent the local $\GL_n \times \GL_n$ Rankin--Selberg $L$-function $L(s,\pi_v \times \sigma_v)$, where $\pi_v$ and $\sigma_v$ are the generic irreducible admissible smooth representations of $\GL_n(F_v)$ occurring in the tensor product decompositions of $\pi$ and $\sigma$. More precisely, if $v$ is a nonarchimedean place with residue field of order $q$, then the quotient $\Psi(s,W_{\pi_v},W_{\sigma_v},\Phi_v) / L(s,\pi_v \times \sigma_v)$ is a polynomial in $q^s$ and $q^{-s}$, and in particular is entire. If $v$ is an archimedean place, then the quotient $\Psi(s,W_{\pi_v},W_{\sigma_v},\Phi_v) / L(s,\pi_v \times \sigma_v)$ is entire and of finite order in vertical strips.

While the quotient $\Psi(s,W_{\pi_v},W_{\sigma_v},\Phi_v) / L(s,\pi_v \times \sigma_v)$ is always entire regardless of the choice of Whittaker functions $W_{\pi_v}$ and $W_{\sigma_v}$ and Schwartz--Bruhat function $\Phi_v$, for many applications, one requires something stronger, namely that for particular choices of $W_{\pi_v}$, $W_{\sigma_v}$, and $\Phi_v$, this quotient be nicely behaved --- in particular, nonvanishing apart from a prescribed collection of values of $s \in \C$. When the representations $\pi_v$ and $\sigma_v$ are both unramified, there exists an explicit Schwartz--Bruhat function $\Phi_v$ for which this quotient is \emph{exactly} equal to $1$ when $W_{\pi_v}$ and $W_{\sigma_v}$ are chosen to be \emph{spherical} Whittaker functions. This motivates the following problem.

\begin{strongproblem}
Given generic irreducible admissible smooth representations $\pi_v$ and $\sigma_v$ of $\GL_n(F_v)$, determine the existence of Whittaker functions $W_{\pi_v}$ and $W_{\sigma_v}$ in the Whittaker models of $\pi_v$ and $\sigma_v$ and a Schwartz--Bruhat function $\Phi_v \in \Scr(F_v^n)$ for which
\[\Psi(s,W_{\pi_v},W_{\sigma_v},\Phi_v) = L(s,\pi_v \times \sigma_v).\]
\end{strongproblem}

We call such a triple $(W_{\pi_v},W_{\sigma_v},\Phi_v)$ a \emph{strong test vector} for the $\GL_n \times \GL_n$ Rankin--Selberg integral. In full generality, this problem remains unresolved, though some special cases have been settled:
\begin{itemize}[leftmargin=*]
\item When $n = 2$ and $F_v$ is nonarchimedean for several families of representations $\pi_v$ and $\sigma_v$ by Kim \cite[Chapters 4 and 5]{Kim10};
\item When $n = 2$ and $F_v$ is archimedean by Jacquet \cite[Theorem 7.2]{Jac72}, S-W.\ Zhang \cite[Proposition 2.5.2]{Zha01}, Miyazaki \cite[Theorem 6.1]{Miy18} and Hirano, Ishii, and Miyazaki \cite[Appendix A]{HIM21};
\item When at least one of the two representations $\pi_v$ and $\sigma_v$ is unramified by Kim \cite[Theorem 2.1.1]{Kim10} for $F_v$ nonarchimedean and by the first author \cite[Theorem 4.18]{Hum20a} for $F_v$ archimedean;
\item When both representations are principal series of prescribed forms and $F_v$ is archimedean by Ishii and Miyazaki \cite[Theorem 2.9]{IM20}.
\end{itemize}

We focus on a weaker yet more tractable problem, where we are satisfied with finding a triple $(W_{\pi_v},W_{\sigma_v},\Phi_v)$ for which $\Psi(s,W_{\pi_v},W_{\sigma_v},\Phi_v)$ is an explicit polynomial multiple of $L(s,\pi_v \times \sigma_v)$. Associated to $\pi_v$ and $\sigma_v$ are \emph{unramified} representations $\pi_{v,\ur}$ and $\sigma_{v,\ur}$ whose standard $L$-functions are such that $L(s,\pi_{v,\ur}) = L(s,\pi_v)$ and $L(s,\sigma_{v,\ur}) = L(s,\sigma_v)$. We then define the \emph{na\"{i}ve Rankin--Selberg $L$-function} associated to $\pi_v$ and $\sigma_v$ to be $L(s,\pi_{v,\ur} \times \sigma_{v,\ur})$, which we show is an explicit polynomial multiple of $L(s,\pi_v \times \sigma_v)$.

\begin{problem}
Given generic irreducible admissible smooth representations $\pi_v$ and $\sigma_v$ of $\GL_n(F_v)$, determine the existence of Whittaker functions $W_{\pi_v}$ and $W_{\sigma_v}$ in the Whittaker models of $\pi_v$ and $\sigma_v$ and a Schwartz--Bruhat function $\Phi_v \in \Scr(F_v^n)$ for which
\[\Psi(s,W_{\pi_v},W_{\sigma_v},\Phi_v) = L(s,\pi_{v,\ur} \times \sigma_{v,\ur}).\]
\end{problem}

We call such a triple $(W_{\pi_v},W_{\sigma_v},\Phi_v)$ a \emph{weak test vector} for the $\GL_n \times \GL_n$ Rankin--Selberg integral. The second author \cite[Theorem 1.1 (i)]{Jo21} resolved this problem when $F_v$ is nonarchimedean via the theory of nonarchimedean newforms due to Jacquet, Piatetski-Shapiro, and Shalika \cite{JP-SS81}. We resolve this problem when $F_v$ is archimedean via the theory of archimedean newforms introduced by the first author \cite{Hum20a}.

\begin{theorem}
\label{thm:GLnRS}
Let $F_v$ be an archimedean local field and let $\pi_v$ and $\sigma_v$ be generic irreducible admissible smooth representations of $\GL_n(F_v)$. Then there exist Whittaker functions $W_{\pi_v}$ and $W_{\sigma_v}$ in the Whittaker models of $\pi_v$ and $\sigma_v$ and a Schwartz--Bruhat function $\Phi_v \in \Scr(F_v^n)$ for which
\[\Psi(s,W_{\pi_v},W_{\sigma_v},\Phi_v) = L(s,\pi_{v,\ur} \times \sigma_{v,\ur}).\]
\end{theorem}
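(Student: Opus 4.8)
The plan is to reduce the problem to the known archimedean newform theory of the first author together with an explicit choice of Schwartz--Bruhat function. First I would recall from \cite{Hum20a} that any generic irreducible Casselman--Wallach representation $\pi_v$ of $\GL_n(F_v)$ contains a distinguished \emph{newform} $W_{\pi_v}^{\circ}$ in its Whittaker model, characterised by being right-invariant under a suitable compact subgroup and normalised so that $W_{\pi_v}^{\circ}(\mathrm{I}_n) = 1$; the key input is that the newform computes the standard $L$-function, in the sense that an appropriate one-dimensional (Hecke--Jacquet--Langlands) integral of $W_{\pi_v}^{\circ}$ against $\left|\det\right|^s$ equals (up to an explicit archimedean normalisation) $L(s,\pi_{v,\ur})$. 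Similarly for $\sigma_v$.

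The second step is to make the choice $W_{\pi_v} = W_{\pi_v}^{\circ}$, $W_{\sigma_v} = W_{\sigma_v}^{\circ}$, and to take $\Phi_v$ to be the standard Gaussian-type Schwartz--Bruhat function on $F_v^n$ --- $\Phi_v(x) = e^{-\pi \left\lVert x \right\rVert^2}$ when $F_v = \R$, and the analogous $e^{-2\pi \left\lVert x \right\rVert^2}$ when $F_v = \C$ --- which is the archimedean analogue of the characteristic function of $\mathcal{O}_v^n$ used in the unramified nonarchimedean computation. The aim is then to evaluate $\Psi(s,W_{\pi_v}^{\circ},W_{\sigma_v}^{\circ},\Phi_v)$. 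Here I would mimic the classical unramified Rankin--Selberg computation: use the Iwasawa decomposition $\GL_n = \Ngp_n \Agp_n \Ogp_n$ (or $\Ugp_n$) to write the integral over $\Ngp_n \backslash \GL_n$ as an integral over $\Agp_n \times K$, absorb the $K$-integral using the fact that both newforms and $\Phi_v$ transform by characters of $K$ that are mutually inverse on the relevant subgroup (this is precisely the defining property that makes the newform a \emph{test vector} for the Godement--Jacquet integral), and thereby collapse $\Psi$ to a one-dimensional archimedean Mellin-type integral that is recognisably $L(s,\pi_{v,\ur} \times \sigma_{v,\ur})$. Concretely, after this reduction the torus integral factors through the Rankin--Selberg convolution of the two unramified $L$-factors, exactly as in the unramified case, because the newform restricted to the diagonal torus behaves (after the $K$-integration) like a spherical Whittaker function for $\pi_{v,\ur}$.

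An alternative and perhaps cleaner route, which I would pursue in parallel, is to invoke the first author's own resolution of the \emph{unramified-times-arbitrary} case: by \cite[Theorem 4.18]{Hum20a}, if one of $\pi_v$, $\sigma_v$ is unramified then there is a strong test vector. One can try to bootstrap from this by a ``see-saw'' or doubling manipulation, but more directly I would set up the computation so that the presence of the newform on \emph{both} sides reduces, via the $K$-type matching, to the same torus integral that appears when both representations are replaced by $\pi_{v,\ur}$ and $\sigma_{v,\ur}$ with spherical vectors --- and for the latter the value $L(s,\pi_{v,\ur}\times\sigma_{v,\ur})$ is classical. The role of the naïve Rankin--Selberg $L$-function introduced just before the theorem is exactly to name this target value. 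Thus the theorem follows once one checks that the newform-newform-Gaussian triple produces the \emph{spherical} torus integral and not something with extra archimedean factors.

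The main obstacle I anticipate is the $K$-integration step: unlike the nonarchimedean case, where $K$ is compact open and the relevant invariances are exact, in the archimedean case the newform is only invariant under a specific (non-maximal, in the ramified case) compact subgroup $K_{n,\lambda}$, and $\Phi_v(e_n g_v)$ transforms under $\Ogp_{n-1}(F_v)$ (the stabiliser of $e_n$) in a way that must be reconciled with the $K_{n,\lambda}$-type of the newform. Verifying that these representations of the relevant compact group pair to the trivial character --- so that the $K$-integral is simply a volume --- is the crux, and it is precisely here that the detailed structure of archimedean newforms from \cite{Hum20a} (the explicit description of the newform $K$-type and its restriction to $\GL_{n-1}$) is indispensable. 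Once that orthogonality/triviality is in hand, the remaining torus integral is a routine archimedean Mellin computation identical to the unramified one, and the equality $\Psi(s,W_{\pi_v},W_{\sigma_v},\Phi_v) = L(s,\pi_{v,\ur} \times \sigma_{v,\ur})$ drops out.
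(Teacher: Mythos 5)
There is a genuine gap, and it is exactly at the step you flag as the crux: with your choice of $\Phi_v$ the $K_n$-integration does \emph{not} reduce to a volume, and in general the integral dies. Your $\Phi_v$ is the plain Gaussian, which is right $K_n$-invariant, so after the Iwasawa decomposition $g = z\begin{psmallmatrix} h & 0 \\ 0 & 1\end{psmallmatrix}k$ the $k$-integral is $\int_{K_n} W_{\pi_v}^{\circ}(gk)\,W_{\sigma_v}^{\circ}(gk)\,dk$ with no compensating factor. The archimedean newforms are not invariant under a congruence-type subgroup $K_{n,\lambda}$ of $K_n$ as you describe; they are $K_{n-1}$-invariant vectors lying in the distinguished (generally high-dimensional) $K_n$-types $\tau_{\chi_{\pi},c(\pi)}$ and $\tau_{\chi_{\sigma},c(\sigma)}$, so the functions $k\mapsto W^{\circ}(gk)$ are matrix coefficients of these types. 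By Schur orthogonality the integral above vanishes unless the two types are dual to each other; for instance if $\pi_v$ is ramified and $\sigma_v$ is unramified one gets $\int_{K_n}W_{\pi_v}^{\circ}(gk)\,dk = 0$, hence $\Psi(s,W_{\pi_v}^{\circ},W_{\sigma_v}^{\circ},\Phi_{\ur})\equiv 0 \neq L(s,\pi_{v,\ur}\times\sigma_{v,\ur})$. So the hoped-for ``pairing to the trivial character'' is false for the Gaussian, and no amount of care in the $K$-analysis rescues that choice: the test vector problem forces you to change $\Phi_v$, not just the Whittaker functions.

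What the paper does instead (Proposition \ref{prop:RSkey}) is take $\Phi(x) = P(x)\exp(-d_F\pi x\,\prescript{t}{}{\overline{x}})$ with $P$ the explicit homogeneous polynomial \eqref{eqn:P(x)defeq} of degree $c(\pi)+c(\sigma)$ and central character $\overline{\chi_{\pi}\chi_{\sigma}}$, built from the reproducing kernels of the spaces $\PP_{\chi,m}(F^n)$; the $K_n$-integration is then carried out using the projection identity \eqref{eqn:Whitnewformintegral}, the addition theorem (Proposition \ref{prop:addition}) and the reproducing-kernel identities \eqref{eqn:Hmatrixcoeff}--\eqref{eqn:Pmatrixcoeff}, which is precisely how the three $K_n$-transforms are made to couple nontrivially. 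This reduces $\Psi$ to $L(ns,\omega_{\pi_{\ur}}\omega_{\sigma_{\ur}})$ times an integral over $\Ngp_{n-1}(F)\backslash\GL_{n-1}(F)$ of the two newforms restricted to $\begin{psmallmatrix} h & 0\\ 0 & 1\end{psmallmatrix}$ — note: restriction to this embedded $\GL_{n-1}$, not to the full diagonal torus — and the second key input is Lemma \ref{lem:WtoWur}, proved via the Whittaker--Plancherel theorem, which says this restriction of $W_{\pi}^{\circ}$ literally equals that of the spherical function $W_{\pi_{\ur}}^{\circ}$. Only then does the computation collapse to the spherical case, which is finished by Stade's formula. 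Your outline asserts the conclusions of these two steps (the collapse of the $K$-integral and the ``behaves like the spherical Whittaker function'' claim) but supplies neither the corrected Schwartz--Bruhat function that makes the first true nor the Plancherel argument that makes the second true; the alternative ``bootstrap from the unramified-times-arbitrary case'' route is likewise only a gesture.
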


A more precise statement of this result is given in \hyperref[thm:GLnxGLn]{Theorem \ref*{thm:GLnxGLn}}.

\subsection{Test Vectors for Archimedean Period Integrals}
\label{sect:mainresults}

\hyperref[thm:GLnRS]{Theorem \ref*{thm:GLnRS}} is but one of many results that we prove pertaining to weak test vectors for archimedean period integrals. There are many local period integrals involving integrals of Whittaker functions that represent distinguished $L$-functions. As well as for the $\GL_n \times \GL_n$ Rankin--Selberg integral, we resolve the analogue of the weak test vector problem for the following:
\begin{itemize}[leftmargin=*]
\item The modified $\GL_n \times \GL_n$ Rankin--Selberg integral introduced by Sakellaridis given by \eqref{eqn:Saknn}. This represents $L(s_2,\pi_v \times \sigma_v) L(s_1 - \frac{n - 1}{2}, \sigma_v)$, the product of the $\GL_n \times \GL_n$ Rankin--Selberg $L$-function and the standard $L$-function. In \hyperref[thm:Saknn]{Theorem \ref*{thm:Saknn}}, we prove the existence of a weak test vector $(W_{\pi_v},W_{\sigma_v},\Phi_v)$ for which this integral is equal to $L(s_2,\pi_{v,\ur} \times \sigma_{v,\ur}) L(s_1 - \frac{n - 1}{2}, \sigma_v)$.
\item The modified $\GL_n \times \GL_{n - 1}$ Rankin--Selberg integral introduced by Sakellaridis given by \eqref{eqn:Saknn-1}. This represents $L(s_2 + \frac{1}{2},\pi_v \times \sigma_v) L(s_1 - \frac{n - 2}{2}, \sigma_v)$, the product of the $\GL_n \times \GL_{n - 1}$ Rankin--Selberg $L$-function and the standard $L$-function. In \hyperref[thm:Saknn-1]{Theorem \ref*{thm:Saknn-1}}, we prove that when $\sigma_v$ is unramified, there exists of a strong test vector $(W_{\pi_v},W_{\sigma_v},\Phi_v)$ for which this integral is equal to $L(s_2 + \frac{1}{2},\pi_v \times \sigma_v) L(s_1 - \frac{n - 1}{2}, \sigma_v)$.
\item The Flicker integral given by \eqref{eqn:Flickerint}. This represents the Asai $L$-function $L(s,\pi_v,\As)$. In \hyperref[thm:Flicker]{Theorem \ref*{thm:Flicker}}, we prove the existence of a weak test vector $(W_{\pi_v},\Phi_v)$ for which this integral is equal to $L(s,\pi_{v,\ur},\As)$.
\item The Bump--Friedberg integral given by \eqref{eqn:BFint}. This represents $L(s_1,\pi_v) L(s_2,\pi_v, \wedge^2)$, the product of the standard $L$-function and the exterior square $L$-function. In \hyperref[thm:BF-reduction]{Theorem \ref*{thm:BF-reduction}}, we prove the existence of a weak test vector $(W_{\pi_v},\Phi_v)$ for which this integral is equal to $L(s_1,\pi_v) L(s_2,\pi_{v,\ur}, \wedge^2)$.
\end{itemize}

The period integrals listed above either involve a single complex variable $s$ or a pair of complex variables $s_1,s_2$. In certain situations, these complex variables can be specialised to special values, at which point these period integrals are linear functionals for particular representations. We list below the periods of interest that arise from specialising these complex variables.
\begin{itemize}[leftmargin=*]
\item The $\GL_n \times \GL_n$ Rankin--Selberg period given by \eqref{eqn:RSperiod2}. When $\pi_v$ and $\sigma_v$ are unitary, we show in \hyperref[thm:RSperiod]{Theorem \ref*{thm:RSperiod}} the existence of a test vector $(W_{\pi_v},W_{\sigma_v})$ for which this period is equal to the special $L$-value $L(1,\pi_{v,\ur} \times \sigma_{v,\ur}) / L(n,\omega_{\pi_{v,\ur}} \omega_{\sigma_{v,\ur}})$, where $\omega_{\pi_{v,\ur}}$ and $\omega_{\sigma_{v,\ur}}$ denote the central characters of $\pi_{v,\ur}$ and $\sigma_{v,\ur}$.
\item The Flicker--Rallis period given by \eqref{eqn:FRperiod2}. When $\pi_v$ is unitary, we show in \hyperref[thm:FRperiod]{Theorem \ref*{thm:FRperiod}} the existence of a test vector $W_{\pi_v}$ for which this period is equal to the special $L$-value $L(1,\pi_{v,\ur},\As) / L(n, \omega_{\pi_{v,\ur}}|_{\R^{\times}})$.
\item The Friedberg--Jacquet period given by \eqref{eqn:FJperiod}. When $\pi_v$ is unitary, we show in \hyperref[thm:FJperiod]{Theorem \ref*{thm:FJperiod}} the existence of a test vector $W_{\pi_v}$ for which this period is equal to the special $L$-value $L(\frac{1}{2},\pi_v) L(1,\pi_{v,\ur},\wedge^2) / L(\frac{n}{2},\omega_{\pi_{v,\ur}})$.
\end{itemize}

\subsection{Comparisons between Archimedean and Nonarchimedean Results}
\label{sec:Intro-Comparision}

Our results on test vectors for archimedean period integrals parallel analogous results of the second author \cite{Jo21} in the nonarchimedean setting. The strategy of proof in both settings is similar: via the Iwasawa decomposition and the fact that Whittaker newforms transform on the right under the maximal compact subgroup $K_v$ of $\GL_n(F_v)$ in a prescribed manner, these integrals can be reduced to integrals over a diagonal torus $\Agp_n(F_v)$.

When $F_v$ is nonarchimedean, the behaviour of the Whittaker newform when restricted to the diagonal torus has a closed form via the Shintani--Casselman--Shalika formula \cites{CS80,Shi76} together with work of Matringe \cite{Mat13} and Miyauchi \cite{Miy14}. This allows one to \emph{directly} evaluate the period integrals of interest.

When $F_v$ is archimedean, on the other hand, no such closed form exists for the Whittaker newform when restricted to the diagonal torus. Instead, there exists a \emph{propagation formula} due to the first author \cite[Lemmata 9.8 and 9.17]{Hum20a} that gives a recursive formula for this Whittaker newform in terms of a Whittaker newform associated to a representation of $\GL_{n - 1}(F_v)$; for unramified representations, such an identity is due independently to Gerasimov, Lebedev, and Oblezin \cite[Proposition 4.1]{GLO08} and to Ishii and Stade \cite[Proposition 2.1]{IS13}. In this way, the period integrals of interest can be evaluated via \emph{induction}. This approach has been previously used to successfully solve the strong test vector problem for the unramified $\GL_n \times \GL_n$ Rankin--Selberg integral \cite[Theorem 1.1]{Sta02} and the unramified Bump--Friedberg integral \cite[Theorem 3.3]{Sta01}. For the modified $\GL_n \times \GL_n$ and $\GL_n \times \GL_{n - 1}$ Rankin--Selberg integrals introduced by Sakellaridis and for the Flicker integral, our results are new even for the unramified case.

Known results for weak test vectors for nonarchimedean period integrals encompass more than just the analogues of the archimedean results listed in \hyperref[sect:mainresults]{Section \ref*{sect:mainresults}}. Weak test vectors have been determined for $\GL_n \times \GL_m$ Rankin--Selberg integrals with $n > m$ \cite[Theorem 1.1]{BKL20}, for Jacquet--Shalika integrals \cite[Theorem 1.1]{MY13}, and for Bump--Ginzburg integrals \cite[Theorem 6.3]{Jo21}. In turn, these give weak test vectors for $\GL_n \times \GL_m$ Rankin--Selberg periods, Jacquet--Shalika periods, and Bump--Ginzburg periods.

In the archimedean setting, on the other hand, we do \emph{not} expect to be able to find such weak test vectors for these various period integrals, with the exception of the $\GL_n \times \GL_{n - 1}$ Rankin--Selberg integral, if we enforce the condition that these test vectors be right $K_v$-finite. Indeed, even if all representations are unramified and all Whittaker functions are spherical, it is widely believed that these period integrals are \emph{not} a nonzero polynomial multiple of the $L$-function associated to such a period integral \cite[\S 2.6]{Bum89}. A prototypical example of this phenomenon is the identity for the unramified $\GL_n \times \GL_{n - 2}$ archimedean Rankin--Selberg integral due to Ishii and Stade \cite[Theorem 3.2]{IS13}, namely
\begin{multline*}
\int\limits_{\Ngp_{n - 2}(\R) \backslash \GL_{n - 2}(\R)} W_{\pi_{\infty}} \begin{pmatrix} g_{\infty} & 0 \\ 0 & 1_2 \end{pmatrix} W_{\sigma_{\infty}}(g_{\infty}) \left|\det g_{\infty}\right|_{\R}^{s - 1} \, dg_{\infty}	\\
= L(s,\pi_{\infty} \times \sigma_{\infty}) \frac{1}{4\pi i} \int_{\sigma - i\infty}^{\sigma + i\infty} \frac{L\left(w,\widetilde{\pi_{\infty}}\right)}{L(s + w,\sigma_{\infty})} \, dw.
\end{multline*}

\subsection{Global Applications}

In conjunction with the nonarchimedean results on test vectors in \cite{Jo21}, our archimedean results have global applications. Each local period integral has a global counterpart, and we are able to show the existence of weak test vectors for these global period integrals. More precisely, we resolve the weak test vector problem for the following:
\begin{itemize}[leftmargin=*]
\item The global $\GL_n \times \GL_n$ Rankin--Selberg integral given by \eqref{eqn:globalRSint} in \hyperref[thm:globalRSint]{Theorem \ref*{thm:globalRSint}}.
\item The global modified $\GL_n \times \GL_n$ Rankin--Selberg integral introduced by Sakellaridis given by \eqref{RS-Sakellaridis-equalrank} in \hyperref[thm:Saknnglobal]{Theorem \ref*{thm:Saknnglobal}}.
\item The global modified $\GL_n \times \GL_{n - 1}$ Rankin--Selberg integral introduced by Sakellaridis given by \eqref{eqn:GLnXGLmRS} in \hyperref[thm:Saknn-1global]{Theorem \ref*{thm:Saknn-1global}}.
\item The global Flicker integral given by \eqref{eqn:Flickerintglobal} in \hyperref[thm:globalFlickerint]{Theorem \ref*{thm:globalFlickerint}}.
\item The global Bump--Friedberg integral given by \eqref{eqn:globalBFinteven} and \eqref{eqn:globalBFintodd} in \hyperref[thm:BF-reduction]{Theorem \ref*{thm:globalBF}}.
\end{itemize}
By specialising the value of the complex variables $s$ or $s_1,s_2$ in these period integrals, our results give weak test vectors for certain periods:
\begin{itemize}[leftmargin=*]
\item The global $\GL_n \times \GL_n$ Rankin--Selberg period given by \eqref{eqn:RSGlobalSV} in \hyperref[thm:RSGlobalSV]{Theorem \ref*{thm:RSGlobalSV}}.
\item The global Flicker--Rallis period given by \eqref{eqn:globalFRperiod} in \hyperref[thm:globalFRperiod]{Theorem \ref*{thm:globalFRperiod}}.
\item The global Friedberg--Jacquet period given by \eqref{eqn:globalFJperiod} in \hyperref[thm:globalFJperiod]{Theorem \ref*{thm:globalFJperiod}}.
\end{itemize}

\section{Preliminaries}

From here on, we work over an archimedean local field. Since all that follows is local, we drop the usage of the subscript $v$. We list below some standard facts on the representation theory of $\GL_n(F)$ with $F$ archimedean. Much of this is well-known and appears elsewhere in the literature; see in particular \cite[\S 2]{Hum20a}.

\subsection{Groups and Measures}

\subsubsection{Local Fields}

Let $F$ be an archimedean local field, so that $F$ is either $\R$ or $\C$. We denote by $|\cdot|_F$ the absolute value on $F$, which we normalise such that
\[
|x|_F = \begin{dcases*}
\max \{ x, -x \} & if $F=\R$, \\
x \overline{x} & if $F=\C$.
\end{dcases*}
\]
When the local field is clear from the context, we omit the subscript $F$ in our notation and simply write $|\cdot|=|\cdot|_F$. We also let $\|\cdot\|\coloneqq |\cdot|^{1/2}_{\C}$ denote the standard module on $\C$.

We fix a nontrivial additive character $\psi_F=\psi$ of $F$. For $F=\R$, we choose $\psi(x)\coloneqq  \exp(2\pi i x)$, while for $F=\C$, we choose $\psi(x)\coloneqq \exp(2\pi i(x+\overline{x}))$; in \hyperref[sect:Flickersect]{Section \ref*{sect:Flickersect}}, we will also work with a slightly different nontrivial additive character $\psi_{\C/\R}$ of $\C$ that is trivial when restricted to $\R$. We normalise the Haar measure $dx$ on $F$ so that it is self-dual with respect to $\psi$. For $F=\R$, $dx$ is simply the Lebesgue measure, whereas for $F=\C$, $dx$ is twice the Lebesgue measure. The multiplicative Haar measure $d^{\times} x$ on $F^{\times}$ is $\zeta_F(1)|x|^{-1}dx$, where
\[ \zeta_F(s) \coloneqq \begin{dcases*}
\pi^{-\frac{s}{2}} \Gamma \left(  \dfrac{s}{2}\right)   & if $F=\R$, \\
2(2 \pi)^{-s} \Gamma(s) & if $F=\C$.
\end{dcases*}\]

\subsubsection{Groups and Haar Measures}

We write $1_n$ to denote the $n \times n$ identity matrix. Let $\{ e_i. : 1 \leq i \leq n \}$ be the standard row basis of $F^n$. Let $\Pgp(F)=\Pgp_{(n_1,\ldots,n_r)}(F)$ denote the standard upper parabolic subgroup of $\GL_n(F)$ of type $(n_1,n_2,\ldots,n_r)$ with $n=n_1+n_2+\cdots+n_r$. This has the Levi decomposition $\Pgp(F)= \Ngp_{\Pgp}(F) \Mgp_{\Pgp}(F)$, where the block-diagonal Levi subgroup $\Mgp_{\Pgp}(F)$ is isomorphic to $\GL_{n_1}(F) \times \cdots \times \GL_{n_r}(F)$, while the unipotent radical $\Ngp_{\Pgp}(F)$ of $\Pgp(F)$ consists of upper triangular matrices with block-diagonal entries $(1_{n_1},\ldots,1_{n_r})$. When $\Pgp(F)$ is the standard Borel (and minimal parabolic) subgroup $\Pgp_{(1,\ldots,1)}(F)\eqqcolon\Bgp_n(F)$ of upper triangular matrices, we write $\Ngp_{\Pgp}(F) \eqqcolon \Ngp_n(F) \cong F^{n(n-1)/2}$, the subgroup of unipotent upper triangular matrices, and $\Mgp_{\Pgp}(F)\eqqcolon \Agp_n(F) \cong (F^{\times})^n$, the subgroup of diagonal matrices. We write $\Zgp_n(F) \cong F^{\times}$ to denote to the centre consisting of scalar matrices. We define $\Pgp_n(F)$, the mirabolic subgroup of $\GL_n(F)$, given by
\[
 \Pgp_n(F)= \left\{ \begin{pmatrix} h  & \prescript{t}{}{x} \\ 0 & 1 \end{pmatrix} : h \in \GL_{n-1}(F), \ x \in F^{n-1} \right\}.
\]
The modulus character of a parabolic subgroup $\Pgp(F)=\Pgp_{(n_1,\ldots,n_r)}(F)$ is
\[
  \delta_{\Pgp(F)}(m)=\prod_{j=1}^r \left|\det m_j\right|^{n-2(n_1+n_2+ \cdots +n_{j-1})-n_j}
\]
for any $m=\blockdiag(m_1,\ldots,m_r) \in \Mgp_{\Pgp}(F)$. We let $K_n$ denote the maximal compact subgroup of $\GL_n(F)$, unique up to conjugacy, so that
\[
K_n= \begin{dcases*}
\Ogp(n) & if $F=\R$, \\
\Ugp(n)  & if $F=\C$.
\end{dcases*}
\]

We have the Iwasawa decomposition $\GL_n(F)=\Ngp_n(F)\Agp_n(F)K_n$. We normalise the Haar measure $dg$ on $\GL_n(F) \ni g$ via the Iwasawa decomposition $g=uak$ with $u \in \Ngp_n(F)$, $a \in \Agp_n(F)$, and $k \in K_n$, so that
\[
dg = \delta^{-1}_{\Bgp_n(F)}(a) \, du \, d^{\times}a \, dk.
\]
Here $du = \prod_{1 \leq i < j \leq n} du_{i,j}$, where $du_{i,j}$ is the Haar measure on $F$, while $d^{\times}a = \prod_{j = 1}^{n} d^{\times}a_j$, where $d^{\times}a_j$ is the Haar measure on $F^{\times}$, and $dk$ is the probability Haar measure on $K_n$. The modulus character of $\Bgp_n(F)$ is simply $\delta_{\Bgp_n(F)}(a) = \prod_{j = 1}^{n} |a_j|^{n-2j+1}$.

The Iwasawa decomposition also gives rise to the Haar measure on $\Ngp_n(F) \backslash \GL_n(F)$ via $dg = \delta_{B_n(F)}^{-1}(a) \, d^{\times}a \, dk$ for $g = ak$ with $a \in \Agp_n(F)$ and $k \in K_n$. There is an alternate expression for this Haar measure via the Iwasawa decomposition $g = (z1_n) \begin{psmallmatrix} h & 0 \\ 0 & 1 \end{psmallmatrix} k$ associated to the maximal parabolic subgroup $\Pgp_{(n - 1,1)}(F)$ of $\GL_n(F)$ of type $(n - 1,1)$. Here $z \in F^{\times}$, $h \in \Ngp_{n - 1}(F) \backslash \GL_{n - 1}(F)$, and $k \in K_n$; the Haar measure on $\Ngp_n(F) \backslash \GL_n(F)$ becomes
\[dg = \left|\det h\right|^{-1} \, d^{\times}z \, dh \, dk,\]
where $d^{\times}z$ denotes the multiplicative Haar measure on $F^{\times}$ and $dh$ denotes the Haar measure on $\Ngp_{n - 1}(F) \backslash \GL_{n - 1}(F)$.

\subsection{Representations of \texorpdfstring{$\GL_n(F)$}{GL\9040\231(F)}}

\subsubsection{Isobaric Sums}

Given representations $(\pi_1,V_{\pi_1}), \ldots, (\pi_r,V_{\pi_r})$ of $\GL_{n_1}(F), \ldots, \GL_{n_r}(F)$ with $n=n_1+\cdots+n_r$, we form the representation $\bigboxtimes_{j = 1}^{r} \pi_j$ of $\Mgp_{\Pgp}(F)$, where $\boxtimes$ denote the outer tensor product. We then extend this representation trivially to a representation of $\Pgp(F)$. We obtain a normalised parabolically induced representation $(\pi,V_{\pi})$ of $\GL_n(F)$ by
\[
\pi\coloneqq  \Ind^{\GL_n(F)}_{\Pgp(F)} \bigboxtimes_{j=1}^r \pi_j,
\]
where $V_{\pi}$ denote the space of smooth functions $f : \GL_n(F) \to V_{\pi_1} \otimes \cdots \otimes V_{\pi_r}$, upon which $\pi$ on $V_{\pi}$ via  right translation, namely, $(\pi(h)\cdot f)(g)\coloneqq f(gh)$, that satisfy
\[
f(umg)=\delta^{1/2}_{\Pgp(F)}(m) \bigotimes_{j=1}^r \pi_j(m_j)\cdot f(g),
\]
for any $u \in \Ngp_{\Pgp}(F)$, $m=\blockdiag(m_1,\ldots,m_r) \in \Mgp_{\Pgp}(F)$, and $g \in \GL_n$. The induced representation $\pi$ is called the \emph{isobaric sum} of $\pi_1,\ldots, \pi_r$, which we denote by
\[
\pi\coloneqq \bigboxplus_{j = 1}^{r} \pi_j.
\]

\subsubsection{Essential Square-Integrable Representations}

Essential square-integrable representations of $\GL_n(\C)$ exist only for $n = 1$. An essentially square-integrable representation of $\GL_1(\R)=\R^{\times}$ must be a character of the form $\pi(x)=e^{i\kappa \arg(x)} |x|^t_{\C}$ for some $\kappa \in \Z$ and $t \in \C$, where $e^{i\arg(x)} \coloneqq x/\|x\|$.

Essential square-integrable representations of $\GL_n(\R)$ exist only for $n \in \{ 1, 2\}$. An essentially square-integrable representation of $\GL_1(\R)=\R^{\times}$ must be a character of the form $\pi(x)=\sgn(x)^{\kappa}|x|^t_{\R}$ for some $\kappa \in \{0,1 \}$ and $t \in \C$, where $\sgn(x) \coloneqq x/\|x\|$. 
We view $\GL_1(\C)$ as a subgroup of $\GL_2(\R)$ via the identification $a+ib \mapsto \begin{psmallmatrix} a & b \\ -b & a \end{psmallmatrix}$. For $\kappa \neq 0$, the essential discrete series representation of weight $\|\kappa \|+1$,
\[
D_{\|\kappa \|+1} \otimes \left|\det\right|^t_{\R}\coloneqq \Ind^{\GL_2(\R)}_{\GL_1(\C)} e^{i\kappa \arg}|\cdot|^t_{\C} \cong \Ind^{\GL_2(\R)}_{\GL_1(\C)} e^{-i\kappa\arg}|\cdot|^t_{\C}
\]
is essentially square-integrable. Every essentially square-integrable representation of $\GL_2(\R)$ is of the form $\pi=D_{\kappa}\otimes \left|\det\right|^t_{\R}$ for some integer $\kappa \geq 2$ and $t \in \C$.

\subsubsection{Induced Representations of Whittaker and Langlands Types}

A representation $\pi$ of $\GL_n(F)$ is said to be an \emph{induced representation of Whittaker type} if it is the isobaric sum of $\pi_1,\pi_2,\ldots,\pi_r$ and each $\pi_j$ is essentially square-integrable. Such a representation is  an admissible smooth Fr\'{e}chet representation of moderate growth and of finite length. Induced representations of Whittaker type are Casselman--Wallach representations \cites{Cas89,Wal92}, which is to say admissible smooth Fr\'{e}chet representations of moderate growth and finite length. In addition, if each $\pi_j$ is of the form $\sigma_j \otimes \left|\det\right|^{t_j}$, where $\sigma_j$ is irreducible, unitary, square-integrable, and $\Re(t_1) \geq \Re(t_2) \geq \cdots \geq \Re(t_r)$, then $\pi$ is said to be an \emph{induced representation of Langlands type}.

Induced representations of Whittaker and Langlands type need not be irreducible. Nonetheless, every generic irreducible Casselman--Wallach representation of $\GL_n(F)$ is isomorphic to some (necessarily irreducible) induced representation of Langlands type. For this reason, we will work more generally with induced representations of Langlands type, since this encompasses generic irreducible Casselman--Wallach representations.

A \emph{spherical induced representation of Whittaker type} of $\GL_n(F)$ is a representation of the form $\pi = \bigboxplus_{j = 1}^{n} \pi_j$ with each $\pi_j$ an unramified character of $F^{\times}$, namely a character of the form $\pi_j = |\cdot|^{t_j}$ with $t_j \in \C$. Such a representation has a $K_n$-fixed vector, which is unique up to scalar multiplication; this is called the \emph{spherical vector}.

\subsubsection{The Whittaker Model}

Let $(\pi,V_{\pi})$ be an induced representation of Whittaker type. We let $\psi_n$ denote an additive character of $\Ngp_n(F)$ defined by
\[
\psi_n(u)\coloneqq \psi \left( \sum_{j=1}^{n-1} u_{j,j+1} \right)
\]
for all $u \in \Ngp_n(F)$. A \emph{Whittaker functional} $\Lambda : V_{\pi} \to \C$ is a continuous linear functional that satisfies
\[
\Lambda(\pi(u) \cdot v)=\psi_n(u) \Lambda(v)
\]
for all $v \in V_{\pi}$ and $u \in \Ngp_n(F)$. If $\pi$ is additionally irreducible, then the space ${\Hom}_{\Ngp_n(F)}(\pi,\psi)$ of Whittaker functionals of $\pi$ is at most one-dimensional. If the space is one-dimensional, it admits a unique functional up to scalar multiplication, and the representation $\pi$ is said to be \emph{generic}.

Let $\WW(\pi,\psi)$ denote the \emph{Whittaker model} of $\pi$, which is the image of $V_{\pi}$ under the map $v \mapsto \Lambda(\pi(\cdot)\cdot v)$. The Whittaker model $\WW(\pi,\psi)$ consists of Whittaker functions $W : \GL_n(F) \to \C$ of the form $W(g)\coloneqq \Lambda(\pi(g) \cdot v)$. Every induced representation of Langlands type is generic and isomorphic to its unique Whittaker model $\WW(\pi,\psi)$. Although an induced representation of Whittaker type $\pi$ affords a one-dimensional space of Whittaker functionals, the map $v \mapsto \Lambda(\pi(\cdot)\cdot v)$ need not to be injective, so the Whittaker model may only be a model of a quotient of $\pi$.

Let $\pi = \bigboxplus_{j = 1}^{r} \pi_j$ be an induced representation of Whittaker type of $\GL_n(F)$, so that each $\pi_j$ is of the form $e^{i\kappa_j \arg} |\cdot|^{t_j}$ if $F = \C$ or either $\sgn^{\kappa_j}|\cdot|^{t_j}$ or $D_{\kappa_j} \otimes \left|\det\right|^{t_j}$ if $F = \R$. The induced model of $\pi$ consists of smooth functions $f : \GL_n(F) \to \bigotimes_{j = 1}^{r} V_{\pi_j}$. Each $V_{\pi_j}$ may itself be identified with a space of smooth functions from $\GL_{n_j}(F)$ to $\C$ \cite[Lemma 8.1]{Hum20a}. Evaluating such a function at the identity $1_{n_j}$ for each $j \in \{1,\ldots,r\}$, we may thereby view an element $f$ of the induced model $V_{\pi}$ of $\pi$ as a smooth function from $\GL_n(F)$ to $\C$ \cite[Corollary 8.2]{Hum20a}.

Given $f \in V_{\pi}$, we define the \emph{Jacquet integral} 
\[
W_f(g)\coloneqq \int_{\Ngp_n(F)} f(w_nug) \overline{\psi_n}(u) \, du,
\]
where $w_n \coloneqq \antidiag(1,\ldots,1)$ denotes the long Weyl element in $\GL_n(F)$. This integral converges absolutely if $\Re(t_1) > \Re(t_2) > \cdots > \Re(t_r)$ and defines a Whittaker function $W_f \in \WW(\pi,\psi)$; that is, as a function of $f \in V_{\pi}$, $\Lambda(f)\coloneqq W_f(1_n)$ defines a Whittaker functional, which is therefore unique up to scalar multiplication. Moreover, the Jacquet integral provides a Whittaker functional for \emph{all} induced representations of Whittaker type, and not just those for which $\Re(t_1) > \Re(t_2) > \cdots > \Re(t_r)$, via analytic continuation in the sense of Wallach \cite{Wal92}.

\subsubsection{$L$-Functions}

We put
\[d_F\coloneqq [F:\R] = \begin{dcases*}
1 & if $F = \R$,	\\
2 & if $F = \C$.
\end{dcases*}\]
The integral representation of the zeta function $\zeta_F(s)$ is
\[
\zeta_F(s)=\int_{F^{\times}} \exp(-d_F\pi\|x\|^2) |x|^s \, d^{\times}x,
\]
which converges absolutely for $\Re(s) > 0$ and extends meromorphically to the entire complex plane. In particular, if $\omega$ is an unramified character of $F^{\times}$, so that $\omega = |\cdot|^t$ for some $t \in \C$, then for $\Re(s) > -\Re(t)$, we have that
\begin{equation}
\label{eqn:Lsomega}
L(s,\omega) = \int_{F^{\times}} \omega(x) \exp(-d_F\pi\|x\|^2) |x|^s \, d^{\times}x.
\end{equation}

Given an induced representation of Whittaker type $\pi = \bigboxplus_{j = 1}^{r} \pi_j$ of $\GL_n(F)$, the local Langlands correspondence, as explicated by Knapp \cite{Kna94}, gives that the standard $L$-function of $\pi$ \cite{GJ72} is
\[L(s,\pi) = \prod_{j = 1}^{r} L(s,\pi_j),\]
where the $L$-function of the essentially square-integrable representation $\pi_j$ is
\[L(s,\pi_j) = \begin{dcases*}
\zeta_{\C}\left(s + t_j + \frac{\|\kappa_j\|}{2}\right) & if $F = \C$ and $\pi_j = e^{i\kappa_j \arg} |\cdot|_{\C}^{t_j}$,	\\
\zeta_{\R}(s + t_j + \kappa_j) & if $F = \R$ and $\pi_j = \sgn^{\kappa_j} |\cdot|_{\R}^{t_j}$,	\\
\zeta_{\R}\left(s + t_j + \frac{\kappa_j - 1}{2}\right) \zeta_{\R}\left(s + t_j + \frac{\kappa_j + 1}{2}\right) & if $F = \R$ and $\pi_j = D_{\kappa_j} \otimes \left|\det\right|_{\R}^{t_j}$.
\end{dcases*}\]

\section{Archimedean Newform Theory}

We now survey the theory of newforms for induced representations of Whittaker type of $\GL_n(F)$ introduced by the first author \cite{Hum20a}. While newforms over nonarchimedean fields are usually defined in terms of vectors fixed under certain congruence subgroups, newforms over archimedean fields are instead defined in terms of vectors lying in distinguished $K_n$-types. We first recall some properties of representations of $K_n$, as well as properties of distinguished models of such representations.

\subsection{Representation Theory of \texorpdfstring{$K_n$}{K\9040\231}}

The equivalence classes of finite-dimensional irreducible representations of the orthogonal group
\[\Ogp(n)\coloneqq \{ k \in \Mat_{n \times n}(\R) : k \prescript{t}{}{k} = 1_n\}\]
are parametrised by the set of highest weights, which may be identified with $n$-tuples of nonnegative integers of the form
\[\mu = \{\mu_1,\ldots,\mu_m, \underbrace{\eta,\ldots,\eta}_{\text{$n-2m$ times}}, \underbrace{0,\ldots,0}_{\text{$m$ times}}\} \in \Z^n,\]
where $m \in \{0,\ldots,\lfloor \frac{n}{2}\rfloor \}$, $\mu_1 \geq \cdots \geq \mu_m \geq 1$, and $\eta \in \{0,1\}$.

Similarly, the equivalence class of finite-dimensional irreducible representations of the unitary group
\[
\Ugp(n)\coloneqq \{ k \in \Mat_{n \times n}(\C) : k \prescript{t}{}{\overline{k}} =1_n  \}
\]
are parametrised by the set of highest weights, which may be identified with $n$-tuples of integers $\mu=(\mu_1,\ldots,\mu_n) \in \Z^n$ that are nonincreasing, so that $\mu_1 \geq \cdots \geq \mu_n$.

In both settings, to each $\tau \in \widehat{K_n}$, the set of equivalence class of irreducible representations of $K_n$, one can associate a nonnegative integer $\deg \tau$ called the \emph{Howe degree} of $\tau$ \cite{How89}. The Howe degree of an irreducible representation $\tau$ of highest weight $\mu$ is
\[
\deg \tau \coloneqq \sum_{j=1}^n \| \mu_j \|.
\]

When $n = 1$, these irreducible representations are simply characters. Characters $\chi$ of $K_1$ are of the form
\[\chi(x) = \begin{dcases*}
\sgn(x)^{\kappa} & if $K_1 = \Ogp(1)$,	\\
e^{i\kappa \arg(x)} & if $K_1 = \Ugp(1)$,
\end{dcases*}\]
where $\kappa \in \{0,1\}$ if $K_1=\Ogp(1) \cong \Z/2\Z$ and $\kappa \in \Z$ if $K_1=\Ugp(1) \cong \R \slash \Z \cong S^1$. In either case, the conductor exponent of $\chi$ is $c(\chi) \coloneqq \|\kappa\|$.

\subsection{Spaces of Homogeneous Harmonic Polynomials}
\label{sec:HHPR}

Let $\chi$ be a character of $\Ogp(1)$. We let $m$ be a nonnegative integer for which $m \geq c(\chi)$ and $m \equiv c(\chi) \pmod{2}$. Let $\PP_{\chi,m}(\R^n)$ denote the space of degree $m$ homogeneous polynomials with central character $\chi$, and let $\HH_{\chi,m}(\R^n)$ denote the subspace of $\PP_{\chi,m}(\R^n)$ of harmonic homogeneous polynomials; $\HH_{\chi,m}(\R^n)$ is a model for the irreducible representation of $\Ogp(n)$ with central character $\chi$ and highest weight $(m,0,\ldots,0)$, which we denote by $\tau_{\chi,m}$.

Similarly, let $\chi$ be a character of $\Ugp(1)$. We let $m$ be a nonnegative integer for which $m \geq c(\chi)$ and $m \equiv c(\chi) \pmod{2}$. Let $\PP_{\chi,m}(\C^n)$ denote the space of degree $m$ homogeneous polynomials with central character $\chi$, and let $\HH_{\chi,m}(\C^n)$ denote the subspace of $\PP_{\chi,m}(\C^n)$ of harmonic homogeneous polynomials; $\HH_{\chi,m}(\C^n)$ is a model for the irreducible representation of $\Ugp(n)$ with central character $\chi$ and highest weight $(m_1,0,\ldots,0,-m_2)$, which we also denote by $\tau_{\chi,m}$. The nonnegative integers $m_1,m_2$ are such that $m_1 + m_2 = m$ and $m_1 - m_2 = \ell$, where $\chi = e^{i\ell \arg}$, so that $c(\chi) = \max\{\ell,-\ell\}$, and elements of $\PP_{\chi,m}(\C^n)$ have bidegree $(m_1,m_2)$.

In both cases, every irreducible representation of $K_n$ whose restriction to $K_{n - 1}$ contains the trivial representation is of the form $\tau_{\chi,m}$ for some character $\chi$ of $K_1$ and some integer $m \geq c(\chi)$; moreover, the Howe degree $\deg \tau_{\chi,m}$ of $\tau_{\chi,m}$ is simply the nonnegative integer $m$. The action of the group $K_n \ni k$ on the space $\PP_{\chi,m}(F^n) \ni P$ is via right translation, namely $(\tau_{\chi,m}(k)\cdot P)(x) \coloneqq P(xk)$. The homogeneity of a polynomial $P$ in $\PP_{\chi,m}(F^n)$ means that
\begin{equation}
\label{eqn:homogeneity}
P(\lambda x) = \chi\left(\frac{\lambda}{\|\lambda\|}\right) \|\lambda\|^m P(x)
\end{equation}
for all $x \in F^n$ and $\lambda \in F^{\times}$, while polynomials in $\HH_{\chi,m}(F^n)$ are additionally annihilated by the Laplacian
\[\Delta = \begin{dcases*}
\sum_{j=1}^n\frac{\dee^2}{\dee x^2_j} & if $F = \R$,	\\
4 \sum_{i=1}^n \frac{\dee^2}{\dee x_j \dee \overline{x_j}} & if $F = \C$.
\end{dcases*}\]

We record the following key properties of homogeneous polynomials in $\PP_{\chi,m}(F^n)$.

\begin{proposition}[{Cf.\ \cite[Lemmata 7.1 and 7.7]{Hum20a}}]
\label{prop:Pcircunique}
There exists a unique $K_{n - 1}$-invariant polynomial $P_{\chi,m}^{\circ}$ in $\HH_{\chi,m}(F^n)$ satisfying $P_{\chi,m}^{\circ}(e_n) = 1$, where the group $K_{n - 1}$ is embedded in $K_n$ via $k' \mapsto \begin{psmallmatrix} k' & 0 \\ 0 & 1  \end{psmallmatrix}$. In particular, for all $k \in K_n$, we have that
\begin{equation}
\label{eqn:Pconj}
P_{{\chi},m}^{\circ}(e_n k)=\overline{P_{\chi,m}^{\circ}}(e_n k^{-1}).
\end{equation}
\end{proposition}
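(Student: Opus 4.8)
**Proof proposal for Proposition \ref*{prop:Pcircunique}.**

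The plan is to establish existence and uniqueness separately, using the branching rule from $K_n$ to $K_{n-1}$ together with the homogeneity constraint \eqref{eqn:homogeneity}, and then deduce the conjugation identity \eqref{eqn:Pconj} as a corollary. For \emph{uniqueness}: the space $\HH_{\chi,m}(F^n)$ is a model for the irreducible representation $\tau_{\chi,m}$ of $K_n$, and by the discussion preceding the proposition, this is precisely the representation whose restriction to $K_{n-1}$ contains the trivial representation with multiplicity one (this multiplicity-one statement is the key input — for $\Ogp(n) \downarrow \Ogp(n-1)$ it follows from the classical branching rule, since the trivial weight of $\Ogp(n-1)$ interlaces with $(m,0,\ldots,0)$ in exactly one way; similarly for $\Ugp(n) \downarrow \Ugp(n-1)$ with highest weight $(m_1,0,\ldots,0,-m_2)$). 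Hence the subspace of $K_{n-1}$-invariants in $\HH_{\chi,m}(F^n)$ is one-dimensional, and normalising by the value at $e_n$ pins down $P_{\chi,m}^\circ$ uniquely --- provided we check that $K_{n-1}$-invariant harmonic polynomials do not vanish at $e_n$, which will follow from the explicit description below.

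For \emph{existence}: I would write down the invariant polynomial explicitly. A polynomial in $F^n$ that is $K_{n-1}$-invariant (for $K_{n-1}$ acting on the first $n-1$ coordinates) and satisfies the central character and homogeneity constraints must be a polynomial in the two basic invariants $r^2 \coloneqq x_1\overline{x_1} + \cdots + x_{n-1}\overline{x_{n-1}}$ and $x_n$ (in the complex case, also $\overline{x_n}$, with the bidegree constraint forcing the bidegrees in $x_n,\overline{x_n}$). Writing the general such polynomial of degree $m$ with the correct central character as $\sum_j c_j\, (r^2)^{j}\, x_n^{a}\,\overline{x_n}^{\,b}\,(\text{with } 2j + a + b \text{ fixed})$, the harmonicity condition $\Delta P = 0$ becomes a three-term recurrence on the coefficients $c_j$; this is exactly the recurrence satisfied by a Gegenbauer (ultraspherical) polynomial in the real case, and by a suitable Jacobi-type polynomial in the complex case. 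One checks the recurrence has a one-parameter family of solutions, confirming the one-dimensionality found above, and that the resulting polynomial is nonzero at $e_n = (0,\ldots,0,1)$ (the leading term $x_n^m$ survives), so it may be normalised to $P_{\chi,m}^\circ(e_n) = 1$. This is essentially the content of \cite[Lemmata 7.1 and 7.7]{Hum20a}, which I would cite for the detailed computation.

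For \eqref{eqn:Pconj}: since $P_{\chi,m}^\circ \in \HH_{\chi,m}(F^n)$ and $\tau_{\chi,m}$ is a representation of the compact group $K_n$, it preserves a $K_n$-invariant Hermitian inner product on $\HH_{\chi,m}(F^n)$; with respect to this inner product the reproducing-kernel property of $P_{\chi,m}^\circ$ (it spans the $K_{n-1}$-fixed line) gives, for any $k \in K_n$, that $P_{\chi,m}^\circ(e_n k)$ is the value at the $K_{n-1}$-fixed vector of the matrix coefficient $k \mapsto \langle \tau_{\chi,m}(k) P_{\chi,m}^\circ, P_{\chi,m}^\circ\rangle$ up to normalisation; unitarity then yields $P_{\chi,m}^\circ(e_n k) = \overline{P_{\chi,m}^\circ(e_n k^{-1})}$ after checking the normalisation $P_{\chi,m}^\circ(e_n) = 1$ makes the two sides agree at $k = 1_n$. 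Alternatively, and more cleanly, one observes that $\overline{P_{\chi,m}^\circ}(e_n k^{-1})$, as a function of the row vector $e_n k^{-1}$, is again a $K_{n-1}$-invariant harmonic polynomial of the same degree with central character $\chi$ normalised to take value $1$ at $e_n$ (using $k \prescript{t}{}{\overline{k}} = 1_n$ for $K_n$, so that $e_n k^{-1} = e_n \prescript{t}{}{\overline{k}} = \overline{e_n k}$ in the relevant sense), so it coincides with $P_{\chi,m}^\circ(e_n k)$ by the uniqueness already proved. The main obstacle is bookkeeping: getting the complex case right, where one must track bidegrees $(m_1,m_2)$ and the conjugate-linear dependence on $\overline{x}$, and making sure the "rewrite $\overline{P^\circ}(e_n k^{-1})$ as an element of the same invariant line" step is justified by an honest change of variables rather than an incantation; I would lean on \cite{Hum20a} for the explicit formulae to sidestep most of this.
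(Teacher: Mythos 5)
Your overall route matches the paper's treatment: the paper does not prove this proposition but defers to \cite[Lemmata 7.1 and 7.7]{Hum20a}, where existence and uniqueness are obtained exactly as you describe (one-dimensionality of the $K_{n-1}$-invariants in $\HH_{\chi,m}(F^n)$ via branching, together with an explicit Gegenbauer-type invariant polynomial whose value at $e_n$ is nonzero), and your first proof of \eqref{eqn:Pconj} is the standard matrix-coefficient argument: evaluation at $e_n$ is a $K_{n-1}$-invariant functional, so $P(e_n) = c\langle P, P_{\chi,m}^{\circ}\rangle$ for all $P \in \HH_{\chi,m}(F^n)$ with $c = \dim\tau_{\chi,m} > 0$ real, whence $P_{\chi,m}^{\circ}(e_n k) = c\langle \tau_{\chi,m}(k)\cdot P_{\chi,m}^{\circ}, P_{\chi,m}^{\circ}\rangle$ and unitarity gives $P_{\chi,m}^{\circ}(e_n k^{-1}) = \overline{P_{\chi,m}^{\circ}(e_n k)}$; this is precisely the identity the paper itself invokes in the proof of \hyperref[lem:Whitnewforminvariance]{Lemma \ref*{lem:Whitnewforminvariance}}.

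However, the ``alternative, cleaner'' argument you offer for \eqref{eqn:Pconj} has a genuine gap and should be dropped or repaired. First, $e_n k^{-1} = e_n \prescript{t}{}{\overline{k}}$ is the conjugate of the $n$-th \emph{column} of $k$, not $\overline{e_n k}$ (the conjugate of the $n$-th row), so the claimed change of variables fails for general $k \in K_n$. Second, $\overline{P_{\chi,m}^{\circ}}$ has central character $\overline{\chi}$, so uniqueness identifies it with $P_{\overline{\chi},m}^{\circ}$, a statement about polynomials on $F^n$; it does not by itself compare the two functions $k \mapsto P_{\chi,m}^{\circ}(e_n k)$ and $k \mapsto \overline{P_{\chi,m}^{\circ}}(e_n k^{-1})$ on $K_n$, since the arguments $e_n k$ and $e_n k^{-1}$ are unrelated points of the sphere. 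Any honest comparison of these two functions of $k$ ends up reintroducing the unitary matrix-coefficient structure (both equal $\dim\tau_{\chi,m}\,\langle \tau_{\chi,m}(k)\cdot P_{\chi,m}^{\circ}, P_{\chi,m}^{\circ}\rangle$), i.e.\ your first argument. With that alternative removed and the normalisation step made explicit as above, the proposal is correct.
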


We define an inner product on $\PP_{\chi,m}(F^n) \ni P,Q$ via
\[
\langle P,Q \rangle \coloneqq \int_{K_n} P(e_n k) \overline{Q}(e_nk) \, dk.
\]

Our first utilisation of $P^{\circ}_{{\chi},m}$ is the following, which is known as the addition theorem for $\HH_{\chi,m}(F^n)$.

\begin{proposition}[{Cf.\ \cite[Theorem 2.9]{AH12}}]
\label{prop:addition}
Let $\{ Q_{\ell}\}$ be an orthonormal basis of $\HH_{\chi,m}(F^n)$. Then for any $x \in \R^n$ and $k \in K_n$, we have that
\[
\sum_{\ell=1}^{\dim \tau_{\chi,m}} Q_{\ell}(x) \overline{Q_{\ell}}(e_nk)=\dim \tau_{\chi,m} P_{{\chi},m}^{\circ}(x k^{-1}).
\]
\end{proposition}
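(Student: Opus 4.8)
The plan is to prove the addition theorem by first establishing that the left-hand side, as a function of $x$ for fixed $k$, is a reproducing kernel for $\HH_{\chi,m}(F^n)$ with respect to the inner product $\langle \cdot,\cdot\rangle$ defined above, and then to identify that kernel with a scalar multiple of $P^{\circ}_{\chi,m}(xk^{-1})$ using the uniqueness statement of \hyperref[prop:Pcircunique]{Proposition \ref*{prop:Pcircunique}}. First I would fix $k \in K_n$ and set $F_k(x) \coloneqq \sum_{\ell} Q_{\ell}(x)\overline{Q_{\ell}}(e_n k)$. Since each $Q_{\ell} \in \HH_{\chi,m}(F^n)$, the function $F_k$ lies in $\HH_{\chi,m}(F^n)$ as well (it is a finite linear combination of the $Q_{\ell}$), and for any $P \in \HH_{\chi,m}(F^n)$ one computes, expanding $P = \sum_{\ell} \langle P, Q_{\ell}\rangle Q_{\ell}$ in the orthonormal basis,
\[
\langle P, F_k \rangle = \sum_{\ell} \langle P, Q_{\ell}\rangle \overline{\overline{Q_{\ell}}(e_n k)} = \sum_{\ell} \langle P, Q_{\ell}\rangle Q_{\ell}(e_n k) = P(e_n k).
\]
Thus $F_k$ is characterised by the reproducing property $\langle P, F_k\rangle = P(e_n k)$ for all $P \in \HH_{\chi,m}(F^n)$, and in particular $F_k$ is independent of the choice of orthonormal basis.

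Next I would pin down $F_k$ explicitly. Because $K_{n-1}$ stabilises $e_n$, the element $g \cdot F_k \coloneqq \tau_{\chi,m}(g)\cdot F_k$ for $g \in K_{n-1}$ satisfies $\langle P, g\cdot F_k\rangle = \langle \tau_{\chi,m}(g^{-1})\cdot P, F_k\rangle = P(e_n g^{-1} k) = P(e_n k)$ for all $P$, so $g \cdot F_k = F_k$; that is, $F_k$ is $K_{n-1}$-invariant as a vector in $\HH_{\chi,m}(F^n)$ — wait, more precisely $F_1$ (taking $k = 1_n$) is $K_{n-1}$-invariant. By \hyperref[prop:Pcircunique]{Proposition \ref*{prop:Pcircunique}}, the space of $K_{n-1}$-invariant vectors in $\HH_{\chi,m}(F^n)$ is one-dimensional, spanned by $P^{\circ}_{\chi,m}$, so $F_1 = c\, P^{\circ}_{\chi,m}$ for a constant $c$. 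To find $c$, use the reproducing property with $P = P^{\circ}_{\chi,m}$: $\langle P^{\circ}_{\chi,m}, F_1\rangle = P^{\circ}_{\chi,m}(e_n) = 1$, and also $\langle P^{\circ}_{\chi,m}, F_1\rangle = \bar{c}\,\langle P^{\circ}_{\chi,m}, P^{\circ}_{\chi,m}\rangle = \bar{c}\,\|P^{\circ}_{\chi,m}\|^2$. Separately, taking $P = F_1$ in the reproducing property gives $\|F_1\|^2 = F_1(e_n) = c$, while $\|F_1\|^2 = |c|^2\|P^{\circ}_{\chi,m}\|^2$; combining these yields $c = 1/\|P^{\circ}_{\chi,m}\|^2$, which is real and positive. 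It remains to evaluate $\|P^{\circ}_{\chi,m}\|^2$. Here one uses the standard fact that for a reproducing kernel on an irreducible $K_n$-representation, evaluating the trace gives $F_1(e_n) = \sum_\ell |Q_\ell(e_n)|^2$; on the other hand, averaging $|Q_\ell(e_n k)|^2$ over $k \in K_n$ and summing over $\ell$, the Schur orthogonality relations give $\int_{K_n} F_1(e_n k)\,\overline{\text{(trivial)}}\,dk$ in terms of $\dim\tau_{\chi,m}$; more directly, since $c = F_1(e_n)$ equals the value of the reproducing kernel on the diagonal, and since integrating the reproducing property against the constant function shows $\int_{K_n}|P^\circ_{\chi,m}(e_n k)|^2 dk = \|P^\circ_{\chi,m}\|^2 = 1/c$ while summing $\langle Q_\ell, Q_\ell\rangle = 1$ over the $\dim\tau_{\chi,m}$ basis vectors and tracing gives $c = \dim\tau_{\chi,m}$. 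Hence $F_1 = \dim\tau_{\chi,m}\, P^{\circ}_{\chi,m}$.

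Finally I would propagate from $k = 1_n$ to general $k$. For arbitrary $k \in K_n$, the reproducing property gives $\langle P, F_k\rangle = P(e_n k) = (\tau_{\chi,m}(k)\cdot P)(e_n) = \langle \tau_{\chi,m}(k)\cdot P, F_1\rangle = \langle P, \tau_{\chi,m}(k^{-1})\cdot F_1\rangle$ for all $P$ (using that $\tau_{\chi,m}(k)$ is unitary for $\langle\cdot,\cdot\rangle$, which follows from the invariance of Haar measure on $K_n$). Therefore $F_k = \tau_{\chi,m}(k^{-1})\cdot F_1 = \dim\tau_{\chi,m}\,\tau_{\chi,m}(k^{-1})\cdot P^{\circ}_{\chi,m}$, and evaluating at $x$ gives $F_k(x) = \dim\tau_{\chi,m}\, P^{\circ}_{\chi,m}(xk^{-1})$, which is exactly the claimed identity. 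The main obstacle is the bookkeeping in the constant $c$: one must be careful to use the right normalisation of $dk$ (probability measure) and the correct form of Schur orthogonality to get $c = \dim\tau_{\chi,m}$ rather than $\dim\tau_{\chi,m}/\vol(K_n)$ or some such, and one should double-check the conjugation conventions (which $Q_\ell$ is conjugated, whether $P^\circ_{\chi,m}$ or its conjugate appears) against \eqref{eqn:Pconj}. Everything else is the standard reproducing-kernel argument, so no serious analytic difficulty arises.
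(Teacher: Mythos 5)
Your argument is correct in substance, and it is worth noting that the paper gives no proof of this proposition at all: it is quoted from the classical addition theorem for spherical harmonics (\cite[Theorem 2.9]{AH12}), so your reproducing-kernel argument is in effect a self-contained reconstruction of the standard proof behind that citation, and it is consistent with how the paper later uses $(\dim \tau_{\chi,m})P^{\circ}_{\chi,m}$ as the reproducing kernel of $\HH_{\chi,m}(F^n)$. The skeleton is sound: $F_k(x)=\sum_{\ell}Q_{\ell}(x)\overline{Q_{\ell}}(e_nk)$ reproduces evaluation at $e_nk$, is independent of the orthonormal basis, satisfies $F_k=\tau_{\chi,m}(k^{-1})\cdot F_1$ by unitarity of the action for the inner product $\langle P,Q\rangle=\int_{K_n}P(e_nk')\overline{Q}(e_nk')\,dk'$, and $F_1$ is $K_{n-1}$-invariant, hence proportional to $P^{\circ}_{\chi,m}$ by Proposition \ref{prop:Pcircunique} (the uniqueness of the normalised invariant does force the invariant subspace to be one-dimensional, as you use). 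Two spots need tightening. First, your invariance computation initially yields $P(e_ng^{-1}k)$, whereas the action gives $P(e_nkg^{-1})$; your retreat to $k=1_n$ repairs this, and that case is all you need. Second, the evaluation of the constant $c$ is garbled as written: the clean route is to note that the diagonal $F_k(e_nk)=\sum_{\ell}|Q_{\ell}(e_nk)|^2$ is independent of $k$ (either because the kernel is basis-independent and $\{\tau_{\chi,m}(g)\cdot Q_{\ell}\}$ is again an orthonormal basis, or directly from $F_k=\tau_{\chi,m}(k^{-1})\cdot F_1$, which you establish without knowing $c$), so integrating over $K_n$ with the probability Haar measure gives $c=\sum_{\ell}\|Q_{\ell}\|^2=\dim\tau_{\chi,m}$, making the detour through $\|P^{\circ}_{\chi,m}\|^2=1/c$ unnecessary. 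With that repair the identity holds as an equality in $\HH_{\chi,m}(F^n)$, and since the inner product is positive definite on homogeneous polynomials (a homogeneous polynomial vanishing on the orbit $e_nK_n$ vanishes identically), it holds for every $x$, as the statement requires.
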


We make crucial use of the fact that for all $P \in \HH_{\chi,m}(F^n)$ and $k \in K_n$, $P(e_nk)$ is equal to a matrix coefficient of $\tau_{\chi,m}$. This can be thought of as an explicit form of Schur orthogonality. 

\begin{proposition}[{Cf.\ \cite[Lemmata 7.5 and 7.12]{Hum20a}}]
The reproducing kernel for $\HH_{\chi,m}(F^n)$ is the homogeneous harmonic polynomial $(\dim \tau_{\chi,m}) P^{\circ}_{\chi,m}(x)$, while the reproducing kernel for $\PP_{\chi,\ell}(F^n)$ is the homogeneous polynomial
\begin{equation}
\label{eqn:Preproducing}
\sum_{\substack{m = c(\chi) \\ m \equiv c(\chi) \hspace{-.2cm} \pmod{2}}}^{\ell} (x \prescript{t}{}{\overline{x}})^{\frac{\ell - m}{2}} ( \dim \tau_{\chi,m}) P^{\circ}_{\chi,m}(x),
\end{equation}
so that for all $k \in K_n$,
\begin{align}
\label{eqn:Hmatrixcoeff}
P(e_n k) & = \int_{K_n} P(e_n k' k) (\dim \tau_{\chi,m}) \overline{P_{\chi,m}^{\circ}}(e_n k') \, dk' & & \text{for all $P \in \HH_{\chi,m}(F^n)$,}	\\
\label{eqn:Pmatrixcoeff}
P(e_n k) & = \int_{K_n} P(e_n k' k) \sum_{\substack{m = c(\chi) \\ m \equiv c(\chi) \hspace{-.2cm} \pmod{2}}}^{\ell}  ( \dim \tau_{\chi,m}) \overline{P_{\chi,m}^{\circ}}(e_n k') \, dk' & & \text{for all $P \in \PP_{\chi,\ell}(F^n)$.}
\end{align}
\end{proposition}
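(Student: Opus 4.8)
The plan is to establish the two displayed identities \eqref{eqn:Hmatrixcoeff} and \eqref{eqn:Pmatrixcoeff} first; the descriptions of the reproducing kernels then drop out by unwinding the definition of $\langle\cdot,\cdot\rangle$ together with \eqref{eqn:Pconj}. Throughout I would use the fact recorded just above the statement, that $P(e_n k)$ is a matrix coefficient of $\tau_{\chi,m}$ whenever $P \in \HH_{\chi,m}(F^n)$, together with the addition theorem of Proposition~\ref{prop:addition}.

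For \eqref{eqn:Hmatrixcoeff}, I would fix an orthonormal basis $\{Q_\ell\}_{\ell = 1}^{\dim\tau_{\chi,m}}$ of $\HH_{\chi,m}(F^n)$ with respect to $\langle\cdot,\cdot\rangle$; this is a genuine (positive-definite) inner product, since a homogeneous polynomial vanishing on $e_n K_n$ vanishes identically. Expanding $P = \sum_\ell \langle P, Q_\ell\rangle Q_\ell$, inserting $\langle P, Q_\ell\rangle = \int_{K_n} P(e_n k)\overline{Q_\ell}(e_n k)\,dk$, interchanging the finite sum with the integral, and applying the addition theorem gives
\[
P(x) = \int_{K_n} P(e_n k)\sum_{\ell} Q_\ell(x)\overline{Q_\ell}(e_n k)\,dk = \int_{K_n} P(e_n k)\,(\dim\tau_{\chi,m})\,P^{\circ}_{\chi,m}(x k^{-1})\,dk.
\]
Specialising $x = e_n \kappa$, substituting $k \mapsto (k')^{-1}\kappa$ and then $k' \mapsto (k')^{-1}$ using the unimodularity of $K_n$, and rewriting $P^{\circ}_{\chi,m}$ via \eqref{eqn:Pconj}, one obtains \eqref{eqn:Hmatrixcoeff}; evaluating the kernel at $x = e_n$ then identifies $(\dim\tau_{\chi,m})P^{\circ}_{\chi,m}$ as the reproducing kernel of $\HH_{\chi,m}(F^n)$. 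That $(\dim\tau_{\chi,m})P^{\circ}_{\chi,m}(x k^{-1})$ depends only on $e_n k$, and so really is a reproducing kernel, is precisely the $K_{n-1}$-invariance from Proposition~\ref{prop:Pcircunique}.

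For \eqref{eqn:Pmatrixcoeff}, I would invoke the Fischer (separation-of-variables) decomposition
\[
\PP_{\chi,\ell}(F^n) = \bigoplus_{\substack{m = c(\chi)\\ m \equiv c(\chi)\hspace{-.2cm}\pmod{2}}}^{\ell} (x\,\prescript{t}{}{\overline{x}})^{\frac{\ell - m}{2}}\,\HH_{\chi,m}(F^n),
\]
which is legitimate because \eqref{eqn:homogeneity} forces $\ell \equiv c(\chi)\pmod{2}$, and write $P = \sum_m (x\,\prescript{t}{}{\overline{x}})^{\frac{\ell - m}{2}} h_m$ with $h_m \in \HH_{\chi,m}(F^n)$. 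The crucial simplification is that $e_n k\,\prescript{t}{}{\overline{e_n k}} = 1$ for every $k \in K_n$, so that $P(e_n k) = \sum_m h_m(e_n k)$ and $P(e_n k' k) = \sum_{m'} h_{m'}(e_n k' k)$. Integrating $P(e_n k' k)$ against $\sum_m (\dim\tau_{\chi,m})\overline{P^{\circ}_{\chi,m}}(e_n k')$ and interchanging the finite sums with the integral, the summand $m' = m$ contributes $h_m(e_n k)$ by \eqref{eqn:Hmatrixcoeff}, while each summand with $m' \neq m$ is an $L^2(K_n)$-pairing of matrix coefficients of the inequivalent $K_n$-representations $\tau_{\chi,m'}$ and $\tau_{\chi,m}$, hence vanishes by Schur orthogonality; summing over $m$ recovers $P(e_n k)$, which is \eqref{eqn:Pmatrixcoeff}. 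Since the factors $(x\,\prescript{t}{}{\overline{x}})^{\frac{\ell - m}{2}}$ are trivial on $e_n K_n$, one may reinsert them into the degree-$m$ summand of the kernel so that it lies in $\PP_{\chi,\ell}(F^n)$, which produces the reproducing kernel \eqref{eqn:Preproducing}.

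I expect the main obstacle to lie in the polynomial case, both in pinning down the Fischer decomposition in this $K_n$-equivariant, central-character-graded form (and, over $\C$, in verifying that it respects the bidegree $(m_1,m_2)$ encoded by $\chi$) and in the accounting of which steps may use restriction to $e_n K_n$: one must split off the harmonic projections $h_m$ \emph{before} invoking $(x\,\prescript{t}{}{\overline{x}})^{\frac{\ell - m}{2}}\big|_{x = e_n k} = 1$, since away from $e_n K_n$ these factors genuinely contribute to the kernel. The harmonic case is, by comparison, a repackaging of Proposition~\ref{prop:addition} and \eqref{eqn:Pconj}.
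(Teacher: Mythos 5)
Your proposal is correct and follows essentially the same route as the paper: the polynomial case is reduced to the harmonic case via the decomposition $P(x) = \sum_m \left(x \prescript{t}{}{\overline{x}}\right)^{\frac{\ell-m}{2}} P_m(x)$ with $P_m$ harmonic (the paper cites \cite[Theorem 12.1.3]{Rud08} for this), the radial factors being trivial on $e_n K_n$ and the cross terms vanishing by Schur orthogonality exactly as you argue. The only difference is that for the harmonic case the paper simply cites \cite[Lemmata 7.5 and 7.12]{Hum20a}, whereas you derive it correctly from the addition theorem (\hyperref[prop:addition]{Proposition \ref*{prop:addition}}) together with \eqref{eqn:Pconj}, which is a legitimate self-contained substitute for that citation.
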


\begin{proof}
The first assertion is justified in \cite[Lemmata 7.5 and 7.12]{Hum20a}. The second assertion follows from the first upon recalling that every homogeneous polynomial $P \in \PP_{\chi,\ell}(F^n)$ of degree $\ell$ and central character $\chi$ admits a decomposition of the form
\[P(x) = \sum_{\substack{m = c(\chi) \\ m \equiv c(\chi) \hspace{-.2cm} \pmod{2}}}^{\ell} \left(x \prescript{t}{}{\overline{x}}\right)^{\frac{\ell - m}{2}} P_m(x)\]
for some harmonic homogeneous polynomials $P_m \in \HH_{\chi,m}(F^n)$ \cite[Theorem 12.1.3]{Rud08}.
\end{proof}

\subsection{Archimedean Newform Theory}  

Let $(\pi,V_{\pi})$ be an induced representation of Whittaker type of $\GL_n(F)$. Since $\pi$ is admissible, $\Hom_{K_n}(\tau,\pi|_{K_n})$ is finite-dimensional for any irreducible representation $\tau$ of $K_n$. We say that $\tau$ is a \emph{$K_n$-type of $\pi$} if $\Hom_{K_n}(\tau,\pi|_{K_n})$ is nontrivial, and we call $\dim \Hom_{K_n}(\tau,\pi|_{K_n})$ the \emph{multiplicity} of $\tau$ in $\pi$. The fundamental result proven in \cite{Hum20a} is the existence of a distinguished $K_n$-type of $\pi$ that occurs with multiplicity one.

\begin{theorem}[{\cite[Theorem 4.7]{Hum20a}}]
Let $(\pi,V_{\pi})$ be an induced representation of Whittaker type of $\GL_n(F)$. Among the $K_n$-types $\tau_{\chi,m}$ of $\pi$ whose restriction to $K_{n - 1}$ contains the trivial representation, there exists a unique such $K_n$-type of minimal Howe degree $m$. Furthermore, this $K_n$-type $\tau_{\chi,m}$ occurs with multiplicity one, and the subspace of $V_{\pi}$ of $\tau_{\chi,m}$-isotypic $K_{n - 1}$-invariant vectors is one-dimensional.
\end{theorem}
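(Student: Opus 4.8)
The plan is to reduce the statement to a question about $K_n$-types occurring in a principal-series-type induction, and then to resolve that question by induction on $n$ using the branching rules for $\Ogp(n) \downarrow \Ogp(n-1)$ and $\Ugp(n) \downarrow \Ugp(n-1)$. First I would fix an isobaric decomposition $\pi = \bigboxplus_{j=1}^r \pi_j$ with each $\pi_j$ essentially square-integrable, and observe that, as a $K_n$-representation, $\pi|_{K_n}$ depends only on the "shapes" of the $\pi_j$ (their $\kappa_j$'s), not on the $t_j$'s, because parabolic induction restricted to the maximal compact is insensitive to the inducing characters' absolute values. Concretely, by the Iwasawa decomposition $\GL_n(F) = \Pgp(F) K_n$, restriction to $K_n$ identifies $\pi|_{K_n}$ with $\Ind_{\Pgp(F) \cap K_n}^{K_n}\big(\bigboxtimes_j \pi_j|_{\cdots}\big)$, so the $\tau_{\chi,m}$-multiplicity is computed purely inside compact-group representation theory.

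Next I would set up the bookkeeping for which $\tau_{\chi,m}$'s can appear and with what multiplicity. The key structural fact, already recorded in the excerpt, is that the $K_n$-types $\tau_{\chi,m}$ whose restriction to $K_{n-1}$ contains the trivial representation are exactly the "class-one relative to $K_{n-1}$" types, parametrised by a character $\chi$ of $K_1$ and an integer $m \ge c(\chi)$ with $m \equiv c(\chi) \pmod 2$, and for these, the Howe degree equals $m$. By Frobenius reciprocity,
\[
\dim \Hom_{K_n}(\tau_{\chi,m}, \pi|_{K_n}) = \dim \Hom_{K_{n-1}}\big(\mathbf{1}, \tau_{\chi,m}|_{K_{n-1}} \otimes (\pi|_{K_n})\big)^{?},
\]
but more usefully I would instead iterate the branching $K_n \downarrow K_{n-1} \downarrow \cdots \downarrow K_1$ to express the multiplicity of $\tau_{\chi,m}$ in $\pi$ as a sum over chains of intermediate highest weights satisfying the classical interlacing inequalities, weighted by the multiplicities of the corresponding $K_{n_j}$-types in each $\pi_j|_{K_{n_j}}$. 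Since each $\pi_j$ is essentially square-integrable — hence $\pi_j|_{K_{n_j}}$ is explicitly known (a character of $K_1$, or for $D_\kappa \otimes |\det|^t$ on $\GL_2(\R)$ the standard list of $\Ogp(2)$-types $\{\kappa, \kappa+2, \kappa+4, \dots\}$) — this becomes a finite combinatorial optimisation: minimise $m = \deg \tau_{\chi,m}$ subject to the branching constraints. I expect the minimiser to be forced, i.e. there is a unique $(\chi, m)$ achieving the minimum, and at that minimum every interlacing chain is rigid, giving multiplicity exactly one; I would verify this by tracking, at each stage of the induction, that the "top" branching inequality $\mu_1 \ge \mu'_1 \ge \mu_2 \ge \cdots$ leaves no slack once $m$ is minimal.

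The main obstacle, and the step I would spend the most care on, is the multiplicity-one and rigidity claim — both that the minimal Howe degree is achieved by a \emph{unique} $\tau_{\chi,m}$ and that it occurs with multiplicity one, together with the final clause that the $\tau_{\chi,m}$-isotypic, $K_{n-1}$-invariant subspace of $V_\pi$ is one-dimensional. The last clause is essentially $\dim \Hom_{K_{n-1}}(\mathbf{1}, \tau_{\chi,m}|_{K_{n-1}}) = 1$ for these specific types (Proposition \ref{prop:Pcircunique} is the $n \leftrightarrow n-1$ instance of exactly this, realised via the unique polynomial $P^\circ_{\chi,m}$), combined with the multiplicity-one of $\tau_{\chi,m}$ in $\pi$, so the real content is the latter. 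For $F = \C$ the branching is multiplicity-free ($\Ugp(n) \downarrow \Ugp(n-1)$ by strict interlacing of $n$- and $(n-1)$-tuples), which makes the argument cleaner; for $F = \R$ one must handle the $\Ogp(n) \downarrow \Ogp(n-1)$ branching, which is multiplicity-free on $\SO$ but requires attention to the $\pm$-signs and to the $D_\kappa$ blocks, and one must also confirm that imposing $K_{n-1}$-invariance (rather than merely $\SO(n-1)$-invariance) picks out the right parity class $m \equiv c(\chi) \pmod 2$. Once the combinatorics is pinned down, I would present the minimal type explicitly — $c(\chi)$ and the minimal $m$ read off from the multiset of $\kappa_j$'s — since the later period-integral computations will need this explicit form anyway.
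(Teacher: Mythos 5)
First, a point of comparison: this paper does not prove the statement at all --- it is quoted verbatim from \cite[Theorem 4.7]{Hum20a}, so there is no in-paper argument to measure your proposal against; the relevant proof lives in the cited newform-theory paper. Your opening reduction is nonetheless the right one and matches the starting point of that source: since $\GL_n(F) = \Pgp(F) K_n$ and $\Pgp(F) \cap K_n = K_{n_1} \times \cdots \times K_{n_r}$, one has $\pi|_{K_n} \cong \Ind_{K_{n_1} \times \cdots \times K_{n_r}}^{K_n} \bigl( \bigboxtimes_{j} \pi_j|_{K_{n_j}} \bigr)$, independent of the $t_j$, and the final clause does reduce, exactly as you say, to $\dim \Hom_{K_{n-1}}(\mathbf{1}, \tau_{\chi,m}|_{K_{n-1}}) = 1$, which is the content of \hyperref[prop:Pcircunique]{Proposition \ref*{prop:Pcircunique}}.

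There are, however, two genuine gaps. First, your computational mechanism is off: by Frobenius reciprocity the multiplicity you need is $\dim \Hom_{K_{n_1} \times \cdots \times K_{n_r}}\bigl(\tau_{\chi,m}|_{K_{n_1} \times \cdots \times K_{n_r}}, \bigboxtimes_j \pi_j|_{K_{n_j}}\bigr)$, i.e.\ a \emph{block-diagonal} restriction of $\tau_{\chi,m}$, and iterating the chain $K_n \downarrow K_{n-1} \downarrow \cdots \downarrow K_1$ does not compute this: the chain subgroups sit in the upper-left corner and are not cofinal with the block product, so Gelfand--Tsetlin interlacing data does not directly couple to the $K_{n_j}$-types of the $\pi_j$ (in particular not to the $\Ogp(2)$-types of the $D_{\kappa_j}$ blocks). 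The natural tool --- and the one the cited source uses --- is the harmonic-polynomial model itself: $\PP_m(F^{n}) \cong \bigoplus_{m_1 + \cdots + m_r = m} \bigotimes_j \PP_{m_j}(F^{n_j})$ together with $\PP_{\ell} = \bigoplus_k (x\,{}^t\overline{x})^{k}\,\HH_{\ell - 2k}$ in each block, which makes the restriction of $\tau_{\chi,m}$ to the block subgroup explicit. Second, and more seriously, the heart of the theorem --- that the minimal Howe degree is attained by a \emph{unique} $(\chi,m)$ and that this type occurs with multiplicity exactly one --- is precisely the step you defer (``I expect the minimiser to be forced \dots I would verify this''). Without carrying out that combinatorial rigidity argument (which must also handle the $\C$-case bidegrees $(m_1,m_2)$ and, for $F=\R$, the determinant-twist/parity issues in $\Ogp(n)$ that you flag but do not resolve), the proposal records the shape of a proof rather than a proof of the statement.
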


\begin{remark}
By considering the restrictions of $\pi$ and $\tau_{\chi,m}$ to the centre of $K_n$, we observe that $\Hom_{K_n}(\tau_{\chi,m}, \pi|_{K_n})$ is trivial if the central character $\chi$ of $\tau_{\chi,m}$ is not equal to
\[\chi_{\pi} \coloneqq \omega_{\pi}|_{K_1},\]
the restriction of the central character $\omega_{\pi}$ of $\pi$ to $K_1$.
\end{remark}

\begin{definition}[{\cite[Definition 4.8]{Hum20a}}]
Let $(\pi,V_{\pi})$ be an induced representation of Whittaker type of $\GL_n(F)$. We define the \emph{newform $K_n$-type} $\tau_{\chi_{\pi},c(\pi)}$ to be the $K_n$-type of minimal Howe degree $m = c(\pi)$ whose restriction to $K_{n - 1}$ contains the trivial representation. We define the \emph{conductor exponent} $c(\pi)$ of $\pi$ to be the Howe degree of the newform $K_n$-type. The nonzero $\tau_{\chi_{\pi},c(\pi)}$-isotypic $K_{n - 1}$-invariant vector $v^{\circ} \in V_{\pi}$, unique up to scalar multiplication, is called the \emph{newform} of $\pi$.
\end{definition}

\begin{remark}
As proven by the first author in \cite{Hum20b}, this definition of the conductor exponent and the newform, as well as the existence of the newform $K_n$-type, is consistent with the \emph{nonarchimedean} definition of the conductor exponent and the newform first introduced by Jacquet, Piatetski-Shapiro, and Shalika \cite{JP-SS81}.
\end{remark}

When $V_{\pi}$ is the induced model of $\pi$, we may normalise the newform $v^{\circ} \in V_{\pi}$ as in \cite[Corollary 8.17 and Definition 9.2]{Hum20a}, which gives us a \emph{canonically normalised newform}. When $\pi$ is an induced representation of Langlands type, the Whittaker model $\WW(\pi,\psi)$ is a model of $\pi$, which is given by the analytic continuation of the Jacquet integral of the induced model. The image of the canonically normalised newform under this map is called the \emph{Whittaker newform} and denoted by $W_{\pi}^{\circ}$ \cite[\S 9.1]{Hum20a}. When $\pi$ is unramified, the Whittaker newform $W_{\pi}^{\circ}$ is simply the spherical Whittaker function.

The following lemma may be thought as the archimedean analogue of \cite[Theorem 4.16]{Hum20b}.

\begin{lemma}[{Cf.\ \cite[Theorem 4.17]{Hum20a}}]
\label{lem:Whitnewforminvariance}
For all $g \in \GL_n(F)$ and $k' \in K_{n - 1}$, the Whittaker newform $W_{\pi}^{\circ} \in \WW(\pi,\psi)$ of an induced representation of Langlands type $(\pi,V_{\pi})$ of $\GL_n(F)$ satisfies
\begin{align}
\label{eqn:Whitnewformintegral}
\dim \tau_{\chi_{\pi},c(\pi)} \int_{K_n} W_{\pi}^{\circ}(gk) P_{\chi_{\pi},c(\pi)}^{\circ}(e_n k^{-1}) \, dk & = W_{\pi}^{\circ}(g),	\\
\label{eqn:WhitnewformKn-1}
W^{\circ}_{\pi} \left(g \begin{pmatrix} k' & 0 \\ 0 & 1 \end{pmatrix} \right) & = W^{\circ}_{\pi}(g).
\end{align}
\end{lemma}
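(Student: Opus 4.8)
The plan is to deduce both identities from the two defining features of the canonically normalised newform $v^{\circ} \in V_{\pi}$ furnished by \cite[Theorem 4.7 and Definition 4.8]{Hum20a}: it is fixed by the image of $K_{n-1}$ under $k' \mapsto \begin{pmatrix} k' & 0 \\ 0 & 1 \end{pmatrix}$, and it spans the $\tau_{\chi_{\pi},c(\pi)}$-isotypic subspace of $V_{\pi}$, which occurs with multiplicity one. Write $\tau \coloneqq \tau_{\chi_{\pi},c(\pi)}$ and $P^{\circ} \coloneqq P^{\circ}_{\chi_{\pi},c(\pi)}$. Since $\pi$ is of Langlands type, the analytic continuation of the Jacquet integral intertwines $V_{\pi}$ with $\WW(\pi,\psi)$ under right translation, so $W^{\circ}_{\pi} = W_{v^{\circ}}$ with $W_{v^{\circ}}(g) = \Lambda(\pi(g) \cdot v^{\circ})$. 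Identity \eqref{eqn:WhitnewformKn-1} is then immediate: right translation by $\begin{pmatrix} k' & 0 \\ 0 & 1 \end{pmatrix}$ fixes $v^{\circ}$, hence fixes $W^{\circ}_{\pi}$.

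For \eqref{eqn:Whitnewformintegral} I would first exhibit $k \mapsto W^{\circ}_{\pi}(gk)$ as a matrix coefficient of $\tau$. Fix a $K_n$-equivariant isomorphism $\iota$ from the model $\HH_{\chi_{\pi},c(\pi)}(F^n)$ of $\tau$ onto the (multiplicity-one) $\tau$-isotypic subspace of $V_{\pi}$, normalised so that $\iota(P^{\circ}) = v^{\circ}$; this is possible because each side carries a one-dimensional space of $K_{n-1}$-fixed vectors, spanned by $P^{\circ}$ (\hyperref[prop:Pcircunique]{Proposition \ref*{prop:Pcircunique}}) and $v^{\circ}$ respectively. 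For fixed $g$, the linear functional $\ell_g(Q) \coloneqq \Lambda(\pi(g) \cdot \iota(Q))$ on the finite-dimensional space $\HH_{\chi_{\pi},c(\pi)}(F^n)$ satisfies $\ell_g(P^{\circ}) = W^{\circ}_{\pi}(g)$, and since $\pi(k)$ preserves the $\tau$-isotypic subspace and $\iota$ is $K_n$-equivariant, $W^{\circ}_{\pi}(gk) = \Lambda(\pi(g) \cdot \pi(k) \cdot v^{\circ}) = \ell_g(\tau(k) \cdot P^{\circ})$ for all $k \in K_n$.

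Next I would feed in the reproducing-kernel structure of $P^{\circ}$. Equip $\HH_{\chi_{\pi},c(\pi)}(F^n)$ with the $K_n$-invariant inner product $\langle P, Q \rangle = \int_{K_n} P(e_n k)\overline{Q}(e_n k)\,dk$; setting $k = 1_n$ in \eqref{eqn:Hmatrixcoeff} shows that evaluation at $e_n$ equals $(\dim \tau)\langle\,\cdot\,, P^{\circ}\rangle$, so $\langle P^{\circ}, P^{\circ}\rangle = (\dim \tau)^{-1}$ (as $P^{\circ}(e_n) = 1$) and $P^{\circ}(e_n k) = (\dim \tau)\langle \tau(k)\cdot P^{\circ}, P^{\circ}\rangle$, whence \eqref{eqn:Pconj} gives $P^{\circ}(e_n k^{-1}) = \overline{P^{\circ}}(e_n k) = (\dim \tau)\,\overline{\langle \tau(k)\cdot P^{\circ}, P^{\circ}\rangle}$. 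Writing $\ell_g = \langle\,\cdot\,, u_g\rangle$ for the unique $u_g \in \HH_{\chi_{\pi},c(\pi)}(F^n)$ representing it and substituting both expressions into the left-hand side of \eqref{eqn:Whitnewformintegral}, the integral becomes $(\dim \tau)^2 \int_{K_n} \langle \tau(k)\cdot P^{\circ}, u_g\rangle\,\overline{\langle \tau(k)\cdot P^{\circ}, P^{\circ}\rangle}\,dk$, and Schur orthogonality for the irreducible unitary representation $\tau$ of the compact group $K_n$ evaluates this as $(\dim \tau)\langle P^{\circ}, P^{\circ}\rangle\,\overline{\langle u_g, P^{\circ}\rangle} = \overline{\langle u_g, P^{\circ}\rangle} = \langle P^{\circ}, u_g\rangle = \ell_g(P^{\circ}) = W^{\circ}_{\pi}(g)$, which is \eqref{eqn:Whitnewformintegral}. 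Equivalently one may expand $\tau(k)\cdot P^{\circ}$ in an orthonormal basis of $\HH_{\chi_{\pi},c(\pi)}(F^n)$ and invoke the addition theorem, \hyperref[prop:addition]{Proposition \ref*{prop:addition}}, at $x = e_n$; this is the same computation rearranged. All integrals run over the compact group $K_n$ with continuous integrands, so convergence is automatic.

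The representation-theoretic content here is minor --- that $W^{\circ}_{\pi}(g\,\cdot\,)|_{K_n}$ lies in the matrix-coefficient span of the newform $K_n$-type $\tau$, and that $P^{\circ}$ is the reproducing kernel singled out by its $K_{n-1}$-invariance. The step requiring genuine care is the bookkeeping of scalars and complex conjugates along the chain newform $\leftrightarrow$ $K_{n-1}$-fixed vector $\leftrightarrow$ $P^{\circ}$ $\leftrightarrow$ reproducing kernel, where the symmetry $P^{\circ}(e_n k) = \overline{P^{\circ}}(e_n k^{-1})$ of \eqref{eqn:Pconj} and the normalisation $\langle P^{\circ}, P^{\circ}\rangle = (\dim \tau)^{-1}$ must be inserted at precisely the right places.
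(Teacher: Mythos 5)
Your proof is correct, but it is organised differently from the paper's. The paper derives \eqref{eqn:Whitnewformintegral} directly from \eqref{eqn:WhitnewformKn-1}: it starts from the standard isotypic projection $\dim \tau_{\chi_{\pi},c(\pi)} \int_{K_n} W_{\pi}^{\circ}(gk) \Tr \tau_{\chi_{\pi},c(\pi)}(k^{-1}) \, dk = W_{\pi}^{\circ}(g)$, averages over $K_{n-1}$ using the right $K_{n-1}$-invariance of $W_{\pi}^{\circ}$, and then identifies the $K_{n-1}$-averaged character with the normalised matrix coefficient $P_{\chi_{\pi},c(\pi)}^{\circ}(e_n k^{-1})$ via \hyperref[prop:Pcircunique]{Proposition \ref*{prop:Pcircunique}} and \eqref{eqn:Hmatrixcoeff}; it never needs multiplicity one of the newform $K_n$-type, only that $W_{\pi}^{\circ}$ is $\tau_{\chi_{\pi},c(\pi)}$-isotypic and right $K_{n-1}$-invariant. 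You instead use the multiplicity-one statement of \cite[Theorem 4.7]{Hum20a} to fix the equivariant identification $\iota$ of $\HH_{\chi_{\pi},c(\pi)}(F^n)$ with the isotypic subspace, realise both factors of the integrand as matrix coefficients of $\tau_{\chi_{\pi},c(\pi)}$ (using \eqref{eqn:Hmatrixcoeff}, \eqref{eqn:Pconj}, and the normalisation $\langle P^{\circ}, P^{\circ}\rangle = (\dim\tau)^{-1}$, all of which you get right), and finish with Schur orthogonality; your closing remark that this is the addition-theorem computation rearranged is accurate, and in fact your version is closer in flavour to the $K_n$-manipulations in the proof of \hyperref[prop:RSkey]{Proposition \ref*{prop:RSkey}}. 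The trade-off: the paper's route is shorter and uses weaker input (no multiplicity one, no Riesz representative $u_g$), while yours makes the matrix-coefficient structure of $k \mapsto W_{\pi}^{\circ}(gk)$ explicit. One wording slip, not a gap: the newform $v^{\circ}$ does not span the $\tau_{\chi_{\pi},c(\pi)}$-isotypic subspace (which has dimension $\dim \tau_{\chi_{\pi},c(\pi)}$); it spans the line of $K_{n-1}$-fixed vectors inside it, which is what your construction of $\iota$ actually uses.
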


\begin{proof}
The identity \eqref{eqn:WhitnewformKn-1} is simply the fact that $W_{\pi}^{\circ}$ is right $K_{n - 1}$-invariant. Since $W_{\pi}^{\circ}$ is additionally $\tau_{\chi_{\pi},c(\pi)}$-isotypic, it satisfies
\[\dim \tau_{\chi_{\pi},c(\pi)} \int_{K_n} W_{\pi}^{\circ}(gk) \Tr \tau_{\chi_{\pi},c(\pi)}(k^{-1}) \, dk = W_{\pi}^{\circ}(g)\]
for all $g \in \GL_n(F)$. Upon replacing $g$ with $g \begin{psmallmatrix} k' & 0 \\ 0 & 1 \end{psmallmatrix}$, integrating over $K_{n - 1} \ni k'$, and making the change of variables $k' \mapsto k'^{-1}$ and $k \mapsto \begin{psmallmatrix} k' & 0 \\ 0 & 1 \end{psmallmatrix} k$, we deduce that
\[\dim \tau_{\chi_{\pi},c(\pi)} \int_{K_n} W_{\pi}^{\circ}(gk) \int_{K_{n - 1}} \Tr \tau_{\chi_{\pi},c(\pi)}\left(k^{-1} \begin{pmatrix} k' & 0 \\ 0 & 1 \end{pmatrix}\right) \, dk' \, dk = W_{\pi}^{\circ}(g).\]
From \hyperref[prop:Pcircunique]{Proposition \ref*{prop:Pcircunique}} and \eqref{eqn:Hmatrixcoeff}, the inner integral is equal to
\[\frac{\left\langle \tau_{\chi_{\pi},c(\pi)}(k^{-1}) \cdot P_{\chi_{\pi},c(\pi)}^{\circ}, P_{\chi_{\pi},c(\pi)}^{\circ}\right\rangle}{\left\langle P_{\chi_{\pi},c(\pi)}^{\circ}, P_{\chi_{\pi},c(\pi)}^{\circ}\right\rangle} = P_{\chi_{\pi},c(\pi)}^{\circ}(e_n k^{-1}).\qedhere\]
\end{proof}

\section{Langlands Parameters}

We now associate an induced representation of Langlands type $\pi$ of $\GL_n(F)$ to a distinguished spherical induced representation of Langlands type $\pi_{\ur}$ of $\GL_n(F)$ defined in terms of the Langlands parameters of $\pi$.

\begin{definition}
The Langlands parameters associated to an induced representation of Whittaker type $\pi = \bigboxplus_{j = 1}^{r} \pi_j$ of $\GL_n(F)$ are the $n$-tuple of complex numbers $(\alpha_{\pi,1},\ldots,\alpha_{\pi,n})$ given by
\[\alpha_{\pi,\ell} \coloneqq \begin{dcases*}
t_j + \frac{\|\kappa_j\|}{2} & if $\ell = n_1 + \cdots + n_j$ and $\pi_j = e^{i\kappa_j \arg} |\cdot|_{\C}^{t_j}$,	\\
t_j + \kappa_j & if $\ell = n_1 + \cdots + n_j$ and $\pi_j = \sgn^{\kappa_j} |\cdot|_{\R}^{t_j}$,	\\
t_j + \frac{\kappa_j + 1}{2} & if $\ell = n_1 + \cdots + n_j - 1$ and $\pi_j = D_{\kappa_j} \otimes \left|\det\right|_{\R}^{t_j}$,	\\
t_j + \frac{\kappa_j - 1}{2} & if $\ell = n_1 + \cdots + n_j$ and $\pi_j = D_{\kappa_j} \otimes \left|\det\right|_{\R}^{t_j}$.
\end{dcases*}\]
\end{definition}

\begin{proposition}
\label{prop:piurexists}
Given an induced representation of Whittaker type $\pi$ of $\GL_n(F)$, there exists a spherical induced representation of Langlands type $\pi_{\ur}$ of $\GL_n(F)$ for which $L(s,\pi) = L(s,\pi_{\ur})$.
\end{proposition}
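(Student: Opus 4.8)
The plan is to construct $\pi_{\ur}$ explicitly from the Langlands parameters $(\alpha_{\pi,1},\ldots,\alpha_{\pi,n})$ and then verify the $L$-function identity factor by factor. Specifically, I would set
\[\pi_{\ur} \coloneqq \bigboxplus_{\ell = 1}^{n} |\cdot|_F^{\alpha_{\pi,\ell}},\]
the isobaric sum of the unramified characters $|\cdot|_F^{\alpha_{\pi,\ell}}$ of $F^{\times}$. This is by definition a spherical induced representation of Whittaker type; to see that it is of Langlands type one may need to reorder the characters so that the real parts $\Re(\alpha_{\pi,\ell})$ are nonincreasing, which is harmless since isobaric sum does not depend on the ordering of its summands (up to isomorphism, and in any case $L$-functions are insensitive to reordering).

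The verification of $L(s,\pi) = L(s,\pi_{\ur})$ would proceed by comparing the product formula $L(s,\pi) = \prod_{j=1}^{r} L(s,\pi_j)$ from the $L$-function subsection with $L(s,\pi_{\ur}) = \prod_{\ell=1}^{n} \zeta_F(s + \alpha_{\pi,\ell})$. The key observation is that the definition of the Langlands parameters is precisely engineered so that each factor $L(s,\pi_j)$ matches the corresponding $\zeta_F$-factors. I would check the three cases:
\begin{itemize}[leftmargin=*]
\item If $F = \C$ and $\pi_j = e^{i\kappa_j \arg}|\cdot|_{\C}^{t_j}$, then $L(s,\pi_j) = \zeta_{\C}(s + t_j + \frac{\|\kappa_j\|}{2}) = \zeta_{\C}(s + \alpha_{\pi,n_1 + \cdots + n_j})$.
\item If $F = \R$ and $\pi_j = \sgn^{\kappa_j}|\cdot|_{\R}^{t_j}$, then $L(s,\pi_j) = \zeta_{\R}(s + t_j + \kappa_j) = \zeta_{\R}(s + \alpha_{\pi,n_1 + \cdots + n_j})$. (Here $\kappa_j \in \{0,1\}$, so there is no discrepancy between $\kappa_j$ and $\|\kappa_j\|$; the parameter exactly records the shift.)
\item If $F = \R$ and $\pi_j = D_{\kappa_j} \otimes |\det|_{\R}^{t_j}$, then $L(s,\pi_j) = \zeta_{\R}(s + t_j + \frac{\kappa_j - 1}{2})\zeta_{\R}(s + t_j + \frac{\kappa_j + 1}{2})$, which is the product of the two factors $\zeta_{\R}(s + \alpha_{\pi,n_1 + \cdots + n_j})$ and $\zeta_{\R}(s + \alpha_{\pi,n_1 + \cdots + n_j - 1})$ contributed by the two Langlands parameters attached to $\pi_j$.
\end{itemize}
Summing over $j$, the $n$ factors $\zeta_F(s + \alpha_{\pi,\ell})$ on the right-hand side of $L(s,\pi_{\ur})$ are exactly accounted for, establishing the identity.

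There is essentially no serious obstacle here: the proposition is a bookkeeping consequence of how the Langlands parameters were defined, and the only mild subtlety is ensuring $\pi_{\ur}$ genuinely qualifies as an induced representation of \emph{Langlands} type rather than merely of Whittaker type, which is handled by ordering the exponents by decreasing real part. One small point worth remarking in the proof is that when $F = \R$ the essentially square-integrable constituents that are genuinely two-dimensional (the $D_{\kappa_j}$) each contribute a \emph{pair} of Langlands parameters, so that the total count of parameters is indeed $n$; this is already built into the definition, where the two cases $\ell = n_1 + \cdots + n_j - 1$ and $\ell = n_1 + \cdots + n_j$ both occur for such a $\pi_j$ (with $n_j = 2$).
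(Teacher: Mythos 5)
Your proposal is correct and follows essentially the same route as the paper: the paper likewise defines $\pi_{\ur}$ as the isobaric sum of the unramified characters $|\cdot|^{\alpha_{\pi,\sigma(j)}}$, with the exponents permuted so their real parts are nonincreasing (to ensure Langlands type), and then verifies $L(s,\pi) = L(s,\pi_{\ur})$ by the same factor-by-factor comparison across the three cases of essentially square-integrable $\pi_j$.
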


\begin{proof}
There exists a permutation $\sigma$ for which the Langlands parameters of $\pi$ are such that $\Re(\alpha_{\pi,\sigma(1)}) \geq \cdots \geq \Re(\alpha_{\pi,\sigma(n)})$. Let $\pi_{\ur} \coloneqq \bigboxplus_{j = 1}^{n} |\cdot|^{\alpha_{\pi,\sigma(j)}}$. This isobaric sum of unramified characters is a spherical induced representation of Langlands type of $\GL_n(F)$ that satisfies
\begin{align*}
L(s,\pi_{\ur}) & = \prod_{j = 1}^{n} \zeta_F\left(s + \alpha_{\pi,\sigma(j)}\right)	\\
& = \prod_{j = 1}^{r} \begin{dcases*}
\zeta_{\C}\left(s + t_j + \frac{\|\kappa_j\|}{2}\right) & if $\pi_j = e^{i\kappa_j \arg} |\cdot|_{\C}^{t_j}$,	\\
\zeta_{\R}(s + t_j + \kappa_j) & if $\pi_j = \sgn^{\kappa_j} |\cdot|_{\R}^{t_j}$,	\\
\zeta_{\R}\left(s + t_j + \frac{\kappa_j - 1}{2}\right) \zeta_{\R}\left(s + t_j + \frac{\kappa_j + 1}{2}\right) & if $\pi_j = D_{\kappa_j} \otimes \left|\det\right|_{\R}^{t_j}$,
\end{dcases*}	\\
& = \prod_{j = 1}^{r} L(s,\pi_j)	\\
& = L(s,\pi).\qedhere
\end{align*}
\end{proof}

\begin{remark}
\hyperref[prop:piurexists]{Proposition \ref*{prop:piurexists}} ensures the existence of such a spherical representation $\pi_{\ur}$ of $\GL_n(F)$, but does not guarantee the \emph{uniqueness}. Indeed, if there are at least two Langlands parameters whose real parts are equal but whose imaginary parts are not, then there exists more than one permutation $\sigma$ for which the condition $\Re(\alpha_{\pi,\sigma(1)}) \geq \cdots \geq \Re(\alpha_{\pi,\sigma(n)})$ is met, and hence more than one spherical induced representation of Langlands type having the same $L$-function as $\pi$. Nonetheless, the spherical Whittaker function $W_{\pi_{\ur}}^{\circ} \in \WW(\pi_{\ur},\psi)$ \emph{is} uniquely determined, since if two spherical induced representations of Langlands type have the same $L$-function, then their spherical Whittaker functions are equal due to the Whittaker--Plancherel theorem (see \cite[Lemma 10.5]{Hum20a} and \hyperref[lem:WtoWur]{Lemma \ref*{lem:WtoWur}}).
\end{remark}

\begin{remark}
A result analogous to \hyperref[prop:piurexists]{Proposition \ref*{prop:piurexists}} also holds in the nonarchimedean setting. A notable difference in this setting is that if $\pi$ is ramified, then $\pi_{\ur}$ is a spherical representation of $\GL_m(F)$ with $m < n$.
\end{remark}

The central character $\omega_{\pi_{\ur}}$ of $\pi_{\ur}$ is closely related to the central character $\omega_{\pi}$ of $\pi$.

\begin{lemma}
\label{lem:omegapitoomegapiur}
Let $\pi$ be an induced representation of Langlands type of $\GL_n(F)$. Then for all $z \in F^{\times}$,
\[\omega_{\pi}(z) \overline{\chi_{\pi}}\left(\frac{z}{\|z\|}\right) \|z\|^{c(\pi)} = \omega_{\pi_{\ur}}(z).\]
\end{lemma}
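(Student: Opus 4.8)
The plan is to compute $\omega_{\pi_{\ur}}(z)$ directly from the definition of $\pi_{\ur}$ as the isobaric sum $\bigboxplus_{j=1}^n |\cdot|^{\alpha_{\pi,\sigma(j)}}$ and compare with $\omega_\pi(z)$. Since the central character of an isobaric sum is the product of the characters of the summands evaluated on the scalar matrix, and since each summand $|\cdot|^{\alpha_{\pi,\sigma(j)}}$ contributes $|z|^{\alpha_{\pi,\sigma(j)}}$, we get $\omega_{\pi_{\ur}}(z) = |z|^{\sum_{j=1}^n \alpha_{\pi,\sigma(j)}} = |z|^{\sum_{\ell=1}^n \alpha_{\pi,\ell}}$, independent of the permutation. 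So the whole identity reduces to the claim that $\omega_\pi(z) \overline{\chi_\pi}(z/\|z\|) \|z\|^{c(\pi)} = |z|^{\sum_\ell \alpha_{\pi,\ell}}$.

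First I would reduce to the case where $\pi$ is itself essentially square-integrable, i.e.\ one of the three building blocks $e^{i\kappa\arg}|\cdot|_\C^t$, $\sgn^\kappa |\cdot|_\R^t$, or $D_\kappa \otimes |\det|_\R^t$. This reduction is legitimate because all three quantities appearing are multiplicative in the isobaric decomposition $\pi = \bigboxplus_{j=1}^r \pi_j$: the central character $\omega_\pi = \prod_j \omega_{\pi_j}$ (evaluated at $z$, noting that the scalar matrix $z1_n$ sits inside $\Mgp_\Pgp$ as $\blockdiag(z1_{n_1},\ldots,z1_{n_r})$ and the modulus character $\delta_\Pgp^{1/2}$ is trivial on scalar matrices), the conductor exponent $c(\pi) = \sum_j c(\pi_j)$, the restriction to $K_1$ satisfies $\chi_\pi = \prod_j \chi_{\pi_j}$, and $\sum_\ell \alpha_{\pi,\ell} = \sum_j (\text{sum of Langlands parameters attached to } \pi_j)$. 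One subtlety: for the $\GL_2(\R)$ discrete series block, the central character of $D_\kappa \otimes |\det|_\R^t$ on the scalar $z1_2$ is $\sgn(z)^\kappa |z|_\R^{2t}$ — I should double-check this against the induced description $\Ind_{\GL_1(\C)}^{\GL_2(\R)} e^{i\kappa\arg}|\cdot|_\C^t$, where the scalar $z$ in $\GL_2(\R)$ corresponds to the scalar $z \in \R^\times \subset \C^\times$, giving central character value $e^{i\kappa\arg(z)}|z|_\C^t = \sgn(z)^\kappa |z|_\R^t$ times the $\delta^{1/2}$ factor $|z|_\R$... this needs care, and is exactly where I expect the bookkeeping to be most error-prone.

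For each of the three atomic cases I would then just expand both sides. Take $F=\C$, $\pi = e^{i\kappa\arg}|\cdot|_\C^t$ (so $n=1$): then $\omega_\pi(z) = e^{i\kappa\arg(z)}|z|_\C^t$, $\chi_\pi = e^{i\kappa\arg}$ so $\overline{\chi_\pi}(z/\|z\|) = e^{-i\kappa\arg(z)}$, $c(\pi) = \|\kappa\|$ so $\|z\|^{c(\pi)} = \|z\|^{\|\kappa\|} = |z|_\C^{\|\kappa\|/2}$, and the single Langlands parameter is $\alpha = t + \|\kappa\|/2$, giving right side $|z|_\C^{t + \|\kappa\|/2}$; the left side is $e^{i\kappa\arg(z)}|z|_\C^t \cdot e^{-i\kappa\arg(z)} \cdot |z|_\C^{\|\kappa\|/2} = |z|_\C^{t+\|\kappa\|/2}$, matching. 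The $\sgn^\kappa|\cdot|_\R^t$ case is analogous and even simpler since $\overline{\chi_\pi} = \sgn^\kappa$ has order two. The $D_\kappa \otimes |\det|_\R^t$ case is the one to treat carefully: its central character on $z1_2$, its conductor exponent (which should be $\kappa$, or perhaps $\kappa$ adjusted — I'd cross-reference the $L$-function formula $\zeta_\R(s+t+\frac{\kappa-1}{2})\zeta_\R(s+t+\frac{\kappa+1}{2})$ and the Langlands parameters $t+\frac{\kappa\pm1}{2}$ whose sum is $2t+\kappa$), its restriction $\chi_\pi$ to $\Ogp(1)$, and then verify the identity. The main obstacle is purely the $\GL_2(\R)$ discrete series bookkeeping — getting the central character, conductor exponent, and $K_1$-restriction of $D_\kappa \otimes |\det|_\R^t$ all mutually consistent — but once those are pinned down the identity falls out by direct substitution, so I would present it as a short case check with the $D_\kappa$ case spelled out and the other two noted as immediate.
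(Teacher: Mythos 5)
Your proposal is correct and follows essentially the same route as the paper: both arguments are a direct bookkeeping computation with the isobaric blocks, using $\omega_{\pi} = \prod_j \omega_{\pi_j}$, $\chi_{\pi} = \omega_{\pi}|_{K_1}$, the fact that $c(\pi) = \sum_j \|\kappa_j\|$ (quoted from the archimedean newform paper), and the explicit Langlands parameters, so that both sides reduce to $|z|^{\sum_{\ell} \alpha_{\pi,\ell}}$. Your flagged worry about the $D_{\kappa} \otimes \left|\det\right|_{\R}^{t}$ central character is the right place to be careful (its value at $z1_2$ is $\sgn(z)^{\kappa}|z|_{\R}^{2t}$, matching the parameter sum $2t + \kappa$), but note the sign part is harmless anyway since it is exactly cancelled by the factor $\overline{\chi_{\pi}}(z/\|z\|)$.
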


\begin{proof}
We write $\pi = \bigboxplus_{j = 1}^{r} \pi_j$, so that the central character of $\pi$ is $\omega_{\pi} = \prod_{j = 1}^{r} \omega_{\pi_j}$, where the central character $\omega_{\pi_j}$ of $\pi_j$ is
\[\omega_{\pi_j} = \begin{dcases*}
e^{i\kappa_j \arg} |\cdot|_{\C}^{t_j} & if $\pi_j = e^{i\kappa_j \arg} |\cdot|_{\C}^{t_j}$,	\\
\sgn^{\kappa_j} |\cdot|_{\R}^{t_j} & if $\pi_j = \sgn^{\kappa_j} |\cdot|_{\R}^{t_j}$,	\\
\sgn^{\kappa_j \hspace{-.2cm} \pmod{2}} |\cdot|_{\R}^{t_j} & if $\pi_j = D_{\kappa_j} \otimes \left|\det\right|_{\R}^{t_j}$.
\end{dcases*}\]
Thus for $F = \C$, $\omega_{\pi} = e^{i\kappa_{\pi}\arg} |\cdot|_{\C}^{t_{\pi}}$ where $\kappa_{\pi} \coloneqq \sum_{j = 1}^{n} \kappa_j$ and $t_{\pi} \coloneqq \sum_{j = 1}^{n} t_j$, while for $F = \R$, we have that $\omega_{\pi} = \sgn^{\kappa_{\pi}} |\cdot|_{\R}^{t_{\pi}}$, where $\kappa_{\pi} \in \{0,1\}$ is such that $\kappa_{\pi} \equiv \sum_{j = 1}^{r} \kappa_j \pmod{2}$ and $t_{\pi} \coloneqq \sum_{j = 1}^{r} t_j$. In particular,
\[\chi_{\pi}\left(\frac{z}{\|z\|}\right) = \begin{dcases*}
e^{i\kappa_{\pi} \arg(z)} & if $F = \C$,	\\
\sgn(z)^{\kappa_{\pi}} & if $F = \R$.
\end{dcases*}\]
From \cite[Theorem 4.15 and Section 5.2.1]{Hum20a}, we have that $c(\pi) = \sum_{j = 1}^{r} \|\kappa_j\|$, so that
\[\|z\|^{c(\pi)} = \begin{dcases*}
|z|_{\C}^{(\|\kappa_1\| + \cdots + \|\kappa_r\|)/2} & if $F = \C$,	\\
|z|_{\R}^{\kappa_1 + \cdots + \kappa_r} & if $F = \R$.
\end{dcases*}\]
From this, we see that
\[\omega_{\pi}(z) \overline{\chi_{\pi}}\left(\frac{z}{\|z\|}\right) \|z\|^{c(\pi)} = |z|^{(t_1 + \cdots + t_r) + (\|\kappa_1\| + \cdots + \|\kappa_r\|)/d_F}.\]
On the other hand, the central character of $\pi_{\ur}$ is $\omega_{\pi_{\ur}} = |\cdot|^{t_{\pi_{\ur}}}$ with $t_{\pi_{\ur}} \coloneqq \sum_{j = 1}^{r} (t_j + \|\kappa_j\|/d_F)$. Thus
\[\omega_{\pi_{\ur}}(z) = |z|^{(t_1 + \cdots + t_r) + (\|\kappa_1\| + \cdots + \|\kappa_r\|)/d_F}.\qedhere\]
\end{proof}

We introduce the following by-now well-known lemma. Over nonarchimedean fields, this lemma plays an important role in the proofs of the stability of $\gamma$-factors \cite[Lemma 3.2]{JS85} and of the local converse theorem \cite{Zha18}. Our case is the archimedean analogue (cf.\ \cite[Lemma 10.5]{Hum20a}) of a result of Jacquet, Piateski-Shapiro, and Shalika \cite[Lemme (3.5)]{JP-SS81}.

\begin{lemma}
\label{lem:WtoWur}
Let $\pi$ be an induced representation of Langlands type of $\GL_n(F)$. Then 
\[W_{\pi}^{\circ}\begin{pmatrix} g & 0 \\ 0 & 1 \end{pmatrix} = W_{\pi_{\ur}}^{\circ}\begin{pmatrix} g & 0 \\ 0 & 1 \end{pmatrix}
\quad \text{for all $g \in \GL_{n - 1}(F)$.}
\]
\end{lemma}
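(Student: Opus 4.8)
The plan is to argue by induction on $n$, exploiting the propagation formula for Whittaker newforms together with the restriction identity that is already the content of this very lemma at the previous level. The base case $n = 1$ is trivial: both sides are the value of the respective Whittaker functions at $1_1 \in \GL_0(F)$, suitably interpreted, or one simply notes that for $n=1$ there is no content since $\GL_0$ is trivial. For $n = 2$ the assertion says $W_\pi^\circ(\begin{psmallmatrix} g & 0 \\ 0 & 1 \end{psmallmatrix}) = W_{\pi_{\ur}}^\circ(\begin{psmallmatrix} g & 0 \\ 0 & 1 \end{psmallmatrix})$ for $g \in \GL_1(F) = F^\times$; since $\pi$ and $\pi_{\ur}$ have the same standard $L$-function, hence the same Langlands parameters up to reordering, the restriction of the Whittaker newform to the diagonal $\GL_1$ is governed by the Whittaker--Plancherel theorem exactly as recorded in the final remark of Section on Langlands parameters, which gives equality.

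For the inductive step, I would invoke the propagation formula of \cite[Lemmata 9.8 and 9.17]{Hum20a}, which expresses $W_\pi^\circ$ restricted to $\Pgp_n(F)$ (or to the block $\begin{psmallmatrix} h & 0 \\ 0 & 1_2 \end{psmallmatrix}$-type subgroup) as an integral transform applied to the Whittaker newform $W_{\pi'}^\circ$ of an induced representation $\pi'$ of Langlands type of $\GL_{n-1}(F)$, where the Langlands parameters of $\pi'$ are obtained from those of $\pi$ by deleting one entry. Crucially, the \emph{same} propagation formula, applied to $\pi_{\ur}$, expresses $W_{\pi_{\ur}}^\circ$ in terms of $W_{(\pi_{\ur})'}^\circ$, and because $\pi_{\ur}$ is the isobaric sum of unramified characters with exponents the Langlands parameters of $\pi$, the representation $(\pi_{\ur})'$ is precisely the spherical induced representation of Langlands type attached to $\pi'$, i.e.\ $(\pi_{\ur})' = (\pi')_{\ur}$ in the sense of \hyperref[prop:piurexists]{Proposition \ref*{prop:piurexists}}. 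The transforms appearing in the two propagation formulas are identical — they depend only on the conductor data of the newform $K_n$-type, and by \hyperref[lem:omegapitoomegapiur]{Lemma \ref*{lem:omegapitoomegapiur}} together with the compatibility of conductor exponents the relevant archimedean data match — so the inductive hypothesis $W_{\pi'}^\circ\big|_{\GL_{n-2}} = W_{(\pi')_{\ur}}^\circ\big|_{\GL_{n-2}}$ propagates upward to give $W_\pi^\circ\big|_{\GL_{n-1}} = W_{\pi_{\ur}}^\circ\big|_{\GL_{n-1}}$.

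The main obstacle is verifying that the propagation formula genuinely only sees the restriction of the lower-level Whittaker newform to $\GL_{n-2}(F)$ embedded block-diagonally, and not its values on a larger domain; one needs the Iwasawa decomposition and the $K_{n-1}$-invariance from \eqref{eqn:WhitnewformKn-1} to reduce the integral transform to an integral involving only $W_{\pi'}^\circ\big|_{\Agp_{n-1}(F)K_{n-1}}$, and then the $K_{n-1}$-equivariance to further reduce to the block-diagonal torus, at which point the inductive hypothesis bites. A secondary point requiring care is the identification $(\pi_{\ur})' = (\pi')_{\ur}$: since $\pi_{\ur}$ is already a sum of unramified characters, its propagation drops one unramified character, and the surviving collection is exactly the Langlands parameters of $\pi'$ reordered, so the two spherical induced representations of Langlands type have the same standard $L$-function and hence — by the uniqueness-up-to-equality-of-spherical-Whittaker-functions remark — the same spherical Whittaker function. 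Modulo these bookkeeping checks, the result is a direct consequence of the shared recursive structure of $W_\pi^\circ$ and $W_{\pi_{\ur}}^\circ$, which is exactly the archimedean counterpart of the nonarchimedean \cite[Lemme (3.5)]{JP-SS81}.
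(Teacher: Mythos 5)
Your inductive strategy has a genuine gap at its core, namely the assertion that ``the transforms appearing in the two propagation formulas are identical.'' They are not: the propagation formula of \cite[Lemmata 9.8 and 9.17]{Hum20a} applied to a ramified $\pi$ has a kernel depending on the ramification of the inducing factor being removed (and, after you use the central character to reduce the $\GL_{n-1}$-torus values of $W_{\pi'}^{\circ}$ to the embedded $\GL_{n-2}$, the central characters $\omega_{\pi'}$ and $\omega_{(\pi')_{\ur}}$ differ exactly by the factor $\overline{\chi_{\pi'}}(z/\|z\|)\,\|z\|^{c(\pi')}$ of \hyperref[lem:omegapitoomegapiur]{Lemma \ref*{lem:omegapitoomegapiur}}), whereas the formula for the spherical $\pi_{\ur}$ has a pure Gaussian kernel, as in \hyperref[propformula]{Lemma \ref*{propformula}}. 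Showing that these different kernels and different central-character twists conspire to produce equal restrictions is precisely the analytic content one would have to prove; it is not bookkeeping, and \hyperref[lem:omegapitoomegapiur]{Lemma \ref*{lem:omegapitoomegapiur}} does not supply it. A second obstruction arises for $F=\R$ when $\pi$ has essentially discrete series inducing factors $D_{\kappa}\otimes\left|\det\right|^{t}$: deleting a single Langlands parameter does not correspond to removing an inducing factor, so there is no representation $\pi'$ of $\GL_{n-1}(\R)$ of the shape your induction requires, and the propagation formulae do not have the form you attribute to them in that case. Finally, your base case $n=2$ leans on the remark following \hyperref[prop:piurexists]{Proposition \ref*{prop:piurexists}}, which concerns two \emph{spherical} representations (and itself cites the present lemma); for ramified $\pi$ on $\GL_2$ you would already need a Mellin-inversion/Plancherel argument there.

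The paper sidesteps all of this with a duality argument rather than an induction: it pairs the difference $W_{\pi}^{\circ}\left(\begin{smallmatrix} g & 0 \\ 0 & 1\end{smallmatrix}\right)-W_{\pi_{\ur}}^{\circ}\left(\begin{smallmatrix} g & 0 \\ 0 & 1\end{smallmatrix}\right)$ against the spherical Whittaker function of an arbitrary spherical induced representation $\sigma$ of $\GL_{n-1}(F)$, evaluates both resulting $\GL_n\times\GL_{n-1}$ integrals as $L(s,\pi\times\sigma)$ and $L(s,\pi_{\ur}\times\sigma)$ via \cite[Theorem 4.17]{Hum20a}, observes these agree because $\sigma$ is a sum of unramified twists so that $L(s,\pi\times\sigma)=\prod_j L(s+t_j,\pi)=L(s,\pi_{\ur}\times\sigma)$, and then concludes by the Whittaker--Plancherel theorem \cite[Lemma 10.5]{Hum20a} (the archimedean counterpart of \cite[Lemme (3.5)]{JP-SS81}). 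If you want to salvage your approach, the completeness step of this kind is unavoidable anyway, and at that point the recursion adds nothing: the recursive work is already packaged inside \cite[Theorem 4.17]{Hum20a}.
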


\begin{proof}
We claim that for every spherical induced representation of Langlands type $\sigma$ of $\GL_{n - 1}(F)$ with spherical Whittaker function $W_{\sigma}^{\circ} \in \WW(\sigma,\overline{\psi})$,
\[\int\limits_{\Ngp_{n - 1}(F) \backslash \GL_{n - 1}(F)} \left(W_{\pi}^{\circ}\begin{pmatrix} g & 0 \\ 0 & 1 \end{pmatrix} - W_{\pi_{\ur}}^{\circ}\begin{pmatrix} g & 0 \\ 0 & 1 \end{pmatrix}\right) W_{\sigma}^{\circ}(g) \left|\det g\right|^{s - \frac{1}{2}} \, dg = 0\]
for $\Re(s)$ sufficiently large; by the Whittaker--Plancherel theorem (see \cite[Lemma 10.5]{Hum20a}), this implies the desired equality. Indeed, we have by \cite[Theorem 4.17]{Hum20a} that
\begin{align*}
\int\limits_{\Ngp_{n - 1}(F) \backslash \GL_{n - 1}(F)} W_{\pi}^{\circ}\begin{pmatrix} g & 0 \\ 0 & 1 \end{pmatrix} W_{\sigma}^{\circ}(g) \left|\det g\right|^{s - \frac{1}{2}} \, dg & = L(s,\pi \times \sigma),	\\
\int\limits_{\Ngp_{n - 1}(F) \backslash \GL_{n - 1}(F)} W_{\pi_{\ur}}^{\circ}\begin{pmatrix} g & 0 \\ 0 & 1 \end{pmatrix} W_{\sigma}^{\circ}(g) \left|\det g\right|^{s - \frac{1}{2}} \, dg & = L(s,\pi_{\ur} \times \sigma),
\end{align*}
and these are equal as we can write $\sigma = \bigboxplus_{j = 1}^{n - 1} |\cdot|^{t_j}$ for some $t_j \in \C$, and then
\[L(s,\pi \times \sigma) = \prod_{j = 1}^{n - 1} L(s + t_j,\pi) = \prod_{j = 1}^{n - 1} L(s + t_j,\pi_{\ur}) = L(s,\pi_{\ur} \times \sigma).\qedhere\]
\end{proof}

\section{Rankin--Selberg Integrals}
\label{sect:RSsect}

Given induced representations of Whittaker type $\pi$ and $\sigma$ of $\GL_n(F)$, Whittaker functions $W_{\pi} \in \WW(\pi,\psi)$ and $W_{\sigma} \in \WW(\sigma,\overline{\psi})$, and a Schwartz--Bruhat function $\Phi \in \Scr(F^n)$, the $\GL_n \times \GL_n$ Rankin--Selberg integral is defined by
\[\Psi(s,W_{\pi},W_{\sigma},\Phi) \coloneqq \int\limits_{\Ngp_n(F) \backslash \GL_n(F)} W_{\pi}(g) W_{\sigma}(g) \Phi(e_n g) \left|\det g\right|^s \, dg.\]
The local Rankin--Selberg $L$-function $L(s,\pi \times \sigma)$ is defined via the local Langlands correspondence as delineated in \cite{Kna94}. This integral converges absolutely for $\Re(s)$ sufficiently large and extends meromorphically to the entire complex plane. Jacquet \cite[Theorem 2.3]{Jac09} has shown that $\Psi(s,W_{\pi},W_{\sigma},\Phi)$ is a holomorphic multiple of $L(s,\pi \times \sigma)$ and that the quotient 
\[
\frac{\Psi(s,W_{\pi},W_{\sigma},\Phi)}{L(s,\pi \times \sigma)}
\]
is of finite order in vertical strips.

\subsection{Test Vectors for \texorpdfstring{$\GL_n \times \GL_n$}{GL\9040\231\80\327GL\9040\231} Rankin--Selberg Integrals and Periods}

For $x \in \Mat_{n \times m}(F)$, we define the Schwartz--Bruhat function $\Phi_{\ur} \in \Scr(\Mat_{n \times m}(F))$ by
\begin{equation}
\label{eqn:Phiurdefeq}
 \Phi_{\ur}(x) \coloneqq \exp(-d_F \pi \Tr(x \prescript{t}{}{\overline{x}})) = \begin{dcases*}
\exp (-\pi \Tr (x \prescript{t}{}{x}) )& if $F=\R$, \\
\exp(-2\pi \Tr (x \prescript{t}{}{\overline{x}})  & if $F=\C$.
\end{dcases*}
\end{equation}
It is readily seen that $\Phi_{\ur}(xk)=\Phi_{\ur}(k'x)=\Phi_{\ur}(x)$ for all $k \in K_m$ and $k' \in K_n$; that is, $\Phi_{\ur}$ is right $K_m$-invariant and left $K_n$-invariant.

Our goal is to evaluate the $\GL_n \times \GL_n$ Rankin--Selberg integral when $\pi$ and $\sigma$ may be ramified. Our first step is to reduce this integral to an integral over $\Ngp_{n - 1}(F) \backslash \GL_{n - 1}(F)$.

\begin{proposition}
\label{prop:RSkey}
Let $\pi$ and $\sigma$ be induced representations of Langlands type of $\GL_n(F)$ with Whittaker newforms $W_{\pi}^{\circ} \in \WW(\pi,\psi)$ and $W_{\sigma}^{\circ} \in \WW(\sigma,\overline{\psi})$. Let $\Phi \in \Scr(F^n)$ be the right $K_n$-finite Schwartz--Bruhat function of the form $\Phi(x) = P(x) \exp(-d_F \pi x \prescript{t}{}{\overline{x}})$, where the distinguished homogeneous polynomial $P \in \PP_{\overline{\chi_{\pi} \chi_{\sigma}},c(\pi) + c(\sigma)}(F^n)$ is given by
\begin{equation}
\label{eqn:P(x)defeq}
P(x) \coloneqq \sum_{\substack{m = c(\chi_{\pi} \chi_{\sigma}) \\ m \equiv c(\chi_{\pi} \chi_{\sigma}) \hspace{-.2cm} \pmod{2}}}^{c(\pi) + c(\sigma)} (x \prescript{t}{}{\overline{x}})^{\frac{c(\pi) + c(\sigma) - m}{2}} (\dim \tau_{\chi_{\pi} \chi_{\sigma},m}) \overline{P_{\chi_{\pi} \chi_{\sigma},m}^{\circ}}(x).
\end{equation}
Then the $\GL_n \times \GL_n$ Rankin--Selberg integral $\Psi(s,W_{\pi}^{\circ},W_{\sigma}^{\circ},\Phi)$ is equal to
\begin{equation}
\label{eqn:RSperiod}
 L(ns,\omega_{\pi_{\ur}} \omega_{\sigma_{\ur}}) \int\limits_{\Ngp_{n - 1}(F) \backslash \GL_{n - 1}(F)} W_{\pi}^{\circ}\begin{pmatrix} h & 0 \\ 0 & 1 \end{pmatrix} W_{\sigma}^{\circ}\begin{pmatrix} h & 0 \\ 0 & 1 \end{pmatrix} \left|\det h\right|^{s - 1} \, dh.
\end{equation}
\end{proposition}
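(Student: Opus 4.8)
The plan is to apply the Iwasawa decomposition attached to the maximal parabolic $\Pgp_{(n-1,1)}(F)$ and exploit the right $K_{n-1}$-invariance and $\tau_{\chi_\pi,c(\pi)}$-isotypy of the Whittaker newforms to collapse the $K_n$-integration against the Schwartz--Bruhat function $\Phi$ into a clean Tate-type zeta integral in the central variable, leaving behind precisely the $\GL_{n-1}$ integral in \eqref{eqn:RSperiod}. Concretely, I would first write $g = (z1_n)\begin{psmallmatrix} h & 0 \\ 0 & 1 \end{psmallmatrix} k$ with $z \in F^\times$, $h \in \Ngp_{n-1}(F)\backslash\GL_{n-1}(F)$, and $k \in K_n$, so that $dg = |\det h|^{-1}\, d^\times z\, dh\, dk$, and note $e_n g = z\, e_n k$ since $e_n \begin{psmallmatrix} h & 0 \\ 0 & 1 \end{psmallmatrix} = e_n$, $|\det g| = |z|^n |\det h|$, $W_\pi^\circ(g) = \omega_\pi(z) W_\pi^\circ\!\left(\begin{psmallmatrix} h & 0 \\ 0 & 1 \end{psmallmatrix} k\right)$, and similarly for $W_\sigma^\circ$ with $\omega_\sigma$. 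Thus $\Psi(s,W_\pi^\circ,W_\sigma^\circ,\Phi)$ becomes
\[
\int_{F^\times}\!\!\int\limits_{\Ngp_{n-1}(F)\backslash\GL_{n-1}(F)}\!\!\int_{K_n} \omega_\pi(z)\omega_\sigma(z) W_\pi^\circ\!\left(\begin{psmallmatrix} h & 0 \\ 0 & 1 \end{psmallmatrix} k\right) W_\sigma^\circ\!\left(\begin{psmallmatrix} h & 0 \\ 0 & 1 \end{psmallmatrix} k\right) \Phi(z\, e_n k) |z|^{ns} |\det h|^{s-1}\, dk\, dh\, d^\times z.
\]

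Next I would insert $\Phi(z\, e_n k) = P(z\, e_n k)\exp(-d_F\pi |z|_{\C} e_n k\,\prescript{t}{}{\overline{e_n k}})$; since $e_n k\,\prescript{t}{}{\overline{e_n k}} = 1$ (rows of $k \in K_n$ are unit vectors) this exponential is $\exp(-d_F\pi\|z\|^2)$, and by the homogeneity relation \eqref{eqn:homogeneity} for $P \in \PP_{\overline{\chi_\pi\chi_\sigma},c(\pi)+c(\sigma)}(F^n)$ we get $P(z\, e_n k) = \overline{\chi_\pi\chi_\sigma}\!\left(\frac{z}{\|z\|}\right)\|z\|^{c(\pi)+c(\sigma)} P(e_n k)$. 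The $z$-integral then factors off as
\[
\int_{F^\times} \omega_\pi(z)\omega_\sigma(z)\,\overline{\chi_\pi}\!\left(\tfrac{z}{\|z\|}\right)\overline{\chi_\sigma}\!\left(\tfrac{z}{\|z\|}\right)\|z\|^{c(\pi)+c(\sigma)}\exp(-d_F\pi\|z\|^2)|z|^{ns}\, d^\times z,
\]
and by \hyperref[lem:omegapitoomegapiur]{Lemma \ref*{lem:omegapitoomegapiur}} the integrand character equals $\omega_{\pi_{\ur}}\omega_{\sigma_{\ur}}(z)$, so by the integral representation \eqref{eqn:Lsomega} of the $L$-factor this $z$-integral is exactly $L(ns,\omega_{\pi_{\ur}}\omega_{\sigma_{\ur}})$, for $\Re(s)$ large, with meromorphic continuation thereafter. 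What remains is
\[
L(ns,\omega_{\pi_{\ur}}\omega_{\sigma_{\ur}})\!\!\int\limits_{\Ngp_{n-1}(F)\backslash\GL_{n-1}(F)}\!\!\left(\int_{K_n} W_\pi^\circ\!\left(\begin{psmallmatrix} h & 0 \\ 0 & 1 \end{psmallmatrix} k\right) W_\sigma^\circ\!\left(\begin{psmallmatrix} h & 0 \\ 0 & 1 \end{psmallmatrix} k\right) P(e_n k)\, dk\right)|\det h|^{s-1}\, dh,
\]
so it suffices to show the inner $K_n$-integral equals $W_\pi^\circ\!\begin{psmallmatrix} h & 0 \\ 0 & 1 \end{psmallmatrix} W_\sigma^\circ\!\begin{psmallmatrix} h & 0 \\ 0 & 1 \end{psmallmatrix}$.

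For this last point I would use the reproducing-kernel formula \eqref{eqn:Pmatrixcoeff}: comparing \eqref{eqn:P(x)defeq} with the reproducing kernel \eqref{eqn:Preproducing} for $\PP_{\overline{\chi_\pi\chi_\sigma},\ell}(F^n)$ with $\ell = c(\pi)+c(\sigma)$ shows that $P(x) = \sum_{m}(x\,\prescript{t}{}{\overline{x}})^{(\ell-m)/2}(\dim\tau_{\chi_\pi\chi_\sigma,m})\overline{P^\circ_{\chi_\pi\chi_\sigma,m}}(x)$ is (up to the conjugation-identity \eqref{eqn:Pconj}) the reproducing kernel evaluated at $e_n$, so integrating any $Q \in \PP_{\chi_\pi\chi_\sigma,\ell}(F^n)$ against $P(e_n k)$ over $k \in K_n$ reproduces $Q(e_n)$ — equivalently, against $P(e_n k^{-1})$ it reproduces the value at the argument translated by $k^{-1}$. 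The function $k \mapsto W_\pi^\circ\!\left(\begin{psmallmatrix} h & 0 \\ 0 & 1 \end{psmallmatrix} k\right)$ transforms under the newform $K_n$-type $\tau_{\chi_\pi,c(\pi)}$, and its product with the $\tau_{\chi_\sigma,c(\sigma)}$-isotypic function for $\sigma$ lies in $\PP_{\chi_\pi\chi_\sigma,c(\pi)+c(\sigma)}$ restricted to $e_n K_n$; applying \eqref{eqn:Pmatrixcoeff} together with the characterisation \eqref{eqn:Whitnewformintegral} from \hyperref[lem:Whitnewforminvariance]{Lemma \ref*{lem:Whitnewforminvariance}} (and \eqref{eqn:Pconj} to match $P(e_n k)$ with $P^\circ(e_n k^{-1})$ summed over $m$) collapses the $K_n$-integral to the value at $k = 1_n$, namely $W_\pi^\circ\!\begin{psmallmatrix} h & 0 \\ 0 & 1 \end{psmallmatrix} W_\sigma^\circ\!\begin{psmallmatrix} h & 0 \\ 0 & 1 \end{psmallmatrix}$, as desired. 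The main obstacle is this final reproducing step: one must argue carefully that the pointwise product of the two isotypic matrix-coefficient functions, a priori only a polynomial on $e_n K_n$ of degree $c(\pi)+c(\sigma)$ with central character $\chi_\pi\chi_\sigma$, is reproduced by $P$ in \eqref{eqn:P(x)defeq} — this needs the decomposition into $\HH_{\chi_\pi\chi_\sigma,m}$-components supplied by \cite[Theorem 12.1.3]{Rud08} and the reproducing property \eqref{eqn:Hmatrixcoeff} componentwise, rather than a single application of Schur orthogonality, since the two newform $K_n$-types for $\pi$ and $\sigma$ are generally distinct — and that the product genuinely lies in degree $\leq c(\pi)+c(\sigma)$ so that no higher-degree harmonic pieces are lost. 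The absolute convergence justifying the interchange of integrals and the factoring-off of the $z$-integral for $\Re(s) \gg 0$, followed by meromorphic continuation, is routine given the moderate-growth bounds on Whittaker newforms.
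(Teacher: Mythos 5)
Your skeleton matches the paper's proof: the Iwasawa decomposition $g = (z1_n)\begin{psmallmatrix} h & 0 \\ 0 & 1 \end{psmallmatrix}k$, the homogeneity relation \eqref{eqn:homogeneity} together with \hyperref[lem:omegapitoomegapiur]{Lemma \ref*{lem:omegapitoomegapiur}} and \eqref{eqn:Lsomega} to peel off the central integral as $L(ns,\omega_{\pi_{\ur}}\omega_{\sigma_{\ur}})$, and the reduction to showing that the $K_n$-integral of $W_{\pi}^{\circ}\bigl(\begin{psmallmatrix} h & 0 \\ 0 & 1 \end{psmallmatrix}k\bigr)W_{\sigma}^{\circ}\bigl(\begin{psmallmatrix} h & 0 \\ 0 & 1 \end{psmallmatrix}k\bigr)$ against $P(e_nk)$ reproduces the value at $k = 1_n$. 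The gap is in your justification of that last step. You assert that the product, as a function of $k$, lies in $\PP_{\chi_{\pi}\chi_{\sigma},c(\pi)+c(\sigma)}(F^n)$ restricted to $e_nK_n$. That is false as stated: any function of the form $k \mapsto Q(e_nk)$ with $Q$ a homogeneous polynomial is \emph{left} $K_{n-1}$-invariant, because $e_n\begin{psmallmatrix} k' & 0 \\ 0 & 1 \end{psmallmatrix} = e_n$, whereas the product of the two newform values is only \emph{right} $K_{n-1}$-invariant (via \eqref{eqn:WhitnewformKn-1}) and is not left invariant for generic $h$. The correct statement has the opposite handedness: each factor $k \mapsto W_{\pi}^{\circ}\bigl(\begin{psmallmatrix} h & 0 \\ 0 & 1 \end{psmallmatrix}k\bigr)$ is a matrix coefficient of $\tau_{\chi_{\pi},c(\pi)}$ with the $K_{n-1}$-fixed vector in the \emph{first} slot, hence of the form $\overline{Q}(e_nk^{-1})$ rather than $Q(e_nk)$, so the product is $S(e_nk^{-1})$ for some $S \in \PP_{\overline{\chi_{\pi}\chi_{\sigma}},c(\pi)+c(\sigma)}(F^n)$ depending on $h$; one must then substitute $k \mapsto k^{-1}$ and use \eqref{eqn:Pconj} before the reproducing property \eqref{eqn:Pmatrixcoeff} applies. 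Citing \cite[Theorem 12.1.3]{Rud08} does not close this gap: that theorem decomposes a polynomial you already have into harmonic pieces, but it does not show that your function of $k$ is a polynomial restriction of degree $c(\pi)+c(\sigma)$ in the first place — which is precisely the content that needs proof.

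The paper supplies exactly this missing content by a slightly different mechanism: it inserts \eqref{eqn:Whitnewformintegral} for each of $W_{\pi}^{\circ}$ and $W_{\sigma}^{\circ}$ with auxiliary variables $k_1,k_2 \in K_n$, changes variables, and applies the addition theorem (\hyperref[prop:addition]{Proposition \ref*{prop:addition}}) to separate the $k$-dependence, so that the $k$-integral involves only the honest polynomial product $\overline{Q_{\ell_1}}\,\overline{Q_{\ell_2}'} \in \PP_{\overline{\chi_{\pi}\chi_{\sigma}},c(\pi)+c(\sigma)}(F^n)$ restricted to the sphere, to which \eqref{eqn:Pmatrixcoeff} applies; it then resums via the addition theorem and applies \eqref{eqn:Whitnewformintegral} once more to recover the two Whittaker values. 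Your target identity is true, and your instinct to invoke \eqref{eqn:Pconj} and an inversion is pointing at the right repair, but as written the key reproducing step is asserted with an incorrect supporting claim rather than proved.
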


\begin{proof}
The absolute convergence of the ensuing integral can be justified from \cite[Proposition 3.3 and Lemma 3.5]{Jac09}. By the Iwasawa decomposition $g = (z1_n) \begin{psmallmatrix} h & 0 \\ 0 & 1 \end{psmallmatrix} k$ for $\Ngp_n(F) \backslash \GL_n(F)$, the $\GL_n \times \GL_n$ Rankin--Selberg integral $\Psi(s,W_{\pi}^{\circ},W_{\sigma}^{\circ},\Phi)$ is equal to
\begin{multline*}
\int_{F^{\times}} \omega_{\pi} \omega_{\sigma}(z) |z|^{ns} \int\limits_{\Ngp_{n - 1}(F) \backslash \GL_{n - 1}(F)} \left|\det h\right|^{s - 1}	\\
\times \int_{K_n} W_{\pi}^{\circ}\left(\begin{pmatrix} h & 0 \\ 0 & 1 \end{pmatrix} k\right) W_{\sigma}^{\circ}\left(\begin{pmatrix} h & 0 \\ 0 & 1 \end{pmatrix} k\right) \Phi(ze_n k) \, dk \, dh \, d^{\times}z.
\end{multline*}
We now insert the identities \eqref{eqn:Whitnewformintegral} for both $W_{\pi}^{\circ}(g)$ and $W_{\sigma}^{\circ}(g)$ with $g$ replaced by $\begin{psmallmatrix} h & 0 \\ 0 & 1 \end{psmallmatrix} k$ and the variables of integration being $k_1 \in K_n$ for $W_{\pi}^{\circ}$ and $k_2 \in K_n$ for  $W_{\sigma}^{\circ}$. We then interchange the order of integration and make the change of variables $k \mapsto k^{-1}$, $k_1 \mapsto k^{-1} k_1$, and $k_2 \mapsto k^{-1} k_2$, arriving at
\begin{multline*}
\int_{F^{\times}} \omega_{\pi} \omega_{\sigma}(z) |z|^{ns} \int\limits_{\Ngp_{n - 1}(F) \backslash \GL_{n - 1}(F)} \left|\det h\right|^{s - 1} \int_{K_n} W_{\pi}^{\circ}\left(\begin{pmatrix} h & 0 \\ 0 & 1 \end{pmatrix} k_1\right) \int_{K_n} W_{\sigma}^{\circ}\left(\begin{pmatrix} h & 0 \\ 0 & 1 \end{pmatrix} k_2\right)	\\
\times \dim \tau_{\chi_{\pi},c(\pi)} \dim \tau_{\chi_{\sigma},c(\sigma)} \int_{K_n} P_{\chi_{\pi},c(\pi)}^{\circ}(e_n k_1^{-1} k) P_{\chi_{\sigma},c(\sigma)}^{\circ}(e_n k_2^{-1} k) \Phi(z e_n k^{-1}) \, dk \, dk_2 \, dk_1 \, dh \, d^{\times}z.
\end{multline*}
By the addition theorem, \hyperref[prop:addition]{Proposition \ref*{prop:addition}}, the last line is
\[\sum_{\ell_1 = 1}^{\dim \tau_{\chi_{\pi},c(\pi)}} \sum_{\ell_2 = 1}^{\dim \tau_{\chi_{\sigma},c(\sigma)}} Q_{\ell_1}(e_n k_1^{-1}) Q_{\ell_2}'(e_n k_2^{-1}) \int_{K_n} \overline{Q_{\ell_1}}(e_n k) \overline{Q_{\ell_2}'}(e_n k) \Phi(z e_n k^{-1}) \, dk,\]
where $\{Q_{\ell_1}\}$ and $\{Q_{\ell_2}'\}$ are orthonormal bases of $\HH_{\chi_{\pi},c(\pi)}(F^n)$ and $\HH_{\chi_{\sigma},c(\sigma)}(F^n)$.

To proceed further, we observe that for $z \in F^{\times}$ and $k \in K_n$,we have that
\[\Phi(ze_n k^{-1}) = \overline{\chi_{\pi} \chi_{\sigma}}\left(\frac{z}{\|z\|}\right) \|z\|^{c(\pi) + c(\sigma)} e^{-d_F\pi \|z\|^2} \sum_{\substack{m = c(\chi_{\pi} \chi_{\sigma}) \\ m \equiv c(\chi_{\pi} \chi_{\sigma}) \hspace{-.2cm} \pmod{2}}}^{c(\pi) + c(\sigma)} (\dim \tau_{\chi_{\pi} \chi_{\sigma},m}) P_{\chi_{\pi} \chi_{\sigma},m}^{\circ}(e_n k)\]
by the definition of the Schwartz--Bruhat function $\Phi \in \Scr(F^n)$, the homogeneity of $P_{\chi_{\pi} \chi_{\sigma},m}^{\circ}$ as in \eqref{eqn:homogeneity}, and the identity \eqref{eqn:Pconj}. By \eqref{eqn:Preproducing}, the sum over $m$ is the reproducing kernel for $\PP_{\overline{\chi_{\pi} \chi_{\sigma}},c(\pi) + c(\sigma)}(F^n)$, and so the integral over $K_n \ni k$ is $\overline{Q_{\ell_1}}(e_n) \overline{Q_{\ell_2}'}(e_n)$ by \eqref{eqn:Pmatrixcoeff}. Using the addition theorem, \hyperref[prop:addition]{Proposition \ref*{prop:addition}}, in \emph{reverse} and then using \eqref{eqn:Whitnewformintegral} to evaluate the integrals over $K_n \ni k_1$ and $K_n \ni k_2$, we find that $\Psi(s,W_{\pi}^{\circ},W_{\sigma}^{\circ},\Phi)$ is equal to
\begin{multline*}
\int_{F^{\times}} \omega_{\pi} \omega_{\sigma}(z) \overline{\chi_{\pi} \chi_{\sigma}}\left(\frac{z}{\|z\|}\right) \|z\|^{c(\pi) + c(\sigma)} |z|^{ns} e^{-d_F\pi \|z\|^2} \, d^{\times}z	\\
\times \int\limits_{\Ngp_{n - 1}(F) \backslash \GL_{n - 1}(F)} W_{\pi}^{\circ}\begin{pmatrix} h & 0 \\ 0 & 1 \end{pmatrix} W_{\sigma}^{\circ}\begin{pmatrix} h & 0 \\ 0 & 1 \end{pmatrix} \left|\det h\right|^{s - 1} \, dh.
\end{multline*}
It remains to recall \hyperref[lem:omegapitoomegapiur]{Lemma \ref*{lem:omegapitoomegapiur}}, which, by \eqref{eqn:Lsomega}, shows that the integral over $F^{\times} \ni z$ is $L(ns,\omega_{\pi_{\ur}} \omega_{\sigma_{\ur}})$.
\end{proof}

\begin{remark}
The same proof remains valid in the nonarchimedean setting using the theory of the newform $K_n$-type and $p$-adic spherical harmonics \cite{Hum20b}.
\end{remark}

With this in hand, we are now able to evaluate this Rankin--Selberg integral by reducing to the spherical case.

\begin{theorem}
\label{thm:GLnxGLn}
With the notation and hypotheses of \hyperref[prop:RSkey]{Proposition \ref*{prop:RSkey}}, the $\GL_n \times \GL_n$ Rankin--Selberg integral 
\[
\Psi(s,W_{\pi}^{\circ},W_{\sigma}^{\circ},\Phi) 
\]
is equal to $L(s,\pi_{\ur} \times \sigma_{\ur})$.
\end{theorem}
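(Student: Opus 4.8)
The plan is to combine Proposition \ref{prop:RSkey} with Lemma \ref{lem:WtoWur} to reduce to the unramified case, and then invoke the known evaluation of the spherical Rankin--Selberg integral. First I would apply Proposition \ref{prop:RSkey} to write
\[
\Psi(s,W_{\pi}^{\circ},W_{\sigma}^{\circ},\Phi) = L(ns,\omega_{\pi_{\ur}} \omega_{\sigma_{\ur}}) \int\limits_{\Ngp_{n - 1}(F) \backslash \GL_{n - 1}(F)} W_{\pi}^{\circ}\begin{pmatrix} h & 0 \\ 0 & 1 \end{pmatrix} W_{\sigma}^{\circ}\begin{pmatrix} h & 0 \\ 0 & 1 \end{pmatrix} \left|\det h\right|^{s - 1} \, dh.
\]
Then I would invoke Lemma \ref{lem:WtoWur}, applied to both $\pi$ and $\sigma$, to replace $W_{\pi}^{\circ}\begin{psmallmatrix} h & 0 \\ 0 & 1 \end{psmallmatrix}$ by $W_{\pi_{\ur}}^{\circ}\begin{psmallmatrix} h & 0 \\ 0 & 1 \end{psmallmatrix}$ and likewise $W_{\sigma}^{\circ}$ by $W_{\sigma_{\ur}}^{\circ}$ inside the integral. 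This is legitimate because the lemma gives a pointwise equality of the restricted Whittaker functions on $\GL_{n-1}(F)$ embedded in the upper-left block, which is exactly the domain of integration. After this substitution, the integral becomes the $\GL_n \times \GL_n$ Rankin--Selberg integral attached to the spherical representations $\pi_{\ur}$ and $\sigma_{\ur}$ with their spherical Whittaker functions, reduced in exactly the same way.

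Next I would run Proposition \ref{prop:RSkey} \emph{in reverse} for the unramified pair: since $\pi_{\ur}$ and $\sigma_{\ur}$ are spherical, we have $c(\pi_{\ur}) = c(\sigma_{\ur}) = 0$, the newform $K_n$-types are trivial, the polynomial $P$ in \eqref{eqn:P(x)defeq} reduces to the constant $1$, and the Schwartz--Bruhat function $\Phi$ becomes $\Phi_{\ur}$ as in \eqref{eqn:Phiurdefeq}. Thus Proposition \ref{prop:RSkey} identifies
\[
\Psi(s,W_{\pi_{\ur}}^{\circ},W_{\sigma_{\ur}}^{\circ},\Phi_{\ur}) = L(ns,\omega_{\pi_{\ur}} \omega_{\sigma_{\ur}}) \int\limits_{\Ngp_{n - 1}(F) \backslash \GL_{n - 1}(F)} W_{\pi_{\ur}}^{\circ}\begin{pmatrix} h & 0 \\ 0 & 1 \end{pmatrix} W_{\sigma_{\ur}}^{\circ}\begin{pmatrix} h & 0 \\ 0 & 1 \end{pmatrix} \left|\det h\right|^{s - 1} \, dh,
\]
and comparing with the displayed formula above, we obtain $\Psi(s,W_{\pi}^{\circ},W_{\sigma}^{\circ},\Phi) = \Psi(s,W_{\pi_{\ur}}^{\circ},W_{\sigma_{\ur}}^{\circ},\Phi_{\ur})$.

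Finally I would appeal to the known evaluation of the spherical $\GL_n \times \GL_n$ Rankin--Selberg integral due to Stade \cite[Theorem 1.1]{Sta02}, which gives $\Psi(s,W_{\pi_{\ur}}^{\circ},W_{\sigma_{\ur}}^{\circ},\Phi_{\ur}) = L(s,\pi_{\ur} \times \sigma_{\ur})$ for spherical induced representations of Langlands type; one should check that Stade's normalisations of the Whittaker function, the Schwartz--Bruhat function, and the Haar measure match ours, which is routine given the conventions fixed in Section 2. Chaining the three identities yields the claim. The only genuinely delicate point is verifying that Lemma \ref{lem:WtoWur} applies as stated --- i.e., that the spherical Whittaker function $W_{\pi_{\ur}}^{\circ}$ is well-defined independently of the (possibly non-unique) choice of $\pi_{\ur}$, which is precisely the content of the remark following Proposition \ref{prop:piurexists} --- and that Stade's spherical formula is being quoted with compatible conventions; the rest is bookkeeping.
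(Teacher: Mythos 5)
Your proposal is correct and follows essentially the same route as the paper: reduce via Proposition \ref{prop:RSkey}, substitute the spherical Whittaker functions using Lemma \ref{lem:WtoWur}, recognise the result as the spherical integral $\Psi(s,W_{\pi_{\ur}}^{\circ},W_{\sigma_{\ur}}^{\circ},\Phi_{\ur})$ by the same computation, and conclude with Stade's formula. The only caveat worth noting (which the paper handles in a remark rather than the proof) is that \cite[Theorem 1.1]{Sta02} is stated for $F=\R$, and the case $F=\C$ requires \cite[Proposition 2.1]{Sta95} or the alternative proofs in \cite{Hum20a} and \cite{IM20}.
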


\begin{proof}
According to \hyperref[lem:WtoWur]{Lemma \ref*{lem:WtoWur}} and \hyperref[prop:RSkey]{Proposition \ref*{prop:RSkey}}, we have the identity
\[\Psi(s,W_{\pi}^{\circ},W_{\sigma}^{\circ},\Phi) = L(ns,\omega_{\pi_{\ur}} \omega_{\sigma_{\ur}}) \int\limits_{\Ngp_{n - 1}(F) \backslash \GL_{n - 1}(F)} W_{\pi_{\ur}}^{\circ}\begin{pmatrix} h & 0 \\ 0 & 1 \end{pmatrix} W_{\sigma_{\ur}}^{\circ}\begin{pmatrix} h & 0 \\ 0 & 1 \end{pmatrix} \left|\det h\right|^{s - 1} \, dh.\]
On the other hand, with $\Phi_{\ur} \in \Scr(F^n)$ given by \eqref{eqn:Phiurdefeq}, the same calculation shows that the right-hand side above is equal to $\Psi(s,W_{\pi_{\ur}}^{\circ},W_{\sigma_{\ur}}^{\circ},\Phi_{\ur})$. Stade's formula \cite[Theorem 1.1]{Sta02} then gives the identity
\[\Psi(s,W_{\pi_{\ur}}^{\circ},W_{\sigma_{\ur}}^{\circ},\Phi_{\ur}) = L(s,\pi_{\ur} \times \sigma_{\ur}).\qedhere\]
\end{proof}

\begin{remark}
Stade's formula is only proved for $F = \R$ in \cite{Sta02}, though from \cite[Proposition 2.1]{Sta95}, the same method can be used to prove this formula for $F = \C$. Different proofs of Stade's formula, valid for $F \in \{\R,\C\}$, were given in more general settings by the first author \cite[Theorem 4.18]{Hum20a} and independently by Ishii and Miyazaki \cite[Theorem 2.9]{IM20}; both proofs are based on the work of Jacquet \cite{Jac09}.
\end{remark}

As previously highlighted, the triple $(W_{\pi}^{\circ}, W_{\sigma}^{\circ}, \Phi)$ is a weak test vector, rather than a strong test vector, for the $\GL_n \times \GL_n$ Rankin--Selberg integral, since in general the na\"{i}ve Rankin--Selberg $L$-function $L(s,\pi_{\ur} \times \sigma_{\ur})$ is not equal to $L(s,\pi \times \sigma)$. Nonetheless, these two $L$-functions are closely related.

\begin{proposition}
\label{prop:RSpolynomial}
Given induced representations of Whittaker type $\pi$ of $\GL_n(F)$ and $\sigma$ of $\GL_m(F)$, there exists a polynomial $p(s)$ for which 
\[
L(s,\pi_{\ur} \times \sigma_{\ur}) = p(s) L(s,\pi \times \sigma).
\]
\end{proposition}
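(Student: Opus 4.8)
The plan is to reduce everything to the Archimedean local Langlands correspondence and the explicit shape of $L$-factors recorded in the Preliminaries. Write $\pi = \bigboxplus_{j=1}^{r} \pi_j$ and $\sigma = \bigboxplus_{k=1}^{r'} \sigma_k$ as isobaric sums of essentially square-integrable representations, so that by the definition of the Rankin--Selberg $L$-function via the local Langlands correspondence \cite{Kna94} we have the factorisation $L(s,\pi \times \sigma) = \prod_{j,k} L(s, \pi_j \times \sigma_k)$, and similarly $L(s, \pi_{\ur} \times \sigma_{\ur}) = \prod_{j,k} L(s, \pi_{j,\ur} \times \sigma_{k,\ur})$ after decomposing $\pi_{\ur}$ and $\sigma_{\ur}$ into their (unramified) isobaric constituents. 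Since a finite product of polynomials is a polynomial, it suffices to treat a single pair, i.e.\ to show that for $\pi_j$ essentially square-integrable of $\GL_{n_j}(F)$ and $\sigma_k$ essentially square-integrable of $\GL_{m_k}(F)$, the quotient $L(s, \pi_{j,\ur} \times \sigma_{k,\ur}) / L(s, \pi_j \times \sigma_k)$ is a polynomial in $s$.

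For this reduced statement I would argue case by case according to $F = \C$ or $F = \R$ and the type of each factor. Over $\C$, every $\pi_j$ is a character $e^{i\kappa_j \arg}|\cdot|_{\C}^{t_j}$, so $\pi_{j,\ur} = |\cdot|_{\C}^{t_j + \|\kappa_j\|/2}$ (matching the Langlands parameter), and $L(s, \pi_j \times \sigma_k)$ and $L(s, \pi_{j,\ur} \times \sigma_{k,\ur})$ are each a single $\zeta_{\C}$-factor; their arguments differ by $\tfrac{1}{2}(\|\kappa_j + \kappa_k\| - \|\kappa_j\| - \|\kappa_k\|)$ — wait, more carefully the relevant comparison is between $\zeta_\C(s + t_j + t_k + \tfrac{\|\kappa_j + \kappa_k\|}{2})$ on the ramified side and $\zeta_\C(s + (t_j + \tfrac{\|\kappa_j\|}{2}) + (t_k + \tfrac{\|\kappa_k\|}{2}))$ on the unramified side. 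Since $\|\kappa_j + \kappa_k\| \le \|\kappa_j\| + \|\kappa_k\|$ and the two quantities differ by an even integer, the quotient is a ratio $\zeta_\C(s+a)/\zeta_\C(s+b)$ with $b - a$ a nonnegative even integer; using $\zeta_\C(s) = 2(2\pi)^{-s}\Gamma(s)$ and the functional equation $\Gamma(s+1) = s\Gamma(s)$, this quotient is (up to an exponential factor $(2\pi)^{b-a}$, which I must absorb — actually since $\zeta_\C(s+a)/\zeta_\C(s+b) = (2\pi)^{b-a}\Gamma(s+a)/\Gamma(s+b) = (2\pi)^{b-a}\prod_{i}(s + a + i)^{-1}$ for $b > a$) \emph{not} obviously a polynomial. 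This is the subtlety: the quotient $L(s,\pi_{\ur}\times\sigma_{\ur})/L(s,\pi\times\sigma)$ goes the other way, with the \emph{ramified} argument being the \emph{larger} one, so the quotient is $\Gamma(s+b)/\Gamma(s+a)$ with $b \ge a$ and $b - a \in \Z_{\ge 0}$, which equals $\prod_{i=0}^{b-a-1}(s + a + i)$, a genuine polynomial — the exponential prefactors $(2\pi)^{\pm(b-a)}$ and factors of $2$ cancelling since both $\zeta_\C$'s carry the same constant.

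Over $\R$ the bookkeeping is the same in spirit but with more cases, since a factor may be $\sgn^{\kappa}|\cdot|_{\R}^{t}$ (contributing one $\zeta_{\R}$) or $D_{\kappa} \otimes |\det|_{\R}^{t}$ (contributing two $\zeta_{\R}$'s), and the Rankin--Selberg $L$-factor $L(s,\pi_j\times\sigma_k)$ is computed from the tensor product of the corresponding Weil--Deligne representations via \cite{Kna94}; in every case $L(s,\pi_j \times \sigma_k)$ is a product of $\zeta_{\R}$-factors whose shift parameters, compared with those of $L(s, \pi_{j,\ur}\times\sigma_{k,\ur})$, differ by nonnegative integers congruent to $0$ modulo $1$ in a way that makes each ratio $\zeta_{\R}(s+b)/\zeta_{\R}(s+a) = \pi^{(b-a)/2}\Gamma(\tfrac{s+b}{2})/\Gamma(\tfrac{s+a}{2})$ a polynomial times (cancelling) constants whenever $b - a \in 2\Z_{\ge 0}$; one checks that the parities always work out because $\|\kappa\|$ and $\kappa$ appear in the combinations dictated by the Langlands parameters, where conductor-exponent contributions $\|\kappa_j\|$ are added on the unramified side. \textbf{The main obstacle} is precisely this parity-and-sign bookkeeping over $\R$ for the $D_\kappa$-type factors (equivalently, verifying that for essentially square-integrable $\pi_j, \sigma_k$ the shifts always move in the polynomial-producing direction), together with confirming that the constant and exponential prefactors $\pi^{\pm}, 2^{\pm}, (2\pi)^{\pm}$ attached to $\zeta_F$ cancel identically so that $p(s)$ is a \emph{bona fide} polynomial with no transcendental leading coefficient; this is a finite but somewhat tedious verification, which I would organise by tabulating the four GL$_1(\R)$/GL$_2(\R)$ $\times$ GL$_1(\R)$/GL$_2(\R)$ combinations and invoking $\Gamma(s+1) = s\Gamma(s)$ in each.
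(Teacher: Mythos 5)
Your proposal follows essentially the same route as the paper: factor $L(s,\pi\times\sigma)$ and $L(s,\pi_{\ur}\times\sigma_{\ur})$ over pairs of essentially square-integrable constituents via the local Langlands correspondence, and then compare the resulting $\zeta_{\C}$- and $\zeta_{\R}$-factors case by case using $\Gamma(s+1)=s\Gamma(s)$, including the pairing of the two $\zeta_{\R}$-factors in the $D_{\kappa}\times D_{\lambda}$ case so that the shift differences are even --- exactly the tabulation the paper carries out. The one small correction: the constant prefactors do \emph{not} cancel (the paper's explicit quotients carry factors such as $(2\pi)^{-\min\{\|\kappa\|,\|\lambda\|\}}$ and $\pi^{-1}$), but this is harmless, since the statement only requires $p(s)$ to be a polynomial, with no constraint on its (possibly transcendental) leading coefficient.
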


\begin{proof}
For $\pi = \bigboxplus_{j = 1}^{r} \pi_j$ and $\sigma = \bigboxplus_{\ell = 1}^{\rho} \sigma_{\ell}$, with each $\pi_j$ and $\sigma_{\ell}$ essentially square-integrable, we have that
\[L(s,\pi \times \sigma) = \prod_{j = 1}^{r} \prod_{\ell = 1}^{\rho} L(s,\pi_j \times \sigma_{\ell})\]
via the local Langlands correspondence \cite{Kna94}. Thus it suffices to consider the case where $\pi$ and $\sigma$ are both essentially square-integrable.

Suppose first that $F = \C$, so that $\pi = e^{i\kappa \arg} |\cdot|_{\C}^{t}$, and $\sigma = e^{i\lambda \arg} |\cdot|_{\C}^{u}$. Then since $\zeta_{\C}(s) \coloneqq 2(2\pi)^{-s} \Gamma(s)$, and recalling the fact that $\Gamma(s + 1) = s\Gamma(s)$, we have that
\begin{multline*}
\frac{L(s,\pi_{\ur} \times \sigma_{\ur})}{L(s,\pi \times \sigma)} = \frac{\zeta_{\C}\left(s + t + u + \frac{\|\kappa\| + \|\lambda\|}{2}\right)}{\zeta_{\C}\left(s + t + u + \frac{\|\kappa + \lambda\|}{2}\right)}	\\
= \begin{dcases*}
1 & if $\sgn(\kappa) = \sgn(\lambda)$,	\\
(2\pi)^{-\min\{\|\kappa\|,\|\lambda\|\}} \prod_{m = 0}^{\min\{\|\kappa\|,\|\lambda\|\} - 1} \left(s + t + u + \frac{\|\kappa + \lambda\|}{2} + m\right) & if $\sgn(\kappa) \neq \sgn(\lambda)$.
\end{dcases*}
\end{multline*}

Next suppose that $F = \R$ and that $\pi = \sgn^{\kappa} |\cdot|_{\R}^{t}$, and $\sigma = \sgn^{\lambda} |\cdot|_{\R}^{u}$. Then since $\zeta_{\R}(s) \coloneqq \pi^{-s/2} \Gamma(s/2)$, we have that
\[\frac{L(s,\pi_{\ur} \times \sigma_{\ur})}{L(s,\pi \times \sigma)} = \frac{\zeta_{\R}\left(s + t + u + \kappa + \lambda\right)}{\zeta_{\R}\left(s + t + u + \|\kappa - \lambda\|\right)} = \begin{dcases*}
1 & if $(\kappa,\lambda) \neq (1,1)$,	\\
\pi^{-1} \left(\frac{s + t + u}{2}\right) & if $(\kappa,\lambda) = (1,1)$.
\end{dcases*}\]

If $F = \R$, $\pi = D_{\kappa} \otimes \left|\det\right|_{\R}^{t}$, and $\sigma = \sgn^{\lambda} |\cdot|_{\R}^{u}$, then
\begin{align*}
\frac{L(s,\pi_{\ur} \times \sigma_{\ur})}{L(s,\pi \times \sigma)} & = \frac{\zeta_{\R}\left(s + t + u + \frac{\kappa - 1}{2} + \lambda\right)\zeta_{\R}\left(s + t + u + \frac{\kappa + 1}{2} + \lambda\right)}{\zeta_{\R}\left(s + t + u + \frac{\kappa - 1}{2}\right)\zeta_{\R}\left(s + t + u + \frac{\kappa + 1}{2}\right)}	\\
& = \begin{dcases*}
1 & if $\lambda = 0$,	\\
\pi^{-1} \left(\frac{s + t + u + \frac{\kappa - 1}{2}}{2}\right) & if $\lambda = 1$.
\end{dcases*}
\end{align*}
An analogous identity holds if  $\pi = \sgn^{\kappa} |\cdot|_{\R}^{t}$, and $\sigma = D_{\lambda} \otimes \left|\det\right|_{\R}^{u}$.

Finally, if $F = \R$, $\pi = D_{\kappa} \otimes \left|\det\right|_{\R}^{t}$, and $\sigma = D_{\lambda} \otimes \left|\det\right|_{\R}^{u}$, then
\begin{align*}
\frac{L(s,\pi_{\ur} \times \sigma_{\ur})}{L(s,\pi \times \sigma)} & = \frac{\zeta_{\R}\left(s + t + u + \frac{\kappa + \lambda}{2} - 1\right)\zeta_{\R}\left(s + t + u + \frac{\kappa + \lambda}{2}\right)}{\zeta_{\R}\left(s + t + u + \frac{\kappa + \lambda}{2} - 1\right)\zeta_{\R}\left(s + t + u + \frac{\kappa + \lambda}{2}\right)}	\\
& \hspace{2cm} \times \frac{\zeta_{\R}\left(s + t + u + \frac{\kappa + \lambda}{2}\right)\zeta_{\R}\left(s + t + u + \frac{\kappa + \lambda}{2} + 1\right)}{\zeta_{\R}\left(s + t + u + \frac{\|\kappa - \lambda\|}{2}\right)\zeta_{\R}\left(s + t + u + \frac{\|\kappa - \lambda\|}{2} + 1\right)}	\\
& = \pi^{-\min\{\kappa,\lambda\}} \prod_{m = 0}^{\left\lceil\frac{1}{2} \min\{\kappa,\lambda\}\right\rceil - 1} \left(\frac{s + t + u + \frac{\|\kappa - \lambda\|}{2}}{2} + m\right)	\\
& \hspace{2cm} \times \prod_{m = 0}^{\left\lfloor\frac{1}{2} \min\{\kappa,\lambda\}\right\rfloor - 1} \left(\frac{s + t + u + \frac{\|\kappa - \lambda\|}{2} + 1}{2} + m\right).\qedhere
\end{align*}
\end{proof}

When $s = 1$, the integral appearing in \eqref{eqn:RSperiod} is known as the \emph{Rankin--Selberg period}. In general, this integral need not converge at $s = 1$. When $\pi$ and $\sigma$ are \emph{unitary}, however, convergence is guaranteed by the following lemma. We omit the proof of this lemma, since it is standard \cite[Proposition 3.17]{JS81a}; it relies upon bounds for Whittaker functions by a gauge, namely \cite[\S 4 Propositions 2 and 3]{JS90}.

\begin{lemma}
\label{RS-convergence}
Let $\pi$ and $\sigma$ be unitary generic irreducible Casselman--Wallach representations of $\GL_n(F)$. For any $W_{\pi} \in \WW(\pi,\psi)$ and $W_{\sigma} \in \WW(\sigma,\overline{\psi})$, the integral
\[
\int\limits_{\Ngp_{n - 1}(F) \backslash \GL_{n - 1}(F)} W_{\pi}\begin{pmatrix} h & 0 \\ 0 & 1 \end{pmatrix} W_{\sigma} \begin{pmatrix} h & 0 \\ 0 & 1 \end{pmatrix} \left|\det h\right|^{s - 1} \, dh
\]
converges absolutely for $\Re(s) \geq 1$.
\end{lemma}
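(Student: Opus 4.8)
The plan is to pass, via the Iwasawa decomposition, to an integral over the diagonal torus of $\GL_{n-1}(F)$, dominate the two Whittaker functions pointwise by gauges, and thereby reduce absolute convergence to that of a product of elementary one-dimensional integrals. Writing $h = ak$ with $a = \diag(a_1,\ldots,a_{n-1}) \in \Agp_{n-1}(F)$ and $k \in K_{n-1}$, one has $dh = \delta_{\Bgp_{n-1}(F)}^{-1}(a)\, d^{\times}a\, dk$ and $\left|\det h\right| = \left|\det a\right|$; since $K_{n-1}$ is compact and the integrand is continuous, absolute convergence of the period reduces to the finiteness of
\[
\int\limits_{\Agp_{n-1}(F)} M_{\pi}(a)\, M_{\sigma}(a)\, \left|\det a\right|^{\Re(s) - 1}\, \delta_{\Bgp_{n-1}(F)}^{-1}(a)\, d^{\times}a, \qquad M_{\pi}(a) \coloneqq \sup_{k \in K_{n-1}} \left|W_{\pi}\begin{psmallmatrix} ak & 0 \\ 0 & 1 \end{psmallmatrix}\right|,
\]
and likewise for $M_{\sigma}$.

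Next I would invoke the archimedean gauge estimates of Jacquet and Shalika \cite[\S 4 Propositions 2 and 3]{JS90} for the $\GL_n(F)$ Whittaker function $W_{\pi}$: there is a left $\Ngp_n(F)$-invariant, right $K_n$-invariant function $\xi_{\pi} \geq \left|W_{\pi}\right|$ whose restriction to the diagonal torus is, in the simple-root coordinates, a finite sum of monomials, each decaying faster than any power as a coordinate tends to $\infty$ and growing at most like a fixed power of that coordinate as it tends to $0$, the exponents being controlled by the Langlands parameters of $\pi$. By right $K_n$-invariance, $M_{\pi}(a) \leq \xi_{\pi}(\diag(a_1,\ldots,a_{n-1},1))$; hence, after the substitution $b_i \coloneqq a_i/a_{i+1}$ for $1 \leq i \leq n-1$ (with $a_n \coloneqq 1$), one obtains, for every $N > 0$, the bound
\[
M_{\pi}(a) \ll_{N} \prod_{i=1}^{n-1} \left|b_i\right|^{f_i(\pi)}\left(1 + \left|b_i\right|\right)^{-N},
\]
and likewise for $M_{\sigma}$, where $f_i(\pi)$ denotes the smallest exponent of $\left|b_i\right|$ occurring in the gauge. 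The input from unitarity enters here: by the classification of the generic unitary dual of $\GL_n(F)$, the central exponents $\Re(t_j)$ of the essentially square-integrable constituents $\pi_j = \delta_j \otimes \left|\det\right|^{t_j}$ of a generic unitary representation satisfy $\left|\Re(t_j)\right| < \tfrac12$ --- the trivial bound toward the Ramanujan conjecture --- and a short computation then shows $f_i(\pi) > \tfrac{i(n-i)}{2} - \tfrac{i}{2}$, and similarly for $\sigma$.

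Finally, in the $b_i$-coordinates one has $d^{\times}a = \prod_{i} d^{\times}b_i$ while $\left|\det a\right|^{s - 1}\delta_{\Bgp_{n-1}(F)}^{-1}(a) = \prod_{i=1}^{n-1}\left|b_i\right|^{i(s + i - n)}$, so the integrand is dominated by a product over $1 \leq i \leq n-1$ of one-variable expressions $\left|b_i\right|^{A_i(s)}\left(1 + \left|b_i\right|\right)^{-2N}$, where $A_i(s) = i\,\Re(s) + i(i - n) + f_i(\pi) + f_i(\sigma)$. For $\Re(s) \geq 1$, the lower bounds on the $f_i$ give $\Re(A_i(s)) \geq A_i(1) > 0$, so each factor $\int_{F^{\times}} \left|b_i\right|^{A_i(s)}\left(1 + \left|b_i\right|\right)^{-2N}\, d^{\times}b_i$ converges once $N$ is taken large --- the monomial handling $\left|b_i\right| \to 0$ and the rapidly decaying factor handling $\left|b_i\right| \to \infty$ --- and therefore so does the period integral.

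The step I expect to be the main obstacle, and the reason one cites \cite[Proposition 3.17]{JS81a} rather than an elementary bound, is the analysis at the edge $\Re(s) = 1$: one needs both the precise shape of the Jacquet--Shalika gauge (to ensure the $f_i$ are not too negative) and the \emph{strict} inequality $\left|\Re(t_j)\right| < \tfrac12$ for generic unitary representations, so as to conclude $A_i(1) > 0$ rather than merely $A_i(1) \geq 0$. For $\Re(s) > 1$ there is room to spare, and the gauge bound can there be replaced by the cruder fact that Whittaker functions are of moderate growth and rapidly decreasing in the positive Weyl chamber.
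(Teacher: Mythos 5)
Your proposal is correct and is essentially the argument the paper has in mind: the paper omits the proof precisely because it is the standard one from Jacquet--Shalika \cite[Proposition 3.17]{JS81a}, resting on the gauge majorizations \cite[\S 4 Propositions 2 and 3]{JS90} together with the bound $|\Re(t_j)| < \tfrac{1}{2}$ for the exponents of unitary generic representations, which is exactly the route you take (Iwasawa reduction to the torus, gauge bounds in simple-root coordinates, strict positivity of the resulting exponents at $\Re(s) = 1$). Your exponent bookkeeping, including $f_i > \tfrac{i(n-i)}{2} - \tfrac{i}{2}$ and $A_i(1) > 0$, checks out.
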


Archimedean components of cuspidal automorphic representations are unitary generic irreducible Casselman--Wallach representations twisted by a (possibly nonunitary) unramified character. In this regard, our unitary assumption is sufficient for potential global applications therein (cf.\ \hyperref[sec:Global]{Section \ref*{sec:Global}}). For $\sigma=\widetilde{\pi}$, the Rankin--Selberg period (in a slightly modified form as the inner product) appears in the work of Feigon, Lapid, and Offen \cite[Appendix A.1]{FLO}, Gelbart, Jacquet, and Rogawski \cite[Lemma 3.3]{GJR01}, and W.\ Zhang \cite[(3.2)]{Zha14}), which are all based on the pioneering result of Jacquet and Shalika  \cite[Proposition 3.17]{JS81a}. Notably, Venkatesh \cite[\S 7]{Ven06} has evaluated the Rankin--Selberg period for nonarchimedean $F$ when $\sigma = \widetilde{\pi}$ and both Whittaker functions are newforms, so that this is simply the square of the $L^2$-norm of the newform. We prove an archimedean analogue.

\begin{theorem}
\label{thm:RSperiod}
Let $\pi$ and $\sigma$ be unitary generic irreducible Casselman--Wallach representations of $\GL_n(F)$ with Whittaker newforms $W_{\pi}^{\circ} \in \WW(\pi,\psi)$ and $W_{\sigma}^{\circ} \in \WW(\sigma,\overline{\psi})$. Then the $\GL_n \times \GL_n$ Rankin--Selberg period
\begin{equation}
\label{eqn:RSperiod2}
\int\limits_{\Ngp_{n - 1}(F) \backslash \GL_{n - 1}(F)} W_{\pi}^{\circ}\begin{pmatrix} h & 0 \\ 0 & 1 \end{pmatrix} W_{\sigma}^{\circ}\begin{pmatrix} h & 0 \\ 0 & 1 \end{pmatrix} \, dh
\end{equation}
is equal to
\[
\frac{L(1,\pi_{\ur} \times \sigma_{\ur})}{L(n,\omega_{\pi_{\ur}} \omega_{\sigma_{\ur}})}.
\]
\end{theorem}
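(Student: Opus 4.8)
The strategy is to read off the Rankin--Selberg period from the meromorphic identity of \hyperref[thm:GLnxGLn]{Theorem \ref*{thm:GLnxGLn}} by specialising to $s = 1$, using \hyperref[RS-convergence]{Lemma \ref*{RS-convergence}} to bridge the gap between the region of absolute convergence and the edge point $s = 1$. Realise $\pi$ and $\sigma$ as induced representations of Langlands type, which is permissible since every generic irreducible Casselman--Wallach representation of $\GL_n(F)$ is isomorphic to one. Let $I(s)$ denote the integral obtained from the Rankin--Selberg period \eqref{eqn:RSperiod2} by inserting the factor $\left|\det h\right|^{s - 1}$ into the integrand. Combining \hyperref[prop:RSkey]{Proposition \ref*{prop:RSkey}} and \hyperref[thm:GLnxGLn]{Theorem \ref*{thm:GLnxGLn}} (for the same auxiliary Schwartz--Bruhat function $\Phi$), for $\Re(s)$ sufficiently large we have
\[L(ns,\omega_{\pi_{\ur}} \omega_{\sigma_{\ur}}) \, I(s) = L(s,\pi_{\ur} \times \sigma_{\ur}).\]
Since $L(ns,\omega_{\pi_{\ur}} \omega_{\sigma_{\ur}})$ is a product of values of $\zeta_F$, which is nowhere zero, the function $Q(s) \coloneqq L(s,\pi_{\ur} \times \sigma_{\ur}) / L(ns,\omega_{\pi_{\ur}} \omega_{\sigma_{\ur}})$ is meromorphic on $\C$, and $I(s) = Q(s)$ for $\Re(s)$ large.

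It remains to continue this to $s = 1$, which is the step where the unitarity of $\pi$ and $\sigma$ is used, through \hyperref[RS-convergence]{Lemma \ref*{RS-convergence}}: the integral defining $I(s)$ converges absolutely for every $s$ with $\Re(s) \geq 1$. Moreover, on any vertical strip $1 \leq \Re(s) \leq A$ the integrand is dominated by $\left|W_{\pi}^{\circ}\right| \left|W_{\sigma}^{\circ}\right| \left(1 + \left|\det h\right|^{A - 1}\right)$, which is integrable over $\Ngp_{n - 1}(F) \backslash \GL_{n - 1}(F)$ by the same lemma; hence $I(s)$ is holomorphic on $\Re(s) > 1$ and extends continuously to the closed half-plane $\Re(s) \geq 1$. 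By analytic continuation $I(s) = Q(s)$ throughout $\Re(s) > 1$. Were $Q$ to have a pole at $s = 1$, then $\left|Q(s)\right| \to \infty$ as $s \to 1$, contradicting $Q(s) = I(s) \to I(1)$ as $s \to 1^{+}$ along the real axis; hence $Q$ is holomorphic at $s = 1$ and $Q(1) = I(1)$ by continuity. Since $I(1)$ is precisely the period \eqref{eqn:RSperiod2}, we conclude that it equals $L(1,\pi_{\ur} \times \sigma_{\ur}) / L(n,\omega_{\pi_{\ur}} \omega_{\sigma_{\ur}})$.

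The only delicate point is the interface between the two descriptions of $I(s)$ near $s = 1$: one must simultaneously know that the integral converges at $s = 1$ and that the explicit ratio of Gamma factors $Q(s)$ is regular there. The former is the content of \hyperref[RS-convergence]{Lemma \ref*{RS-convergence}} and rests on gauge estimates for Whittaker functions of unitary representations; the latter then follows as above, but it can also be checked directly from the Jacquet--Shalika bound $\Re(\alpha_{\pi,i}), \Re(\alpha_{\sigma,j}) > -\tfrac{1}{2}$ on the Langlands parameters of unitary generic representations, which gives $\Re(\alpha_{\pi,i} + \alpha_{\sigma,j}) > -1$, so that no factor $\zeta_F(s + \alpha_{\pi,i} + \alpha_{\sigma,j})$ of $L(s,\pi_{\ur} \times \sigma_{\ur})$ has a pole on $\Re(s) \geq 1$, while $\Re\big(n + \sum_i \alpha_{\pi,i} + \sum_j \alpha_{\sigma,j}\big) > 0$ keeps $L(ns,\omega_{\pi_{\ur}} \omega_{\sigma_{\ur}})$ finite and nonzero at $s = 1$.
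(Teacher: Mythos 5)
Your argument is correct and follows the paper's own route: combine Proposition \ref{prop:RSkey} with Theorem \ref{thm:GLnxGLn} to get $L(ns,\omega_{\pi_{\ur}}\omega_{\sigma_{\ur}})\,I(s)=L(s,\pi_{\ur}\times\sigma_{\ur})$ and then specialise at $s=1$, with Lemma \ref{RS-convergence} (unitarity) justifying the evaluation there. The only difference is that you spell out the analytic-continuation and dominated-convergence bookkeeping that the paper leaves implicit, which is a harmless elaboration rather than a different method.
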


\begin{proof}
Having \hyperref[RS-convergence]{Lemma \ref*{RS-convergence}} in mind, we take $s=1$ in \hyperref[prop:RSkey]{Proposition \ref*{prop:RSkey}} and invoke \hyperref[thm:GLnxGLn]{Theorem \ref*{thm:GLnxGLn}}.
\end{proof}

We say that $\pi \otimes \sigma$ is \emph{$\GL_n(F)$-distinguished} if $\Hom_{\GL_n(F)}(\pi \otimes \sigma,\mathbf{1})$ is nontrivial. If $\pi$ and $\sigma$ are irreducible, this condition amounts to saying that $\sigma \simeq \widetilde{\pi}$.

\begin{remark}[{\cite[\S 10]{Bar03}}]
\label{rmk:InvarinatRSperiod}
We define a $\Pgp_n(F)$-invariant bilinear form $\beta : \WW(\pi,\psi) \times \WW(\sigma,\overline{\psi}) \to \C$ by
\begin{equation}
\label{eqn:beta}
\beta(W_{\pi},W_{\sigma}) \coloneqq \int\limits_{\Ngp_{n - 1}(F) \backslash \GL_{n - 1}(F)} W_{\pi}\begin{pmatrix} h & 0 \\ 0 & 1 \end{pmatrix} W_{\sigma}\begin{pmatrix} h & 0 \\ 0 & 1 \end{pmatrix} \, dh.
\end{equation}
It can be deduced from \hyperref[thm:RSperiod]{Theorem \ref*{thm:RSperiod}} that $\beta$ is a nontrivial bilinear form. Furthermore, Baruch \cite{Bar03} has shown that any $\Pgp_n(F)$-invariant pairing is $\GL_n(F)$-invariant; the proof is purely local, whereas a local-to-global approach can be extracted from \cite[Proposition 3.1]{Zha14}. Thus $\beta$ gives rise to a nontrivial $\GL_n(F)$-invariant bilinear form on $\WW(\pi,\psi) \times \WW(\sigma,\overline{\psi})$.
\end{remark}

\subsection{Modified Rankin--Selberg Integrals by Sakellaridis}
\label{sec:ModifiedRS}

Sakellaridis \cite[\S 5]{Sak12} has introduced new types of $\GL_n \times \GL_n$ and $\GL_n \times \GL_{n - 1}$ Rankin--Selberg integrals that extend the classical theory of Rankin--Selberg integrals due to Jacquet, Piatetski-Shapiro, and Shalika \cite{JP-SS83}. We solve the weak test vector problems for these integrals. For these modified Rankin--Selberg integrals, we let $G^{\diag}$ denote the image of $G$ under the diagonal embedding $\GL_n \hookrightarrow \GL_n \times \GL_n$ given by $g \mapsto (g,g)$ with $G$ a subgroup of $\GL_n(F)$.

\begin{theorem}
\label{thm:Saknn}
Let $\pi$ and $\sigma$ be induced representations of Langlands type of $\GL_n(F)$ with Whittaker newforms $W^{\circ}_{\pi} \in \WW(\pi,\psi)$ and $W^{\circ}_{\sigma} \in \WW(\sigma,\overline{\psi})$. Let $\Phi \in \Scr(F^n)$ be the right $K_n$-finite Schwartz--Bruhat function of the form $\Phi(x) = P(x) \exp(-d_F \pi x \prescript{t}{}{\overline{x}})$, where the distinguished homogeneous polynomial $P \in \PP_{\overline{\chi_{\pi} \chi_{\sigma}},c(\pi) + c(\sigma)}(F^n)$ is given by \eqref{eqn:P(x)defeq}, and let $\Phi^{\ast} \in \Scr(\Mat_{n \times n}(F))$ be the bi-$K_n$-finite Schwartz--Bruhat function of the form $\Phi^{\ast}(x) = P^{\ast}(e_n x) \exp(-d_F \pi \Tr(x \prescript{t}{}{\overline{x}}))$, where the distinguished homogeneous polynomial $P^{\ast} \in \HH_{\overline{\chi_{\sigma}},c(\sigma)}(F^n)$ is given by
\[P^{\ast}(x) \coloneqq (\dim \tau_{\chi_{\sigma},c(\sigma)}) \overline{P_{\chi_{\sigma},c(\sigma)}^{\circ}}(x).\]
Then for $\Re(s_1)$ and $\Re(s_2)$ sufficiently large, the modified $\GL_n(F) \times \GL_n(F)$ Rankin--Selberg integral by Sakellaridis
\begin{equation}
\label{eqn:Saknn}
\int\limits_{\Ngp^{\diag}_n(F)  \backslash \GL_n(F) \times \GL_n(F)} W^{\circ}_{\pi}(g_1) W^{\circ}_{\sigma}(g_2) \Phi^{\ast}(g^{-1}_1g_2) \Phi(e_ng_1) \left| \frac{\det g_2}{\det g_1} \right|^{s_1} \left|\det g_1\right|^{s_2} \, dg_2 \, dg_1
\end{equation}
is equal to
\[
L\left(s_2, \pi_{\ur} \times \sigma_{\ur} \right) L\left(s_1-\frac{n-1}{2}, \sigma \right).
\]
\end{theorem}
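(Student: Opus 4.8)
The plan is to integrate out the variable $s_1$ first: the inner $g_2$-integration in \eqref{eqn:Saknn} is a convolution operator which, applied to the Whittaker newform $W_\sigma^\circ$, produces a scalar multiple of $W_\sigma^\circ$, so that the whole expression reduces to the ordinary $\GL_n\times\GL_n$ Rankin--Selberg integral of \hyperref[prop:RSkey]{Proposition \ref*{prop:RSkey}}, which is evaluated in \hyperref[thm:GLnxGLn]{Theorem \ref*{thm:GLnxGLn}}, while the scalar turns out to be the standard $L$-function $L(s_1-\tfrac{n-1}{2},\sigma)$. Working in the range of absolute convergence, I would parametrise $\Ngp_n^{\diag}(F)\backslash(\GL_n(F)\times\GL_n(F))$ by $(g_1,h)$ via $g_2=g_1h$, with $(g_1,h)\in(\Ngp_n(F)\backslash\GL_n(F))\times\GL_n(F)$, so that $g_1^{-1}g_2=h$ and $\left|\det g_2/\det g_1\right|=\left|\det h\right|$; then \eqref{eqn:Saknn} becomes
\[
\int\limits_{\Ngp_n(F)\backslash\GL_n(F)} W_\pi^\circ(g_1)\,\Phi(e_ng_1)\left|\det g_1\right|^{s_2}\,W'(g_1)\,dg_1,\qquad W'(g)\coloneqq\int_{\GL_n(F)} W_\sigma^\circ(gh)\,\Phi^\ast(h)\left|\det h\right|^{s_1}\,dh.
\]
Here $W'$ is the image of $W_\sigma^\circ$ under the convolution operator $\sigma(\Phi^\ast\left|\det\right|^{s_1})\coloneqq\int_{\GL_n(F)}\Phi^\ast(h)\left|\det h\right|^{s_1}\sigma(h)\,dh$ on the Casselman--Wallach representation $\WW(\sigma,\overline\psi)$, which is well-defined for $\Re(s_1)$ large since $\Phi^\ast\left|\det\right|^{s_1}$ is then rapidly decreasing on $\GL_n(F)$; hence $W'\in\WW(\sigma,\overline\psi)$ and \eqref{eqn:Saknn} equals $\Psi(s_2,W_\pi^\circ,W',\Phi)$.

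Next, I would prove that $W'=\lambda(s_1)\,W_\sigma^\circ$ for a scalar $\lambda(s_1)$ depending only on $\sigma$ and $s_1$. Since $e_n\begin{psmallmatrix}k'&0\\0&1\end{psmallmatrix}=e_n$ and $\Tr(h\,\prescript{t}{}{\overline h})$ is invariant under $h\mapsto\begin{psmallmatrix}k'&0\\0&1\end{psmallmatrix}h$ for $k'\in K_{n-1}$, the function $\Phi^\ast$ is left $K_{n-1}$-invariant, so the substitution $h\mapsto\begin{psmallmatrix}k'^{-1}&0\\0&1\end{psmallmatrix}h$ shows $W'$ is right $K_{n-1}$-invariant. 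For $k\in K_n$, substituting $h\mapsto k^{-1}h$ and using the left $K_n$-invariance of $\Tr(h\,\prescript{t}{}{\overline h})$ gives
\[
W'(gk)=\int_{\GL_n(F)} W_\sigma^\circ(gh)\,P^\ast(e_nk^{-1}h)\exp\!\big(-d_F\pi\Tr(h\,\prescript{t}{}{\overline h})\big)\left|\det h\right|^{s_1}\,dh.
\]
For fixed $h$, the map $v\mapsto P^\ast(vh)$ is a degree $c(\sigma)$ homogeneous polynomial on $F^n$ of central character $\overline{\chi_\sigma}$, hence decomposes by \cite[Theorem 12.1.3]{Rud08} as $\sum_m(v\,\prescript{t}{}{\overline v})^{(c(\sigma)-m)/2}R_m^h(v)$ with $R_m^h\in\HH_{\overline{\chi_\sigma},m}(F^n)$ and $c(\chi_\sigma)\le m\le c(\sigma)$, $m\equiv c(\chi_\sigma)\pmod 2$; since $k\mapsto R_m^h(e_nk^{-1})$ is a matrix coefficient of $\tau_{\chi_\sigma,m}$ (the contragredient of the $K_n$-representation on $\HH_{\overline{\chi_\sigma},m}(F^n)$), Schur orthogonality shows that each $K_n$-isotypic component of the right $K_{n-1}$-invariant vector $W'$ is of type $\tau_{\chi_\sigma,m}$ for some $m$ with $c(\chi_\sigma)\le m\le c(\sigma)$, and is again $K_{n-1}$-invariant. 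But by the definition of $c(\sigma)$ and the multiplicity one property of the newform $K_n$-type \cite[Theorem 4.7]{Hum20a}, the only $K_n$-type of $\sigma$ possessing a nonzero $K_{n-1}$-fixed vector and Howe degree $\le c(\sigma)$ is $\tau_{\chi_\sigma,c(\sigma)}$, whose isotypic $K_{n-1}$-invariant subspace of $\WW(\sigma,\overline\psi)$ is the line through $W_\sigma^\circ$. Therefore $W'=\lambda(s_1)W_\sigma^\circ$, and \eqref{eqn:Saknn} equals $\lambda(s_1)\,\Psi(s_2,W_\pi^\circ,W_\sigma^\circ,\Phi)=\lambda(s_1)\,L(s_2,\pi_{\ur}\times\sigma_{\ur})$ by \hyperref[thm:GLnxGLn]{Theorem \ref*{thm:GLnxGLn}}.

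Finally, I would evaluate $\lambda(s_1)$. The relation $\int_{\GL_n(F)}W_\sigma^\circ(gh)\Phi^\ast(h)\left|\det h\right|^{s_1}\,dh=\lambda(s_1)W_\sigma^\circ(g)$ exhibits $\lambda(s_1)$ as a zeta integral of Godement--Jacquet type for the standard $L$-function $L(s,\sigma)$, with the variable shifted by $\tfrac{n-1}{2}$ and with test function $\Phi^\ast$; one checks (say at $g=1_n$, where $W_\sigma^\circ(1_n)=W_{\sigma_{\ur}}^\circ(1_n)\neq 0$ by \hyperref[lem:WtoWur]{Lemma \ref*{lem:WtoWur}}) that the shape of $\Phi^\ast$, built from $P^\circ_{\chi_\sigma,c(\sigma)}$, is exactly the one for which this integral represents $L(s_1-\tfrac{n-1}{2},\sigma)$ without a polynomial correction. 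Concretely, inserting \eqref{eqn:Whitnewformintegral} for $W_\sigma^\circ$ and applying the addition theorem together with the reproducing-kernel identity for $\PP_{\overline{\chi_\sigma},c(\sigma)}(F^n)$, as in the proof of \hyperref[prop:RSkey]{Proposition \ref*{prop:RSkey}}, collapses one factor of $\zeta_F$ and reduces to the analogous integral attached to a representation of $\GL_{n-1}(F)$; induction on $n$ via the propagation formula \cite[Lemmata 9.8 and 9.17]{Hum20a}, with base case $n=1$ the integral representation of $\zeta_F$, then yields $\lambda(s_1)=L(s_1-\tfrac{n-1}{2},\sigma_{\ur})=L(s_1-\tfrac{n-1}{2},\sigma)$. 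Combining, \eqref{eqn:Saknn} equals $L(s_2,\pi_{\ur}\times\sigma_{\ur})L(s_1-\tfrac{n-1}{2},\sigma)$. The main obstacle is precisely this last step, namely verifying that $(W_\sigma^\circ,\Phi^\ast)$ is a \emph{strong} test vector for the Godement--Jacquet integral of $\sigma$; note that no ``naive'' replacement intervenes here, since $L(s,\sigma)=L(s,\sigma_{\ur})$, so this factor appears exactly, in contrast with the Rankin--Selberg factor.
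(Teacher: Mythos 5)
Your outer structure is the same as the paper's: after the substitution $g_2 \mapsto g_1 g_2$, the integral \eqref{eqn:Saknn} factors into the inner convolution $\int_{\GL_n(F)} W_{\sigma}^{\circ}(g_1 g_2)\,\Phi^{\ast}(g_2)\left|\det g_2\right|^{s_1} dg_2$ against ordinary $\GL_n \times \GL_n$ Rankin--Selberg data, and once that inner integral is identified with $L(s_1-\frac{n-1}{2},\sigma)\,W_{\sigma}^{\circ}(g_1)$ the theorem follows from \hyperref[thm:GLnxGLn]{Theorem \ref*{thm:GLnxGLn}}, exactly as in the paper. The difference is how that identification is obtained: the paper gets it in one stroke by citing the archimedean convolution section identity \cite[Lemma 9.6]{Hum20a} (whose nonarchimedean analogue is \hyperref[lem:GJConvolution]{Lemma \ref*{lem:GJConvolution}} of this paper), namely
\[
\int_{\GL_n(F)} W_{\sigma}^{\circ}(h g)\,\Phi^{\ast}(g)\left|\det g\right|^{s_1} dg \;=\; L\left(s_1-\tfrac{n-1}{2},\sigma\right) W_{\sigma}^{\circ}(h),
\]
whereas you set out to prove it. Your argument that $W'$ is proportional to $W_{\sigma}^{\circ}$ --- left $K_{n-1}$-invariance of $\Phi^{\ast}$, the harmonic decomposition of $v \mapsto P^{\ast}(vh)$ confining the $K_n$-types of $W'$ to $\tau_{\chi_{\sigma},m}$ with $m \le c(\sigma)$, then minimality of the Howe degree and multiplicity one of the $\tau_{\chi_{\sigma},c(\sigma)}$-isotypic $K_{n-1}$-fixed line --- is sound and a nice use of the newform theory.

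The genuine gap is the determination of the scalar $\lambda(s_1)$, which is where the entire content of \cite[Lemma 9.6]{Hum20a} lies, and your proposal does not close it. The shortcut of evaluating at $g = 1_n$ fails: \hyperref[lem:WtoWur]{Lemma \ref*{lem:WtoWur}} gives $W_{\sigma}^{\circ}(1_n) = W_{\sigma_{\ur}}^{\circ}(1_n)$ but says nothing about nonvanishing, and this value can vanish (already for $\GL_2(\R)$ it is essentially a Bessel value $K_{\nu}(2\pi)$, which has zeros as a function of the spectral parameter), so there is no legitimate normalisation point in general. The fallback you sketch --- inserting \eqref{eqn:Whitnewformintegral}, the addition theorem and the reproducing kernel, then an induction through the propagation formulae of \cite[Lemmata 9.8 and 9.17]{Hum20a} --- is precisely the computation that would have to be carried out in detail, and it is not; you yourself flag it as the main obstacle. (Two smaller points: your convergence remark is off, since $\Phi^{\ast}\left|\det\right|^{s_1}$ is not rapidly decreasing near the singular matrices --- it is the factor $\left|\det h\right|^{s_1}$ with $\Re(s_1)$ large, together with the gauge estimates the paper cites from Jacquet, that gives absolute convergence; and one should also justify that the operator-valued integral lands back in $\WW(\sigma,\overline{\psi})$.) The efficient repair is simply to cite \cite[Lemma 9.6]{Hum20a} for the displayed identity; with that citation in place of your final step, your argument coincides with the paper's proof.
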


\begin{proof}
We make the change of variables $g_2 \mapsto g_1 g_2$. The ensuing integral becomes
\[
\int\limits_{\Ngp_n(F) \backslash \GL_n(F)}  W^{\circ}_{\pi}(g_1) \Phi(e_ng_1) \left|\det g_1\right|^{s_2} \int_{\GL_n(F)} W^{\circ}_{\sigma}(g_1 g_2) \Phi^{\ast}(g_2) \left|\det g_2\right|^{s_1} \, dg_2 \, dg_1.
\]
The absolute convergence of the above double integral follows from \cite[Lemma 3.2 (ii), Proposition 3.3, and Lemma 3.5]{Jac09}. Applying \cite[Lemma 9.6]{Hum20a} to the innermost integral yields
\[
 L\left(s_1-\frac{n-1}{2}, \sigma\right) \int\limits_{\Ngp_n(F) \backslash \GL_n(F)}  W^{\circ}_{\pi}(g_1)  W^{\circ}_{\sigma}(g_1) \Phi(e_n g_1)  \left|\det g_1\right|^{s_2} \, dg_1.
\]
The latter is the $\GL_n \times \GL_n$ Rankin--Selberg integral, which is $L(s_2,\pi_{\ur} \times \sigma_{\ur})$ by \hyperref[thm:GLnxGLn]{Theorem \ref*{thm:GLnxGLn}}.
\end{proof}

\begin{theorem}
\label{thm:Saknn-1}
Let $\pi$ be an induced representation of Langlands type of $\GL_{n}(F)$ with Whittaker newform $W^{\circ}_{\pi} \in \WW(\pi,\psi)$ and let $\sigma$ be a spherical induced representation of Langlands type of $\GL_{n-1}(F)$ with spherical Whittaker function $W^{\circ}_{\sigma} \in \WW(\sigma,\overline{\psi})$. Then for $\Re(s_1)$ and $\Re(s_2)$ sufficiently large, the modified $\GL_{n}(F) \times \GL_{n-1}(F)$ Rankin--Selberg integral by Sakellaridis
\begin{equation}
\label{eqn:Saknn-1}
\int\limits_{\Ngp^{\diag}_{n-1}(F)  \backslash \GL_{n-1}(F) \times \GL_{n-1}(F)} W^{\circ}_{\pi} \begin{pmatrix} g_1 &0 \\0 & 1 \end{pmatrix} W^{\circ}_{\sigma}(g_2) \Phi_{\ur}\left(  g_1^{-1} g_2 \right) \left| \frac{\det g_2}{\det g_1} \right|^{s_1} \left|\det g_1\right|^{s_2} \, dg_2 \, dg_1
\end{equation}
 is equal to
\[
L\left(s_2+\frac{1}{2}, \pi \times \sigma \right) L\left(s_1-\frac{n-2}{2}, \sigma \right)
\]
where the Schwartz--Bruhat function $\Phi_{\ur} \in  \Scr( \Mat_{(n-1) \times (n-1)}(F))$ is given by \eqref{eqn:Phiurdefeq}.
\end{theorem}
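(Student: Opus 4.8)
The plan is to mimic the proof of \hyperref[thm:Saknn]{Theorem \ref*{thm:Saknn}}. First I would make the change of variables $g_2 \mapsto g_1 g_2$, after which \eqref{eqn:Saknn-1} becomes
\[
\int\limits_{\Ngp_{n-1}(F) \backslash \GL_{n-1}(F)} W^{\circ}_{\pi} \begin{pmatrix} g_1 & 0 \\ 0 & 1 \end{pmatrix} \left|\det g_1\right|^{s_2} \int_{\GL_{n-1}(F)} W^{\circ}_{\sigma}(g_1 g_2) \Phi_{\ur}(g_2) \left|\det g_2\right|^{s_1} \, dg_2 \, dg_1,
\]
the absolute convergence of this double integral for $\Re(s_1)$ and $\Re(s_2)$ sufficiently large following from \cite[Lemma 3.2 (ii), Proposition 3.3, and Lemma 3.5]{Jac09}, just as in \hyperref[thm:Saknn]{Theorem \ref*{thm:Saknn}}.

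Next I would evaluate the inner integral. Since $\sigma$ is spherical with spherical Whittaker function $W^{\circ}_{\sigma}$ and $\Phi_{\ur}$ is the Gaussian on $\Mat_{(n-1) \times (n-1)}(F)$ of \eqref{eqn:Phiurdefeq}, \cite[Lemma 9.6]{Hum20a} applied on $\GL_{n-1}(F)$ should give
\[
\int_{\GL_{n-1}(F)} W^{\circ}_{\sigma}(g_1 g_2) \Phi_{\ur}(g_2) \left|\det g_2\right|^{s_1} \, dg_2 = L\left(s_1 - \frac{n-2}{2}, \sigma\right) W^{\circ}_{\sigma}(g_1),
\]
the shift $\frac{n-2}{2} = \frac{(n-1)-1}{2}$ being dictated by the size of the matrices; because $\sigma$ is already spherical, the $L$-factor that appears is the genuine standard $L$-function of $\sigma$ rather than a na\"{i}ve one. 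Substituting this back leaves
\[
L\left(s_1 - \frac{n-2}{2}, \sigma\right) \int\limits_{\Ngp_{n-1}(F) \backslash \GL_{n-1}(F)} W^{\circ}_{\pi} \begin{pmatrix} g_1 & 0 \\ 0 & 1 \end{pmatrix} W^{\circ}_{\sigma}(g_1) \left|\det g_1\right|^{s_2} \, dg_1.
\]

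Finally, the remaining integral is the $\GL_n \times \GL_{n-1}$ Rankin--Selberg integral of Jacquet, Piatetski-Shapiro, and Shalika evaluated on the Whittaker newform of $\pi$ and the spherical Whittaker function of $\sigma$; by \cite[Theorem 4.17]{Hum20a} --- the very input used in the proof of \hyperref[lem:WtoWur]{Lemma \ref*{lem:WtoWur}}, with the exponent $s - \frac{1}{2}$ there set equal to $s_2$ --- it equals $L\left(s_2 + \frac{1}{2}, \pi \times \sigma\right)$, and multiplying the two factors yields the asserted identity. I expect the only delicate point to be purely bookkeeping: pinning down the half-integral shifts so that \cite[Lemma 9.6]{Hum20a} delivers precisely $L(s_1 - \frac{n-2}{2}, \sigma)$ and \cite[Theorem 4.17]{Hum20a} precisely $L(s_2 + \frac{1}{2}, \pi \times \sigma)$; beyond that, the argument is a routine specialisation of the proof of \hyperref[thm:Saknn]{Theorem \ref*{thm:Saknn}}, with the ramified test function $\Phi^{\ast}$ there replaced by the spherical Gaussian $\Phi_{\ur}$ appropriate to the unramified $\sigma$.
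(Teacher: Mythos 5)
Your proposal is correct and follows essentially the same route as the paper: the change of variables $g_2 \mapsto g_1 g_2$, the convolution section identity \cite[Lemma 9.6]{Hum20a} applied on $\GL_{n-1}(F)$ to extract $L(s_1 - \tfrac{n-2}{2},\sigma)$, and \cite[Theorem 4.17]{Hum20a} to evaluate the remaining $\GL_n \times \GL_{n-1}$ Rankin--Selberg integral as $L(s_2 + \tfrac{1}{2}, \pi \times \sigma)$. The only cosmetic difference is the citation for absolute convergence (the paper invokes \cite[Lemma 3.2 (ii) and Section 8.3]{Jac09}), which does not affect the argument.
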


\begin{proof}
We make the change of variables $g_2 \mapsto g_1 g_2$ in order to obtain
\[
\int\limits_{\Ngp_{n-1}(F) \backslash \GL_{n-1}(F)} W^{\circ}_{\pi}\begin{pmatrix} g_1 & 0 \\ 0&1 \end{pmatrix} \left|\det g_1\right|^{s_2} \int_{\GL_{n-1}(F)} W^{\circ}_{\sigma} (g_1 g_2) \Phi_{\ur}(g_2) \left|\det g_2\right|^{s_1} \, dg_2 \, dg_1.
\]
The absolute convergence of the above double integral can be shown by \cite[Lemma 3.2 (ii) and Section 8.3]{Jac09}. Then \cite[Lemma 9.6]{Hum20a} implies that this is equal to
\[
L\left(s_1-\frac{n-2}{2}, \sigma \right) \int\limits_{\Ngp_{n-1}(F) \backslash \GL_{n-1}(F)} W^{\circ}_{\pi}  \begin{pmatrix} g_1 & 0 \\ 0 &1 \end{pmatrix}  W^{\circ}_{\sigma}(g_1) \left|\det g_1\right|^{s_2} \, dg_1.
\]
From \cite[Theorem 4.17]{Hum20a}, the latter integral is simply $L(s_2+1/2, \pi \times \sigma)$.
\end{proof}

We take this occasion to complete weak test vector problems for modified Rankin--Selberg integrals over a nonarchimedean local field $F$. The missing ingredient is a convolution section identity for the Whittaker newform, namely the nonarchimedean analogue of \cite[Lemma 9.6]{Hum20a}. The identity can be viewed as a generalization of an identity of Godement--Jacquet \cite[Lemma 6.10]{GJ72} (cf.\ \cite[(5-2)]{Sak12}).

\begin{lemma}
\label{lem:GJConvolution}
Let $F$ be a nonarchimedean local field and let $\pi$ be an induced representation of Langlands type of $\GL_n(F)$ with Whittaker newform $W^{\circ}_{\pi} \in \WW(\pi,\psi)$. Then for all $h \in \GL_n(F)$ and for $\Re(s)$ sufficiently large, we have that
\[
\int_{\GL_n(F)} W^{\circ}_{\pi}(hg)\Phi^{\ast}(g) \left|\det g\right|^{s+\frac{n-1}{2}} \, dg = L(s,\pi)  W^{\circ}_{\pi}(h),
\]
where $\Phi^{\ast} \in \Scr(\Mat_{n \times n}(F))$ is the bi-$K_n$-finite Schwartz--Bruhat function
\begin{multline}
\label{GJ-SchwartzBruhat}
\Phi^{\ast}(x) \\
\coloneqq  \begin{dcases*}
1 & if $c(\pi) = 0$, $x \in \Mat_{n \times n}(\OO)$, and $x_{n,1},\ldots,x_{n,n} \in \OO$,	\\
\frac{\omega_{\pi}^{-1}(x_{n,n})}{\vol(K_0(\pp^{c(\pi)}))} & if $c(\pi) > 0$, $x \in \Mat_{n \times n}(\OO)$, $x_{n,1},\ldots,x_{n,n - 1} \in \pp^{c(\pi)}$, and $x_{n,n} \in \OO^{\times}$,	\\
0 & otherwise.
\end{dcases*}
\end{multline}
\end{lemma}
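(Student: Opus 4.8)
We sketch the argument. The plan is, for $\Re(s)$ sufficiently large, first to show that the left-hand side depends on $h$ only through the value $W^{\circ}_{\pi}(h)$, and then to pin down the constant of proportionality by specialising to $h = 1_n$. Throughout we write $K_1(\pp^m) \coloneqq \{k \in \GL_n(\OO) : k_{n,j} \in \pp^m \ (j < n),\ k_{n,n} \equiv 1 \pmod{\pp^m}\}$.

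\emph{Convergence and membership in the Whittaker model.} Since $\pi$ is an induced representation of Langlands type, $\omega_{\pi} = \chi_0 |\cdot|^t$ with $\chi_0$ unitary, so $|\omega_{\pi}^{-1}|$ is bounded on $\OO^{\times}$ and hence $|\Phi^{\ast}| \ll \mathbf{1}_{\Mat_{n \times n}(\OO)}$ on $\GL_n(F)$. Combined with the standard gauge estimate for $W^{\circ}_{\pi}$ and the Cartan decomposition $\GL_n(F) = \bigsqcup_{\lambda_1 \geq \cdots \geq \lambda_n} K_n \varpi^{\lambda} K_n$, this shows that the integral converges absolutely once $\Re(s)$ is large. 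Fix such an $s$ and set $f_s(g) \coloneqq \Phi^{\ast}(g) \left|\det g\right|^{s + \frac{n-1}{2}}$. For $N \geq 0$ the truncation $f_s^{(N)} \coloneqq f_s \cdot \mathbf{1}_{\{\left|\det g\right| \geq q^{-N}\}}$ lies in $C_c^{\infty}(\GL_n(F))$, and $h \mapsto \int_{\GL_n(F)} f_s^{(N)}(g) W^{\circ}_{\pi}(hg) \, dg$ is the Whittaker function of $\pi(f_s^{(N)}) v^{\circ} \in V_{\pi}$, hence lies in $\WW(\pi,\psi)$.

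\emph{Reduction to a scalar via newform theory.} The crucial structural fact is that $\Phi^{\ast}$, and therefore each $f_s^{(N)}$, is left-invariant under $K_1(\pp^{c(\pi)})$: for $k \in K_1(\pp^{c(\pi)})$ and $x \in \Mat_{n \times n}(\OO)$ the last row of $kx$ is congruent to that of $x$ modulo $\pp^{c(\pi)}$ and $(kx)_{n,n} \equiv x_{n,n} \pmod{\pp^{c(\pi)}}$, and since the conductor exponent of $\omega_{\pi}$ is at most $c(\pi)$, the support conditions and the value $\omega_{\pi}^{-1}((kx)_{n,n})$ are unchanged, so $\Phi^{\ast}(kx) = \Phi^{\ast}(x)$. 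Substituting $g \mapsto k^{-1} g$ then shows that $h \mapsto \int f_s^{(N)}(g) W^{\circ}_{\pi}(hg) \, dg$ is right-invariant under $K_1(\pp^{c(\pi)})$; by the nonarchimedean newform theory of Jacquet, Piatetski-Shapiro, and Shalika \cite{JP-SS81}, the space $\WW(\pi,\psi)^{K_1(\pp^{c(\pi)})}$ is one-dimensional and spanned by $W^{\circ}_{\pi}$, so this function equals $\lambda^{(N)}(s) W^{\circ}_{\pi}$ for a scalar $\lambda^{(N)}(s)$. Letting $N \to \infty$ and invoking dominated convergence with the dominating function from the previous step gives
\[\int_{\GL_n(F)} W^{\circ}_{\pi}(hg) \Phi^{\ast}(g) \left|\det g\right|^{s + \frac{n-1}{2}} \, dg = \lambda(s) W^{\circ}_{\pi}(h), \qquad \lambda(s) = \frac{1}{W^{\circ}_{\pi}(1_n)} \int_{\GL_n(F)} W^{\circ}_{\pi}(g) \Phi^{\ast}(g) \left|\det g\right|^{s + \frac{n-1}{2}} \, dg,\]
which is legitimate since $W^{\circ}_{\pi}(1_n) \neq 0$ (indeed $= 1$ for the standard normalisation of the newform).

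\emph{Evaluation of the constant, and the main obstacle.} It remains to prove $\lambda(s) = L(s,\pi)$, that is, that the Godement--Jacquet integral of the newform against $\Phi^{\ast}$ equals $L(s,\pi) W^{\circ}_{\pi}(1_n)$. When $c(\pi) = 0$ this is the classical computation of Godement and Jacquet \cite[Lemma 6.10]{GJ72}: $\Phi^{\ast} = \mathbf{1}_{\Mat_{n \times n}(\OO)}$ is $K_n$-bi-invariant, the support restricts the Cartan decomposition to $\lambda_1 \geq \cdots \geq \lambda_n \geq 0$, and inserting the Shintani--Casselman--Shalika formula for $W^{\circ}_{\pi}$ on the diagonal torus together with Macdonald's volume formula and summing yields $\prod_i (1 - \alpha_i q^{-s})^{-1} = L(s,\pi)$. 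When $c(\pi) > 0$ one proceeds along the same lines, now using the description of the restriction of the Whittaker newform to the diagonal torus furnished by Matringe \cite{Mat13} and Miyauchi \cite{Miy14}; the precise shape of $\Phi^{\ast}$ in \eqref{GJ-SchwartzBruhat}, in particular the factor $\omega_{\pi}^{-1}(x_{n,n})/\vol(K_0(\pp^{c(\pi)}))$ and the congruence conditions on the last row, is tailored so that the resulting sum collapses to $L(s,\pi)$. (Alternatively, one may fold the integral onto $\Ngp_n(F) \backslash \GL_n(F)$ through the Fourier coefficient $g \mapsto \int_{\Ngp_n(F)} \psi_n(u) \Phi^{\ast}(ug) \, du$ and reduce to the $\GL_n \times \GL_1$ Rankin--Selberg integral, invoking the nonarchimedean analogue of \cite[Lemme (3.5)]{JP-SS81} recorded in \hyperref[lem:WtoWur]{Lemma \ref*{lem:WtoWur}}.) I expect this last evaluation in the ramified case to be the main obstacle: unlike in the archimedean setting the newform has no uniform closed form away from the torus, so one must combine the torus formula with a careful accounting of the volumes of the relevant compact open subgroups — exactly the bookkeeping that the somewhat involved definition of $\Phi^{\ast}$ is designed to make work.
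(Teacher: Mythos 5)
Your first two steps are correct, and they take a genuinely different route from the paper: you observe that $\Phi^{\ast}$ is left-invariant under $K_1(\pp^{c(\pi)}) \coloneqq \{k \in K_0(\pp^{c(\pi)}) : k_{n,n} \equiv 1 \pmod{\pp^{c(\pi)}}\}$ (using $c(\omega_{\pi}) \leq c(\pi)$), so that the integral defines a right-$K_1(\pp^{c(\pi)})$-invariant element of $\WW(\pi,\psi)$, which by multiplicity one of the fixed space at minimal level must be $\lambda(s) W^{\circ}_{\pi}$. The paper instead pairs the integral against all spherical Whittaker functions of $\GL_{n-1}(F)$ inside a Rankin--Selberg integral and uses \cite[Th\'{e}or\`{e}me (4), Lemme (3.5)]{JP-SS81} plus uniqueness of the Kirillov model; your invariance argument is arguably cleaner. (Two small points: for a possibly reducible induced representation of Langlands type the one-dimensionality you invoke is due to Jacquet's corrigendum and Matringe \cite{Mat13} rather than to \cite{JP-SS81} itself, and \hyperref[lem:WtoWur]{Lemma \ref*{lem:WtoWur}} is the \emph{archimedean} analogue of \cite[Lemme (3.5)]{JP-SS81}, not a nonarchimedean statement you can quote here.)

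The genuine gap is the identification $\lambda(s) = L(s,\pi)$, i.e.\ the evaluation $\int_{\GL_n(F)} W^{\circ}_{\pi}(g)\Phi^{\ast}(g)\left|\det g\right|^{s+\frac{n-1}{2}}\,dg = L(s,\pi)$ when $c(\pi) > 0$. This is the entire content of the lemma --- it is exactly where the specific shape of $\Phi^{\ast}$, including the normalisation $1/\vol(K_0(\pp^{c(\pi)}))$, must be justified --- and your proposal asserts that ``the resulting sum collapses'' without proof, flagging it yourself as the main obstacle. It does not go ``along the same lines'' as the unramified Godement--Jacquet computation: in Cartan coordinates the support condition is transparent but one has no usable formula for $W^{\circ}_{\pi}(k_1 \varpi^{\lambda} k_2)$ with $k_1,k_2 \in K_n$, while in Iwasawa coordinates the integrality condition $g \in \Mat_{n \times n}(\OO)$ does not decouple from $u$, $a$, and $k$, and the Matringe--Miyauchi formula only gives $W^{\circ}_{\pi}$ on the torus inside the mirabolic. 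The paper sidesteps any such computation: since $\Phi^{\ast}(g) = \Phi'(\begin{pmatrix} 1_{n-1} & 0\end{pmatrix} g)\,\Phi^{\circ}(e_n g)$, it inserts the nonarchimedean propagation formula \cite[Lemma 4.1]{Hum21} into the $\GL_n \times \GL_n$ Rankin--Selberg integral $\Psi(s,W^{\circ}_{\pi},W^{\circ}_{\sigma},\Phi^{\circ})$ with an auxiliary spherical $\sigma$, and compares with Kim's test-vector evaluation $\Psi(s,W^{\circ}_{\pi},W^{\circ}_{\sigma},\Phi^{\circ}) = L(s+t_1,\pi)L(s,\pi \times \sigma_0)$ \cite[Theorem 2.1.1]{Kim10}; the factor $L(s+t_1,\pi)$ is precisely the constant you are missing. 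To complete your argument you must either import such a known evaluation, or genuinely carry out the coset-by-coset computation over $\GL_n(F)/K_0(\pp^{c(\pi)})$ using the $\omega_{\pi}(k_{n,n})$-equivariance of $W^{\circ}_{\pi}$ and $\Phi^{\ast}$ under $K_0(\pp^{c(\pi)})$; as written, the lemma is not proved.
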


Here $\OO$ denotes the ring of integers of $F$, $\pp$ denotes the maximal ideal of $\OO$, $c(\pi)$ denotes the conductor exponent of $\pi$, and for $m > 0$, $K_0(\pp^m)$ denotes the congruence subgroup
\[K_0(\pp^m) \coloneqq \{k \in \GL_n(\OO) : k_{n,1},\ldots,k_{n,n - 1} \in \pp^m\},\]
which has volume $q^{-(m - 1)(n - 1)} (q - 1) / (q^n - 1)$, where $q \coloneqq \# \OO / \pp$.

\begin{proof}
Let $\sigma = \bigboxplus_{j = 1}^{n} |\cdot|^{t_j}$ and $\sigma_0 \coloneqq \bigboxplus_{j = 2}^{n} |\cdot|^{t_j}$ be induced representations of Langlands type of $\GL_n(F)$ and $\GL_{n - 1}(F)$ with spherical Whittaker functions $W_{\sigma}^{\circ} \in \WW(\sigma,\overline{\psi})$ and $W_{\sigma_0}^{\circ} \in \WW(\sigma_0,\overline{\psi})$. By \cite[Theorem 2.1.1]{Kim10}, the  $\GL_n \times \GL_n$ Rankin--Selberg integral
\[\Psi(s,W_{\pi}^{\circ},W_{\sigma}^{\circ},\Phi^{\circ}) \coloneqq \int\limits_{\Ngp_n(F) \backslash \GL_n(F)} W_{\pi}^{\circ}(g) W_{\sigma}^{\circ}(g) \Phi^{\circ}(e_n g) \left|\det g\right|^s \, dg\]
is equal to
\[L(s,\pi \times \sigma) = L(s + t_1,\pi) L(s,\pi \times \sigma_0),\]
where $\Phi^{\circ} \in \Scr(F^n)$ is the Schwartz--Bruhat function
\begin{equation}
\label{RS-SchwartzBruhat}
\Phi^{\circ}(x_1,\ldots,x_n) \coloneqq \begin{dcases*}
1 & if $c(\pi) = 0$ and $x_1,\ldots,x_n \in \OO$,	\\
\frac{\omega_{\pi}^{-1}(x_n)}{\vol(K_0(\pp^{c(\pi)}))} & if $c(\pi) > 0$, $x_1,\ldots,x_{n - 1} \in \pp^{c(\pi)}$, and $x_n \in \OO^{\times}$,	\\
0 & otherwise.
\end{dcases*}
\end{equation}
On the other hand, we may insert into the $\GL_n \times \GL_n$ Rankin--Selberg integral the propagation formula \cite[Lemma 4.1]{Hum21}
\begin{multline*}
W_{\sigma}^{\circ}(g) = \left|\det g\right|^{t_1 + \frac{n - 1}{2}} \int_{\GL_{n - 1}(F)} W_{\sigma_0}^{\circ}(h) \left|\det h\right|^{-t_1 - \frac{n}{2}}	\\
\times \int\limits_{\Mat_{(n - 1) \times 1}(F)} \Phi'\left(h^{-1} \begin{pmatrix} 1_{n - 1} & v \end{pmatrix} g\right) \psi(e_{n - 1} v) \, dv \, dh,
\end{multline*}
where $\Phi' \in \Scr(\Mat_{(n - 1) \times n}(F))$ is the Schwartz--Bruhat function
\[\Phi'(x) \coloneqq \begin{dcases*}
1 & if $x \in \Mat_{(n - 1) \times n}(\OO)$,	\\
0 & otherwise,
\end{dcases*}\]
unfold the integration, and make the change of variables $g \mapsto \begin{psmallmatrix} h & 0 \\ 0 & 1 \end{psmallmatrix} g$ in order to see that
\begin{multline*}
\Psi(s,W_{\pi}^{\circ},W_{\sigma}^{\circ},\Phi^{\circ})	\\
= \int\limits_{\Ngp_{n - 1}(F) \backslash \GL_{n - 1}(F)} W_{\sigma_0}^{\circ}(h) \left|\det h\right|^{s - \frac{1}{2}} \int_{\GL_n(F)} W_{\pi}^{\circ}\left(\begin{pmatrix} h & 0 \\ 0 & 1 \end{pmatrix} g\right) \Phi^{\ast}(g) \left|\det g\right|^{s + t_1 + \frac{n - 1}{2}} \, dg \, dh
\end{multline*}
since $\Phi^{\ast}(g) = \Phi'(\begin{pmatrix} 1_{n - 1} & 0 \end{pmatrix} g) \Phi^{\circ}(e_n g)$. So letting $W_{\pi}' \in \WW(\pi,\psi)$ be given by
\[W_{\pi}'(h) \coloneqq \frac{1}{L(w,\pi)} \int_{\GL_n(F)} W_{\pi}^{\circ}(hg) \Phi^{\ast}(g) \left|\det g\right|^{w + \frac{n - 1}{2}} \, dg\]
with $\Re(w)$ sufficiently large, we see that for every induced representation of Langlands type $\sigma_0$ of $\GL_{n - 1}(F)$ with spherical Whittaker function $W_{\sigma_0}^{\circ} \in \WW(\sigma_0,\overline{\psi})$,
\[\int\limits_{\Ngp_{n - 1}(F) \backslash GL_{n - 1}(F)} \left(W_{\pi}'\begin{pmatrix} h & 0 \\ 0 & 1 \end{pmatrix} - W_{\pi}^{\circ}\begin{pmatrix} h & 0 \\ 0 & 1 \end{pmatrix}\right) W_{\sigma_0}^{\circ}(h) \left|\det h\right|^{s - \frac{1}{2}} \, dh = 0\]
for $\Re(s)$ sufficiently large due to \cite[Th\'{e}or\`{e}me (4)]{JP-SS81}. Since $W_{\pi}'$ is right $\GL_{n - 1}(\OO)$-invariant, as we may make the change of variables $g \mapsto \begin{psmallmatrix} k'^{-1} & 0 \\ 0 & 1 \end{psmallmatrix} g$ and use the fact that $\Phi$ is left $\GL_{n - 1}(\OO)$-invariant, we therefore have that $W_{\pi}'\begin{psmallmatrix} h & 0 \\ 0 & 1 \end{psmallmatrix} = W_{\pi}^{\circ}\begin{psmallmatrix} h & 0 \\ 0 & 1 \end{psmallmatrix}$ for all $h \in \GL_{n - 1}(F)$ by \cite[Lemme (3.5)]{JP-SS81}. Invoking the uniqueness of the Kirillov model of $\pi$, we deduce that $W_{\pi}'(g) = W_{\pi}^{\circ}(g)$ for all $g \in \GL_n(F)$.
\end{proof}

We present following consequences of the convolution section identity in \hyperref[lem:GJConvolution]{Lemma \ref*{lem:GJConvolution}}, which follow in the exact same manner as in the archimedean setting.

\begin{corollary} 
\label{cor:Saknn-nonarch}
Let $F$ be a nonarchimedean local field, and let $\pi$ and $\sigma$ be induced representations of Langlands type of $\GL_n(F)$ with Whittaker newforms $W_{\pi}^{\circ} \in \WW(\pi,\psi)$ and $W_{\sigma}^{\circ} \in \WW(\sigma,\overline{\psi})$. Let $\Phi^{\ast} \in \Scr(\Mat_{n \times n}(F))$ and $\Phi^{\circ} \in \Scr(F^n)$ be the Schwartz--Bruhat functions given by \eqref{GJ-SchwartzBruhat} and \eqref{RS-SchwartzBruhat}. Then for $\Re(s_1)$ and $\Re(s_2)$ sufficiently large, the modified $\GL_n(F) \times \GL_n(F)$ Rankin--Selberg integral by Sakellaridis
\[
\int\limits_{\Ngp^{\diag}_n(F)  \backslash \GL_n(F) \times \GL_n(F)} W^{\circ}_{\pi}(g_1) W^{\circ}_{\sigma}(g_2) \Phi^{\ast}(g^{-1}_1g_2) \Phi^{\circ}(e_ng_1) \left|\frac{\det g_2}{\det g_1} \right|^{s_1} \left|\det g_1\right|^{s_2} \, dg_2 \, dg_1
\]
is equal to
\[
L\left(s_2, \pi_{\ur} \times \sigma_{\ur} \right) L\left(s_1-\frac{n-1}{2}, \sigma \right).
\]
\end{corollary}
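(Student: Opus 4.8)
The plan is to follow the proof of \hyperref[thm:Saknn]{Theorem \ref*{thm:Saknn}} \emph{mutatis mutandis}, replacing its two archimedean inputs --- the convolution section identity \cite[Lemma 9.6]{Hum20a} and the $\GL_n \times \GL_n$ Rankin--Selberg evaluation of \hyperref[thm:GLnxGLn]{Theorem \ref*{thm:GLnxGLn}} --- by their nonarchimedean counterparts, namely \hyperref[lem:GJConvolution]{Lemma \ref*{lem:GJConvolution}} and the second author's result \cite[Theorem 1.1 (i)]{Jo21}. First I would perform the change of variables $g_2 \mapsto g_1 g_2$, under which the modified $\GL_n(F) \times \GL_n(F)$ Rankin--Selberg integral in the statement becomes the iterated integral
\[
\int\limits_{\Ngp_n(F) \backslash \GL_n(F)} W^{\circ}_{\pi}(g_1) \Phi^{\circ}(e_n g_1) \left|\det g_1\right|^{s_2} \int_{\GL_n(F)} W^{\circ}_{\sigma}(g_1 g_2) \Phi^{\ast}(g_2) \left|\det g_2\right|^{s_1} \, dg_2 \, dg_1.
\]
Absolute convergence of this double integral for $\Re(s_1)$ and $\Re(s_2)$ sufficiently large follows from the standard gauge estimates for nonarchimedean Whittaker functions together with the compact support of $\Phi^{\ast}$ and $\Phi^{\circ}$, in the manner of \cite{JP-SS83}; this in particular licenses the interchange of the $g_1$- and $g_2$-integrations.

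Next I would apply \hyperref[lem:GJConvolution]{Lemma \ref*{lem:GJConvolution}} to the innermost integral, taking the representation there to be $\sigma$, the point $h$ to be $g_1$, and the shift parameter to satisfy $s + \frac{n-1}{2} = s_1$, i.e.\ $s = s_1 - \frac{n-1}{2}$, so that $\Phi^{\ast}$ is correspondingly the bi-$K_n$-finite function \eqref{GJ-SchwartzBruhat} attached to $\sigma$. This evaluates the inner integral as $L(s_1 - \frac{n-1}{2}, \sigma) \, W^{\circ}_{\sigma}(g_1)$, so that the outer integral collapses to
\[
L\!\left(s_1 - \frac{n-1}{2}, \sigma\right) \int\limits_{\Ngp_n(F) \backslash \GL_n(F)} W^{\circ}_{\pi}(g_1) W^{\circ}_{\sigma}(g_1) \Phi^{\circ}(e_n g_1) \left|\det g_1\right|^{s_2} \, dg_1 = L\!\left(s_1 - \frac{n-1}{2}, \sigma\right) \Psi(s_2, W^{\circ}_{\pi}, W^{\circ}_{\sigma}, \Phi^{\circ}).
\]
Finally, since $W^{\circ}_{\pi}$, $W^{\circ}_{\sigma}$, and $\Phi^{\circ}$ are precisely the weak test vector of \cite[Theorem 1.1 (i)]{Jo21} for the $\GL_n \times \GL_n$ Rankin--Selberg integral, that theorem gives $\Psi(s_2, W^{\circ}_{\pi}, W^{\circ}_{\sigma}, \Phi^{\circ}) = L(s_2, \pi_{\ur} \times \sigma_{\ur})$, which yields the claimed value $L(s_2, \pi_{\ur} \times \sigma_{\ur}) L(s_1 - \frac{n-1}{2}, \sigma)$.

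Since all the substantive work is packaged into \hyperref[lem:GJConvolution]{Lemma \ref*{lem:GJConvolution}} (already established above) and \cite[Theorem 1.1 (i)]{Jo21}, I do not anticipate a genuine obstacle; the points requiring care are purely bookkeeping --- tracking the absolute-value exponents through the substitution $g_2 \mapsto g_1 g_2$ so that the power $s_1$ of $\left|\det g_2\right|$ matches the normalisation $s + \frac{n-1}{2}$ of \hyperref[lem:GJConvolution]{Lemma \ref*{lem:GJConvolution}}, confirming that the Schwartz--Bruhat functions named in the statement are exactly those demanded by the two cited results (in particular that $\Phi^{\ast}$ here is the one associated to $\sigma$ rather than $\pi$), and the Fubini justification provided by the convergence estimate above.
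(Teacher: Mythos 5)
Your proposal matches the paper's own (implicit) proof: the paper simply states that this corollary follows ``in the exact same manner as in the archimedean setting,'' i.e.\ the proof of Theorem~\ref{thm:Saknn} with the change of variables $g_2 \mapsto g_1 g_2$, the nonarchimedean convolution section identity of Lemma~\ref{lem:GJConvolution} in place of \cite[Lemma 9.6]{Hum20a}, and the second author's nonarchimedean weak test vector result in place of Theorem~\ref{thm:GLnxGLn}, exactly as you do. Your bookkeeping remarks (the shift $s = s_1 - \frac{n-1}{2}$, and that $\Phi^{\ast}$ must be the function \eqref{GJ-SchwartzBruhat} attached to $\sigma$, with $\psi$ replaced by $\overline{\psi}$) are consistent with the archimedean argument being transposed.
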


\begin{corollary}
\label{cor:Saknn-1-nonarch}
Let $F$ be a nonarchimedean local field, let $\pi$ be an induced representation of Langlands type of $\GL_{n}(F)$ with Whittaker newform $W^{\circ}_{\pi} \in \WW(\pi,\psi)$, and let $\sigma$ be a spherical induced representation of Langlands type of $\GL_{n - 1}(F)$ with spherical Whittaker function  $W^{\circ}_{\sigma} \in \WW(\sigma,\overline{\psi})$. Then for $\Re(s_1)$ and $\Re(s_2)$ sufficiently large, the modified $\GL_{n}(F) \times \GL_{n-1}(F)$ Rankin--Selberg integral by Sakellaridis
\[
\int\limits_{\Ngp^{\diag}_{n-1}(F)  \backslash \GL_{n-1}(F) \times \GL_{n-1}(F)} W^{\circ}_{\pi} \begin{pmatrix} g_1 &0 \\0 & 1 \end{pmatrix} W^{\circ}_{\sigma}(g_2) \Phi_{\ur}\left(  g_1^{-1} g_2 \right) \left| \frac{\det g_2}{\det g_1} \right|^{s_1} \left|\det g_1\right|^{s_2} \, dg_2 \, dg_1
\]
is equal to
\[
L\left(s_2+\frac{1}{2}, \pi \times \sigma\right) L\left(s_1-\frac{n-2}{2}, \sigma \right)
\]
where the Schwartz--Bruhat function $\Phi_{\ur} \in  \Scr( \Mat_{(n-1) \times (n-1)}(F))$ is given by
\[\Phi_{\ur}(x) \coloneqq \begin{dcases*}
1 & if $x \in \Mat_{(n - 1) \times (n - 1)}(\OO)$,	\\
0 & otherwise.
\end{dcases*}\]
\end{corollary}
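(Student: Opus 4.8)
The plan is to replay the argument of \hyperref[thm:Saknn-1]{Theorem \ref*{thm:Saknn-1}} essentially verbatim, with the nonarchimedean convolution section identity of \hyperref[lem:GJConvolution]{Lemma \ref*{lem:GJConvolution}} substituted for its archimedean counterpart \cite[Lemma 9.6]{Hum20a}.

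First I would make the change of variables $g_2 \mapsto g_1 g_2$, which unfolds the diagonal quotient and rewrites the Sakellaridis integral \eqref{eqn:Saknn-1} as
\[\int\limits_{\Ngp_{n - 1}(F) \backslash \GL_{n - 1}(F)} W^{\circ}_{\pi}\begin{pmatrix} g_1 & 0 \\ 0 & 1 \end{pmatrix} \left|\det g_1\right|^{s_2} \int_{\GL_{n - 1}(F)} W^{\circ}_{\sigma}(g_1 g_2) \Phi_{\ur}(g_2) \left|\det g_2\right|^{s_1} \, dg_2 \, dg_1,\]
absolute convergence for $\Re(s_1)$ and $\Re(s_2)$ sufficiently large being standard. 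Next I would apply \hyperref[lem:GJConvolution]{Lemma \ref*{lem:GJConvolution}} --- with $n$ replaced by $n - 1$ and $\pi$ replaced by $\sigma$ --- to the inner $\GL_{n - 1}(F)$-integral. Since $\sigma$ is spherical, its conductor exponent $c(\sigma)$ vanishes, so the bi-$K_{n - 1}$-finite Schwartz--Bruhat function of \eqref{GJ-SchwartzBruhat} collapses to the indicator function $\Phi_{\ur}$ of $\Mat_{(n - 1) \times (n - 1)}(\OO)$, and the lemma evaluates the inner integral as $L(s_1 - \frac{n - 2}{2}, \sigma) \, W^{\circ}_{\sigma}(g_1)$; here the relevant shift is $\frac{(n - 1) - 1}{2} = \frac{n - 2}{2}$, as the lemma is invoked for $\GL_{n - 1}$ in place of $\GL_n$.

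It then remains to identify
\[L\left(s_1 - \frac{n - 2}{2}, \sigma\right) \int\limits_{\Ngp_{n - 1}(F) \backslash \GL_{n - 1}(F)} W^{\circ}_{\pi}\begin{pmatrix} g_1 & 0 \\ 0 & 1 \end{pmatrix} W^{\circ}_{\sigma}(g_1) \left|\det g_1\right|^{s_2} \, dg_1\]
with the asserted product. Equivalently, I need that the Whittaker newform of $\pi$, paired against the spherical Whittaker function of $\sigma$, is a test vector for the $\GL_n \times \GL_{n - 1}$ Rankin--Selberg integral representing $L(s_2 + \frac{1}{2}, \pi \times \sigma)$ --- the weight $\left|\det g_1\right|^{s_2}$ corresponding to the parameter $s_2 + \frac{1}{2}$ under the usual normalisation. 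This is part of the nonarchimedean newform theory of Jacquet, Piatetski-Shapiro, and Shalika \cite{JP-SS81}; in fact it already emerges from the proof of \hyperref[lem:GJConvolution]{Lemma \ref*{lem:GJConvolution}}, on combining the factorisation of the $\GL_n \times \GL_n$ Rankin--Selberg integral established there with the convolution section identity itself.

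The argument involves no new ideas beyond those of \hyperref[thm:Saknn-1]{Theorem \ref*{thm:Saknn-1}} and \hyperref[lem:GJConvolution]{Lemma \ref*{lem:GJConvolution}}; the only point requiring care --- and hence the main obstacle, such as it is --- is the bookkeeping of the shifts of the complex variables $s_1$ and $s_2$, so that the normalisations of the Godement--Jacquet type convolution and of the $\GL_n \times \GL_{n - 1}$ Rankin--Selberg integral are tracked consistently and together deliver exactly $L(s_2 + \frac{1}{2}, \pi \times \sigma) L(s_1 - \frac{n - 2}{2}, \sigma)$.
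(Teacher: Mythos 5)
Your proposal is correct and is essentially the paper's own argument: the corollary is stated there as following "in the exact same manner as in the archimedean setting," i.e.\ by replaying the proof of Theorem \ref{thm:Saknn-1} with the change of variables $g_2 \mapsto g_1 g_2$, the nonarchimedean convolution section identity of Lemma \ref{lem:GJConvolution} (applied to $\sigma$ on $\GL_{n-1}$, where $c(\sigma)=0$ makes $\Phi^{\ast}$ collapse to $\Phi_{\ur}$), and the Jacquet--Piatetski-Shapiro--Shalika essential vector theorem evaluating the remaining $\GL_n \times \GL_{n-1}$ integral as $L(s_2+\tfrac{1}{2},\pi\times\sigma)$. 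Your bookkeeping of the shifts $\tfrac{n-2}{2}$ and $s_2+\tfrac{1}{2}$ matches the stated conclusion.
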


For a pair of spherical induced representations of Langlands type over nonarchimedean local fields, such formul\ae{} are due to Sakellaridis \cite[\S 5]{Sak12}.

\section{Flicker Integrals}
\label{sect:Flickersect}

We define an additive character on $\C$ by $\psi_{\C/\R}(z) \coloneqq e^{-2\pi(z - \overline{z})}$, so that $\psi_{\C/\R}(x + iy) = e^{-4\pi iy}$ for $x,y \in \R$; this additive character is trivial when restricted to $\R$. We let
\[\psi_{\C/\R,n}(u) \coloneqq \psi_{\C/\R}\left(\sum_{j = 1}^{n - 1} u_{j,j + 1}\right)\]
denote the corresponding character of $\Ngp_n(\C) \ni u$. Given an induced representation of Whittaker type $\pi$ of $\GL_n(\C)$, a Whittaker function $W_{\pi} \in \WW(\pi,\psi_{\C/\R})$, and a Schwartz--Bruhat function $\Phi \in \Scr(\R^n)$, the $\GL_n$ Flicker integral \cites{Fli93,FZ95} is defined by
\[\Psi(s,W_{\pi},\Phi) \coloneqq \int\limits_{\Ngp_n(\R) \backslash \GL_n(\R)} W_{\pi}(g) \Phi(e_n g) \left|\det g\right|_{\R}^s \, dg.\]
Once more, this integral converges absolutely for $\Re(s)$ sufficiently and extends meromorphically to the entire complex plane. The local Asai $L$-function $L(s,\pi,\As)$ is defined via the local Langlands correspondence as described accurately in \cite[Section 3.2]{B-P21}. Beuzart--Plessis \cite[Theorem 3.5]{B-P21} has shown that $\Psi(s,W_{\pi},\Phi)$ is a holomorphic multiple of $L(s,\pi,\As)$ and that the quotient
\[
\frac{\Psi(s,W_{\pi},\Phi)}{ L(s,\pi,\As)}
\]
 is of finite order in vertical strips.

\subsection{The Spherical Calculation}

We recall the Iwasawa decomposition
\[\GL_n(\C) = \Ngp_n(\C)\Agp_n(\C)\Ugp(n).\]
Since every element of $\Agp_n(\C)$ may be written as the product of an element of $\Agp_n(\R)$ and of $\Agp_n(\C) \cap \Ugp(n)$, we also have the Iwasawa decomposition
\[\GL_n(\C) = \Ngp_n(\C)\Agp_n(\R)\Ugp(n).\]
We write $g = uak$, where $g \in \GL_n(\C)$, $u \in \Ngp_{n}(\C)$, $a = \diag(a_1,\ldots,a_{n}) \in \Agp_{n}(\R)$, and $k \in \Ugp(n)$. The Haar measure on $\GL_n(\C)$ becomes
\[
dg = 2^{n} \delta_{\Bgp_n(\R)}^{-2}(a) \, dk \, d^{\times}a \, du.
\]
Here the additional factor of $2^n$ arises from the fact that the diagonal torus is chosen to be $\Agp_n(\R)$ in place of $\Agp_n(\C)$, recalling that the Haar measure on $\C$ is twice the Lebesgue measure, while for $a \in \Agp_n(\R)$, we observe that $\delta_{\Bgp_n(\R)}^{-2}(a) = \delta_{\Bgp_n(\C)}^{-1}(a)$.

We first require the following propagation formula.

\begin{lemma}
\label{propformula}
Let $\pi = \bigboxplus_{j = 1}^{n} |\cdot|_{\C}^{t_j}$ and $\pi_0 \coloneqq \bigboxplus_{j = 2}^{n} |\cdot|_{\C}^{t_j}$ be spherical induced representations of Langlands type of $\GL_n(\C)$ and $\GL_{n - 1}(\C)$ respectively with spherical Whittaker functions $W_{\pi}^{\circ} \in \WW(\pi,\psi_{\C/\R})$ and $W_{\pi_0}^{\circ} \in \WW(\pi_0,\psi_{\C/\R})$. Then for $a = \diag(a_1,\ldots,a_n) \in \Agp_n(\R)$, we have that
\begin{multline}
\label{eqn:propformula}
W_{\pi}^{\circ}(a) = 2^{n - 1} \left|\det a\right|_{\R}^{2t_1} \delta_{\Bgp_n(\R)}(a) \int_{\Agp_{n - 1}(\R)} W_{\pi_0}^{\circ}(a') \prod_{j = 1}^{n - 1} \exp\left(-2\pi a_j^{\prime 2} a_{j + 1}^{-2}\right) 	\\
\times \exp\left(-2\pi a_j^{\prime -2} a_j^2\right) \left|\det a'\right|_{\R}^{-2t_1} \delta_{\Bgp_{n-1}(\R)}^{-1}(a') \, d^{\times}a'.
\end{multline}
\end{lemma}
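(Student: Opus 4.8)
The plan is to reduce \eqref{eqn:propformula} to the already-known propagation formula for spherical Whittaker functions of $\GL_n(\C)$ with respect to the \emph{standard} additive character $\psi$ of $\C$, exploiting the fact that $\psi_{\C/\R}$ differs from $\psi$ only by a twist: one checks directly that $\psi_{\C/\R}(z) = \psi(iz)$, since $\psi(iz) = \exp(2\pi i(iz + \overline{iz})) = \exp(-2\pi(z - \overline{z})) = \psi_{\C/\R}(z)$. Write $W_{\pi}'$ for the spherical Whittaker function of $\pi$ in $\WW(\pi,\psi)$, built from the same spherical vector $f^{\circ} \in V_{\pi}$ as $W_{\pi}^{\circ}$, and set $\delta_n \coloneqq \diag(i^{n - 1}, i^{n - 2}, \ldots, i, 1) \in \Ugp(n)$. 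Conjugation by $\delta_n$, namely $u \mapsto \delta_n^{-1} u \delta_n$, multiplies the super-diagonal entry $u_{j,j + 1}$ of $u \in \Ngp_n(\C)$ by $i^{-1}$ and every entry by a complex number of absolute value $1$; hence it preserves the Haar measure $du$ and satisfies $\psi_{\C/\R,n}(\delta_n^{-1} u \delta_n) = \psi_n(u)$. Moreover $w_n \delta_n^{-1} w_n^{-1}$ is diagonal with entries of absolute value $1$, so it acts trivially in the induced model of $\pi$ (through $\delta_{\Bgp_n(\C)}^{1/2}$ and the unramified inducing character). Substituting $u \mapsto \delta_n^{-1} u \delta_n$ in the Jacquet integral defining $W_{\pi}^{\circ}$ therefore gives $W_{\pi}^{\circ}(g) = W_{\pi}'(\delta_n g)$ for all $g \in \GL_n(\C)$.

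Restricting to $g = a \in \Agp_n(\R)$ and using that $a$ and $\delta_n$ commute, together with the right $\Ugp(n)$-invariance of the \emph{spherical} Whittaker function $W_{\pi}'$ and the membership $\delta_n \in \Ugp(n)$, we obtain $W_{\pi}^{\circ}(a) = W_{\pi}'(a \delta_n) = W_{\pi}'(a)$ for all $a \in \Agp_n(\R)$; the identical argument applied to $\pi_0$ gives $W_{\pi_0}^{\circ}(a') = W_{\pi_0}'(a')$ for all $a' \in \Agp_{n - 1}(\R)$. It then suffices to invoke the propagation formula for spherical Whittaker functions of $\GL_n(\C)$ with respect to $\psi$, due independently to Gerasimov--Lebedev--Oblezin \cite[Proposition 4.1]{GLO08} and to Ishii--Stade \cite[Proposition 2.1]{IS13} and subsumed by \cite[Lemmata 9.8 and 9.17]{Hum20a}. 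Restricted to $a \in \Agp_n(\R)$ and $a' \in \Agp_{n - 1}(\R)$, this identity takes precisely the shape of the right-hand side of \eqref{eqn:propformula}: the factors $|\det a|_{\R}^{2 t_1} = |\det a|_{\C}^{t_1}$ and $|\det a'|_{\R}^{-2 t_1}$ record the removal of the inducing character $|\cdot|_{\C}^{t_1}$; the factors $\delta_{\Bgp_n(\R)}(a) = \delta_{\Bgp_n(\C)}^{1/2}(a)$ and $\delta_{\Bgp_{n - 1}(\R)}^{-1}(a') = \delta_{\Bgp_{n - 1}(\C)}^{-1/2}(a')$ are the usual normalising factors; and the interlacing kernel $\prod_{j = 1}^{n - 1} \exp(-2\pi a_j^{\prime 2} a_{j + 1}^{-2}) \exp(-2\pi a_j^{\prime -2} a_j^2)$ is the restriction to the real torus, where $|z|_{\C} = z^2$ for $z \in \R$, of the complex kernel $\prod_{j = 1}^{n - 1} \exp(-2\pi |a_j'/a_{j + 1}|_{\C}) \exp(-2\pi |a_j/a_j'|_{\C})$ appearing in that formula. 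Combining this with the two identities above proves \eqref{eqn:propformula}.

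The one genuinely fiddly point is the bookkeeping of constants: one must reconcile the normalisations of $\psi$, of the Haar measures on $\Ngp_n(\C)$ and $\Agp_n(\R)$, and of the spherical Whittaker function used in the cited sources with the conventions fixed above, so that every explicit constant --- in particular the $2^{n - 1}$ and the $2\pi$ in the exponentials --- emerges exactly as stated. If one prefers a self-contained proof, \eqref{eqn:propformula} can instead be established directly by induction on $n$: realise $\pi$ as $\Ind_{\Pgp_{(1, n - 1)}(\C)}^{\GL_n(\C)}(|\cdot|_{\C}^{t_1} \boxtimes \pi_0)$, insert this into the Jacquet integral defining $W_{\pi}^{\circ}$ with respect to $\psi_{\C/\R}$, split the unipotent integration compatibly with $\Pgp_{(1, n - 1)}(\C)$ so that the integral over $\Ngp_{n - 1}(\C)$ reconstructs $W_{\pi_0}^{\circ}$, and carry out the remaining integral over the $n - 1$ new complex coordinates; here the delicate step is tracking how the $\delta_{\Bgp_n(\C)}^{1/2}$-twist and the $|\det|_{\C}^{t_1}$-twist propagate through the induction.
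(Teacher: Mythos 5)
Your reduction of the $\psi_{\C/\R}$-Whittaker function to the standard $\psi$-spherical Whittaker function is correct and genuinely different from the paper's argument: the paper works directly with $\psi_{\C/\R}$, inserting the $\GL_{n-1}(\C)$ propagation formula of \cite[Lemma 9.14]{Hum20a}, passing to the Iwasawa decomposition, and evaluating the resulting Gaussian--Fourier integrals over the unipotent coordinates, whereas you twist by $\delta_n = \diag(i^{n-1},\ldots,i,1)$ in the Jacquet integral to get $W_{\pi}^{\circ}(g) = W_{\pi}'(\delta_n g)$ and hence $W_{\pi}^{\circ}(a) = W_{\pi}'(a)$ for $a \in \Agp_n(\R)$. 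That part is a clean and valid observation.

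The final step, however, has a genuine gap. The propagation formula you invoke (GLO08/IS13; in this paper it is Lemma~\ref{formula1} with $F = \C$) expresses $W_{\pi}'(a)$, even for $a \in \Agp_n(\R)$, as an integral over the \emph{full complex torus} $\Agp_{n-1}(\C)$, whereas \eqref{eqn:propformula} integrates over $\Agp_{n-1}(\R)$. ``Restricting the identity to $a' \in \Agp_{n-1}(\R)$'' is not a meaningful operation for an integration variable: one must first show that, for real $a$, the integrand depends only on the moduli $|a_j'|$ --- which holds because a diagonal phase $\theta$ commutes with $a'$ and lies in $\Ugp(n-1)$, so $W_{\pi_0}'(a'\theta) = W_{\pi_0}'(a')$ by right $\Ugp(n-1)$-invariance, and the Gaussian factors, $|\det|_{\C}^{-t_1}$, and $\delta_{\Bgp_{n-1}(\C)}^{-1/2}$ are visibly radial --- and then carry out the angular integration. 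This is precisely where the constant $2^{n-1}$ comes from: with the paper's normalisations, the radialised Haar measure $d^{\times}z$ on $\C^{\times}$ is $4\,dr/r$, while on $\R^{\times}$ (after folding the two signs, the integrand being even) it is $2\,dr/r$, so $\int_{\Agp_{n-1}(\C)} f \, d^{\times}a' = 2^{n-1} \int_{\Agp_{n-1}(\R)} f \, d^{\times}a'$ for radial $f$. Your proposal treats the $2^{n-1}$ and the measure normalisations as unexamined ``bookkeeping,'' so as written the key quantitative content of the lemma is not established; once the radiality-plus-angular-integration argument is supplied, your route does go through.
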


\begin{proof}
From \cite[Lemma 9.14]{Hum20a}, we have the identity
\begin{multline*}
W_{\pi_0}^{\circ}\begin{pmatrix} g & 0 \\ 0 & a_n \end{pmatrix} = \left|\det g\right|_{\C}^{t_1 + \frac{n - 1}{2}} |a_n|_{\C}^{t_1 - \frac{n - 1}{2}}	\\
\times \int_{\GL_{n - 1}(\C)} W_{\pi_0}^{\circ}(h) \Phi_1(h^{-1}g) \Phi_2(a_n^{-1} e_{n - 1} h) \left|\det h\right|_{\C}^{-t_1 - \frac{n}{2} + 1} \, dh
\end{multline*}
for $g \in \GL_n(\C)$ and $a_n \in \C^{\times}$, where $\Phi_1 \in \Scr(\Mat_{(n - 1) \times (n - 1)}(\C))$ and $\Phi_2 \in \Scr(\Mat_{1 \times (n - 1)}(\C))$ are given by
\[\Phi_1(x_1) \coloneqq \exp(-2\pi \Tr(x_1 \prescript{t}{}{\overline{x_1}})), \qquad \Phi_2(x_2) \coloneqq \exp(-2\pi x_2 \prescript{t}{}{\overline{x_2}}).\]

We employ the Iwasawa decomposition in order to write $h = u'a'k'$, where $u' \in \Ngp_{n - 1}(\C)$, $a' = \diag(a_1',\ldots,a_{n - 1}') \in \Agp_{n - 1}(\R)$, and $k' \in \Ugp(n - 1)$. As $W_{\pi_0}^{\circ}(u'a'k') = \psi_{\C/\R,n - 1}(u') W_{\pi_0}^{\circ}(a')$, the integral over $\Ugp(n - 1) \ni k'$ is trivial.

Now we specify $g = \diag(a_1,\ldots,a_{n - 1}) \in \Agp_{n - 1}(\R)$ and $a_n \in \R$, so that $\Phi_2(a_n^{-1} e_{n - 1} h) = \exp(-2\pi a_{n - 1}^{\prime 2} a_n^{-2})$. We make the change of variables $u' \mapsto u^{\prime -1}$, then $u_{i,j}' \mapsto a_i' a_j^{-1} u_{i,j}'$, and finally evaluate the integrals over $\C \ni u_{i,j}'$; they are equal to $1$ if $j \neq i + 1$ and to $\exp(-2\pi a_i^{\prime 2} a_{i + 1}^{-2})$ if $j = i + 1$. This gives the desired identity.
\end{proof}

We also require a convolution section identity.

\begin{lemma}
\label{convsectid}
Let $\pi = \bigboxplus_{j = 1}^{n} |\cdot|_{\C}^{t_j}$ be a spherical induced representation of Langlands type of $\GL_n(\C)$ with spherical Whittaker functions $W_{\pi}^{\circ} \in \WW(\pi,\psi_{\C/\R})$. Then for $a' = \diag(a_1',\ldots,a_n') \in \Agp_n(\R)$, we have that for $\Re(s)$ sufficiently large,
\begin{multline}
\label{eqn:convsectid}
L(s,\pi) W_{\pi}^{\circ}(a') \\
= 2^n \int_{\Agp_n(\R)} W_{\pi}^{\circ}(a'a) \prod_{j = 1}^{n} \exp(-2\pi a_j^2) \prod_{j = 1}^{n - 1} \exp\left(-2\pi a_j^{\prime 2} a_{j + 1}^{\prime -2} a_{j + 1}^{-2}\right) \left|\det a\right|_{\R}^{2s} \delta_{\Bgp_n(\R)}^{-1}(a) \, d^{\times}a.
\end{multline}
\end{lemma}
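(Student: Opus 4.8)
The plan is to derive \eqref{eqn:convsectid} from the archimedean Godement--Jacquet convolution section identity of the first author \cite[Lemma 9.6]{Hum20a}, specialised to a spherical representation. For such $\pi$ the distinguished homogeneous polynomial attached to the newform is trivial, so the Schwartz--Bruhat function in that lemma is simply $\Phi^{\ast}(x) = \exp(-2\pi \Tr(x \prescript{t}{}{\overline{x}}))$ on $\Mat_{n \times n}(\C)$, and the lemma asserts
\[
\int_{\GL_n(\C)} W_{\pi}^{\circ}(hg) \exp(-2\pi \Tr(g \prescript{t}{}{\overline{g}})) |\det g|_{\C}^{s + \frac{n - 1}{2}} \, dg = L(s,\pi) W_{\pi}^{\circ}(h)
\]
for all $h \in \GL_n(\C)$ and $\Re(s)$ sufficiently large; this is insensitive to the choice of additive character, so it may be applied with $\psi_{\C/\R}$ in place of $\psi$ (alternatively, conjugate by $\diag(i^{n - 1},\ldots,i,1)$). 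I would then take $h = a' \in \Agp_n(\R)$ and unfold the integral over $\GL_n(\C)$ via the Iwasawa decomposition $\GL_n(\C) = \Ngp_n(\C) \Agp_n(\R) \Ugp(n)$ with its Haar measure $dg = 2^n \delta_{\Bgp_n(\R)}^{-2}(a) \, dk \, d^{\times}a \, du$.

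Writing $g = uak$, the $\Ugp(n)$-integral is trivial since $W_{\pi}^{\circ}$ and $\Phi^{\ast}$ are both right $\Ugp(n)$-invariant; moreover $W_{\pi}^{\circ}(a' u a k) = \psi_{\C/\R,n}(a' u a'^{-1}) W_{\pi}^{\circ}(a'a)$ because $a'$ is diagonal and $a'a \in \Agp_n(\R)$, while $\Phi^{\ast}(uak) = \exp(-2\pi \sum_{j} a_j^2) \prod_{i < j} \exp(-2\pi a_j^2 u_{i,j} \overline{u_{i,j}})$ and $|\det g|_{\C}^{s + \frac{n - 1}{2}} = |\det a|_{\R}^{2s + n - 1}$. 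The remaining integral over $\Ngp_n(\C)$ then factors over the entries $u_{i,j}$ with $i < j$: the entries with $j > i + 1$ yield elementary Gaussian integrals equal to $a_j^{-2}$, while each superdiagonal entry $u_{j,j + 1}$ yields a shifted Gaussian integral equal to $a_{j + 1}^{-2} \exp(-2\pi a_j^{\prime 2} a_{j + 1}^{\prime -2} a_{j + 1}^{-2})$ --- this is the source of the kernel $\prod_{j = 1}^{n - 1} \exp(-2\pi a_j^{\prime 2} a_{j + 1}^{\prime -2} a_{j + 1}^{-2})$ in \eqref{eqn:convsectid}. Gathering the powers of $a_j$ contributed by these $n(n - 1)/2$ Gaussian integrals together with $|\det a|_{\R}^{2s + n - 1}$ and $\delta_{\Bgp_n(\R)}^{-2}(a)$, one verifies that they collapse to precisely $|\det a|_{\R}^{2s} \delta_{\Bgp_n(\R)}^{-1}(a)$, so that the unfolded left-hand side becomes the right-hand side of \eqref{eqn:convsectid}; equating with $L(s,\pi) W_{\pi}^{\circ}(a')$ finishes the proof.

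I expect the main obstacle to be purely this bookkeeping of normalising factors: correctly tracking the $2^n$ from the chosen Iwasawa decomposition, the fact that Haar measure on $\C$ is twice Lebesgue measure (which enters each of the unipotent-entry integrals), and the cancellation of the exponents of the $a_j$ coming from $\delta_{\Bgp_n(\R)}$ against those produced by the Gaussian integrals, so that they reassemble into the single factor $\delta_{\Bgp_n(\R)}^{-1}(a)$. The analytic input is benign: for $\Re(s)$ large the multiple integral converges absolutely because $\Phi^{\ast}$ decays rapidly in the $\Ngp_n(\C)$- and $\Agp_n(\R)$-directions, so Fubini is justified and no analytic continuation is required. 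A self-contained alternative is induction on $n$: the base case $n = 1$ is the integral representation \eqref{eqn:Lsomega} for $\zeta_{\C}(s + t_1) = L(s,\pi)$, and the inductive step substitutes the propagation formula of \hyperref[propformula]{Lemma \ref*{propformula}} for $W_{\pi}^{\circ}(a'a)$, separates off the $a_1$-integral (again an instance of \eqref{eqn:Lsomega}), and identifies what remains, after a change of variables, with the case $n - 1$ of \eqref{eqn:convsectid} applied to $\pi_0$.
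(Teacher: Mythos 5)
Your proposal is correct and follows essentially the same route as the paper: it invokes the convolution section identity of \cite[Lemma 9.6]{Hum20a} with the Gaussian $\Phi_{\ur}$, takes $h = a'$, and unfolds over $\GL_n(\C)$ via the Iwasawa decomposition $\Ngp_n(\C)\Agp_n(\R)\Ugp(n)$, with the superdiagonal Gaussian--Fourier integrals producing the kernel $\prod_j \exp(-2\pi a_j^{\prime 2} a_{j+1}^{\prime -2} a_{j+1}^{-2})$ and the remaining powers of the $a_j$ reassembling into $|\det a|_{\R}^{2s}\delta_{\Bgp_n(\R)}^{-1}(a)$, exactly as in the paper's proof (which performs the equivalent change of variables $u_{i,j} \mapsto a_j^{-1}u_{i,j}$ before evaluating the Gaussians). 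Your added remarks on the insensitivity to the choice of additive character and the alternative induction are fine but not needed.
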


\begin{proof}
From \cite[Lemma 9.6]{Hum20a}, we have the identity
\[L(s,\pi) W_{\pi}^{\circ}(a') = \int_{\GL_{n - 1}(\C)} W_{\pi}^{\circ}(a' h) \Phi_{\ur}(h) \left|\det h\right|_{\C}^{s + \frac{n - 1}{2}} \, dh,\]
with $\Phi_{\ur} \in \Scr(\Mat_{n \times n}(\C))$ given by \eqref{eqn:Phiurdefeq}. We employ the Iwasawa decomposition in order to write $h = uak$, where $u \in \Ngp_n(\C)$, $a = \diag(a_1,\ldots,a_n) \in \Agp_n(\R)$, and $k \in \Ugp(n)$. As $W_{\pi}^{\circ}(a'uak) = \psi_{\C/\R,n - 1}(a'ua^{\prime -1}) W_{\pi}^{\circ}(a'a)$, the integral over $\Ugp(n) \ni k$ is trivial. We make the change of variables $u_{i,j} \mapsto a_j^{-1} u_{i,j}$, and finally evaluate the integrals over $\C \ni u_{i,j}$; they are equal to $1$ if $j \neq i + 1$ and to $\exp(-2\pi a_i^{\prime 2} a_{i + 1}^{\prime -2} a_{i + 1}^{-2})$ if $j = i + 1$. We arrive at the identity \eqref{eqn:convsectid}.
\end{proof}

We are now able to prove that when $\pi$ is a spherical induced representation of Langlands type of $\GL_n(\C)$, the spherical Whittaker function $W_{\pi}^{\circ} \in \WW(\pi,\psi_{\C/\R})$ is a strong test vector for the Flicker integral.

\begin{theorem}
\label{thm:Flickerspher}
Let $\pi = \bigboxplus_{j = 1}^{n} |\cdot|_{\C}^{t_j}$ be a spherical induced representation of Langlands type of $\GL_n(\C)$ with spherical Whittaker function $W_{\pi}^{\circ} \in \WW(\pi,\psi_{\C/\R})$. Then with $\Phi_{\ur} \in \Scr(\R^n)$ given by \eqref{eqn:Phiurdefeq}, the Flicker integral $\Psi(s,W_{\pi}^{\circ},\Phi_{\ur})$ is equal to
\[
L(s,\pi,\As) \coloneqq \prod_{j = 1}^{n} \zeta_{\R}(s + 2t_j) \prod_{1 \leq j < \ell \leq n} \zeta_{\C}(s + t_j + t_{\ell}).
\]
\end{theorem}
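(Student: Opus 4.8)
The plan is to proceed by induction on $n$, using the propagation formula in \hyperref[propformula]{Lemma \ref*{propformula}} and the convolution section identity in \hyperref[convsectid]{Lemma \ref*{convsectid}} to reduce the Flicker integral for $\GL_n(\C)$ to the one for $\GL_{n-1}(\C)$, together with an additional $d^{\times}a_n$-integral that produces the new factors $\zeta_{\R}(s + 2t_1) \prod_{j=2}^{n} \zeta_{\C}(s + t_1 + t_j)$. First I would dispose of the base case $n = 1$ directly: here $W_{\pi}^{\circ}(x) = |x|_{\C}^{t_1}$ on $\R^{\times}$, $\Phi_{\ur}(x) = \exp(-\pi x^2)$, and the integral is $\int_{\R^{\times}} |x|_{\R}^{2t_1} \exp(-\pi x^2) |x|_{\R}^{s} \, d^{\times}x = \zeta_{\R}(s + 2t_1) = L(s,\pi,\As)$ by \eqref{eqn:Lsomega} (using $|x|_{\C} = |x|_{\R}^2$ for $x \in \R^{\times}$, so the module contributes $|x|_{\R}^{2t_1}$, and recalling $L(s,\pi,\As) = \zeta_{\R}(s + 2t_1)$ when $n=1$).

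For the inductive step I would first use the Iwasawa decomposition $\GL_n(\C) = \Ngp_n(\C)\Agp_n(\R)\Ugp(n)$ recalled just before the lemmata to write the Flicker integral as an integral over $\Agp_n(\R) \times \Ugp(n)$; since $W_{\pi}^{\circ}$ is spherical and $\Phi_{\ur}$ is right $K_n$-invariant, the $\Ugp(n)$-integral is trivial up to the probability normalisation, and the $du$-integral over $\Ngp_n(\C)$ collapses because $W_{\pi}^{\circ}(ua) = \psi_{\C/\R,n}(u)\,W_{\pi}^{\circ}(a)$ and $\Phi_{\ur}(e_n ua) = \Phi_{\ur}(e_n a)$, leaving
\[
\Psi(s,W_{\pi}^{\circ},\Phi_{\ur}) = 2^n \int_{\Agp_n(\R)} W_{\pi}^{\circ}(a)\, \Phi_{\ur}(e_n a)\, |\det a|_{\R}^{s}\, \delta_{\Bgp_n(\R)}^{-2}(a)\, d^{\times}a,
\]
where $\Phi_{\ur}(e_n a) = \exp(-\pi a_n^2)$. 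Next I substitute the propagation formula \eqref{eqn:propformula} for $W_{\pi}^{\circ}(a)$, which expresses it in terms of $W_{\pi_0}^{\circ}(a')$ for $a' \in \Agp_{n-1}(\R)$ with $\pi_0 = \bigboxplus_{j=2}^{n} |\cdot|_{\C}^{t_j}$, times explicit Gaussian factors $\prod_{j=1}^{n-1} \exp(-2\pi a_j^{\prime 2} a_{j+1}^{-2}) \exp(-2\pi a_j^{\prime -2} a_j^2)$ and a power of $|\det a|_{\R}$. After interchanging the order of integration, the integral over $a_n \in \R^{\times}$ decouples (the only dependence on $a_n$ is through $\exp(-\pi a_n^2)$, $\exp(-2\pi a_{n-1}^{\prime 2} a_n^{-2})$, and a monomial in $|a_n|$); I would evaluate it in closed form — it is of the same shape as the one-dimensional integrals appearing in Stade's inductive arguments and yields $\zeta_{\R}(s + 2t_1)$ up to a manageable Gaussian weight in $a_{n-1}'$. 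The remaining Gaussian cross-terms $\exp(-2\pi a_j^{\prime 2} a_{j+1}^{-2})$ for $j = 1,\dots,n-2$ still involve $a_{j+1} \in \R^{\times}$ for $j+1 \le n-1$, and these are precisely absorbed by applying the convolution section identity \eqref{eqn:convsectid} for $\GL_{n-1}(\C)$ in reverse: the product of $\GL_{n-1}(\R)$-diagonal integrals against $W_{\pi_0}^{\circ}(a')$ with exactly these Gaussian weights reassembles, after the change of variables matching $a_j$ with the convolution variables, into $L(s + t_1, \pi_0) \cdot \Psi(s, W_{\pi_0}^{\circ}, \Phi_{\ur})$ — or more precisely into the $\GL_{n-1}$ Flicker integral with a shifted spectral parameter. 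Invoking the inductive hypothesis, $\Psi(s, W_{\pi_0}^{\circ}, \Phi_{\ur}) = L(s, \pi_0, \As) = \prod_{j=2}^{n} \zeta_{\R}(s + 2t_j) \prod_{2 \le j < \ell \le n} \zeta_{\C}(s + t_j + t_\ell)$, and combining with the factor $\prod_{j=2}^{n}\zeta_{\C}(s + t_1 + t_j)$ produced by the reverse convolution identity and the $\zeta_{\R}(s + 2t_1)$ from the $a_n$-integral gives exactly the claimed product formula for $L(s,\pi,\As)$.

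The main obstacle I anticipate is the careful bookkeeping of the Gaussian cross-terms and Jacobian factors when the propagation formula and the reverse convolution identity are composed: one must verify that the weights $\exp(-2\pi a_j^{\prime 2} a_{j+1}^{-2})$ emerging from \eqref{eqn:propformula} match \emph{exactly} the weights $\exp(-2\pi a_j^{\prime 2} a_{j+1}^{\prime -2} a_{j+1}^{-2})$ demanded by \eqref{eqn:convsectid} after the substitution $a_j \mapsto a_j' a_j$ (or a similar rescaling), and that the powers of $|\det a|_{\R}$ and the modulus characters $\delta_{\Bgp_n(\R)}$, $\delta_{\Bgp_{n-1}(\R)}$ combine to give the correct shift $s \mapsto s + t_1$ in the $L$-factor and the correct exponent $s$ on the $\GL_{n-1}$ side. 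A secondary point requiring care is tracking the powers of $2$ (the factor $2^n$ from the Haar measure, the $2^{n-1}$ in \eqref{eqn:propformula}, and the $2^n$ in \eqref{eqn:convsectid}) so that the final normalisation is clean; since the claimed answer has no spurious constant, these must cancel, which is a useful consistency check. Both of these are purely computational once the structure is set up, and the convergence for $\Re(s)$ large needed to justify the interchanges of integration follows from the absolute convergence statements in \cite[Theorem 3.5]{B-P21} together with the rapid decay of the Gaussians.
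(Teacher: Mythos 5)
Your overall strategy is the same as the paper's: induction on $n$, with the base case $n=1$ handled directly, the propagation formula \eqref{eqn:propformula} inserted to descend to $\GL_{n-1}(\C)$, the decoupled $\R^{\times}$-integral producing $\zeta_{\R}(s+2t_1)$, and the convolution section identity \eqref{eqn:convsectid} used in reverse to produce $L(s+t_1,\pi_0)$ times the $\GL_{n-1}$ Flicker integral. However, your very first reduction step is wrong, and it is not a mere bookkeeping issue. The Flicker integral is an integral over $\Ngp_n(\R)\backslash\GL_n(\R)$ (the \emph{real} group), so the relevant decomposition is the real Iwasawa decomposition $\GL_n(\R)=\Ngp_n(\R)\Agp_n(\R)\Ogp(n)$, under which the quotient measure is $\delta_{\Bgp_n(\R)}^{-1}(a)\,d^{\times}a\,dk$: there is no unipotent integral to perform at all, the compact integral is over $\Ogp(n)$ (not $\Ugp(n)$), and the torus reduction reads
\[
\Psi(s,W_{\pi}^{\circ},\Phi_{\ur})=\int_{\Agp_n(\R)}W_{\pi}^{\circ}(a)\exp(-\pi a_n^2)\left|\det a\right|_{\R}^{s}\,\delta_{\Bgp_n(\R)}^{-1}(a)\,d^{\times}a ,
\]
with no factor $2^n$. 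What you wrote, $2^n\delta_{\Bgp_n(\R)}^{-2}(a)$, is the Haar measure of the \emph{complex} group $\GL_n(\C)$ in the decomposition $\Ngp_n(\C)\Agp_n(\R)\Ugp(n)$, which is not the domain of $\Psi$; moreover your justification that the $\Ngp_n(\C)$-integral ``collapses'' is not correct, since $\psi_{\C/\R,n}$ is nontrivial on $\Ngp_n(\C)$ (it is trivial only on $\Ngp_n(\R)$, which is precisely why the Flicker integral is well defined on $\Ngp_n(\R)\backslash\GL_n(\R)$).

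This error propagates: in the correct setup the $\delta_{\Bgp_n(\R)}^{-1}(a)$ from the measure cancels exactly against the $\delta_{\Bgp_n(\R)}(a)$ appearing in the propagation formula \eqref{eqn:propformula}, which is what makes the exponents line up so that, after the rescalings by powers of $2^{1/2}$, the remaining torus integral is literally the left-hand side of \eqref{eqn:convsectid} for $\pi_0$ (with its factor $2^{n-1}$ matching the $2^{n-1}$ of \eqref{eqn:propformula}), followed by the $\GL_{n-1}$ Flicker integral. With your normalisation a spurious $\delta_{\Bgp_n(\R)}^{-1}$ and a factor $2^n$ survive, the exponents no longer match the convolution identity, and the induction as written does not yield $L(s,\pi,\As)$. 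Once you replace the torus reduction by the correct one above, the rest of your outline (which you left as an unverified matching of Gaussian weights and determinant exponents) is exactly the paper's computation and does go through, including the cancellation of all powers of $2$ after the changes of variables $y\mapsto 2^{-1/2}a_1'y$, $a_j'\mapsto 2^{-1/2}a_j'a_j$, $a_j\mapsto 2^{1/2}a_j^{\prime-1}a_{j+1}'a_j$.
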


\begin{proof}
We prove this by induction. The base case $n = 1$ is trivially true since the $\GL_1$ Flicker integral is simply
\[\int_{\R^{\times}} \exp(-\pi x^2)  |x|_{\R}^{s + 2t_1}\, d^{\times}x = \zeta_{\R}(s + 2t_1) = L(s,\pi,\As).\]
For the general case, we shall show that if $\pi = \bigboxplus_{j = 1}^{n} |\cdot|_{\C}^{t_j}$ and $\pi_0 \coloneqq \bigboxplus_{j = 2}^{n} |\cdot|_{\C}^{t_j}$, then the $\GL_n$ Flicker integral satisfies the identity
\begin{equation}
\label{eqn:inductionhyp}
\Psi(s,W_{\pi}^{\circ},\Phi_{\ur}) = \zeta_{\R}(s + 2t_1) L(s + t_1,\pi_0) I(s,W_{\pi_0}^{\circ},\Phi_{\ur}),
\end{equation}
where the $\GL_{n - 1}$ Flicker integral on the right-hand side involves the spherical Whittaker function $W_{\pi_0}^{\circ} \in \WW(\pi_0,\psi_{\C/\R})$ and the Schwartz--Bruhat function $\Phi_{\ur} \in \Scr(\R^{n - 1})$ given by \eqref{eqn:Phiurdefeq}. Since
\[L(s + t_1,\pi_0) = \prod_{\ell = 2}^{n} \zeta_{\C}(s + t_1 + t_{\ell}),\]
this implies the result by the induction hypothesis.

We first note that the $\Psi(s,W_{\pi}^{\circ},\Phi_{\ur})$ is equal to
\[\int_{\Agp_n(\R)} W_{\pi}^{\circ}(a) \exp(-\pi a_n^2) \left|\det a\right|_{\R}^s \delta_{\Bgp_n(\R)}^{-1}(a) \, d^{\times}a\]
via the Iwasawa decomposition $g = ak$, since the integral over $\Ogp(n) \ni k$ is trivial. We insert the propagation formula \eqref{eqn:propformula} and relabel $a = \diag(a_1,\ldots,a_n) \in \Agp_n(\R)$ as $\begin{psmallmatrix} y & 0 \\ 0 & a \end{psmallmatrix}$, where now $y \in \R^{\times}$ and $a = \diag(a_1,\ldots,a_{n - 1}) \in \Agp_{n - 1}(\R)$. The Flicker integral $\Psi(s,W_{\pi}^{\circ},\Phi_{\ur})$ becomes
\begin{multline*}
2^{n - 1} \int_{\Agp_{n - 1}(\R)} \exp(-\pi a_{n - 1}^2) \left|\det a\right|_{\R}^{s + 2t_1} \int_{\Agp_{n - 1}(\R)} W_{\pi_0}^{\circ}(a') \prod_{j = 1}^{n - 2} \exp\left(-2\pi a_{j + 1}^{\prime -2} a_j^2\right)	\\
\times \prod_{j = 1}^{n - 1} \exp\left(-2\pi a_j^{\prime 2} a_j^{-2}\right) \left|\det a'\right|_{\R}^{-2t_1} \delta_{\Bgp_{n-1}(\R)}^{-1}(a') \int_{\R^{\times}} \exp(-2\pi a_1^{\prime - 2} y^2) |y|_{\R}^{s + 2t_1} \, d^{\times}y \, d^{\times}a' \, d^{\times}a.
\end{multline*}
We make the change of variables $y \mapsto 2^{-1/2} a_1' y$, then the change of variables $a_j' \mapsto 2^{-1/2} a_j' a_j$ and $a_j \mapsto 2^{1/2} a_j^{\prime -1} a_{j + 1}' a_j$, and finally $a_{n - 1} \mapsto 2^{1/2} a_{n - 1}$. We arrive at the identity
\begin{multline*}
\int_{\R^{\times}} \exp(-\pi y^2) |y|_{\R}^{s + 2t_1} \, d^{\times}y \int_{\Agp_{n - 1}(\R)} \exp(-\pi a_{n - 1}^{\prime 2}) \left|\det a'\right|_{\R}^s \delta_{\Bgp_{n-1}(\R)}^{-1}(a') \\
\times 2^{n - 1} \int_{\Agp_{n - 1}(\R)} W_{\pi_0}^{\circ}(a'a) \prod_{j = 1}^{n - 1} \exp(-2\pi a_j^2)	  \prod_{j = 1}^{n - 2} \exp\left(-2\pi a_j^{\prime 2} a_{j + 1}^{\prime -2} a_{j + 1}^{-2}\right)	\\
\times \left|\det a\right|_{\R}^{2s + 2t_1} \delta_{\Bgp_{n-1}(\R)}^{-1}(a) \, d^{\times}a \, d^{\times}a'.
\end{multline*}
The integral over $\R^{\times} \ni y$ is $\zeta_{\R}(s + 2t_1)$, while the last two lines are $L(s + t_1,\pi_0) W_{\pi_0}^{\circ}(a')$ by the convolution section identity \eqref{eqn:convsectid}. So this is
\[\zeta_{\R}(s + 2t_1) L(s + t_1,\pi_0) \int_{\Agp_{n - 1}(\R)} W_{\pi_0}^{\circ}(a') \exp(-\pi a_{n - 1}^{\prime 2}) \left|\det a'\right|_{\R}^s \delta_{\Bgp_{n-1}(\R)}^{-1}(a') \, d^{\times}a',\]
which is precisely \eqref{eqn:inductionhyp} by the Iwasawa decomposition $g = a'k'$, since the integral over $\Ogp(n - 1) \ni k'$ is trivial.
\end{proof}

\subsection{Test Vectors for Flicker Integrals and Flicker--Rallis Periods}

We proceed to the more general case where $\pi$ may be ramified. Our first step is to reduce the Flicker integral to an integral over $\Ngp_{n - 1}(\R) \backslash \GL_{n - 1}(\R)$.

\begin{proposition}
\label{prop:FlickerKey}
Let $\pi$ be an induced representation of Langlands type of $\GL_n(\C)$ with Whittaker newform $W_{\pi}^{\circ} \in \WW(\pi,\psi_{\C/\R})$. Let $\Phi \in \Scr(\R^n)$ be the right $\Ogp(n)$-finite Schwartz--Bruhat function of the form $\Phi(x) = P(x) \exp(-\pi x \prescript{t}{}{x})$, where the distinguished homogeneous polynomial $P \in \PP_{\chi_{\pi}|_{\Ogp(1)},c(\pi)}(\R^n)$ is given by
\[P(x) \coloneqq \sum_{\substack{m = c(\chi_{\pi}|_{\Ogp(1)}) \\ m \equiv c(\chi_{\pi}|_{\Ogp(1)}) \hspace{-.2cm} \pmod{2}}}^{c(\pi)} \left(x \prescript{t}{}{x}\right)^{\frac{c(\pi) - m}{2}} (\dim \tau_{\chi_{\pi}|_{\Ogp(1)},m}) \overline{P_{\chi_{\pi}|_{\Ogp(1)},m}^{\circ}}(x).\]
Then the Flicker integral $\Psi(s,W_{\pi}^{\circ},\Phi)$ is equal to
\begin{equation}
\label{eqn:FRperiod}
L\left(ns, \omega_{\pi_{\ur}}|_{\R^{\times}}\right) \int\limits_{\Ngp_{n - 1}(\R) \backslash \GL_{n - 1}(\R)} W_{\pi}^{\circ}\begin{pmatrix} g & 0 \\ 0 & 1 \end{pmatrix} \left|\det g\right|_{\R}^{s - 1} \, dg.
\end{equation}
\end{proposition}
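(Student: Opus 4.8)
The plan is to adapt the proof of \hyperref[prop:RSkey]{Proposition \ref*{prop:RSkey}}, replacing the central integral over $F^{\times}$ there by one over $\R^{\times}$ and the compact integral over $\Ugp(n)$ by one over $\Ogp(n)$. I would first apply the Iwasawa decomposition $g = (z1_n) \begin{psmallmatrix} h & 0 \\ 0 & 1 \end{psmallmatrix} k$ of $\Ngp_n(\R) \backslash \GL_n(\R)$ attached to the maximal parabolic $\Pgp_{(n - 1,1)}(\R)$, with $z \in \R^{\times}$, $h \in \Ngp_{n - 1}(\R) \backslash \GL_{n - 1}(\R)$, and $k \in \Ogp(n)$, under which $dg = \left|\det h\right|_{\R}^{-1} \, d^{\times}z \, dh \, dk$. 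Using that $e_n \begin{psmallmatrix} h & 0 \\ 0 & 1 \end{psmallmatrix} = e_n$, that $\left|\det k\right|_{\R} = 1$, and that $W_{\pi}^{\circ}((z1_n) g') = \omega_{\pi}(z) W_{\pi}^{\circ}(g')$, this rewrites $\Psi(s,W_{\pi}^{\circ},\Phi)$ as
\[\int_{\R^{\times}} \omega_{\pi}(z) \left|z\right|_{\R}^{ns} \int\limits_{\Ngp_{n - 1}(\R) \backslash \GL_{n - 1}(\R)} \left|\det h\right|_{\R}^{s - 1} \int_{\Ogp(n)} W_{\pi}^{\circ}\left(\begin{pmatrix} h & 0 \\ 0 & 1 \end{pmatrix} k\right) \Phi(z e_n k) \, dk \, dh \, d^{\times}z,\]
with absolute convergence for $\Re(s)$ large guaranteed by the estimates underlying \cite[Theorem 3.5]{B-P21}; the claimed identity then extends to all $s \in \C$ by meromorphic continuation. (Here \hyperref[lem:Whitnewforminvariance]{Lemma \ref*{lem:Whitnewforminvariance}} applies verbatim to the Whittaker model relative to $\psi_{\C/\R}$, since $\psi_{\C/\R}$ and $\psi$ differ by scaling by an element of $\C^{\times}$ of modulus one, so that the corresponding Whittaker newforms differ by a left translation by a diagonal element of $\Ugp(n)$.)

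The heart of the argument is the innermost integral over $\Ogp(n)$. For $k \in \Ogp(n)$ one has $(e_n k) \prescript{t}{}{(e_n k)} = 1$, so by the homogeneity \eqref{eqn:homogeneity} and the definition of $\Phi$ in the statement, $\Phi(z e_n k) = \chi_{\pi}|_{\Ogp(1)}(\sgn z) \left|z\right|_{\R}^{c(\pi)} \exp(-\pi z^2) P(e_n k)$. I would then insert the reproducing identity \eqref{eqn:Whitnewformintegral} for $W_{\pi}^{\circ}\left(\begin{psmallmatrix} h & 0 \\ 0 & 1 \end{psmallmatrix} k\right)$ with integration variable $k_1 \in \Ugp(n)$ and make the substitution $k_1 \mapsto k^{-1} k_1$, which is legitimate since $\Ogp(n) \subset \Ugp(n)$; after interchanging the order of integration, the integral $\int_{\Ogp(n)} W_{\pi}^{\circ}\left(\begin{psmallmatrix} h & 0 \\ 0 & 1 \end{psmallmatrix} k\right) \Phi(z e_n k) \, dk$ becomes $\chi_{\pi}|_{\Ogp(1)}(\sgn z) \left|z\right|_{\R}^{c(\pi)} \exp(-\pi z^2)$ times
\[\dim \tau_{\chi_{\pi},c(\pi)} \int_{\Ugp(n)} W_{\pi}^{\circ}\left(\begin{pmatrix} h & 0 \\ 0 & 1 \end{pmatrix} k_1\right) \int_{\Ogp(n)} P_{\chi_{\pi},c(\pi)}^{\circ}(e_n k_1^{-1} k) P(e_n k) \, dk \, dk_1.\]

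To evaluate $\int_{\Ogp(n)} P_{\chi_{\pi},c(\pi)}^{\circ}(e_n k_1^{-1} k) P(e_n k) \, dk$, I would expand $P_{\chi_{\pi},c(\pi)}^{\circ}(e_n k_1^{-1} k)$ by the addition theorem \hyperref[prop:addition]{(Proposition \ref*{prop:addition})} in an orthonormal basis $\{Q_{\ell}\}$ of $\HH_{\chi_{\pi},c(\pi)}(\C^n)$, and then use the key observation that the restriction of each $Q_{\ell}$ to $\R^n$ lies in $\PP_{\chi_{\pi}|_{\Ogp(1)},c(\pi)}(\R^n)$ --- it is homogeneous of degree $c(\pi)$ with central character $\chi_{\pi}|_{\Ogp(1)}$, although in general it is \emph{not} harmonic for the real Laplacian. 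Since the kernel $P$ in the statement is the reproducing kernel for the \emph{full} space $\PP_{\chi_{\pi}|_{\Ogp(1)},c(\pi)}(\R^n)$, the reproducing property \eqref{eqn:Pmatrixcoeff}, combined with the change of variables $k \mapsto k^{-1}$ and the symmetry $P(e_n k^{-1}) = P(e_n k)$ coming from \eqref{eqn:Pconj}, gives $\int_{\Ogp(n)} \overline{Q_{\ell}}(e_n k^{-1}) P(e_n k) \, dk = \overline{Q_{\ell}}(e_n)$; running the addition theorem in reverse then collapses the inner integral to $P_{\chi_{\pi},c(\pi)}^{\circ}(e_n k_1^{-1})$, and a final application of \eqref{eqn:Whitnewformintegral} shows that $\int_{\Ogp(n)} W_{\pi}^{\circ}\left(\begin{psmallmatrix} h & 0 \\ 0 & 1 \end{psmallmatrix} k\right) \Phi(z e_n k) \, dk$ equals $\chi_{\pi}|_{\Ogp(1)}(\sgn z) \left|z\right|_{\R}^{c(\pi)} \exp(-\pi z^2) W_{\pi}^{\circ}\left(\begin{psmallmatrix} h & 0 \\ 0 & 1 \end{psmallmatrix}\right)$.

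Substituting this back factors $\Psi(s,W_{\pi}^{\circ},\Phi)$ as the product of $\int_{\Ngp_{n - 1}(\R) \backslash \GL_{n - 1}(\R)} W_{\pi}^{\circ}\left(\begin{psmallmatrix} h & 0 \\ 0 & 1 \end{psmallmatrix}\right) \left|\det h\right|_{\R}^{s - 1} \, dh$ with $\int_{\R^{\times}} \omega_{\pi}(z) \chi_{\pi}|_{\Ogp(1)}(\sgn z) \left|z\right|_{\R}^{c(\pi) + ns} \exp(-\pi z^2) \, d^{\times}z$. By \hyperref[lem:omegapitoomegapiur]{Lemma \ref*{lem:omegapitoomegapiur}} for $\GL_n(\C)$, restricted to $\R^{\times}$ --- on which $z / \|z\| = \sgn z$, $\|z\| = \left|z\right|_{\R}$, and $\chi_{\pi}|_{\Ogp(1)}(\sgn z) = \overline{\chi_{\pi}}(\sgn z)$ since this value lies in $\{\pm 1\}$ --- the product $\omega_{\pi}(z) \chi_{\pi}|_{\Ogp(1)}(\sgn z) \left|z\right|_{\R}^{c(\pi)}$ equals $\omega_{\pi_{\ur}}|_{\R^{\times}}(z)$, so \eqref{eqn:Lsomega} with $F = \R$ identifies the $z$-integral with $L(ns,\omega_{\pi_{\ur}}|_{\R^{\times}})$, which yields \eqref{eqn:FRperiod}. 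I expect the main obstacle to be exactly the evaluation of the inner $\Ogp(n)$-integral: one must realise that $P_{\chi_{\pi},c(\pi)}^{\circ}$ is to be regarded as a polynomial on $\C^n$ restricted to the \emph{real} unit sphere, so that its addition-theorem expansion forces the use of the reproducing kernel of the \emph{entire} polynomial space $\PP_{\chi_{\pi}|_{\Ogp(1)},c(\pi)}(\R^n)$ rather than of its harmonic subspace --- which is precisely why $\Phi$ is built from the full kernel $P$ displayed in the statement and not from a single harmonic polynomial. The remaining character bookkeeping (the $\GL_n(\C)$ incarnation of \hyperref[lem:omegapitoomegapiur]{Lemma \ref*{lem:omegapitoomegapiur}}) and the convergence and interchange-of-integration justifications are routine refinements of the corresponding steps in the proof of \hyperref[prop:RSkey]{Proposition \ref*{prop:RSkey}}.
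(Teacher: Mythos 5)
Your proposal is correct and follows essentially the same route as the paper's proof: the Iwasawa decomposition for $\Ngp_n(\R) \backslash \GL_n(\R)$, insertion of the reproducing identity \eqref{eqn:Whitnewformintegral}, the addition theorem for $\HH_{\chi_{\pi},c(\pi)}(\C^n)$ combined with the observation that the restrictions of its elements to $\R^n$ lie in the full space $\PP_{\chi_{\pi}|_{\Ogp(1)},c(\pi)}(\R^n)$ (forcing the use of the full reproducing kernel via \eqref{eqn:Preproducing} and \eqref{eqn:Pmatrixcoeff}), and finally \hyperref[lem:omegapitoomegapiur]{Lemma \ref*{lem:omegapitoomegapiur}} with \eqref{eqn:Lsomega} to identify the central integral as $L(ns,\omega_{\pi_{\ur}}|_{\R^{\times}})$. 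The only differences are cosmetic bookkeeping (where the substitution $k \mapsto k^{-1}$ is performed) plus your added remarks on convergence and on transporting the newform identities to $\WW(\pi,\psi_{\C/\R})$, which are harmless refinements.
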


The proof of this proceeds along similar lines to that of \hyperref[prop:RSkey]{Proposition \ref*{prop:RSkey}}.

\begin{proof}
We use the Iwasawa decomposition $g = (z1_n) \begin{psmallmatrix} h & 0 \\ 0 & 1 \end{psmallmatrix} k$ for $\Ngp_n(\R) \backslash \GL_n(\R)$ to see that the Flicker integral $\Psi(s,W_{\pi}^{\circ},\Phi)$ is equal to
\[\int_{\R^{\times}} \omega_{\pi}(z) |z|_{\R}^{ns} \int\limits_{\Ngp_{n - 1}(\R) \backslash \GL_{n - 1}(\R)} \left|\det h\right|_{\R}^{s - 1} \int_{\Ogp(n)} W_{\pi}^{\circ}\left(\begin{pmatrix} h & 0 \\ 0 & 1 \end{pmatrix} k\right) \Phi(ze_n k) \, dk \, dh \, d^{\times}z.\]
We insert the identity \eqref{eqn:Whitnewformintegral} for $W_{\pi}^{\circ}(g)$ with $g$ replaced by $\begin{psmallmatrix} h & 0 \\ 0 & 1 \end{psmallmatrix} k$ and the variable of integration being $k' \in \Ugp(n)$, then interchange the order of integration and make the change of variables $k \mapsto k^{-1}$ and $k' \mapsto k k'$. We end up with
\begin{multline*}
\int_{\R^{\times}} \omega_{\pi}(z) |z|_{\R}^{ns} \int\limits_{\Ngp_{n - 1}(\R) \backslash \GL_{n - 1}(\R)} \left|\det h\right|_{\R}^{s - 1} \int_{\Ugp(n)} W_{\pi}^{\circ}\left(\begin{pmatrix} h & 0 \\ 0 & 1 \end{pmatrix} k'\right)	\\
\times \dim \tau_{\chi_{\pi},c(\pi)} \int_{\Ogp(n)} P_{\chi_{\pi},c(\pi)}^{\circ}(e_n k^{\prime -1} k^{-1}) \Phi(z e_n k^{-1}) \, dk \, dk' \, dh \, d^{\times}z.
\end{multline*}
By the addition theorem, \hyperref[prop:addition]{Proposition \ref*{prop:addition}}, the last line is
\[\sum_{\ell = 1}^{\dim \tau_{\chi_{\pi},c(\pi)}} Q_{\ell}(e_n k^{\prime -1}) \int_{\Ogp(n)} \overline{Q_{\ell}}(e_n k) \Phi(z e_n k^{-1}) \, dk,\]
where $\{Q_{\ell}\}$ is an orthonormal basis of $\HH_{\chi_{\pi},c(\pi)}(\C^n)$. By the homogeneity of $Q_{\ell}$, \eqref{eqn:homogeneity}, we observe that the restrictions of these polynomials to $\R^n$ are elements of $\PP_{\chi_{\pi}|_{\Ogp(1)},c(\pi)}(\R^n)$.

We now use the fact that
\[\Phi(ze_n k^{-1}) = \overline{\chi_{\pi}}\left(\frac{z}{\|z\|}\right) \|z\|^{c(\pi)} e^{-\pi z^2} \sum_{\substack{m = c(\chi_{\pi}|_{\Ogp(1)}) \\ m \equiv c(\chi_{\pi}|_{\Ogp(1)}) \hspace{-.2cm} \pmod{2}}}^{c(\pi)} (\dim \tau_{\chi_{\pi}|_{\Ogp(1)},m}) P_{\chi_{\pi}|_{\Ogp(1)},m}^{\circ}(e_n k)\]
for $z \in \R^{\times}$ and $k \in \Ogp(n)$ by the homogeneity of $P_{\chi_{\pi}|_{\Ogp(1)},m}^{\circ}$ as in \eqref{eqn:homogeneity} coupled with the identity \eqref{eqn:Pconj}. By \eqref{eqn:Preproducing}, the sum over $m$ is the reproducing kernel of $\PP_{\chi_{\pi}|_{\Ogp(1)},c(\pi)}(\R^n)$, and so the integral over $\Ogp(n) \ni k$ is $\overline{Q_{\ell}}(e_n)$ by \eqref{eqn:Pmatrixcoeff}. Using the addition theorem, \hyperref[prop:addition]{Proposition \ref*{prop:addition}}, in \emph{reverse} and then using \eqref{eqn:Whitnewformintegral} to evalute the integral over $\Ugp(n) \ni k'$, we find that $I(s,W_{\pi}^{\circ},\Phi)$ is equal to
\[\int_{\R^{\times}} \omega_{\pi}(z) \overline{\chi_{\pi}}\left(\frac{z}{\|z\|}\right) \|z\|^{c(\pi)} |z|_{\R}^{ns} e^{-\pi z^2} \, d^{\times}z \int\limits_{\Ngp_{n - 1}(\R) \backslash \GL_{n - 1}(\R)} W_{\pi}^{\circ}\begin{pmatrix} h & 0 \\ 0 & 1 \end{pmatrix} \left|\det h\right|_{\R}^{s - 1} \, dh.\]
It remains to recall \hyperref[lem:omegapitoomegapiur]{Lemma \ref*{lem:omegapitoomegapiur}}, which, by \eqref{eqn:Lsomega}, shows that the integral over $\R^{\times} \ni z$ is $L(ns,\omega_{\pi_{\ur}}|_{\R^{\times}})$.
\end{proof}

\begin{remark}
Just as for \hyperref[prop:RSkey]{Proposition \ref*{prop:RSkey}}, the same proof remains valid in the nonarchimedean setting.
\end{remark}

Finally, we use \hyperref[thm:Flickerspher]{Theorem \ref*{thm:Flickerspher}} to show that when $\pi$ is an induced representation of Langlands type of $\GL_n(\C)$, the newform $W_{\pi}^{\circ} \in \WW(\pi,\psi_{\C/\R})$ is a weak test vector for the Flicker integral. The proof proceeds by reducing the problem to the spherical case.

\begin{theorem}
\label{thm:Flicker}
With the notation and hypotheses of \hyperref[prop:FlickerKey]{Proposition \ref*{prop:FlickerKey}}, the Flicker integral 
\begin{equation}
\label{eqn:Flickerint}
\Psi(s,W_{\pi}^{\circ},\Phi) \coloneqq \int\limits_{\Ngp_n(\R) \backslash \GL_n(\R)} W_{\pi}^{\circ}(g) \Phi(e_n g) \left|\det g\right|_{\R}^s \, dg
\end{equation}
is equal to $L(s,\pi_{\ur},\As)$.
\end{theorem}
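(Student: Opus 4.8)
The plan is to mirror the proof of \hyperref[thm:GLnxGLn]{Theorem \ref*{thm:GLnxGLn}}: reduce the ramified Flicker integral to the period integral of the Whittaker newform over $\Ngp_{n-1}(\R) \backslash \GL_{n-1}(\R)$, swap the newform for the spherical Whittaker function of $\pi_{\ur}$, and then recognise the resulting quantity as the \emph{spherical} Flicker integral, which was already computed in \hyperref[thm:Flickerspher]{Theorem \ref*{thm:Flickerspher}}. Concretely, I would first apply \hyperref[prop:FlickerKey]{Proposition \ref*{prop:FlickerKey}} to write $\Psi(s,W_{\pi}^{\circ},\Phi)$ as $L(ns,\omega_{\pi_{\ur}}|_{\R^{\times}})$ times the integral $\int_{\Ngp_{n-1}(\R) \backslash \GL_{n-1}(\R)} W_{\pi}^{\circ}\begin{psmallmatrix} g & 0 \\ 0 & 1 \end{psmallmatrix} \left|\det g\right|_{\R}^{s-1}\, dg$.

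The second step is to replace $W_{\pi}^{\circ}$ by $W_{\pi_{\ur}}^{\circ}$ inside this integral, using the identity $W_{\pi}^{\circ}\begin{psmallmatrix} g & 0 \\ 0 & 1 \end{psmallmatrix} = W_{\pi_{\ur}}^{\circ}\begin{psmallmatrix} g & 0 \\ 0 & 1 \end{psmallmatrix}$ for all $g \in \GL_{n-1}(\C)$, a fortiori for all $g \in \GL_{n-1}(\R)$. This is the analogue of \hyperref[lem:WtoWur]{Lemma \ref*{lem:WtoWur}} for the additive character $\psi_{\C/\R}$ in place of $\psi$. Its proof carries over verbatim, since it uses only the Whittaker--Plancherel theorem and the Rankin--Selberg integral representation of $L(s,\pi \times \sigma)$, neither of which depends on the choice of nontrivial additive character. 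Alternatively one can observe that $\psi_{\C/\R}(z) = \psi(iz)$, so that left translation by the diagonal matrix $\diag(i^{n-1},\ldots,i,1)$ intertwines $\WW(\pi,\psi)$ with $\WW(\pi,\psi_{\C/\R})$; since this matrix lies in the image of $\Ugp(n-1)$ in $\Ugp(n)$ and Whittaker newforms are right $K_{n-1}$-invariant, it carries the Whittaker newform for $\psi$ to the one for $\psi_{\C/\R}$ and fixes the restriction $g \mapsto W^{\circ}\begin{psmallmatrix} g & 0 \\ 0 & 1 \end{psmallmatrix}$, so the identity transfers.

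After this substitution, $\Psi(s,W_{\pi}^{\circ},\Phi)$ equals $L(ns,\omega_{\pi_{\ur}}|_{\R^{\times}})$ times $\int_{\Ngp_{n-1}(\R) \backslash \GL_{n-1}(\R)} W_{\pi_{\ur}}^{\circ}\begin{psmallmatrix} g & 0 \\ 0 & 1 \end{psmallmatrix} \left|\det g\right|_{\R}^{s-1}\, dg$. Since $\pi_{\ur}$ is spherical, its conductor exponent vanishes and the distinguished polynomial $P$ of \hyperref[prop:FlickerKey]{Proposition \ref*{prop:FlickerKey}} is the constant $1$, so the attached Schwartz--Bruhat function is exactly $\Phi_{\ur}$; running \hyperref[prop:FlickerKey]{Proposition \ref*{prop:FlickerKey}} in reverse for $\pi_{\ur}$ identifies the above quantity with $\Psi(s,W_{\pi_{\ur}}^{\circ},\Phi_{\ur})$. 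By \hyperref[thm:Flickerspher]{Theorem \ref*{thm:Flickerspher}}, $\Psi(s,W_{\pi_{\ur}}^{\circ},\Phi_{\ur}) = L(s,\pi_{\ur},\As)$, which is the desired value.

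The only genuine subtlety, and the main point requiring care, is the bookkeeping with the two additive characters $\psi$ and $\psi_{\C/\R}$ in the second step: one must be certain that the Whittaker newform appearing in the Flicker integral matches, after restriction to $\GL_{n-1}$, the Whittaker newform governed by \hyperref[lem:WtoWur]{Lemma \ref*{lem:WtoWur}}. Once that is in place, the argument is a formal transcription of the proof of \hyperref[thm:GLnxGLn]{Theorem \ref*{thm:GLnxGLn}}, with the spherical computation \hyperref[thm:Flickerspher]{Theorem \ref*{thm:Flickerspher}} playing the role of Stade's formula.
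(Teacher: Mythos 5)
Your proposal is correct and follows essentially the same route as the paper: apply \hyperref[prop:FlickerKey]{Proposition \ref*{prop:FlickerKey}}, replace $W_{\pi}^{\circ}$ by $W_{\pi_{\ur}}^{\circ}$ on the restricted torus via (the $\psi_{\C/\R}$-analogue of) \hyperref[lem:WtoWur]{Lemma \ref*{lem:WtoWur}}, run the same computation in reverse to recognise $\Psi(s,W_{\pi_{\ur}}^{\circ},\Phi_{\ur})$, and conclude with \hyperref[thm:Flickerspher]{Theorem \ref*{thm:Flickerspher}}. Your explicit care with the two additive characters is a point the paper passes over silently (it simply cites \hyperref[lem:WtoWur]{Lemma \ref*{lem:WtoWur}}), and your observation that the proof of that lemma is independent of the choice of nontrivial character is the right justification.
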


\begin{proof}
By \hyperref[lem:WtoWur]{Lemma \ref*{lem:WtoWur}} and \hyperref[prop:FlickerKey]{Proposition \ref*{prop:FlickerKey}}, we have that
\[\Psi(s,W_{\pi}^{\circ},\Phi) = L\left(ns, \omega_{\pi_{\ur}}|_{\R^{\times}}\right) \int\limits_{\Ngp_{n - 1}(\R) \backslash \GL_{n - 1}(\R)} W_{\pi_{\ur}}^{\circ}\begin{pmatrix} h & 0 \\ 0 & 1 \end{pmatrix} \left|\det h\right|_{\R}^{s - 1} \, dh.\]
On the other hand, the same calculation shows that the right-hand side is equal to $\Psi(s,W_{\pi_{\ur}}^{\circ},\Phi_{\ur})$ with $\Phi_{\ur} \in \Scr(\R^n)$ given by \eqref{eqn:Phiurdefeq}. It remains to invoke \hyperref[thm:Flickerspher]{Theorem \ref*{thm:Flickerspher}}.
\end{proof}

In general, the na\"{i}ve Asai $L$-function $L(s,\pi_{\ur}, \As)$ is not equal to $L(s,\pi,\As)$. Nonetheless, these two $L$-functions are closely related.

\begin{proposition}
Given an induced representation of Whittaker type $\pi$ of $\GL_n(\C)$, there exists a polynomial $p(s)$ for which 
\[
L(s,\pi_{\ur},\As) = p(s) L(s,\pi,\As).
\]
\end{proposition}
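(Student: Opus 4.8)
The plan is to reduce to the essentially square-integrable case via the local Langlands correspondence and then treat each resulting factor directly, in close parallel to the proof of Proposition \ref{prop:RSpolynomial}. First I would recall that the Asai $L$-function of an induced representation of Whittaker type $\pi = \bigboxplus_{j=1}^r \pi_j$ of $\GL_n(\C)$ decomposes via the local Langlands correspondence (as in \cite{B-P21}): since the Asai construction corresponds to the tensor-induction of the Langlands parameter from $W_{\C}$ to $W_{\R}$, one has
\[
L(s,\pi,\As) = \prod_{j=1}^{r} L(s,\pi_j,\As) \prod_{1 \leq j < \ell \leq r} L(s,\pi_j \times \pi_\ell),
\]
and the identical identity holds for $\pi_{\ur}$ in place of $\pi$, with $\pi_{j,\ur}$ replacing $\pi_j$. (Here each $\pi_{j,\ur}$ is, by Proposition \ref{prop:piurexists}, an unramified character $|\cdot|^{\alpha_{\pi,j}}$ on $\GL_1(\C)$.) Consequently the quotient $L(s,\pi_{\ur},\As)/L(s,\pi,\As)$ factors as a product of the ``diagonal'' quotients $L(s,\pi_{j,\ur},\As)/L(s,\pi_j,\As)$ and the ``off-diagonal'' quotients $L(s,\pi_{j,\ur} \times \pi_{\ell,\ur})/L(s,\pi_j \times \pi_\ell)$.

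For the off-diagonal terms, since $\pi_j$ and $\pi_\ell$ are essentially square-integrable representations of $\GL_1(\C)$, Proposition \ref{prop:RSpolynomial} (in the case $F = \C$) already shows each quotient $L(s,\pi_{j,\ur} \times \pi_{\ell,\ur})/L(s,\pi_j \times \pi_\ell)$ is a polynomial in $s$. For the diagonal terms, each $\pi_j$ is a character $e^{i\kappa_j \arg}|\cdot|_{\C}^{t_j}$, so $\pi_{j,\ur} = |\cdot|_{\C}^{t_j + \|\kappa_j\|/2}$, and one computes directly from Theorem \ref{thm:Flickerspher} (applied with $n = 1$) that
\[
L(s,\pi_j,\As) = \zeta_{\R}(s + 2t_j) = \zeta_{\R}(s + 2t_j), \qquad L(s,\pi_{j,\ur},\As) = \zeta_{\R}(s + 2t_j + \|\kappa_j\|).
\]
Since $\zeta_{\R}(s) = \pi^{-s/2}\Gamma(s/2)$ and $\Gamma(w+1) = w\Gamma(w)$, the quotient $\zeta_{\R}(s + 2t_j + \|\kappa_j\|)/\zeta_{\R}(s + 2t_j)$ equals $1$ when $\kappa_j = 0$ and otherwise equals $\pi^{-\|\kappa_j\|/2}\prod_{m=0}^{\lceil \|\kappa_j\|/2\rceil - 1}(\frac{s + 2t_j}{2} + m)$ or a similar product of half-integer shifts (the precise shape depending on the parity of $\|\kappa_j\|$), which is a polynomial in $s$.

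Multiplying the diagonal and off-diagonal polynomial quotients together yields a polynomial $p(s)$ with $L(s,\pi_{\ur},\As) = p(s)L(s,\pi,\As)$, completing the proof. The main obstacle — really the only nonroutine point — is justifying the decomposition of $L(s,\pi,\As)$ into standard and Rankin--Selberg factors of the constituents $\pi_j$; once that is in hand, everything reduces to the $\GL_1(\C)$ Gamma-factor bookkeeping already carried out in Proposition \ref{prop:RSpolynomial} and Theorem \ref{thm:Flickerspher}. I would cite \cite[Section 3.2]{B-P21} for the multiplicativity of the Asai $L$-function under isobaric sums, noting that it mirrors the well-known tensor-induction identity $\As(\rho_1 \oplus \rho_2) \cong \As(\rho_1) \oplus \As(\rho_2) \oplus (\rho_1 \otimes \rho_2)$ for Galois representations.
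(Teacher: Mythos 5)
Your proof follows the same route as the paper's: factor the Asai $L$-function through the local Langlands correspondence, dispatch the cross terms $L(s,\pi_j \times \pi_\ell)$ with Proposition \ref{prop:RSpolynomial}, and compute the remaining quotient for each essentially square-integrable constituent directly. The gap is in that last, diagonal computation. For $\pi_j = e^{i\kappa_j \arg}|\cdot|_{\C}^{t_j}$ with $\kappa_j \neq 0$, the Asai factor is \emph{not} $\zeta_{\R}(s + 2t_j)$: Theorem \ref{thm:Flickerspher} applies only to spherical (unramified) characters, so it cannot be invoked for a ramified $\pi_j$. The correct factor, coming from the local Langlands correspondence (the Asai lift of $e^{i\kappa_j \arg}|\cdot|_{\C}^{t_j}$ is the character $\sgn^{\kappa_j'}|\cdot|_{\R}^{2t_j}$ of $\R^{\times}$, where $\kappa_j' \in \{0,1\}$ and $\kappa_j' \equiv \kappa_j \pmod{2}$), is $L(s,\pi_j,\As) = \zeta_{\R}(s + 2t_j + \kappa_j')$.

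This is not a cosmetic point. With your formula, for odd $\kappa_j$ the diagonal quotient would be $\zeta_{\R}(s + 2t_j + \|\kappa_j\|)/\zeta_{\R}(s + 2t_j)$, a ratio of Gamma factors whose arguments (after halving) differ by a half-integer; this has infinitely many zeros and poles and is not a polynomial, so your hedge that a ``similar product of half-integer shifts'' is still a polynomial is false, and the argument breaks exactly in the ramified odd-weight case. With the correct factor the difference $\|\kappa_j\| - \kappa_j'$ is even and
\[
\frac{L(s,\pi_{j,\ur},\As)}{L(s,\pi_j,\As)} = \frac{\zeta_{\R}\left(s + 2t_j + \|\kappa_j\|\right)}{\zeta_{\R}\left(s + 2t_j + \kappa_j'\right)} = \pi^{-\frac{\|\kappa_j\| - \kappa_j'}{2}} \prod_{m = 0}^{\frac{1}{2}(\|\kappa_j\| - \kappa_j') - 1} \left(\frac{s + 2t_j + \kappa_j'}{2} + m\right),
\]
which is precisely how the paper concludes. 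Everything else in your outline --- the multiplicativity of the Asai $L$-function under isobaric sums via \cite{B-P21} and the use of Proposition \ref{prop:RSpolynomial} for the cross terms --- matches the paper's proof.
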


\begin{proof}
For $\pi = \bigboxplus_{j = 1}^{n} \pi_j$, we have that
\[L(s,\pi,\As) = \prod_{j = 1}^{n} L(s,\pi_j,\As) \prod_{1 \leq j < \ell \leq n} L(s, \pi_j \times \pi_{\ell})\]
via the local Langlands correspondence (cf.\ \cite[Lemma 3.2.1]{B-P21}). Recalling \hyperref[prop:RSpolynomial]{Proposition \ref*{prop:RSpolynomial}}, it thereby suffices to consider the case where $\pi$ is essentially square-integrable.

For $\pi$ essentially square-integrable, so that $\pi = e^{i\kappa \arg} |\cdot|_{\C}^{t}$ and $\pi_{\ur} = |\cdot|_{\C}^{t + \|\kappa\|/2}$, the fact that $\zeta_{\R}(s) \coloneqq \pi^{-s/2} \Gamma(s/2)$ and that $\Gamma(s + 1) = s\Gamma(s)$ means that for $\kappa' \in \{0,1\}$ satisfying $\kappa \equiv \kappa' \pmod{2}$,
\[\frac{L(s,\pi_{\ur},\As)}{L(s,\pi,\As)} = \frac{\zeta_{\R}\left(s + 2t + \|\kappa\|\right)}{\zeta_{\R}\left(s + 2t + \kappa'\right)} = \pi^{-\frac{\|\kappa\| - \kappa'}{2}} \prod_{m = 0}^{\frac{1}{2}(\|\kappa\| - \kappa') - 1} \left(\frac{s + 2t + \kappa'}{2} + m\right).\qedhere\]
\end{proof}

When $s = 1$, the integral appearing in \eqref{eqn:FRperiod} is known as the \emph{Flicker--Rallis period}. In general, this integral need not converge at $s = 1$. When $\pi$ is \emph{unitary}, however, convergence is guaranteed by the following lemma, whose proof we omit as it is a standard application \citelist{\cite{Kem15b}*{Lemma 7.1}; \cite{FZ95}*{Proposition 1}} of bounds for Whittaker functions by a gauge \cite[\S 4 Propositions 2 and 3]{JS90}.

\begin{lemma}
\label{Flicker-convergence}
Let $\pi$ be a unitary generic irreducible Casselman--Wallach representation of $\GL_n(\C)$. For any $W_{\pi} \in \WW(\pi,\psi_{\C/\R})$, the integral
\[
\int\limits_{\Ngp_{n - 1}(\R) \backslash \GL_{n - 1}(\R)} W_{\pi} \begin{pmatrix} h & 0 \\ 0 & 1 \end{pmatrix} \left|\det h\right|_{\R}^{s - 1} \, dh
\]
converges absolutely for $\Re (s) \geq 1$.
\end{lemma}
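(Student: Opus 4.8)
The plan is to estimate the restriction of $W_{\pi}$ to the diagonal torus of $\GL_{n-1}(\R)$ by a gauge in the sense of Jacquet and Shalika \cite[\S 4 Propositions 2 and 3]{JS90}, and then to evaluate the resulting torus integral directly, following \cite[Lemma 7.1]{Kem15b} and \cite[Proposition 1]{FZ95}. First I would reduce to the torus: since $\psi_{\C/\R}$ is trivial on $\R$, the character $\psi_{\C/\R,n}$ is trivial on $\Ngp_{n-1}(\R)$, so the integrand descends to $\Ngp_{n-1}(\R) \backslash \GL_{n-1}(\R)$, and writing $h = ak$ with $a = \diag(a_1,\ldots,a_{n-1}) \in \Agp_{n-1}(\R)$ and $k \in \Ogp(n-1)$, and using that $\Ogp(n-1)$ is compact and $W_{\pi}$ is continuous, the absolute convergence of the period reduces to that of
\[
\int_{\Agp_{n-1}(\R)} \Xi(a) \, \left|\det a\right|_{\R}^{\Re(s) - 1} \, \delta_{\Bgp_{n-1}(\R)}^{-1}(a) \, d^{\times}a, \qquad \Xi(a) \coloneqq \sup_{k \in \Ogp(n-1)} \left| W_{\pi} \begin{pmatrix} ak & 0 \\ 0 & 1 \end{pmatrix} \right|.
\]

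Next I would invoke the gauge estimate. Viewing $\diag(a_1,\ldots,a_{n-1},1)$ inside $\Agp_n(\C)$ and recalling that $\psi_{\C/\R}$ is a \emph{nontrivial} additive character of $\C$, the bounds of \cite[\S 4 Propositions 2 and 3]{JS90} provide, for every $N > 0$, a constant $C_N$ for which
\[
\Xi(a) \leq C_N \, \delta_{\Bgp_n(\C)}^{1/2}\begin{pmatrix} a & 0 \\ 0 & 1 \end{pmatrix} \prod_{i=1}^{n-1} \min\left\{ \left| a_i a_{i+1}^{-1} \right|_{\R}^{-\epsilon}, \left| a_i a_{i+1}^{-1} \right|_{\R}^{-N} \right\}, \qquad a_n \coloneqq 1,
\]
uniformly in $k$, since a compact translation of the argument does not alter the gauge class. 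Here the exponent $\epsilon$ may be taken strictly less than $1$ precisely because $\pi$ is unitary: the exponents of a unitary generic representation of $\GL_n(\C)$ have absolute value below $\tfrac{1}{2}$ in the module normalisation $|\cdot|_{\C}$, hence below $1$ as powers of $|\cdot|_{\R}$.

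The last step is a computation. On the torus one has the clean identity
\[
\delta_{\Bgp_n(\C)}^{1/2}\begin{pmatrix} a & 0 \\ 0 & 1 \end{pmatrix} \delta_{\Bgp_{n-1}(\R)}^{-1}(a) = \prod_{j=1}^{n-1} |a_j|_{\R}^{(n - 2j + 1) + (2j - n)} = \left|\det a\right|_{\R},
\]
so the integral is dominated by $\int_{\Agp_{n-1}(\R)} \bigl( \prod_{i=1}^{n-1} \min\{\,\cdot\,,\,\cdot\,\} \bigr) \left|\det a\right|_{\R}^{\Re(s)} \, d^{\times}a$. Substituting $b_i \coloneqq a_i a_{i+1}^{-1}$, so that $a_i = b_i b_{i+1} \cdots b_{n-1}$, $\left|\det a\right|_{\R} = \prod_{i=1}^{n-1} |b_i|_{\R}^{i}$, and $d^{\times}a = \prod_{i=1}^{n-1} d^{\times}b_i$, this factorises as a product of one-variable integrals $\int_{\R^{\times}} \min\{ |b_i|_{\R}^{-\epsilon}, |b_i|_{\R}^{-N} \} |b_i|_{\R}^{i \Re(s)} \, d^{\times}b_i$. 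Choosing $N$ large makes each integral converge as $b_i \to \infty$, and near $b_i = 0$ the integrand is $O(|b_i|_{\R}^{i\Re(s) - \epsilon})$, which is integrable precisely when $i \Re(s) > \epsilon$; since $\epsilon < 1 \leq i$ for every $i \geq 1$, this holds for all $i$ whenever $\Re(s) \geq 1$, and the lemma follows.

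I expect the only delicate point to be the gauge estimate of the second step: extracting it in the correct $\C$-versus-$\R$ normalisation and checking that the resulting exponent $\epsilon$ falls strictly below the threshold set by the Iwasawa Jacobian. This is exactly where unitarity of $\pi$ enters — for a non-unitary representation the bound $\epsilon < 1$ may fail, and absolute convergence on the line $\Re(s) = 1$ is then no longer guaranteed.
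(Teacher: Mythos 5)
Your argument is correct and is precisely the route the paper has in mind: it omits the proof as ``a standard application of bounds for Whittaker functions by a gauge'' in the sense of Jacquet--Shalika, following Kemarsky and Flicker--Zinoviev, which is what you carry out (Iwasawa reduction to the torus, gauge majorisation with exponent controlled by unitarity, and the factorised one-variable integrals in the simple-root coordinates). The normalisation bookkeeping is also right: the exponents of a unitary generic representation of $\GL_n(\C)$ have real part of absolute value strictly less than $\tfrac{1}{2}$ in the $|\cdot|_{\C}$-normalisation, hence strictly less than $1$ as powers of $|\cdot|_{\R}$, and your identity $\delta_{\Bgp_n(\C)}^{1/2}\bigl(\begin{smallmatrix} a & 0 \\ 0 & 1 \end{smallmatrix}\bigr)\,\delta_{\Bgp_{n-1}(\R)}^{-1}(a) = \left|\det a\right|_{\R}$ checks out, giving absolute convergence on the closed half-plane $\Re(s) \geq 1$.
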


We explicitly evaluate the Flicker--Rallis period at the Whittaker newform. The nonvanishing of this integral (in a slightly modified form) is implicitly described in work of Gelbart, Jacquet, and Rogawski \cite[\S 2]{GJR01},  Kemarsky \citelist{\cite{Kem15a}*{\S 1}; \cite{Kem15b}*{\S 8}}, and W.\ Zhang \cite[(3.14) and (3.21)]{Zha14}.
 
\begin{theorem}
\label{thm:FRperiod}
Let $\pi$ be a unitary generic irreducible Casselman--Wallach representation of $\GL_n(\C)$ with Whittaker newform $W_{\pi}^{\circ} \in \WW(\pi,\psi_{\C/\R})$. Then the Flicker--Rallis period
\begin{equation}
\label{eqn:FRperiod2}
\int\limits_{\Ngp_{n - 1}(\R) \backslash \GL_{n - 1}(\R)} W_{\pi}^{\circ}\begin{pmatrix} h & 0 \\ 0 & 1 \end{pmatrix}  \, dh
\end{equation}
is equal to
\[
\frac{L(1,\pi_{\ur},\As)}{L\left(n, \omega_{\pi_{\ur}}|_{\R^{\times}}\right) }.
\]
\end{theorem}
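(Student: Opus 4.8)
The plan is to follow the strategy of \hyperref[thm:RSperiod]{Theorem \ref*{thm:RSperiod}} verbatim, specialising the Flicker integral to $s = 1$. By \hyperref[prop:FlickerKey]{Proposition \ref*{prop:FlickerKey}}, applied to the Whittaker newform $W_{\pi}^{\circ}$ and the right $\Ogp(n)$-finite Schwartz--Bruhat function $\Phi$ defined there, the Flicker integral factorises as
\[\Psi(s,W_{\pi}^{\circ},\Phi) = L\left(ns,\omega_{\pi_{\ur}}|_{\R^{\times}}\right) \int\limits_{\Ngp_{n - 1}(\R) \backslash \GL_{n - 1}(\R)} W_{\pi}^{\circ}\begin{pmatrix} h & 0 \\ 0 & 1 \end{pmatrix} \left|\det h\right|_{\R}^{s - 1} \, dh.\]
Meanwhile \hyperref[thm:Flicker]{Theorem \ref*{thm:Flicker}} identifies the left-hand side with $L(s,\pi_{\ur},\As)$; this applies because the unitary generic irreducible Casselman--Wallach representation $\pi$ is isomorphic to an induced representation of Langlands type. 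Rearranging these two identities, I obtain
\[\int\limits_{\Ngp_{n - 1}(\R) \backslash \GL_{n - 1}(\R)} W_{\pi}^{\circ}\begin{pmatrix} h & 0 \\ 0 & 1 \end{pmatrix} \left|\det h\right|_{\R}^{s - 1} \, dh = \frac{L(s,\pi_{\ur},\As)}{L\left(ns,\omega_{\pi_{\ur}}|_{\R^{\times}}\right)}\]
as meromorphic functions of $s$, and the remaining task is to evaluate at $s = 1$.

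To justify the specialisation I would check two things. First, \hyperref[Flicker-convergence]{Lemma \ref*{Flicker-convergence}} guarantees that the left-hand integral converges absolutely at $s = 1$, so its value there is precisely the Flicker--Rallis period \eqref{eqn:FRperiod2}. Second, since $\pi$ is unitary, standard bounds on the Langlands parameters of a generic unitary representation of $\GL_n(\C)$ give $|\Re(\alpha_{\pi,j})| < \frac{1}{2}$ for each $j$; writing $\omega_{\pi_{\ur}}|_{\R^{\times}} = |\cdot|_{\R}^{t}$, this forces $|\Re(t)| < n$, so that $L\left(n,\omega_{\pi_{\ur}}|_{\R^{\times}}\right) = \zeta_{\R}(n + t)$ is finite and nonzero, and likewise $L(1,\pi_{\ur},\As)$ is finite. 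Thus both sides of the displayed identity are regular at $s = 1$, and substituting $s = 1$ yields the claimed value $L(1,\pi_{\ur},\As)/L\left(n,\omega_{\pi_{\ur}}|_{\R^{\times}}\right)$.

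The only real obstacle is this meromorphic-continuation bookkeeping: \hyperref[prop:FlickerKey]{Proposition \ref*{prop:FlickerKey}} and \hyperref[thm:Flicker]{Theorem \ref*{thm:Flicker}} are established for $\Re(s)$ large (and then by analytic continuation), so one must be sure no pole of the $L$-factors interferes at $s = 1$ and that the period integral genuinely converges there --- both points are delivered by the unitarity hypothesis and \hyperref[Flicker-convergence]{Lemma \ref*{Flicker-convergence}}. As this is exactly the (tacit) point in the proof of \hyperref[thm:RSperiod]{Theorem \ref*{thm:RSperiod}}, I expect no genuine difficulty beyond it.
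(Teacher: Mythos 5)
Your proposal is correct and follows exactly the paper's route: take $s = 1$ in \hyperref[prop:FlickerKey]{Proposition \ref*{prop:FlickerKey}}, combine with \hyperref[thm:Flicker]{Theorem \ref*{thm:Flicker}}, and use \hyperref[Flicker-convergence]{Lemma \ref*{Flicker-convergence}} together with unitarity to justify the specialisation. The extra bookkeeping you add about regularity of the $L$-factors at $s = 1$ is the same tacit point the paper relies on, so there is nothing further to fix.
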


\begin{proof}
With \hyperref[Flicker-convergence]{Lemma \ref*{Flicker-convergence}} in hand, we take $s=1$ in \hyperref[prop:FlickerKey]{Proposition \ref*{prop:FlickerKey}} and combine this with \hyperref[thm:Flicker]{Theorem \ref*{thm:Flicker}}.
\end{proof}

\begin{remark}
The nonarchimedean analogue of this result has been resolved by Anandavardhanan and Matringe \cite[Theorem 1.1]{AM17}.
\end{remark}

We say that $\pi$ is \emph{$\GL_n(\R)$-distinguished} if $\Hom_{\GL_n(\R)}(\pi,\mathbf{1})$ is nontrivial.

\begin{remark}[{\cite[\S 6]{Kem15b}}]
\label{rmk:InvarinatFRperiod}
We define the $\Pgp_n(\R)$-invariant linear functional $\vartheta^{\flat} : \WW(\pi,\psi_{\C/\R}) \to \C$ by
\begin{equation}
\label{eqn:varthetaflat}
\vartheta^{\flat}(W_{\pi}) \coloneqq \int\limits_{\Ngp_{n - 1}(\R) \backslash \GL_{n - 1}(\R)} W_{\pi} \begin{pmatrix} h & 0 \\ 0 & 1 \end{pmatrix}  \, dh.
\end{equation}
Owing to \hyperref[thm:FRperiod]{Theorem \ref*{thm:FRperiod}}, the linear functional $\vartheta^{\flat}$ is nontrivial. In addition, Kemarsky \cite[Theorem 1.1]{Kem15a} shows that a $\Pgp_n(\R)$-invariant linear functional extends to a $\GL_n(\R)$-invariant linear functional in a purely local manner. Alternatively, a local-to-global method may be adapted as in \cite[p.\ 185--186]{GJR01} (cf.\ \cite[Proposition 3.2]{Zha14}). Hence $\vartheta^{\flat}$ gives rise to a nontrivial $\GL_n(\R)$-invariant linear functional on the Whittaker model $\WW(\pi,\psi_{\C/\R})$.
\end{remark}

\section{Bump--Friedberg Integrals}
\label{sect:BFsect}

For $n = 2m$, we define the embedding $J: \GL_m(F) \times \GL_m(F) \to \GL_n(F)$ by
\[
J(g,g^{\prime})_{k,\ell} \coloneqq \begin{dcases*}
g_{i,j} &  if $k=2i-1$ and $\ell=2j-1$, \\
g^{\prime}_{i,j} &  if $k=2i$ and $l=2j$, \\
0 &  otherwise.
\end{dcases*}
\]
We denote by $\Mgp_{m,m}(F) \cong \GL_m(F) \times \GL_m(F)$ the standard Levi subgroup of $\GL_n(F)$ associated with the partition $(m,m)$. Let
\[
w_{m,m} \coloneqq \begin{pmatrix} 1 & 2 & \cdots & m & | & m+1 & m+2 & \cdots & 2m \\ 1 & 3 & \cdots & 2m-1 & | &  2 & 4 & \cdots &2m \end{pmatrix}.
\]
We then set
\begin{align*}
\Hgp_{m,m}(F) & \coloneqq w_{m,m}\Mgp_{m,m}(F)w^{-1}_{m,m}	\\
& = \left\{ J(g,g')=w_{m,m}\diag(g,g')w^{-1}_{m,m} : \diag(g,g') \in  \Mgp_{m,m}(F) \right\}.
\end{align*}
Similarly, for $n = 2m + 1$, we define the embedding $J: \GL_{m+1}(F) \times \GL_m(F) \to \GL_n(F)$ by
\[
J(g,g^{\prime})_{k,\ell} \coloneqq \begin{dcases*}
g_{i,j} & if $k=2i-1$ and $\ell=2j-1$, \\
g^{\prime}_{i,j} & if $k=2i$ and $\ell=2j$, \\
0 &  otherwise.
\end{dcases*}
\]
We denote by $\Mgp_{m + 1,m}(F) \cong \GL_{m + 1}(F) \times \GL_m(F)$ the standard Levi subgroup associated to the partition $(m+1,m)$ of $2m+1$. Let $w_{m+1,m} \coloneqq w_{m+1,m+1}|_{\GL_{2m+1}(F)}$, so that
\[
w_{m+1,m} \coloneqq \begin{pmatrix} 1 & 2 & \cdots & m+1 & | & m+2 & m+3 & \cdots &2m & 2m+1 \\ 1 & 3 & \cdots & 2m+1 & | &  2 & 4 & \cdots & 2m-2 &2m \end{pmatrix},
\]
and then set
\begin{align*}
\Hgp_{m+1,m}(F) & \coloneqq w_{m+1,m}\Mgp_{m+1,m}(F)w^{-1}_{m+1,m}	\\
& = \left\{ J(g,g')=w_{m+1,m}\diag(g,g')w^{-1}_{m+1,m} : \diag(g,g') \in  \Mgp_{m+1,m}(F) \right\}.
\end{align*}

To make the above description much more transparent, we provide prototypical elements in the cases of $\Hgp_{2,2}(F)$ and $\Hgp_{3,2}(F)$.

\begin{example}
The group $ \Hgp_{2,2}(F)$ consists of invertible matrices of the form
\[
J \left( \begin{pmatrix} g_{1,1} & g_{1,2} \\ g_{2,1} & g_{2,2} \end{pmatrix}, \begin{pmatrix} g'_{1,1} & g'_{1,2} \\ g'_{2,1} & g'_{2,2} \end{pmatrix}  \right)= \begin{pmatrix} g_{1,1} &0&g_{1,2}&0 \\ 0&g'_{1,1}&0&g'_{1,2}\\ g_{2,1} &0&g_{2,2}&0 \\ 0&g'_{2,1}&0&g'_{2,2} \end{pmatrix}.
\]
The group $\Hgp_{3,2}(F)$ consists of invertible matrices of the form
\[
J \left( \begin{pmatrix} g_{1,1} & g_{1,2}  & g_{1,3} \\ g_{2,1} & g_{2,2} & g_{2,3} \\ g_{3,1} & g_{3,2} & g_{3,3} \end{pmatrix}, \begin{pmatrix} g'_{1,1} & g'_{1,2} \\ g'_{2,1} & g'_{2,2} \end{pmatrix}  \right) = \begin{pmatrix} g_{1,1} &    0  &g_{1,2}&    0   & g_{1,3} \\  0    &g'_{1,1}&  0   &g'_{1,2}&   0  \\ g_{2,1} &    0  &g_{2,2}&     0  & g_{2,3} \\ 0     &g'_{2,1}&   0  &g'_{2,2}&    0 \\ g_{3,1} &   0   &g_{3,2}&    0  & g_{3,3} \end{pmatrix}.
\]
\end{example}

Given an induced representation of Whittaker type $\pi$ of $\GL_n(F)$, a Whittaker function $W_{\pi} \in \WW(\pi,\psi)$, and a Schwartz--Bruhat function $\Phi \in \Scr(F^m)$, where $m \coloneqq \lfloor \frac{n}{2}\rfloor$, the Bump--Friedberg integral \cite{BF90} is given by
\begin{multline*}
B(s_1,s_2,W_{\pi},\Phi)	\\
\coloneqq \begin{dcases*}
\int\limits_{\Ngp_m(F) \backslash \GL_m(F)}  \int\limits_{\Ngp_m(F) \backslash \GL_m(F)}  W_{\pi}(J(g,g')) \Phi(e_m g') \left|\det g\right|^{s_1-\frac{1}{2}} \left|\det g'\right|^{s_2-s_1 + \frac{1}{2}} \, dg \, dg' &	\\
& \hspace{-3cm} for $n = 2m$,	\\
\int\limits_{\Ngp_m(F) \backslash \GL_m(F)}  \int\limits_{\Ngp_{m+1}(F) \backslash \GL_{m+1}(F)}  W_{\pi}(J(g,g')) \Phi(e_{m+1}g) \left|\det g\right|^{s_1} \left|\det g'\right|^{s_2-s_1} \, dg \, dg' &	\\
& \hspace{-3cm} for $n = 2m + 1$.
\end{dcases*}
\end{multline*}
Alternatively, one can write this as an integral over $(\Ngp_n(F) \cap \Hgp_{m,m}(F)) \backslash \Hgp_{m,m}(F)$ for $n = 2m$ and as an integral over $(\Ngp_n(F) \cap \Hgp_{m + 1,m}(F)) \backslash \Hgp_{m + 1,m}(F)$ for $n = 2m + 1$ (cf. \cite{Mat15, Mat17}). The Bump--Friedberg integrals converges absolutely for $\Re(s)$ sufficiently large and extends meromorphically to the entire complex plane. The local exterior square $L$-function $L(s,\pi,\wedge^2)$ is defined via the local Langlands correspondence as illustrated in \cite{Kna94} and \cite[Section 3.2]{B-P21}. We take the local Bump--Friedberg $L$-function $L(s_1,s_2,\pi,\BF)$ to be $L(s_1,\pi)L(s_2,\pi,\wedge^2)$. For our purposes, it will often be convenient to write $B(s,W_{\pi},\Phi)$ in place of $B(s,2s,W_{\pi},\Phi)$ and $L(s,\pi,\BF)$ in place of $L(s,\pi) L(2s,\pi,\wedge^2)$ when $n=2m$ is even.

\subsection{The Spherical Calculation}

We summarise a propagation formula for $\GL_n(F)$ Whittaker functions in terms of $\GL_{n-1}(F)$ Whittaker functions and a convolution section identity for radial parts. We omit the proofs, since they are essentially identical to the corresponding proofs of \hyperref[propformula]{Lemmata \ref*{propformula}} and \ref{convsectid}.

\begin{lemma}[{Cf.\ \hyperref[propformula]{Lemma \ref*{propformula}}}]
\label{formula1}
Let $\pi=\bigboxplus_{j=1}^n |\cdot|^{t_j}$ and $\pi_0\coloneqq \bigboxplus_{j=2}^n |\cdot|^{t_j}$ be the spherical induced representations of Langlands type of $\GL_n(F)$ and $\GL_{n-1}(F)$ respectively with spherical Whittaker functions $W^{\circ}_{\pi} \in \WW(\pi,\psi)$ and $W^{\circ}_{\pi_0} \in \WW(\pi_0,\psi)$. Then for $a=\diag(a_1,\ldots,a_n) \in \Agp_n(F)$, we have
\begin{multline}
\label{eqn:GLnpropagation}
W^{\circ}_{\pi}(a) = \left|\det a\right|^{t_1} \delta^{1/2}_{\Bgp_n(F)}(a) \int_{\Agp_{n-1}(F)} W^{\circ}_{\pi_0}(a') \prod_{j=1}^{n-1}  \exp(-d_F\pi \|a'_ja^{-1}_{j+1}\|^2) 	\\
\times  \exp(-d_F\pi \|a'^{-1}_ja_j\|^2)\left|\det a'\right|^{-t_1} \delta^{-1/2}_{\Bgp_{n-1}(F)}(a') \, d^{\times} a'.
\end{multline}
\end{lemma}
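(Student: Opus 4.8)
The plan is to transcribe the proof of \hyperref[propformula]{Lemma \ref*{propformula}} essentially line for line, replacing the field $\C$ and the character $\psi_{\C/\R}$ by a general archimedean field $F$ and the standard character $\psi$, and — unlike the $F = \C$ argument — retaining the diagonal torus $\Agp_n(F)$ throughout rather than descending to a real form. The input is the propagation identity \cite[Lemma 9.14]{Hum20a}, which in this generality reads
\begin{multline*}
W_{\pi}^{\circ}\begin{pmatrix} g & 0 \\ 0 & a_n \end{pmatrix} = \left|\det g\right|^{t_1 + \frac{n-1}{2}} |a_n|^{t_1 - \frac{n-1}{2}} \\
\times \int_{\GL_{n-1}(F)} W_{\pi_0}^{\circ}(h)\, \Phi_1(h^{-1}g)\, \Phi_2(a_n^{-1} e_{n-1} h) \left|\det h\right|^{-t_1 - \frac{n}{2} + 1} \, dh
\end{multline*}
for $g \in \GL_{n-1}(F)$ and $a_n \in F^{\times}$, where $\Phi_1(x) \coloneqq \exp(-d_F \pi \Tr(x \prescript{t}{}{\overline{x}}))$ on $\Mat_{(n-1) \times (n-1)}(F)$ and $\Phi_2(x) \coloneqq \exp(-d_F \pi x \prescript{t}{}{\overline{x}})$ on $\Mat_{1 \times (n-1)}(F)$ are Gaussians of the shape \eqref{eqn:Phiurdefeq}.

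First I would apply the Iwasawa decomposition $h = u' a' k'$ with $u' \in \Ngp_{n-1}(F)$, $a' = \diag(a_1', \ldots, a_{n-1}') \in \Agp_{n-1}(F)$, and $k' \in K_{n-1}$, so that $dh = \delta_{\Bgp_{n-1}(F)}^{-1}(a')\, du'\, d^{\times}a'\, dk'$. Since $\pi_0$ is spherical we have $W_{\pi_0}^{\circ}(u' a' k') = \psi_{n-1}(u') W_{\pi_0}^{\circ}(a')$, and since $\Phi_1$ and $\Phi_2$ are bi-$K_{n-1}$-invariant by \eqref{eqn:Phiurdefeq}, the integral over $K_{n-1}$ is trivial. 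I would then specialise $g = \diag(a_1, \ldots, a_{n-1})$ and put $a = \diag(a_1, \ldots, a_n)$; here $e_{n-1} h = a_{n-1}' e_{n-1} k'$ gives $\Phi_2(a_n^{-1} e_{n-1} h) = \exp(-d_F \pi \|a_{n-1}' a_n^{-1}\|^2)$, the $j = n-1$ term of the first product in \eqref{eqn:GLnpropagation}. Expanding $\Phi_1(a'^{-1} u'^{-1} g)$ over matrix entries, its diagonal part supplies the factor $\prod_{j=1}^{n-1} \exp(-d_F \pi \|a_j'^{-1} a_j\|^2)$, namely the second product in \eqref{eqn:GLnpropagation}, leaving an integral over the strictly upper-triangular entries of $u'$. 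Performing the changes of variables $u' \mapsto u'^{-1}$ and then $u_{i,j}' \mapsto a_i' a_j^{-1} u_{i,j}'$ for $i < j$ decouples this integral entry by entry: when $j \neq i + 1$ the integrand is $\exp(-d_F \pi \|u_{i,j}'\|^2)$, which integrates to $1$ against $du_{i,j}'$, and when $j = i + 1$ there is an extra character factor $\psi(-a_i' a_{i+1}^{-1} u_{i,i+1}')$, so by self-duality of $\exp(-d_F \pi \|{\cdot}\|^2)$ with respect to $\psi$ the integral equals $\exp(-d_F \pi \|a_i' a_{i+1}^{-1}\|^2)$; together with the $\Phi_2$-contribution this accounts for the full first product in \eqref{eqn:GLnpropagation}.

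What then remains is bookkeeping: assembling the prefactors $\left|\det g\right|^{t_1 + \frac{n-1}{2}} |a_n|^{t_1 - \frac{n-1}{2}}$ and $\left|\det h\right|^{-t_1 - \frac{n}{2} + 1}$, the Iwasawa factor $\delta_{\Bgp_{n-1}(F)}^{-1}(a')$, and the Jacobians $\prod_{i < j} |a_i' a_j^{-1}|_F$ of the substitutions, and checking that the powers of the $a_i$ and the $a_j'$ regroup exactly into $\left|\det a\right|^{t_1} \delta_{\Bgp_n(F)}^{1/2}(a)$ and $\left|\det a'\right|^{-t_1} \delta_{\Bgp_{n-1}(F)}^{-1/2}(a')$. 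This is the same computation as in \hyperref[propformula]{Lemma \ref*{propformula}}, and is in fact cleaner, since working over $F$ with the torus $\Agp_n(F)$ introduces neither the factor $2^{n-1}$ nor the conversions between $|\cdot|_{\C}$ and $|\cdot|_{\R}$ present in the $F = \C$ case. The only point requiring real care — and the natural candidate for a slip — is the precise tracking of constants: one must use that the normalisations of $dx$, $d^{\times}x$, and $\psi$ fixed in the preliminaries are exactly those making $\exp(-d_F \pi \|{\cdot}\|^2)$ its own $\psi$-Fourier transform and of total mass $1$, so that the off-superdiagonal integrals contribute precisely $1$ and the superdiagonal ones precisely $\exp(-d_F \pi \|a_i' a_{i+1}^{-1}\|^2)$, with no stray constants surviving. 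Absolute convergence of the right-hand side of \eqref{eqn:GLnpropagation} for every $t_j \in \C$ then follows at once, since each $a_j'$ is dominated by a Gaussian both as $a_j' \to 0$ and as $a_j' \to \infty$, so no analytic continuation is needed.
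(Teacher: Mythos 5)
Your proposal is correct and is exactly the argument the paper has in mind: the paper omits the proof of \hyperref[formula1]{Lemma \ref*{formula1}} precisely because it is "essentially identical" to that of \hyperref[propformula]{Lemma \ref*{propformula}}, which is what you transcribe, using the propagation formula of Hum20a, the Iwasawa decomposition, the unipotent changes of variables, and the self-duality of the Gaussian with respect to $\psi$. The deferred exponent bookkeeping does close up as you claim (the Jacobian $\prod_{i<j}|a_i'a_j^{-1}|$ together with the prefactors regroups into $\left|\det a\right|^{t_1}\delta_{\Bgp_n(F)}^{1/2}(a)$ and $\left|\det a'\right|^{-t_1}\delta_{\Bgp_{n-1}(F)}^{-1/2}(a')$ with no extra constants), so there is no gap.
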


\begin{lemma}[{Cf.\ \hyperref[convsectid]{Lemma \ref{convsectid}}}]
\label{formula2} Let $\pi=\bigboxplus_{j=1}^n |\cdot|^{t_j}$ be a spherical induced representation of Langlands type of $\GL_n(F)$ with spherical Whittaker function $W^{\circ}_{\pi} \in \WW(\pi,\psi)$. Then for $a=\diag(a_1,\ldots,a_n) \in \Agp_n(F)$, we have
\begin{multline*}
L(s,\pi)  W^{\circ}_{\pi}(a') = \int_{\Agp_{n}(F)} W^{\circ}_{\pi}(a'a) \prod_{j=1}^{n} \exp(-d_F\pi \|a_j\|^2) \prod_{j=1}^{n-1} \exp(-d_F\pi \|a'_ja'^{-1}_{j+1}a^{-1}_{j+1}\|^2)	\\
\times \left|\det a\right|^s_F \delta^{-1/2}_{\Bgp_{n}(F)}(a) \, d^{\times} a.
\end{multline*}
\end{lemma}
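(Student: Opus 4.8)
The plan is to repeat, for an arbitrary archimedean field $F$ and the standard additive character $\psi$, the unfolding along the diagonal torus used to prove \hyperref[convsectid]{Lemma \ref*{convsectid}} (which treats $F = \C$ with $\psi_{\C/\R}$). The input is the convolution section identity \cite[Lemma 9.6]{Hum20a}, which in this generality asserts that for $\Re(s)$ sufficiently large,
\[
L(s,\pi) W^{\circ}_{\pi}(a') = \int_{\GL_n(F)} W^{\circ}_{\pi}(a'h) \Phi_{\ur}(h) \left|\det h\right|_F^{s + \frac{n - 1}{2}} \, dh,
\]
where $\Phi_{\ur} \in \Scr(\Mat_{n \times n}(F))$ is the Gaussian of \eqref{eqn:Phiurdefeq}; the whole task is to evaluate this integral torus-by-torus.

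First I would apply the Iwasawa decomposition $h = uak$ with $u \in \Ngp_n(F)$, $a = \diag(a_1,\ldots,a_n) \in \Agp_n(F)$, and $k \in K_n$, so that $dh = \delta_{\Bgp_n(F)}^{-1}(a) \, du \, d^{\times}a \, dk$. Because $W^{\circ}_{\pi}$ is spherical, because $\left|\det\right|_F$ is right $K_n$-invariant, and because $\Phi_{\ur}(xk) = \Phi_{\ur}(x)$ for all $k \in K_n$ --- a consequence of $\Tr\bigl((xk)\prescript{t}{}{\overline{xk}}\bigr) = \Tr\bigl(x\prescript{t}{}{\overline{x}}\bigr)$ --- the integral over the probability space $K_n$ is trivial. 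Using $W^{\circ}_{\pi}(a'ua) = \psi_n(a'ua'^{-1}) W^{\circ}_{\pi}(a'a)$ together with the identity $\Phi_{\ur}(ua) = \prod_{j = 1}^{n} \exp\bigl(-d_F\pi\|a_j\|^2\bigr) \prod_{i < k} \exp\bigl(-d_F\pi\|u_{i,k}\|^2\|a_k\|^2\bigr)$, the remaining integration over $\Ngp_n(F) \cong F^{n(n - 1)/2}$ factors as a product of one-dimensional integrals indexed by the entries $u_{i,k}$ with $i < k$.

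Next I would evaluate these factors. For $k \neq i + 1$ the $u_{i,k}$-integral is a pure Gaussian integral, equal by self-duality of $dx$ to a negative power of $\|a_k\|$; for a superdiagonal entry $u_{j,j+1}$ one instead obtains the Fourier transform of a Gaussian against the linear phase $\psi\bigl(a'_j a'^{-1}_{j+1} u_{j,j+1}\bigr)$, and completing the square yields $\exp\bigl(-d_F\pi\|a'_j a'^{-1}_{j+1} a^{-1}_{j+1}\|^2\bigr)$ times a negative power of $\|a_{j+1}\|$. Gathering all the powers of the $\|a_k\|$ produced this way with the factors $\left|\det a\right|_F^{s + \frac{n-1}{2}}$ and $\delta_{\Bgp_n(F)}^{-1}(a)$ already present, and invoking $\|a_k\|^{d_F} = |a_k|_F$, the exponent of $|a_k|_F$ collapses to $s + k - \frac{n+1}{2}$ --- precisely its exponent in $\left|\det a\right|_F^s \delta_{\Bgp_n(F)}^{-1/2}(a)$. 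This is the claimed identity.

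The only genuine difficulty is bookkeeping, with two facets: one must keep precise track of the self-dual normalisation of $dx$ (and hence of $du$) and the normalisation of $d^{\times}x$ on $F^{\times}$ so that the one-dimensional Gaussian integrals emit the correct powers of $\|a_k\|$, and one must run the Gaussian Fourier transform computation uniformly for $F \in \{\R,\C\}$ via $\|x\|^{2} = |x|_F^{2/d_F}$. Absolute convergence of all the integrals in sight, for $\Re(s)$ sufficiently large, is inherited from \cite[Lemma 9.6]{Hum20a} together with the rapid decay of the Gaussian weights that appear after the Iwasawa decomposition; the companion propagation formula \hyperref[formula1]{Lemma \ref*{formula1}} is handled identically, starting instead from \cite[Lemma 9.14]{Hum20a}.
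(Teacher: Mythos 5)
Your proposal is correct and takes essentially the same approach as the paper: the paper omits the proof of this lemma, stating that it is essentially identical to that of \hyperref[convsectid]{Lemma \ref*{convsectid}}, which is precisely the argument you give (start from \cite[Lemma 9.6]{Hum20a} with the Gaussian $\Phi_{\ur}$, apply the Iwasawa decomposition, use sphericity of $W_{\pi}^{\circ}$ and the $K_n$-invariance of $\Phi_{\ur}$ to render the $K_n$-integral trivial, and evaluate the unipotent integrals as Gaussian and Gaussian--Fourier integrals whose powers of $|a_j|$ combine with $\left|\det a\right|^{s+\frac{n-1}{2}}\delta_{\Bgp_n(F)}^{-1}(a)$ to give $\left|\det a\right|^s\delta_{\Bgp_n(F)}^{-1/2}(a)$). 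The only cosmetic difference is that the paper's model computation first rescales $u_{i,j} \mapsto a_j^{-1}u_{i,j}$ before integrating, whereas you complete the square directly; this is the same bookkeeping.
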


We will use these identities after reducing Bump--Friedberg integrals to integrals over a torus.

\begin{proposition} 
\label{BF-Iwasawa}
Let $\pi=\bigboxplus_{j=1}^{n} |\cdot|^{t_j}$ be a spherical induced representation of Langlands type of $\GL_n(F)$ with spherical Whittaker function $W^{\circ}_{\pi} \in \WW(\pi,\psi)$, and for $m = \lfloor \frac{n}{2}\rfloor$, let $\Phi_{\ur} \in \Scr(F^m)$ be given by \eqref{eqn:Phiurdefeq}.
\begin{enumerate}[leftmargin=*,label=\emph{(\roman*)}]
\item\label{BF-Spherical-even} For $n=2m$, the Bump--Friedberg integral $B(s_1,s_2,W_{\pi}^{\circ},\Phi_{\ur})$ is equal to
\[
\int_{\Agp_{n}(F)} W^{\circ}_{\pi}(b) \exp(-d_F\pi \|b_n\|^2) \left|b_1 b_3 \cdots b_{n-1}\right|^{s_1}  \left|b_2 b_4 \cdots b_{n-2}\right|^{s_2-s_1} \delta^{-\frac{1}{2}}_{\Bgp_n(F)}(b) \, d^{\times}b.
\]
\item\label{BF-Spherical-odd} For $n=2m+1$, the Bump--Friedberg integral $B(s_1,s_2,W_{\pi}^{\circ},\Phi_{\ur})$ is equal to
\[
\int_{\Agp_n(F)} W^{\circ}_{\pi}(b) \exp(-d_F\pi \|b_n\|^2) \left|b_1 b_3 \cdots b_{n-2}\right|^{s_1}  \left|b_2 b_4\cdots b_{n - 1}\right|^{s_2-s_1} \delta^{-\frac{1}{2}}_{\Bgp_n(F)}(b) \, d^{\times} b.
\]
\end{enumerate}
\end{proposition}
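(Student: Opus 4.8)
The plan is to unfold both Bump--Friedberg integrals by applying the Iwasawa decomposition to each of the two general linear factors, and then to exploit the $K$-invariance of the spherical Whittaker function $W_{\pi}^{\circ}$ together with the bi-$K$-invariance of the Gaussian $\Phi_{\ur}$. Consider first $n = 2m$. Writing $g = ak$ and $g' = a'k'$ with $a, a' \in \Agp_m(F)$ and $k, k' \in K_m$, the Haar measure becomes $dg \, dg' = \delta_{\Bgp_m(F)}^{-1}(a) \delta_{\Bgp_m(F)}^{-1}(a') \, d^{\times}a \, d^{\times}a' \, dk \, dk'$. Since $J \colon \GL_m(F) \times \GL_m(F) \to \GL_n(F)$ is a group homomorphism --- it is the conjugate of the block-diagonal embedding by the permutation matrix $w_{m,m}$ --- and since $J$ therefore carries $K_m \times K_m$ into $K_n$, the right $K_n$-invariance of the spherical Whittaker function gives $W_{\pi}^{\circ}(J(ak,a'k')) = W_{\pi}^{\circ}(J(a,a'))$. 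Likewise, using $e_m a' = a_m' e_m$ and the right $K_m$-invariance of $\Phi_{\ur}$, we get $\Phi_{\ur}(e_m g') = \Phi_{\ur}(e_m a' k') = \exp(-d_F \pi \|a_m'\|^2)$. Both compact integrations are then trivial because $dk$ and $dk'$ are probability measures.

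Next I would relabel $b \coloneqq J(a,a') = \diag(a_1, a_1', a_2, a_2', \ldots, a_m, a_m') \in \Agp_n(F)$, so that $b_{2i - 1} = a_i$, $b_{2i} = a_i'$, $d^{\times}b = d^{\times}a \, d^{\times}a'$, and $b_n = a_m'$. Then $W_{\pi}^{\circ}(J(a,a')) = W_{\pi}^{\circ}(b)$, $\exp(-d_F \pi \|a_m'\|^2) = \exp(-d_F \pi \|b_n\|^2)$, and the two determinant factors $|\det g|^{s_1 - 1/2}$, $|\det g'|^{s_2 - s_1 + 1/2}$ become monomials in $b_1, b_3, \ldots$ and $b_2, b_4, \ldots$ respectively. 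It remains to match the Jacobian: using $\delta_{\Bgp_k(F)}(\diag(c_1, \ldots, c_k)) = \prod_j |c_j|^{k - 2j + 1}$ and tracking the exponent of each $b_j$ under the interleaving above, one checks that $\delta_{\Bgp_m(F)}^{-1}(a) \delta_{\Bgp_m(F)}^{-1}(a')$ differs from $\delta_{\Bgp_n(F)}^{-1/2}(b)$ by an explicit monomial arising from the mismatch between $\delta_{\Bgp_m}^{1/2} \delta_{\Bgp_m}^{1/2}$ on the Levi and $\delta_{\Bgp_n}^{1/2}$ on $\GL_n(F)$; absorbing that monomial into the determinant powers produces the expression in \ref{BF-Spherical-even}. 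The case $n = 2m + 1$ is handled identically, now with $a \in \Agp_{m + 1}(F)$, $k \in K_{m + 1}$, $a' \in \Agp_m(F)$, $k' \in K_m$, $J \colon \GL_{m + 1}(F) \times \GL_m(F) \to \GL_n(F)$, and $\Phi_{\ur}(e_{m + 1} g) = \exp(-d_F \pi \|a_{m + 1}\|^2) = \exp(-d_F \pi \|b_n\|^2)$ for $b \coloneqq J(a,a') = \diag(a_1, a_1', \ldots, a_m, a_m', a_{m + 1})$; here $\delta_{\Bgp_{m + 1}(F)}^{-1}(a) \delta_{\Bgp_m(F)}^{-1}(a') = \delta_{\Bgp_n(F)}^{-1/2}(b)$ exactly, which leads to \ref{BF-Spherical-odd}.

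Throughout, absolute convergence for $\Re(s_1)$ and $\Re(s_2)$ sufficiently large --- which justifies both the Iwasawa unfolding and the interchange of the compact and torus integrations --- follows from the standard estimates for Whittaker functions by a gauge, exactly as invoked elsewhere in the paper. The only genuine content of the proof is this final Jacobian bookkeeping: the interleaving $b_{2i - 1} = a_i$, $b_{2i} = a_i'$ must be combined carefully with the half-integer shifts from the modulus characters to land precisely on the monomial weights $|b_1 b_3 \cdots|^{s_1}$ and $|b_2 b_4 \cdots|^{s_2 - s_1}$ (with the roles of odd- and even-indexed coordinates interchanged when $n = 2m + 1$). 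Everything else --- the collapse of the $K$-integrals and the evaluation of $\Phi_{\ur}$ at $e_m a'$ or $e_{m + 1} a$ --- is immediate from the invariance properties of $W_{\pi}^{\circ}$ and $\Phi_{\ur}$ recorded earlier.
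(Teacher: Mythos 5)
Your proof is correct and is essentially the paper's own argument: Iwasawa decomposition in each factor, collapse of the compact integrations via the right $K_n$-invariance of $W_{\pi}^{\circ}$ (using $J(K_m \times K_m) \subset K_n$) and the bi-$K$-invariance of $\Phi_{\ur}$, followed by the interleaving relabelling, with your "explicit monomial" in the even case being exactly the paper's identity $\left|\det a\right|^{-1/2}\left|\det a'\right|^{1/2}\delta^{-1}_{\Bgp_m(F)}(a)\,\delta^{-1}_{\Bgp_m(F)}(a') = \delta^{-1/2}_{\Bgp_n(F)}(J(a,a'))$ and your observation that no correction occurs for $n=2m+1$ matching the paper's treatment of the odd case. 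One caveat: carried out carefully, the bookkeeping (yours and the paper's alike, since $\det a' = b_2 b_4 \cdots b_n$ when $n=2m$ and $\det a = b_1 b_3 \cdots b_n$ when $n=2m+1$) produces $\left|b_2 b_4 \cdots b_n\right|^{s_2-s_1}$ in \ref{BF-Spherical-even} and $\left|b_1 b_3 \cdots b_n\right|^{s_1}$ in \ref{BF-Spherical-odd}; the endpoints $b_{n-2}$ in the displayed statement are typographical (already for $n=2$ the direct computation yields $|b_2|^{s_2-s_1+\frac12}$, not $|b_2|^{\frac12}$, and the induction in the spherical theorem uses the version containing $b_n$), so you should not try to force your Jacobian computation to land on the printed monomials as literally written.
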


\begin{proof}
We prove this only for $n = 2m$; the case $n = 2m + 1$ follows analogously. Exploiting the Iwasawa decomposition $g = ak$ and $g' = a'k'$ for $\Ngp_m(F) \backslash \GL_m(F)$, the Bump--Friedberg integral $B(s_1,s_2,W_{\pi}^{\circ},\Phi_{\ur})$ is equal to
\begin{multline*}
\int_{\Agp_m(F)} \int_{\Agp_{m}(F)} W^{\circ}_{\pi}(J(a,a')) \exp(-d_F\pi \|a'_m\|^2) \left|\det a\right|^{s_1-\frac{1}{2}} \left|\det a'\right|^{s_2-s_1 + \frac{1}{2}}  \\
\times \delta^{-1}_{\Bgp_m(F)}(a) \delta^{-1}_{\Bgp_m(F)}(a') \, d^{\times} a \, d^{\times} a'.
\end{multline*}
Since $\left|\det a\right|^{-\frac{1}{2}} \left|\det a'\right|^{\frac{1}{2}} \delta^{-1}_{\Bgp_m(F)}(a) \delta^{-1}_{\Bgp_m(F)}(a') = \delta^{-\frac{1}{2}}_{\Bgp_{n}(F)}(J(a,a'))$, this is equal to
\[
\int_{\Agp_m(F)} \int_{\Agp_{m}(F)} W^{\circ}_{\pi}(J(a,a')) \exp(-d_F\pi \|a'_m\|^2) \left|\det a\right|^{s_1}  \left|\det a'\right|^{s_2-s_1} \delta^{-\frac{1}{2}}_{\Bgp_n(F)}(J(a,a')) \, d^{\times} a \, d^{\times} a'.
\]
The result now follows by writing $a=\diag(b_1,b_3,\ldots,b_{n-1})$ and $a'=\diag(b_2,b_4,\ldots,b_{n})$ and letting $b \coloneqq (b_1,\ldots,b_n) \in \Agp_n(F)$.
\end{proof}

We are now able to prove that when $\pi$ is a spherical induced representation of Langlands type of $\GL_n(F)$, the spherical Whittaker function $W_{\pi}^{\circ} \in \WW(\pi,\psi)$ is a strong test vector for the Bump--Friedberg integral.

\begin{theorem}
\label{BFspherical}
Let $\pi=\bigboxplus_{j=1}^n |\cdot|^{t_j}$ be a spherical induced representation of Langlands type of $\GL_n(F)$
with spherical Whittaker function $W^{\circ}_{\pi} \in \WW(\pi,\psi)$, and for $m = \lfloor \frac{n}{2}\rfloor$, let $\Phi_{\ur} \in \Scr(F^m)$ be given by \eqref{eqn:Phiurdefeq}. Then the Bump--Friedberg integral $B(s_1,s_2,W_{\pi}^{\circ},\Phi_{\ur})$ is equal to
\[
L(s_1,s_2,\pi,\BF)\coloneqq L(s_1,\pi)L(s_2,\pi,\wedge^2)=\prod_{\ell=1}^n  \zeta_F(s_1+t_{\ell}) \prod_{1 \leq j < k \leq n} \zeta_F(s_2+t_j+t_k).  
\]
\end{theorem}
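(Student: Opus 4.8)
The plan is to argue by induction on $n$, following closely the proof of Theorem \ref{thm:Flickerspher}. The base case $n = 1$ is immediate: there the Bump--Friedberg integral is simply $\int_{F^\times} W_\pi^\circ(g) \Phi_{\ur}(g) |g|^{s_1} \, d^\times g = \int_{F^\times} \exp(-d_F \pi \|g\|^2) |g|^{s_1 + t_1} \, d^\times g = \zeta_F(s_1 + t_1)$ by \eqref{eqn:Lsomega}, which equals $L(s_1,s_2,\pi,\BF) = L(s_1,\pi)$.

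For the inductive step, writing $\pi_0 \coloneqq \bigboxplus_{j = 2}^n |\cdot|^{t_j}$, I will establish the recursion
\[
B(s_1,s_2,W_\pi^\circ,\Phi_{\ur}) = \zeta_F(s_1 + t_1) \, L(s_2 + t_1, \pi_0) \, B(s_1,s_2,W_{\pi_0}^\circ,\Phi_{\ur}),
\]
in which the right-hand Bump--Friedberg integral is the one for $\GL_{n - 1}$ attached to $W_{\pi_0}^\circ \in \WW(\pi_0,\psi)$. Since $L(s_1,\pi) = \zeta_F(s_1 + t_1) L(s_1,\pi_0)$ and $L(s_2,\pi,\wedge^2) = L(s_2 + t_1,\pi_0) L(s_2,\pi_0,\wedge^2)$, the induction hypothesis applied to the right-hand side then yields the asserted identity. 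To prove the recursion I will first invoke Proposition \ref{BF-Iwasawa} to reduce $B(s_1,s_2,W_\pi^\circ,\Phi_{\ur})$ to an integral over the diagonal torus $\Agp_n(F)$; substitute the propagation formula of Lemma \ref{formula1}, which expresses $W_\pi^\circ$ on $\Agp_n(F)$ in terms of $W_{\pi_0}^\circ$ on $\Agp_{n - 1}(F)$ against an interleaving Gaussian kernel; and peel off the first torus coordinate $b_1$. A change of variables that absorbs the interleaving factor attached to $b_1$ turns its integral into $\int_{F^\times} \exp(-d_F \pi \|b_1\|^2) |b_1|^{s_1 + t_1} \, d^\times b_1 = \zeta_F(s_1 + t_1)$ by \eqref{eqn:Lsomega}, up to a monomial in the surviving newform torus variables. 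A further sequence of changes of variables --- the exact analogue of those in the proof of Theorem \ref{thm:Flickerspher} --- then casts the remaining Gaussian cross-terms in the shape of the convolution section identity of Lemma \ref{formula2} for $\pi_0$ with spectral parameter $s_2 + t_1$, collapsing the inner integral to $L(s_2 + t_1,\pi_0) W_{\pi_0}^\circ$ evaluated at the surviving torus element. Applying Proposition \ref{BF-Iwasawa} once more, now for $\GL_{n - 1}$, I recognise what remains as precisely $B(s_1,s_2,W_{\pi_0}^\circ,\Phi_{\ur})$. The even case $n = 2m$ and the odd case $n = 2m + 1$ are handled in the same way, differing only in which torus coordinates carry weight $s_1$ and which carry weight $s_2 - s_1$.

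The hard part will be the bookkeeping of these alternating weights. Unlike the Flicker calculation, where every torus coordinate carries the single weight $s$ and the changes of variables preserve a uniform structure, here one must verify that the substitutions redistribute the weights $s_1$ and $s_2 - s_1$ in such a way that (a) the $b_1$-integral produces exactly $\zeta_F(s_1 + t_1)$; (b) the inner integral matches Lemma \ref{formula2} with the correct parameter $s_2 + t_1$; and, crucially, (c) the newform torus variables emerge carrying exactly the alternating weight pattern of a genuine $\GL_{n - 1}$ Bump--Friedberg torus integral, so that the induction hypothesis is literally applicable. Keeping track of the normalising powers of $2$ that appear when $F = \C$ because of the measure conventions, and of the modulus characters $\delta_{\Bgp_n(F)}$, also demands care but is routine once Lemmata \ref{formula1} and \ref{formula2} are in hand.
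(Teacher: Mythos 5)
Your proposal is correct and takes essentially the same route as the paper's proof: induction on $n$ through the recursion $B(s_1,s_2,W_{\pi}^{\circ},\Phi_{\ur}) = \zeta_F(s_1+t_1)\, L(s_2+t_1,\pi_0)\, B(s_1,s_2,W_{\pi_0}^{\circ},\Phi_{\ur})$, obtained by reducing to a torus integral via \hyperref[BF-Iwasawa]{Proposition \ref*{BF-Iwasawa}}, inserting the propagation formula of \hyperref[formula1]{Lemma \ref*{formula1}}, extracting $\zeta_F(s_1+t_1)$ from the peeled-off coordinate, and collapsing the inner integral with the convolution section identity of \hyperref[formula2]{Lemma \ref*{formula2}} at parameter $s_2+t_1$ before reassembling via \hyperref[BF-Iwasawa]{Proposition \ref*{BF-Iwasawa}} for $\GL_{n-1}$. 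The only cosmetic difference is that you start the induction at the degenerate case $n=1$ whereas the paper carries out the base case $n=2$ directly.
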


\begin{proof}
We prove this by induction. For the base case $n = 2$, the Bump--Friedberg integral is
\[\int_{F^{\times}} \int_{F^{\times}} W_{\pi}^{\circ}\begin{pmatrix} a_1 & 0\\ 0 & a_2 \end{pmatrix} \exp(-d_F \pi \|a_2\|^2) |a_1|^{s_1 - \frac{1}{2}} |a_2|^{s_2 - s_1 + \frac{1}{2}} \, d^{\times}a_1 \, d^{\times}a_2.\]
From \eqref{eqn:GLnpropagation}, this is equal to
\begin{multline*}
\int_{F^{\times}} \int_{F^{\times}} \exp(-d_F \pi \|a_2\|^2) |a_1|^{s_1 + t_1} |a_2|^{s_2 - s_1 + t_1}	\\
\times \int_{F^{\times}} \exp(-d_F \pi \|a' a_2^{-1}\|^2) \exp(-d_F \pi \|a'^{-1} a_1\|^2) |a'|^{t_2 - t_1} \, d^{\times}a' \, d^{\times}a_1 \, d^{\times}a_2.
\end{multline*}
We interchange the order of integration and make the change of variables $a_1 \mapsto a' a_1$ and $a' \mapsto a' a_2$, yielding
\begin{multline*}
\int_{F^{\times}} \exp(-d_F \pi \|a_1\|^2) |a_1|^{s_1 + t_1} \, d^{\times}a_1 \int_{F^{\times}} \exp(-d_F \pi \|a_2\|^2) |a_2|^{s_2 + t_1 + t_2} \, d^{\times}a_2	\\
\times \int_{F^{\times}} \exp(-d_F \pi \|a'\|^2) |a'|^{s_1 + t_2} \, d^{\times}a',
\end{multline*}
which is precisely $L(s_1,s_2,\pi,\BF)$.

Now we proceed to the induction step. We suppose that the desired identity holds for $n = 2m - 1$ and we prove this for $n = 2m$. We insert the recursive formula for $W_{\pi}^{\circ}(a)$ from \hyperref[formula1]{Lemma \ref*{formula1}} into the expression given in \hyperref[BF-Iwasawa]{Proposition \ref*{BF-Iwasawa}} \emph{\ref{BF-Spherical-even}} for the Bump--Friedberg integral $B(s_1,s_2,W_{\pi}^{\circ},\Phi_{\ur})$. We then relabel $b \in \Agp_n(F)$ with $\diag(y,b)$, where now $y \in F^{\times}$ and $b=\diag(b_1,b_2,\ldots,b_{n-1}) \in \Agp_{n-1}(F)$. The modulus character $\delta^{-1/2}_{\Bgp_{n}(F)}(b)$ is cancelled out and we arrive at
\begin{multline*}
\int_{\Agp_{n-1}(F)} \exp(-d_F\pi \|b_{n-1}\|^2)  \left|b_2 b_4 \cdots b_{n-2}\right|^{s_1+t_1}  \left|b_1 b_3 \cdots b_{n-1}\right|^{s_2-s_1+t_1} \\
\times \int_{\Agp_{n-1}(F)} W^{\circ}_{\pi_0}(b')  \prod_{j=1}^{n-1} \exp(-d_F\pi \|b'_jb^{-1}_{j}\|^2)  \prod_{j=1}^{n-2}  \exp(-d_F\pi \|b'^{-1}_{j+1}b_j\|^2) \left|\det b'\right|^{-t_1} \delta^{-\frac{1}{2}}_{\Bgp_{n-1}(F)}(b') \\
\times \int_{F^{\times}} \exp(-d_F\pi \| b'^{-1}_1y \|^2) |y|^{s_1+t_1} \, d^{\times}y \, d^{\times} b' \, d^{\times}b.
\end{multline*}
We interchange the order of integration and perform the change of variables $y \mapsto b'_1y$ and $b_j \mapsto b'_{j+1}b_j$ for $1 \leq j \leq n-2$. The Bump--Friedberg integral $B(s_1,s_2,W_{\pi}^{\circ},\Phi_{\ur})$ becomes
\begin{multline*}
\int_{\Agp_{n-1}(F)}\prod_{j=1}^{n-1}  \exp(-d_F\pi \|b_j\|^2)  \left|b_2 b_4 \cdots b_{n-2}\right|^{s_1+t_1}  \left|b_1 b_3 \cdots b_{n-1}\right|^{s_2-s_1+t_1} \\
\times \int_{\Agp_{n-1}(F)} W^{\circ}_{\pi_0}(b')  \prod_{j=1}^{n-2} \exp(-d_F\pi \|b'_jb'^{-1}_{j+1}b^{-1}_j\|^2) \exp(-d_F\pi \|b'_{n-1}b^{-1}_{n-1}\|^2)  \delta^{-\frac{1}{2}}_{\Bgp_{n-1}(F)}(b') \\
\times  \left|b'_1 b'_3 \cdots b'_{n-1}\right|^{s_1}  \left|b'_2 b'_4 \cdots b'_{n-2} \right|^{s_2-s_1}  \int_{F^{\times}} \exp(-d_F\pi \| y \|^2) |y|^{s_1+t_1} \, d^{\times}y \, d^{\times} b' \, d^{\times}b.
\end{multline*}
The integral over $F^{\times} \ni y$ is simply $ \zeta_F(s_1+t_1)$. We make the change of variables $b'_j \mapsto b'_jb_{j}$ and then interchange the order of the integration. This leads us to
\begin{multline*}
\zeta_F(s_1+t_1) \int_{\Agp_{n-1}(F)}  \exp(-d_F\pi \|b'_{n-1}\|^2)  \left|b'_1 b'_3 \cdots b'_{n-1}\right|^{s_1}  \left|b'_2 b'_4 \cdots b'_{n-2} \right|^{s_2-s_1} \delta^{-\frac{1}{2}}_{\Bgp_{n-1}(F)}(b')	\\
\times \int_{\Agp_{n-1}(F)} W^{\circ}_{\pi_0}(b'b)  \prod_{j=1}^{n-1} \exp(-d_F\pi \|b_j\|^2)  \exp(-d_F\pi \|b'_jb'^{-1}_{j+1}b^{-1}_{j+1}\|^2)	\\
 \times \left|\det b\right|^{s_2+t} \delta^{-\frac{1}{2}}_{\Bgp_{n-1}(F)}(b) \, d^{\times} b \, d^{\times}b'.
\end{multline*}
By \hyperref[formula2]{Lemma \ref*{formula2}}, the integral over $\Agp_{n-1}(F) \ni b$ is precisely $L(s_2+t_1,\pi_0)W^{\circ}_{\pi_0}(b')$. According to \hyperref[BF-Iwasawa]{Proposition \ref*{BF-Iwasawa}} \emph{\ref{BF-Spherical-odd}}, we end up with
\begin{multline*}
\zeta_F(s_1+t_1) L(s_2+t_1,\pi_0)\\
\times \int\limits_{\Ngp_{m-1}(F) \backslash \GL_{m-1}(F)}  \int\limits_{\Ngp_{m}(F) \backslash \GL_{m}(F)}  W^{\circ}_{\pi}(J(g,g')) \Phi_{\ur}(e_m g) \left|\det g\right|^{s_1} \left|\det g'\right|^{s_2-s_1} \, dg \, dg'
\end{multline*}
from which the desired identity holds by the induction hypothesis.

The same method of proof remains valid for the induction step when $n = 2m + 1$, where we suppose that the desired identity holds for $n = 2m$; the only difference is that we appeal to \hyperref[BF-Iwasawa]{Proposition \ref*{BF-Iwasawa}} \emph{\ref{BF-Spherical-even}} in place of \hyperref[BF-Iwasawa]{Proposition \ref*{BF-Iwasawa}} \emph{\ref{BF-Spherical-odd}}.
\end{proof}

When $F = \R$, \hyperref[BFspherical]{Theorem \ref*{BFspherical}} recovers an earlier result of Stade \cite[Theorem 3.3]{Sta01} proven via different means. Invoking \cite[Proposition 2.1]{Sta95}, Stade's method can also be used to prove \hyperref[BFspherical]{Theorem \ref*{BFspherical}} when $F = \C$.

\begin{remark}
Ishii \cite[Theorem 4.1]{Ish18} has proven the existence of a \emph{strong} test vector for the Bump--Friedberg integral when $F = \R$ and $\pi$ is a principal series representation.
\end{remark}

\subsection{Test Vectors for Bump--Friedberg Integrals and Friedberg--Jacquet Periods}

We proceed to the more general case where $\pi$ may be ramified. Our first step is to reduce the Bump--Friedberg integral to a double integral over $\Ngp_{m - 1}(F) \backslash \GL_{m -1}(F)$ and $\Ngp_m(F) \backslash \GL_m(F)$ when $n = 2m$ and over $\Ngp_m(F) \backslash \GL_m(F)$ and $\Ngp_m(F) \backslash \GL_m(F)$ when $n = 2m + 1$.

\begin{proposition}
\label{prop:BFKey}
Let $\pi$ be an induced representation of Langlands type of $\GL_n(F)$ with Whittaker newform $W^{\circ}_{\pi} \in \WW(\pi,\psi)$. For $m = \lfloor \frac{n}{2}\rfloor$, let $\Phi \in \Scr(F^m)$ be the Schwartz--Bruhat function given by $\Phi(x)=P(x) \exp(-d_F \pi x \prescript{t}{}{\overline{x}})$, where the distinguished homogeneous polynomial $P \in \PP_{\overline{\chi_{\pi}},c(\pi)}(F^m)$ is taken to be
\[
P(x)\coloneqq \sum^{c(\pi)}_{\substack{j=c(\chi_{\pi}) \\	j \equiv c(\chi_{\pi}) \hspace{-.2cm} \pmod{2} }}  \left(x \prescript{t}{}{\overline{x}}\right)^{\frac{c(\pi)-j}{2}}( \dim \tau_{\chi_{\pi},j}) \overline{P^{\circ}_{\chi_{\pi},j}}(x).
\]

\begin{enumerate}[leftmargin=*,label=\emph{(\roman*)}]
\item\label{BF-Tate-1} Let $n=2m$. For $\Re(s_1)$ and $\Re(s_2)$ sufficiently large, the Bump--Friedberg integral $B(s_1,s_2,W_{\pi}^{\circ},\Phi)$ is equal to
\begin{multline}
\label{eqn:FJperiod1}
L(ms_2,\omega_{\pi_{\ur}}) \int\limits_{\Ngp_{m-1}(F) \backslash \GL_{m-1}(F)} \int\limits_{\Ngp_m(F) \backslash \GL_m(F)} W^{\circ}_{\pi} \left(J\left( g,\begin{pmatrix} h' &0 \\ 0& 1 \end{pmatrix}\right)  \right) \\
\times \left|\det g\right|^{s_1 - \frac{1}{2}} \left|\det h'\right|^{s_2 - s_1 - \frac{1}{2}} \, dg \, dh'.
\end{multline}
\item\label{BF-Tate-2} Let $n=2m+1$. For $\Re(s_1)$ and $\Re(s_2)$ sufficiently large, the Bump--Friedberg integral $B(s_1,s_2,W_{\pi}^{\circ},\Phi)$ is equal to
\begin{multline}
\label{eqn:FJperiod2}
L \left(s_1 + ms_2,\omega_{\pi_{\ur}} \right) \int\limits_{\Ngp_m(F) \backslash \GL_m(F)}  \int\limits_{\Ngp_m(F) \backslash \GL_m(F)} W^{\circ}_{\pi}  \left(J\left( \begin{pmatrix} h&0 \\ 0& 1 \end{pmatrix} ,g'  \right)\right)	\\
\times  \left|\det h\right|^{s_1-1} \left|\det g'\right|^{s_2-s_1} \, dh \, dg'.
\end{multline}
\end{enumerate}
\end{proposition}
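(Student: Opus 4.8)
The plan is to follow the strategy of \hyperref[prop:RSkey]{Proposition \ref*{prop:RSkey}}, but with the Iwasawa decomposition applied to a maximal parabolic subgroup of whichever $\GL$-factor carries the Schwartz--Bruhat function, and with the newform $K_n$-type machinery transported to $K_m$ (respectively $K_{m + 1}$) through the embedding $J$. We treat \emph{(i)} in detail; \emph{(ii)} runs identically with the two arguments of $J$ interchanged. For \emph{(i)}, write $g' = (z1_m) \begin{psmallmatrix} h' & 0 \\ 0 & 1 \end{psmallmatrix} k'$ with $z \in F^{\times}$, $h' \in \Ngp_{m - 1}(F) \backslash \GL_{m - 1}(F)$, and $k' \in K_m$, so that $dg' = \left|\det h'\right|^{-1} \, d^{\times}z \, dh' \, dk'$, $\Phi(e_m g') = \Phi(z e_m k')$, and $\left|\det g'\right| = |z|^m \left|\det h'\right|$. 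From the multiplicativity $J(a_1 a_2, b_1 b_2) = J(a_1, b_1) J(a_2, b_2)$ and the fact that $J(z1_m, z1_m) = z1_n$ is central in $\GL_n(F)$, one finds $J(g, g') = z1_n \cdot J(z^{-1}g, \begin{psmallmatrix} h' & 0 \\ 0 & 1 \end{psmallmatrix}) J(1_m, k')$, so that $W_{\pi}^{\circ}(J(g, g')) = \omega_{\pi}(z) W_{\pi}^{\circ}(J(z^{-1}g, \begin{psmallmatrix} h' & 0 \\ 0 & 1 \end{psmallmatrix}) J(1_m, k'))$. Substituting $g \mapsto zg$ in the outer integral, and justifying absolute convergence and the interchange of integration via \cite[Lemma 3.2 (ii), Proposition 3.3, and Lemma 3.5]{Jac09}, the Bump--Friedberg integral $B(s_1,s_2,W_{\pi}^{\circ},\Phi)$ becomes
\begin{multline*}
\int_{F^{\times}} \omega_{\pi}(z) |z|^{ms_2} \int\limits_{\Ngp_m(F) \backslash \GL_m(F)} \int\limits_{\Ngp_{m - 1}(F) \backslash \GL_{m - 1}(F)} \left|\det g\right|^{s_1 - \frac{1}{2}} \left|\det h'\right|^{s_2 - s_1 - \frac{1}{2}} \\
\times \int_{K_m} W_{\pi}^{\circ}\left(J\left(g, \begin{pmatrix} h' & 0 \\ 0 & 1 \end{pmatrix}\right) J(1_m, k')\right) \Phi(z e_m k') \, dk' \, dh' \, dg \, d^{\times}z,
\end{multline*}
the key point being that the powers of $|z|$ accumulate as $|z|^{m(s_1 - \frac{1}{2})} \cdot |z|^{m(s_2 - s_1 + \frac{1}{2})} = |z|^{ms_2}$.

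It remains to collapse the inner $K_m$-integral. Set $X \coloneqq J(g, \begin{psmallmatrix} h' & 0 \\ 0 & 1 \end{psmallmatrix})$; inserting \eqref{eqn:Whitnewformintegral} for $W_{\pi}^{\circ}(X J(1_m, k'))$ and substituting $k \mapsto J(1_m, k')^{-1} k$ in the resulting $K_n$-integral reduces the inner double integral to
\[\dim \tau_{\chi_{\pi},c(\pi)} \int_{K_n} W_{\pi}^{\circ}(X k) \left(\int_{K_m} P_{\chi_{\pi},c(\pi)}^{\circ}(e_n k^{-1} J(1_m, k')) \Phi(z e_m k') \, dk'\right) dk.\]
The crucial structural fact is that $J(1_m, k')$, acting on $F^n$ by right translation, fixes each odd-numbered coordinate basis vector and mixes the even-numbered ones exactly as $k'$ mixes the standard basis of $F^m$; hence, writing $\iota \colon F^m \hookrightarrow F^n$ for the embedding onto the subspace spanned by $e_2, e_4, \ldots, e_n$ --- which carries the last basis vector $e_m$ of $F^m$ to $e_n$ --- one has $e_n J(1_m, k'^{-1}) = \iota(e_m k'^{-1})$, while for every $Q \in \HH_{\chi_{\pi},c(\pi)}(F^n)$ the restriction $\overline{Q} \circ \iota$ is homogeneous of degree $c(\pi)$ with central character $\overline{\chi_{\pi}}$, hence lies in $\PP_{\overline{\chi_{\pi}},c(\pi)}(F^m)$. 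The inner $K_m$-integral is then evaluated exactly as in the proof of \hyperref[prop:RSkey]{Proposition \ref*{prop:RSkey}}. First, the addition theorem \hyperref[prop:addition]{Proposition \ref*{prop:addition}} for $\HH_{\chi_{\pi},c(\pi)}(F^n)$ rewrites $\dim \tau_{\chi_{\pi},c(\pi)} P_{\chi_{\pi},c(\pi)}^{\circ}(e_n k^{-1} J(1_m, k'))$ as $\sum_{\ell} Q_{\ell}(e_n k^{-1}) \overline{Q_{\ell}}(\iota(e_m k'^{-1}))$ for an orthonormal basis $\{Q_{\ell}\}$ of $\HH_{\chi_{\pi},c(\pi)}(F^n)$. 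Second, \eqref{eqn:homogeneity} and \eqref{eqn:Pconj} express $\Phi(z e_m k')$ as $\overline{\chi_{\pi}}(z / \|z\|) \|z\|^{c(\pi)} e^{-d_F \pi \|z\|^2}$ times the reproducing kernel of $\PP_{\overline{\chi_{\pi}},c(\pi)}(F^m)$ at $e_m k'$. Third, applying \eqref{eqn:Pmatrixcoeff} on $F^m$ and using $\iota(e_m) = e_n$, the $K_m$-integral of $\overline{Q_{\ell}}(\iota(e_m k'^{-1}))$ against that reproducing kernel is $\overline{Q_{\ell}}(e_n)$; reassembling via the addition theorem in reverse, the inner $K_m$-integral equals $\overline{\chi_{\pi}}(z / \|z\|) \|z\|^{c(\pi)} e^{-d_F \pi \|z\|^2} P_{\chi_{\pi},c(\pi)}^{\circ}(e_n k^{-1})$. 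A final appeal to \eqref{eqn:Whitnewformintegral} shows the inner double integral equals $\overline{\chi_{\pi}}(z / \|z\|) \|z\|^{c(\pi)} e^{-d_F \pi \|z\|^2} W_{\pi}^{\circ}(X)$; then \hyperref[lem:omegapitoomegapiur]{Lemma \ref*{lem:omegapitoomegapiur}} together with \eqref{eqn:Lsomega} identifies the remaining integral over $F^{\times}$ as $L(ms_2, \omega_{\pi_{\ur}})$, yielding \eqref{eqn:FJperiod1}. For \emph{(ii)} one uses instead the Iwasawa decomposition $g = (z1_{m + 1}) \begin{psmallmatrix} h & 0 \\ 0 & 1 \end{psmallmatrix} k$ for $\Ngp_{m + 1}(F) \backslash \GL_{m + 1}(F)$ and the analogous embedding of $F^{m + 1}$ onto the odd-numbered coordinates of $F^n$ (which again sends the last basis vector to $e_n$); the powers of $|z|$ now accumulate as $|z|^{(m + 1)s_1} \cdot |z|^{m(s_2 - s_1)} = |z|^{s_1 + ms_2}$, producing the factor $L(s_1 + ms_2, \omega_{\pi_{\ur}})$, and the rest is verbatim.

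The step I expect to be the main obstacle is the bookkeeping around the embedding $J$: one must check that the scalar extracted from the relevant $\GL$-factor recombines, through the interleaving effected by $J$, into a genuine central element $z1_n$ of $\GL_n(F)$ --- so that $\omega_{\pi}$ can be pulled out and the exponents of $|z|$ tally to $ms_2$ (respectively $s_1 + ms_2$) --- and that restriction of the $K_n$-harmonic reproducing kernel to the even-numbered (respectively odd-numbered) coordinate subspace of $F^n$, identified with $F^m$ (respectively $F^{m + 1}$), is governed by the reproducing kernel of $\PP_{\overline{\chi_{\pi}},c(\pi)}$ on that smaller space, the image of whose distinguished basis vector under the embedding is precisely $e_n$. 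Granting these two points, the argument is formally identical to that of \hyperref[prop:RSkey]{Proposition \ref*{prop:RSkey}}.
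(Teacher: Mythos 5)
Your proposal is correct and is essentially the paper's own proof: Iwasawa decomposition on the factor carrying $\Phi$, extraction of the centre via $J(z1_m,z1_m)=z1_n$ so the exponents tally to $ms_2$ (resp.\ $s_1+ms_2$), insertion of \eqref{eqn:Whitnewformintegral}, the addition theorem, the reproducing-kernel identity \eqref{eqn:Pmatrixcoeff} on the smaller space, the addition theorem in reverse, and finally \hyperref[lem:omegapitoomegapiur]{Lemma \ref*{lem:omegapitoomegapiur}} with \eqref{eqn:Lsomega} to produce the Tate factor. The two points you flag as potential obstacles (the recombination into a genuine central element, and that $e_n J(1_m,k')$ restricts the $K_n$-harmonics to elements of $\PP_{\overline{\chi_{\pi}},c(\pi)}$ on the smaller space with the distinguished basis vector mapping to $e_n$) are exactly the steps the paper carries out, just stated more implicitly there.
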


Once more, the proof of this proceeds along similar lines to that of \hyperref[prop:RSkey]{Proposition \ref*{prop:RSkey}}.

\begin{proof}
We prove this for $n = 2m$; the case $n = 2m + 1$ follows analogously.We use the Iwasawa decomposition for $\Ngp_m(F) \backslash \GL_m(F) \ni g'$ to write $g' = (z1_m)\begin{psmallmatrix} h' & 0 \\ 0 & 1 \end{psmallmatrix} k'$ and then make the change of variables $g \mapsto (z1_m)g$ on the first copy of $\Ngp_m(F) \backslash\GL_m(F) \ni g$ in order to see that the Bump--Friedberg integral $B(s_1,s_2,W_{\pi}^{\circ},\Phi)$ is equal to
\begin{multline*}
\int_{F^{\times}} \omega_{\pi}(z)|z|^{ms_2} \int\limits_{\Ngp_{m-1}(F) \backslash \GL_{m-1}(F)}  \int\limits_{\Ngp_m(F) \backslash \GL_m(F)} \left|\det g\right|^{s_1-\frac{1}{2}} \left|\det h'\right|^{s_2-s_1 - \frac{1}{2}}	\\
\times \int_{K_m}  W^{\circ}_{\pi} \left(J\left( g,\begin{pmatrix} h' &0 \\ 0& 1 \end{pmatrix}k'\right) \right)     \Phi(ze_m k') \, dk' \, dg \, dh' \, d^{\times}z.
\end{multline*}
We insert the identity \eqref{eqn:Whitnewformintegral} for $W_{\pi}^{\circ}(g)$ with $J\left( g,\begin{psmallmatrix} h' & 0 \\ 0 & 1 \end{psmallmatrix}k'\right)$ in lieu of $g$ and then make the change of variables $k' \mapsto k'^{-1}$ and $k \mapsto J(1_m,k')k$. We arrive at
 \begin{multline*}
\int_{F^{\times}}  \omega_{\pi}(z)|z|^{ms_2} \int\limits_{\Ngp_{m-1}(F) \backslash \GL_{m-1}(F)} \int\limits_{\Ngp_m(F) \backslash \GL_m(F)} \left|\det g\right|^{s_1-\frac{1}{2}} \left|\det h'\right|^{s_2-s_1 - \frac{1}{2}} \\
\times \int_{K_n} W^{\circ}_{\pi} \left(J\left( g,\begin{pmatrix} h' &0 \\ 0& 1 \end{pmatrix}\right) k \right)\\
\times \dim \tau_{\chi_{\pi},c(\pi)}  \int_{K_m} P^{\circ}_{\chi_{\pi},c(\pi)}(e_n k^{-1} J(1_m,k'^{-1}))  \Phi(ze_m k'^{-1}) \, dk' \, dk \, dg \, dh' \, d^{\times}z.
\end{multline*}
By the addition theorem, \hyperref[prop:addition]{Proposition \ref*{prop:addition}}, the last line turns into
\[
\sum_{\ell=1}^{\dim \tau_{\chi_{\pi},c(\pi)}} Q_{\ell}(e_n k^{-1}) \int_{K_m} \overline{Q_{\ell}}(e_n J(1_m,k'))  \Phi(ze_m k'^{-1}) \, dk',
\]
where $\{ Q_{\ell}\}$ is an orthonormal basis of $\HH_{\chi_{\pi},c(\pi)}(F^n)$.

Since $P \in \PP_{\overline{\chi_{\pi}},c(\pi)}(F^m)$, we observe that for $z \in F^{\times}$ and $k' \in K_m$,
\[\Phi(ze_m k'^{-1}) =\overline{\chi_{\pi}} \left( \frac{z}{\|z\|} \right) \|z\|^{c(\pi)} \exp(-d_F\pi \|z\|^2 ) 
\sum^{c(\pi)}_{\substack{j=c(\chi_{\pi}) \\   j \equiv c(\chi_{\pi}) \hspace{-.2cm} \pmod{2}}}  ( \dim \tau_{\chi_{\pi},j}) P^{\circ}_{\chi_{\pi},j}(e_m k')\]
by the homogeneity of $P^{\circ}_{\chi_{\pi},j}$ as in \eqref{eqn:homogeneity} together with the identity \eqref{eqn:Pconj}. By \eqref{eqn:Preproducing}, the sum over $j$ is the reproducing kernel for $\PP_{\chi_{\pi},c(\pi)}(F^m)$, and so the integral over $K_m \ni k'$ is simply $\overline{Q_{\ell}}(e_n)$ by \eqref{eqn:Pmatrixcoeff}. Using the addition theorem, \hyperref[prop:addition]{Proposition \ref*{prop:addition}}, in \emph{reverse} and then using \eqref{eqn:Whitnewformintegral} to evaluate the integral over $K_n \ni k$, we find that the Bump--Friedberg integral $B(s_1,s_2,W_{\pi}^{\circ},\Phi)$ is equal to
\begin{multline*}
\int_{F^{\times}}  \omega_{\pi}(z) \overline{\chi_{\pi}} \left( \frac{z}{\|z\|} \right) \|z\|^{c(\pi)} |z|^{ms_2} \exp(-d_F\pi \|z\|^2 ) \, d^{\times}z	\\
\times \int\limits_{\Ngp_{m-1}(F) \backslash \GL_{m-1}(F)} \int\limits_{\Ngp_m(F) \backslash \GL_m(F)} W^{\circ}_{\pi} \left(J\left( g,\begin{pmatrix} h' &0 \\ 0& 1 \end{pmatrix}\right) \right) \left|\det g\right|^{s_1-\frac{1}{2}} \left|\det g'\right|^{s_2-s_1 - \frac{1}{2}} \, dg \, dh'.
\end{multline*}
It remains to recall \hyperref[lem:omegapitoomegapiur]{Lemma \ref*{lem:omegapitoomegapiur}}, which, by \eqref{eqn:Lsomega}, shows that the integral over $\R^{\times} \ni z$ is $L (ms_2,\omega_{\pi_{\ur}} )$.
\end{proof}

\begin{remark}
Just as for \hyperref[prop:RSkey]{Propositions \ref*{prop:RSkey}} and \ref{prop:FlickerKey}, the same proof remains valid in the nonarchimedean setting.
\end{remark}

Finally, we use \hyperref[BFspherical]{Theorem \ref*{BFspherical}} to show that when $\pi$ is an induced representation of Langlands type of $\GL_n(F)$, $W_{\pi}^{\circ}$ is a weak test vector for the Bump--Friedberg integral. Once more, this is proven by reducing the problem to the spherical case.

\begin{theorem}
\label{thm:BF-reduction}
With the notation and hypotheses of  \hyperref[prop:BFKey]{Proposition \ref*{prop:BFKey}}, the Bump--Friedberg integral 
\begin{multline}
\label{eqn:BFint}
B(s_1,s_2,W_{\pi}^{\circ},\Phi)	\\
\coloneqq \begin{dcases*}
\int\limits_{\Ngp_m(F) \backslash \GL_m(F)}  \int\limits_{\Ngp_m(F) \backslash \GL_m(F)}  W^{\circ}_{\pi}(J(g,g')) \Phi(e_m g')  \left|\det g\right|^{s_1-\frac{1}{2}} \left|\det g'\right|^{s_2-s_1 + \frac{1}{2}} \, dg \, dg' &    \\
& \hspace{-3cm} for $n=2m$,  \\
\int\limits_{\Ngp_m(F) \backslash \GL_m(F)}  \int\limits_{\Ngp_{m+1}(F) \backslash \GL_{m+1}(F)}  W^{\circ}_{\pi}(J(g,g')) \Phi(e_{m+1} g)  \left|\det g\right|^{s_1} \left|\det g'\right|^{s_2-s_1} \, dg \, dg' &	\\
& \hspace{-3cm} for $n=2m+1$,
\end{dcases*}
\end{multline}
is equal to $L(s_1,s_2,\pi_{\ur},\BF)\coloneqq L(s_1,\pi)L(s_2,\pi_{\ur},\wedge^2)$.
\end{theorem}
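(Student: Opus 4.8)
The plan is to mirror the proofs of \hyperref[thm:GLnxGLn]{Theorem \ref*{thm:GLnxGLn}} and \hyperref[thm:Flicker]{Theorem \ref*{thm:Flicker}}: reduce the Bump--Friedberg integral to the spherical setting using \hyperref[prop:BFKey]{Proposition \ref*{prop:BFKey}} and \hyperref[lem:WtoWur]{Lemma \ref*{lem:WtoWur}}, and then invoke the spherical computation in \hyperref[BFspherical]{Theorem \ref*{BFspherical}}.

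First I would apply \hyperref[prop:BFKey]{Proposition \ref*{prop:BFKey}}, which for $n = 2m$ writes $B(s_1,s_2,W_{\pi}^{\circ},\Phi)$ as $L(ms_2,\omega_{\pi_{\ur}})$ times the double integral \eqref{eqn:FJperiod1} over $\Ngp_{m - 1}(F) \backslash \GL_{m - 1}(F)$ and $\Ngp_m(F) \backslash \GL_m(F)$, and analogously for $n = 2m + 1$ via \eqref{eqn:FJperiod2}. The key point to record is that the argument $J\left(g,\begin{psmallmatrix} h' & 0 \\ 0 & 1 \end{psmallmatrix}\right)$ of the Whittaker function in \eqref{eqn:FJperiod1} has the block form $\begin{psmallmatrix} \widetilde{g} & 0 \\ 0 & 1 \end{psmallmatrix}$ with $\widetilde{g} \in \GL_{n - 1}(F)$: the embedding $J$ carries the last row and column of the second factor of $\GL_m(F) \times \GL_m(F)$ onto the last row and column of the ambient $\GL_n(F)$, so since $\begin{psmallmatrix} h' & 0 \\ 0 & 1 \end{psmallmatrix}$ is trivial there except for the bottom-right entry $1$, so is $J\left(g,\begin{psmallmatrix} h' & 0 \\ 0 & 1 \end{psmallmatrix}\right)$, whose upper-left $(n - 1) \times (n - 1)$ block is the image of $(g,h')$ under the analogous embedding $\GL_m(F) \times \GL_{m - 1}(F) \to \GL_{n - 1}(F)$. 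In the case $n = 2m + 1$ it is instead the last row and column of the \emph{first} factor that is responsible, and the $(n - 1) \times (n - 1)$ block is the image of $(h,g')$ under $\GL_m(F) \times \GL_m(F) \to \GL_{n - 1}(F)$. With this established, \hyperref[lem:WtoWur]{Lemma \ref*{lem:WtoWur}} lets me replace $W_{\pi}^{\circ}$ by $W_{\pi_{\ur}}^{\circ}$ throughout \eqref{eqn:FJperiod1} and \eqref{eqn:FJperiod2}.

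Next I would apply \hyperref[prop:BFKey]{Proposition \ref*{prop:BFKey}} a second time, now to the spherical induced representation of Langlands type $\pi_{\ur}$ itself; since $c(\pi_{\ur}) = 0$ and $\chi_{\pi_{\ur}}$ is trivial, the Schwartz--Bruhat function $\Phi$ of that proposition collapses to $\Phi_{\ur}$ of \eqref{eqn:Phiurdefeq}, so the proposition shows that $L(ms_2,\omega_{\pi_{\ur}})$, respectively $L(s_1 + ms_2,\omega_{\pi_{\ur}})$, times the corresponding double integral with $W_{\pi_{\ur}}^{\circ}$ equals $B(s_1,s_2,W_{\pi_{\ur}}^{\circ},\Phi_{\ur})$. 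Chaining these identities gives $B(s_1,s_2,W_{\pi}^{\circ},\Phi) = B(s_1,s_2,W_{\pi_{\ur}}^{\circ},\Phi_{\ur})$, and then \hyperref[BFspherical]{Theorem \ref*{BFspherical}} identifies the right-hand side with $L(s_1,\pi_{\ur})L(s_2,\pi_{\ur},\wedge^2)$; since $L(s_1,\pi_{\ur}) = L(s_1,\pi)$ by \hyperref[prop:piurexists]{Proposition \ref*{prop:piurexists}}, this is exactly $L(s_1,s_2,\pi_{\ur},\BF)$, as claimed.

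The main obstacle I expect is purely combinatorial: verifying carefully that $J\left(g,\begin{psmallmatrix} h' & 0 \\ 0 & 1 \end{psmallmatrix}\right)$ and its odd-dimensional counterpart genuinely have the block shape $\begin{psmallmatrix} \widetilde{g} & 0 \\ 0 & 1 \end{psmallmatrix}$ needed to invoke \hyperref[lem:WtoWur]{Lemma \ref*{lem:WtoWur}}, which comes down to tracking where the permutations $w_{m,m}$ and $w_{m + 1,m}$ send the last index. Everything else is a formal consequence of results already in place.
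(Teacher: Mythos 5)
Your proposal is correct and follows essentially the same route as the paper's proof: apply \hyperref[prop:BFKey]{Proposition \ref*{prop:BFKey}}, use \hyperref[lem:WtoWur]{Lemma \ref*{lem:WtoWur}} to swap $W_{\pi}^{\circ}$ for $W_{\pi_{\ur}}^{\circ}$, recognise the result as $B(s_1,s_2,W_{\pi_{\ur}}^{\circ},\Phi_{\ur})$ via the same calculation applied to $\pi_{\ur}$, and conclude with \hyperref[BFspherical]{Theorem \ref*{BFspherical}}. The block-structure check you flag as the main obstacle is indeed the right (and correct) observation --- $J\left(g,\begin{psmallmatrix} h' & 0 \\ 0 & 1 \end{psmallmatrix}\right)$ and its odd counterpart have last row and column $(0,\ldots,0,1)$ --- and is left implicit in the paper.
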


\begin{proof}
Again, we prove this for $n = 2m$; the case $n = 2m + 1$ follows analogously. Owing to \hyperref[lem:WtoWur]{Lemma \ref*{lem:WtoWur}} and \hyperref[prop:BFKey]{Proposition \ref*{prop:BFKey}}, the Bump--Friedberg integral $B(s_1,s_2,W_{\pi}^{\circ},\Phi)$ is equal to
\begin{multline*}
L \left( ms_2,\omega_{\pi_{\ur}} \right)   \int\limits_{\Ngp_{m-1}(F) \backslash \GL_{m-1}(F)}  \int\limits_{\Ngp_m(F) \backslash \GL_m(F)} W^{\circ}_{\pi_{\ur}} \left(J\left( g,\begin{pmatrix} h' & 0\\ 0& 1 \end{pmatrix}\right)  \right)	\\
\times \left|\det g\right|^{s_1-\frac{1}{2}} \left|\det h'\right|^{s_2-s_1 - \frac{1}{2}} \, dg \, dh'.
\end{multline*}
On the other hand, the same calculation shows that this is equal to $B(s_1,s_2,W_{\pi_{\ur}}^{\circ},\Phi_{\ur})$ with $\Phi_{\ur} \in \Scr(F^m)$ given by \eqref{eqn:Phiurdefeq}. It remains to invoke \hyperref[BFspherical]{Theorem \ref*{BFspherical}}.
\end{proof}

In general, the na\"{i}ve Bump--Friedberg $L$-function $L(s_1,s_2,\pi_{\ur},\BF)$ is not equal to $L(s_1,s_2,\pi,\BF)$. Nonetheless, these two $L$-functions are closely related.

\begin{proposition}
Given an induced representation of Whittaker type $\pi$ of $\GL_n(F)$, there exists a polynomial $p(s_2)$ for which 
\[
L(s_1,s_2,\pi_{\ur},\BF)=p(s_2)L(s_1,s_2,\pi,\BF).
\]
\end{proposition}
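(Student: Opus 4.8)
The plan is to reduce this to the polynomiality already established for Rankin--Selberg $L$-functions in \hyperref[prop:RSpolynomial]{Proposition \ref*{prop:RSpolynomial}}. Since $L(s_1,s_2,\pi,\BF) = L(s_1,\pi)L(s_2,\pi,\wedge^2)$ and $L(s_1,s_2,\pi_{\ur},\BF) = L(s_1,\pi_{\ur})L(s_2,\pi_{\ur},\wedge^2)$, and since $L(s_1,\pi_{\ur}) = L(s_1,\pi)$ by \hyperref[prop:piurexists]{Proposition \ref*{prop:piurexists}}, the quotient $L(s_1,s_2,\pi_{\ur},\BF)/L(s_1,s_2,\pi,\BF)$ collapses to $L(s_2,\pi_{\ur},\wedge^2)/L(s_2,\pi,\wedge^2)$, a function of $s_2$ alone. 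Thus it suffices to exhibit a polynomial $p(s)$ with $L(s,\pi_{\ur},\wedge^2) = p(s)L(s,\pi,\wedge^2)$, which then serves as $p(s_2)$.

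Next I would write $\pi = \bigboxplus_{j=1}^{r} \pi_j$ with each $\pi_j$ essentially square-integrable of $\GL_{n_j}(F)$, so that $n_j \in \{1,2\}$ and $n_j = 2$ only when $F = \R$ and $\pi_j = D_{\kappa_j} \otimes \left|\det\right|_{\R}^{t_j}$, and appeal to the local Langlands correspondence to factor
\[L(s,\pi,\wedge^2) = \prod_{j=1}^{r} L(s,\pi_j,\wedge^2) \prod_{1 \leq j < \ell \leq r} L(s,\pi_j \times \pi_\ell),\]
exactly as for the Asai $L$-function, using that $\wedge^2$ of a direct sum of Weil-group parameters is $\bigoplus_j \wedge^2 \phi_j \oplus \bigoplus_{j < \ell} \phi_j \otimes \phi_\ell$. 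On the other side, $\pi_{\ur}$ is an isobaric sum of $n$ unramified characters whose exponents are the Langlands parameters $\alpha_{\pi,1},\dots,\alpha_{\pi,n}$ of $\pi$, and since $\wedge^2$ of a sum of characters has no diagonal contribution, grouping these parameters according to the block $S_j$ of indices attached to $\pi_j$ yields
\[L(s,\pi_{\ur},\wedge^2) = \prod_{j : n_j = 2} \zeta_F\Bigl(s + \textstyle\sum_{k \in S_j} \alpha_{\pi,k}\Bigr) \prod_{1 \leq j < \ell \leq r} L(s,\pi_{j,\ur} \times \pi_{\ell,\ur}),\]
where $\pi_{j,\ur} \coloneqq \bigboxplus_{k \in S_j} |\cdot|^{\alpha_{\pi,k}}$ has the same standard $L$-function as $(\pi_j)_{\ur}$, hence the same Rankin--Selberg $L$-functions against everything.

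Then I compare the two products factor by factor. For the cross terms, $L(s,\pi_{j,\ur} \times \pi_{\ell,\ur})/L(s,\pi_j \times \pi_\ell)$ is a polynomial by \hyperref[prop:RSpolynomial]{Proposition \ref*{prop:RSpolynomial}}. For the block terms, when $n_j = 1$ the representation $\pi_j$ is a character, $S_j$ is a singleton with no diagonal pair, and $L(s,\pi_j,\wedge^2) = 1$, so there is nothing to compare; when $n_j = 2$ we have $F = \R$ and $\pi_j = D_{\kappa_j} \otimes \left|\det\right|_{\R}^{t_j}$, whose two Langlands parameters sum to $2t_j + \kappa_j$, whereas the exterior square of its two-dimensional parameter $\phi_j$ is its determinant, so $L(s,\pi_j,\wedge^2) = L\bigl(s, \sgn^{\kappa_j} |\cdot|_{\R}^{2t_j}\bigr) = \zeta_{\R}(s + 2t_j + \e_j)$ with $\e_j \in \{0,1\}$ and $\e_j \equiv \kappa_j \pmod 2$. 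Since $\kappa_j - \e_j$ is then a nonnegative even integer, repeated application of $\Gamma(z+1) = z\Gamma(z)$ gives
\[\frac{\zeta_{\R}(s + 2t_j + \kappa_j)}{\zeta_{\R}(s + 2t_j + \e_j)} = \pi^{-\frac{\kappa_j - \e_j}{2}} \prod_{m = 0}^{\frac{\kappa_j - \e_j}{2} - 1} \left(\frac{s + 2t_j + \e_j}{2} + m\right),\]
a polynomial in $s$. The product of these finitely many polynomial factors is the desired $p(s)$. The only step demanding care is pinning down $\wedge^2 \phi_j = \det \phi_j = \sgn^{\kappa_j} |\cdot|_{\R}^{2t_j}$ for the discrete series $D_{\kappa_j} \otimes \left|\det\right|_{\R}^{t_j}$, which comes down to the standard identity $\det \Ind_{W_{\C}}^{W_{\R}} \psi = \eta_{\C/\R} \cdot (\psi|_{\R^{\times}})$ and the computation of the inducing character's restriction to $\R^{\times}$; everything else is bookkeeping that parallels \hyperref[prop:RSpolynomial]{Proposition \ref*{prop:RSpolynomial}} and the Asai case.
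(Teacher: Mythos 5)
Your proposal is correct and follows essentially the same route as the paper: reduce to showing $L(s,\pi_{\ur},\wedge^2) = p(s)L(s,\pi,\wedge^2)$, factor both sides via the local Langlands correspondence into block and cross terms, dispose of the cross terms by \hyperref[prop:RSpolynomial]{Proposition \ref*{prop:RSpolynomial}}, and compute the ratio $\zeta_{\R}(s+2t+\kappa)/\zeta_{\R}(s+2t+\kappa')$ explicitly in the discrete series case. You merely spell out two steps the paper leaves implicit — the grouping of the Langlands parameters of $\pi_{\ur}$ by blocks and the identification $\wedge^2\phi_j=\det\phi_j=\sgn^{\kappa_j}|\cdot|_{\R}^{2t_j}$ — and both are carried out correctly.
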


\begin{proof}
It is sufficient to show that $L(s,\pi_{\ur},\wedge^2) = p(s) L(s,\pi,\wedge^2)$.
For $\pi = \bigboxplus_{j = 1}^{r} \pi_j$, we have that
\[L(s,\pi,\wedge^2) = \prod_{j = 1}^{r} L(s,\pi_j,\wedge^2) \prod_{1 \leq j < \ell \leq r} L(s, \pi_j \times \pi_{\ell})\]
via the local Langlands correspondence (cf.\ \cite[\S 1.3]{Mat17}). Recalling \hyperref[prop:RSpolynomial]{Proposition \ref*{prop:RSpolynomial}}, it thereby suffices to consider the case where $\pi$ is essentially square-integrable.

If $F = \C$, so that $\pi = e^{i\kappa \arg} |\cdot|_{\C}^{t}$, then $L(s,\pi,\wedge^2) = L(s,\pi_{\ur},\wedge^2) = 1$. Similarly, if $F = \R$ and $\pi = \sgn^{\kappa} |\cdot|_{\R}^{t}$, then $L(s,\pi,\wedge^2) = L(s,\pi_{\ur},\wedge^2) = 1$. Finally, if $F = \R$ and $\pi = D_{\kappa} \otimes \left|\det\right|_{\R}^{t}$, then since $\zeta_{\R}(s) \coloneqq \pi^{-s/2} \Gamma(s/2)$, and recalling the fact that $\Gamma(s + 1) = s\Gamma(s)$, we have that for $\kappa' \in \{0,1\}$ satisfying $\kappa \equiv \kappa' \pmod{2}$,
\[\frac{L(s,\pi_{\ur},\wedge^2)}{L(s,\pi,\wedge^2)} = \frac{\zeta_{\R}(s + 2t + \kappa)}{\zeta_{\R}(s + 2t + \kappa')} = \pi^{-\frac{\kappa - \kappa'}{2}} \prod_{m = 0}^{\frac{1}{2}(\kappa - \kappa') - 1} \left(\frac{s + 2t + \kappa'}{2} + m\right).\qedhere\]
\end{proof}

When $s_1 = 1/2$ and $s_2 = 1$, the integrals appearing in \eqref{eqn:FJperiod1} and \eqref{eqn:FJperiod2} are known as the \emph{Friedberg--Jacquet period} \cite{FJ93}. In general, these integrals need not converge at $(s_1,s_2) = (1/2,1)$. When $\pi$ is \emph{unitary}, however, convergence is guaranteed by the following lemma, whose proof we omit since it is standard \cite[Propositions 3.4 and 5.1]{Mat15}; once more, it follows from bounds for Whittaker functions by a gauge \cite[\S 4 Propositions 2 and 3]{JS90}.

\begin{lemma}
\label{BF-convergence}
Let $\pi$ be a unitary generic irreducible Casselman--Wallach representation of $\GL_n(F)$. For any $W_{\pi} \in \WW(\pi,\psi)$, the integrals
\[
\begin{dcases*}
\int\limits_{\Ngp_{m-1}(F) \backslash \GL_{m-1}(F)} \int\limits_{\Ngp_m(F) \backslash \GL_m(F)} W^{\circ}_{\pi} \left(J\left( g,\begin{pmatrix} h' &0 \\ 0& 1 \end{pmatrix}\right)  \right) \left|\det g\right|^{s - \frac{1}{2}} \left|\det h'\right|^{s - \frac{1}{2}} \, dg \, dh' &    \\
& \hspace{-3cm} for $n=2m$,  \\
\int\limits_{\Ngp_m(F) \backslash \GL_m(F)} \int\limits_{\Ngp_m(F) \backslash \GL_m(F)} W^{\circ}_{\pi}  \left(J\left( \begin{pmatrix} h&0 \\ 0& 1 \end{pmatrix} ,g'  \right)\right) \left|\det h\right|^{s-1} \left|\det g'\right|^s \, dh \, dg' &	\\
& \hspace{-3cm} for $n=2m+1$,
\end{dcases*}
\]
converge absolutely for $\Re (s) \geq 1/2$.
\end{lemma}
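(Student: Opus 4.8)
The plan is to dominate the Whittaker newform by a Jacquet--Shalika gauge, collapse each of the two integrations onto the diagonal torus via the Iwasawa decomposition, and then recognise what remains as a finite sum of products of one-dimensional Tate integrals, whose common domain of absolute convergence --- thanks to the unitarity of $\pi$ --- contains $\{\Re(s) \geq \tfrac{1}{2}\}$.

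First I would recall the gauge estimate of Jacquet and Shalika \cite[\S 4, Propositions 2 and 3]{JS90}: since $\pi$ is unitary generic, there is a \emph{gauge} $\xi$ on $\Agp_n(F)$, that is, a finite sum of functions of the form $\prod_{i = 1}^{n - 1} \phi_i(a_i/a_{i + 1})$ with each $\phi_i$ rapidly decreasing on $F^{\times}$ as $\|\cdot\| \to \infty$ and $O(\|\cdot\|^{c_i})$ as $\|\cdot\| \to 0$, such that $|W_{\pi}^{\circ}(ak)| \leq \xi(a)$ for all $a \in \Agp_n(F)$ and $k \in K_n$. The essential feature is that the exponents $c_i$ are \emph{large}: each contains the positive quantity $i(n - i)/2$ arising from the $\delta_{\Bgp_n(F)}^{1/2}$ built into the induced model of $\pi$, improved further by a non-negative contribution from the Langlands parameters of $\pi$ (whose real parts, arranged in decreasing order, have non-negative partial sums when $\pi$ is unitary).

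Next I would carry out the torus reduction, treating $n = 2m$ in detail (the case $n = 2m + 1$ being identical). Write $g = ak$ and $h' = a'k'$ via the Iwasawa decomposition, with $a = \diag(x_1,\dots,x_m) \in \Agp_m(F)$, $a' = \diag(y_1,\dots,y_{m - 1}) \in \Agp_{m - 1}(F)$, $k \in K_m$, $k' \in K_{m - 1}$, so that $dg$ and $dh'$ contribute the factors $\delta_{\Bgp_m(F)}^{-1}(a)$ and $\delta_{\Bgp_{m - 1}(F)}^{-1}(a')$. As $J$ is a group homomorphism on $\GL_m(F) \times \GL_m(F)$, the matrix $J\!\left(a,\begin{psmallmatrix} a' & 0 \\ 0 & 1 \end{psmallmatrix}\right)$ is the diagonal matrix $b \coloneqq \diag(x_1,y_1,x_2,y_2,\dots,x_{m - 1},y_{m - 1},x_m,1) \in \Agp_n(F)$ while $J\!\left(k,\begin{psmallmatrix} k' & 0 \\ 0 & 1 \end{psmallmatrix}\right) \in K_n$; bounding $W_{\pi}^{\circ}$ by $\xi(b)$ uniformly in the compact variables and integrating the latter out, the integral in the statement is majorised by
\[\int_{\Agp_m(F)} \int_{\Agp_{m - 1}(F)} \xi(b) \prod_{j = 1}^{m} \left|x_j\right|^{\Re(s) - \frac{1}{2}} \prod_{j = 1}^{m - 1} \left|y_j\right|^{\Re(s) - \frac{1}{2}} \delta_{\Bgp_m(F)}^{-1}(a) \, \delta_{\Bgp_{m - 1}(F)}^{-1}(a') \, d^{\times}a' \, d^{\times}a.\]
Changing variables to the simple-root coordinates $t_i \coloneqq b_i/b_{i + 1}$ for $1 \leq i \leq n - 1$ --- a measure-preserving identification of $(F^{\times})^{2m - 1}$ with $(F^{\times})^{n - 1}$, since $b_n = 1$ --- the product of absolute-value characters rewrites as $\prod_i \|t_i\|^{\beta_i(\Re(s))}$ for explicit affine functions $\beta_i$, and the majorant becomes a finite sum of products $\prod_{i = 1}^{n - 1} \bigl(\int_{F^{\times}} \phi_i(t_i) \|t_i\|^{\beta_i(\Re(s))} \, d^{\times}t_i\bigr)$. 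Each one-dimensional factor is a Tate integral converging exactly for $\Re(\beta_i(\Re(s))) > -c_i$; a short computation of the $\beta_i$, together with the lower bounds on the $c_i$, shows that at $\Re(s) = \tfrac12$ the downward shift of the weight exponents is more than offset by the $\delta_{\Bgp}^{1/2}$ already present inside $\xi$, so all factors converge and the common region of convergence contains $\{\Re(s) \geq \tfrac12\}$. For $n = 2m + 1$, applying the Iwasawa decomposition to $\GL_m(F) \ni h$ and $\GL_m(F) \ni g'$ again yields a diagonal matrix $\diag(x_1,y_1,\dots,x_m,y_m,1) \in \Agp_n(F)$, and the same estimate goes through with the weights $|\det h|^{\Re(s) - 1}$ and $|\det g'|^{\Re(s)}$.

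The main obstacle is the sharpness of the last step: one must know that the gauge exponents attached to a \emph{unitary} $\pi$ are large enough that the abscissa of convergence lands at $\Re(s) = \tfrac12$, rather than at some point to the right dictated by the archimedean Langlands parameters of a general $\pi$. This is exactly where the unitarity hypothesis enters, and it is the substance of the bookkeeping carried out by Matringe \cite[Propositions 3.4 and 5.1]{Mat15}, which we therefore simply invoke; the reduction to the torus and the factorisation into Tate integrals are then routine.
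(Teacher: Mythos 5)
Your proposal is correct and matches the paper's treatment: the paper omits the proof of this lemma entirely, declaring it standard and citing exactly the two ingredients you use, namely the Jacquet--Shalika gauge bounds \cite[\S 4 Propositions 2 and 3]{JS90} and Matringe's convergence analysis \cite[Propositions 3.4 and 5.1]{Mat15}. Your Iwasawa/torus reduction and factorisation into Tate integrals is a reasonable fleshing-out of that citation, and your final appeal to Matringe for the unitarity bookkeeping at $\Re(s) = \tfrac{1}{2}$ is precisely where the paper places the weight as well.
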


We associate Friedberg--Jacquet  periods to certain values of Bump--Friedberg $L$-functions. Indeed, the nonvanishing of these integrals is explained by Matringe  \cite[Proposition 3.5]{Mat15}.

\begin{theorem}
\label{thm:FJperiod}
Let $\pi$ be a unitary generic irreducible Casselman--Wallach representation of $\GL_n(F)$ with Whittaker newform $W_{\pi}^{\circ} \in \WW(\pi,\psi)$. Then the Friedberg--Jacquet period
\begin{equation}
\label{eqn:FJperiod}
\begin{dcases*}
\int\limits_{\Ngp_{m-1}(F) \backslash \GL_{m-1}(F)} \int\limits_{\Ngp_m(F) \backslash \GL_m(F)} W^{\circ}_{\pi}  \left(J\left( g,\begin{pmatrix} h' &0 \\ 0 & 1 \end{pmatrix}\right)  \right) \, dg \, dh'  & for $n = 2m$,	\\
\int\limits_{\Ngp_m(F) \backslash \GL_m(F)}  \int\limits_{\Ngp_m(F) \backslash \GL_m(F)} W^{\circ}_{\pi}  \left(J\left( \begin{pmatrix} h&0 \\ 0& 1 \end{pmatrix} ,g'  \right)\right) \left| \frac{\det h}{\det g'} \right|^{-\frac{1}{2}} \, dh \, dg' & for $n = 2m + 1$,
\end{dcases*}
\end{equation}
is equal to
\[
\frac{L\left(\frac{1}{2},\pi\right) L(1,\pi_{\ur},\wedge^2)}{L \left(\frac{n}{2},\omega_{\pi_{\ur}} \right)}.
\]
\end{theorem}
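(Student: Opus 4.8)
The plan is to deduce this by specialising the Bump--Friedberg integral at $(s_1,s_2) = \left(\frac{1}{2},1\right)$, in exactly the manner in which \hyperref[thm:RSperiod]{Theorem \ref*{thm:RSperiod}} and \hyperref[thm:FRperiod]{Theorem \ref*{thm:FRperiod}} were obtained from \hyperref[thm:GLnxGLn]{Theorem \ref*{thm:GLnxGLn}} and \hyperref[thm:Flicker]{Theorem \ref*{thm:Flicker}}. The only analytic input beyond the results already established is \hyperref[BF-convergence]{Lemma \ref*{BF-convergence}}, which guarantees that, since $\pi$ is unitary, the Friedberg--Jacquet period \eqref{eqn:FJperiod} converges absolutely.

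I would first set $s_1 = \frac{1}{2}$ and $s_2 = 1$ in \hyperref[prop:BFKey]{Proposition \ref*{prop:BFKey}}. For $n = 2m$, the weight $\left|\det g\right|^{s_1 - \frac{1}{2}} \left|\det h'\right|^{s_2 - s_1 - \frac{1}{2}}$ in \eqref{eqn:FJperiod1} collapses to $1$, so the remaining double integral is precisely the Friedberg--Jacquet period in \eqref{eqn:FJperiod}, while the Tate factor $L(ms_2,\omega_{\pi_{\ur}})$ becomes $L\left(\frac{n}{2},\omega_{\pi_{\ur}}\right)$. For $n = 2m + 1$, the weight $\left|\det h\right|^{s_1 - 1} \left|\det g'\right|^{s_2 - s_1}$ in \eqref{eqn:FJperiod2} becomes $\left|\det h / \det g'\right|^{-\frac{1}{2}}$, again matching \eqref{eqn:FJperiod}, and the factor $L(s_1 + ms_2,\omega_{\pi_{\ur}})$ becomes $L\left(m + \frac{1}{2},\omega_{\pi_{\ur}}\right) = L\left(\frac{n}{2},\omega_{\pi_{\ur}}\right)$.

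Next I would invoke \hyperref[thm:BF-reduction]{Theorem \ref*{thm:BF-reduction}}, which gives $B(s_1,s_2,W_{\pi}^{\circ},\Phi) = L(s_1,\pi) L(s_2,\pi_{\ur},\wedge^2)$; specialising to $(s_1,s_2) = \left(\frac{1}{2},1\right)$ produces $L\left(\frac{1}{2},\pi\right) L(1,\pi_{\ur},\wedge^2)$. Combining this with the specialisation of \hyperref[prop:BFKey]{Proposition \ref*{prop:BFKey}} just described yields
\[
L\left(\tfrac{n}{2},\omega_{\pi_{\ur}}\right) \cdot \bigl( \text{Friedberg--Jacquet period} \bigr) = L\left(\tfrac{1}{2},\pi\right) L(1,\pi_{\ur},\wedge^2),
\]
and dividing through by $L\left(\frac{n}{2},\omega_{\pi_{\ur}}\right)$ gives the assertion; the case $n = 2m + 1$ is handled identically using the second branches of \hyperref[prop:BFKey]{Proposition \ref*{prop:BFKey}}, \hyperref[thm:BF-reduction]{Theorem \ref*{thm:BF-reduction}}, and \hyperref[BF-convergence]{Lemma \ref*{BF-convergence}}.

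The one step requiring care --- and the main obstacle --- is legitimising the specialisation, since the identities of \hyperref[prop:BFKey]{Proposition \ref*{prop:BFKey}} and \hyperref[thm:BF-reduction]{Theorem \ref*{thm:BF-reduction}} are only asserted for $\Re(s_1), \Re(s_2)$ large. One must meromorphically continue both identities and verify that the value at $\left(\frac{1}{2},1\right)$ is an honest finite equality, with no spurious pole of the Tate factor. Here the hypothesis that $\pi$ is unitary is decisive: by the classification of the generic unitary dual of $\GL_n(F)$, each Langlands parameter satisfies $\Re(\alpha_{\pi,\ell}) > -\frac{1}{2}$, so that every argument of $\zeta_F$ occurring in $L\left(\frac{1}{2},\pi\right) = \prod_{\ell} \zeta_F\left(\frac{1}{2} + \alpha_{\pi,\ell}\right)$, in $L(1,\pi_{\ur},\wedge^2) = \prod_{j < \ell} \zeta_F(1 + \alpha_{\pi,j} + \alpha_{\pi,\ell})$, and in $L\left(\frac{n}{2},\omega_{\pi_{\ur}}\right) = \zeta_F\left(\frac{n}{2} + \sum_{\ell} \alpha_{\pi,\ell}\right)$ has strictly positive real part; since $\zeta_F$ is holomorphic and nonvanishing on $\Re(s) > 0$, all three of these factors are finite and nonzero. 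Together with the absolute convergence of the remaining integral supplied by \hyperref[BF-convergence]{Lemma \ref*{BF-convergence}}, this shows that the continued identities evaluate to a finite equality at $\left(\frac{1}{2},1\right)$, completing the argument.
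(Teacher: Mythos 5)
Your proposal is correct and follows essentially the same route as the paper: the paper likewise specialises Proposition \ref{prop:BFKey} (along the line $s_1 = s$, $s_2 = 2s$) at $(s_1,s_2) = (\tfrac{1}{2},1)$, uses Lemma \ref{BF-convergence} to justify the evaluation there, and combines this with Theorem \ref{thm:BF-reduction}. Your additional verification via the generic unitary dual that the three $L$-factors are finite and nonzero at the special point is a harmless elaboration of the same argument.
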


\begin{proof}
We put $s_1=s$ and $s_2=2s$. Then \hyperref[BF-convergence]{Lemma \ref*{BF-convergence}} ensures that our conclusion can be deduced by taking $s_1 = 1/2$ and $s_2 = 1$ in \hyperref[prop:BFKey]{Proposition \ref*{prop:BFKey}}.
\end{proof}

For $n = 2m$, we say that $\pi$ is \emph{$\Hgp_{m,m}(F)$-distinguished} if $\Hom_{\Hgp_{m,m}(F)}(\pi,\mathbf{1})$ is nontrivial. 
The following result is a weaker version of the analogous results for the Rankin--Selberg period, \hyperref[rmk:InvarinatRSperiod]{Remark \ref*{rmk:InvarinatRSperiod}}, and the Flicker--Rallis period, \hyperref[rmk:InvarinatFRperiod]{Remark \ref*{rmk:InvarinatFRperiod}}.

\begin{proposition}
\label{prop:FJdistinction}
Let $\pi$ be a unitary generic irreducible Casselman--Wallach representation of $\GL_{2m}(F)$ that is $\Hgp_{m,m}(F)$-distinguished. Suppose that $(\pi,V_{\pi})$ occurs as a local component of a unitary cuspidal automorphic representation. Then the linear functional $\vartheta^{\sharp} : \WW(\pi,\psi) \to \C$ given by
\begin{equation}
\label{eqn:varthetasharp}
\vartheta^{\sharp}(W_{\pi}) \coloneqq \int\limits_{\Ngp_{m-1}(F) \backslash \GL_{m-1}(F)} \int\limits_{\Ngp_m(F) \backslash \GL_m(F)} W_{\pi}  \left(J\left( g,\begin{pmatrix} h' &0 \\ 0 & 1 \end{pmatrix}\right)  \right) \, dg \, dh'  
\end{equation}
gives rise to a nontrivial $\Hgp_{m,m}(F)$-invariant linear functional on the Whittaker model $\WW(\pi,\psi)$.
\end{proposition}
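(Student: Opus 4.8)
The strategy is to combine the explicit evaluation of the Friedberg--Jacquet period from \hyperref[thm:FJperiod]{Theorem \ref*{thm:FJperiod}} with a local-to-global argument, exactly paralleling the treatment of the Rankin--Selberg period in \hyperref[rmk:InvarinatRSperiod]{Remark \ref*{rmk:InvarinatRSperiod}} and of the Flicker--Rallis period in \hyperref[rmk:InvarinatFRperiod]{Remark \ref*{rmk:InvarinatFRperiod}}. First I would observe that $\vartheta^{\sharp}$ is manifestly invariant under the "mirabolic-type" subgroup of $\Hgp_{m,m}(F)$ arising from the embedding $J\left(g, \begin{psmallmatrix} h' & 0 \\ 0 & 1 \end{psmallmatrix}\right)$ together with the unipotent translations in the top row of the second block---this is a routine change-of-variables computation analogous to \cite[\S 10]{Bar03}. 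The nontriviality of $\vartheta^{\sharp}$ is immediate from \hyperref[thm:FJperiod]{Theorem \ref*{thm:FJperiod}}: taking the Whittaker newform $W_{\pi}^{\circ}$, the period equals $L\left(\tfrac{1}{2},\pi\right) L(1,\pi_{\ur},\wedge^2) / L\left(\tfrac{n}{2},\omega_{\pi_{\ur}}\right)$, a ratio of $\zeta_F$-values that is finite and nonzero for $\pi$ unitary (so that the relevant Langlands parameters have real parts in the critical range), hence $\vartheta^{\sharp}(W_{\pi}^{\circ}) \neq 0$.

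The remaining and genuinely substantive point is upgrading from this partial invariance to full $\Hgp_{m,m}(F)$-invariance. Unlike the $\Pgp_n(F)$-to-$\GL_n(F)$ situation handled by Baruch and Kemarsky, there is no purely local extension theorem available here for the pair $\bigl(\text{partial symmetry group}, \Hgp_{m,m}(F)\bigr)$. This is why the hypothesis that $\pi$ occurs as a local component of a unitary cuspidal automorphic representation $\Pi = \bigotimes'_v \Pi_v$ is imposed. The idea is to use the fact, due to Friedberg--Jacquet \cite{FJ93} and as refined in the nonarchimedean setting, that when $\Pi$ has nonvanishing $\GL_m(\A) \times \GL_m(\A)$-period (equivalently, when $L\left(\tfrac{1}{2},\Pi\right) \neq 0$ and $\Pi$ is a functorial lift from $\mathrm{GSpin}$-type data), the global period integral over $\Hgp_{m,m}(F)\Zgp_n(\A_F) \backslash \Hgp_{m,m}(\A_F)$ is Eulerian and factors through local $\Hgp_{m,m}(F_v)$-invariant functionals. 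A standard argument then forces the local functional at our archimedean place to be $\Hgp_{m,m}(F)$-invariant: by multiplicity one for the local period (which holds when the relevant $L$-value is in general position), the space $\Hom_{\Hgp_{m,m}(F)}(\pi,\mathbf{1})$ is at most one-dimensional, and the nonzero $\vartheta^{\sharp}$---being a limit/specialisation of the $\Hgp_{m,m}(F)$-invariant global period restricted to pure tensors with a varying archimedean component---must land inside it.

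The main obstacle, as flagged by the phrasing of the proposition itself (it is called "a weaker version of the analogous results"), is precisely that this local-to-global step requires the cuspidal-automorphic hypothesis and cannot be made unconditional with the tools at hand: one needs a source of a nonzero $\Hgp_{m,m}(\A_F)$-invariant global functional to restrict, and the global Friedberg--Jacquet period supplies this only under the distinction hypothesis on $\Pi$ (or, concretely, only when one can choose $\Pi$ with $L\left(\tfrac{1}{2},\Pi\right) \neq 0$ and the right exterior-square pole, which the $\Hgp_{m,m}(F)$-distinction of the local component $\pi$ makes plausible but does not by itself guarantee). I would therefore structure the proof so that: (i) the local partial invariance and nontriviality are established cleanly and unconditionally from \hyperref[thm:FJperiod]{Theorem \ref*{thm:FJperiod}}; (ii) the global input is quoted from the literature (Friedberg--Jacquet \cite{FJ93}, together with the newform computations at the nonarchimedean places from \cite{Jo21} to ensure the global period is nonzero when all but one local factor is taken to be a newform), with the distinction hypothesis on $\Pi$ guaranteed by the hypothesis that the $\Hgp_{m,m}(F)$-distinguished $\pi$ is a local component; and (iii) the descent to local $\Hgp_{m,m}(F)$-invariance is carried out by the factorisation-plus-multiplicity-one argument, mirroring \cite[p.\ 185--186]{GJR01} and \cite[Proposition 3.2]{Zha14}.
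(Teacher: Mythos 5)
Your steps (i) and (ii) are fine and agree with the paper: the $\Pgp_{2m}(F)\cap\Hgp_{m,m}(F)$-invariance of $\vartheta^{\sharp}$ is an elementary change of variables, and nontriviality follows from Theorem \ref{thm:FJperiod}, since for unitary $\pi$ the archimedean $L$-values appearing there are finite and nonzero. The genuine gap is in step (iii), which is where all the work lies. You propose to quote that the global Friedberg--Jacquet period "is Eulerian and factors through local $\Hgp_{m,m}(F_v)$-invariant functionals" and then to conclude by local multiplicity one. But that factorisation is not an available input: in this paper it is precisely Proposition \ref{prop:GlobalFJdistinction}, whose proof \emph{uses} Proposition \ref{prop:FJdistinction}, so invoking it here is circular; and \cite{FJ93} provides an integral representation of $L(\tfrac12,\Pi)$, not a factorisation of the period through invariant local functionals. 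Moreover, even granting multiplicity one for linear periods (a theorem with no "general position" caveat on $L$-values, and in any case not used by the paper), your final sentence --- that $\vartheta^{\sharp}$, "being a limit/specialisation of the global period", must land in $\Hom_{\Hgp_{m,m}(F)}(\pi,\mathbf{1})$ --- is exactly the identification that needs proof: one must show that the functional the global period induces at the place $v$ is proportional to $\vartheta^{\sharp}$, and no mechanism for that is given. The side conditions you attach ($L(\tfrac12,\Pi)\neq 0$, functorial lift from $\mathrm{GSpin}$-type data) are also beside the point; the relevant global criterion is Matringe's pole criterion for $\Lambda^S(s,\pi,\BF)$ at $s=\tfrac12$ \cite{Mat15}.

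For comparison, the paper's actual mechanism is a residue computation with varying Schwartz data, in the spirit of \cite{GJR01} and \cite{Zha14} but carried out on the Bump--Friedberg integral rather than on an assumed factorisation. With factorisable $\varphi_{\pi}$ and $\Phi$, one takes the residue at $s=\tfrac12$ of the Eulerian expansion of $Z(s,2s,\varphi_{\pi},\Phi)$, obtaining \eqref{BFResidue}: the left-hand side is $\widehat{\Phi}(0)$ times the (manifestly $\Hgp_{m,m}(\A_F)$-invariant) global period, the right-hand side is $\Res_{s=1/2}\Lambda^S(s,\pi,\BF)\prod_{v\in S}B(\tfrac12,W_{\varphi_{\pi},v},\Phi_v)$. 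Via the Iwasawa decomposition one rewrites $B(\tfrac12,W_v,\Phi_v)=\int_{K_{m,v}}\int_{F_v^{\times}}\vartheta_v^{\sharp}\bigl(\pi_v(J(1,k'_v))W_v\bigr)\Phi_v(z'_v e_m k'_v)\,|z'_v|^m_v\,d^{\times}z'_v\,dk'_v$ and $\widehat{\Phi_v}(0)$ as the same double integral without the $\vartheta$-twist. Comparing the two sides as $\Phi_v$ varies forces $\int_{K_{m,v}}\vartheta_v^{\sharp}\bigl(\pi_v(J(1,k'_v))W_v\bigr)f(k'_v)\,dk'_v=\vartheta_v^{\sharp}(W_v)\int_{K_{m,v}}f(k'_v)\,dk'_v$, first for all smooth $f$ invariant under $\Pgp_{(m-1,1)}(F_v)\cap K_{m,v}$, and then for all smooth $f$ on $K_{m,v}$ by exploiting the already-known $\Pgp_{2m}(F_v)\cap\Hgp_{m,m}(F_v)$-invariance. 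This yields invariance of $\vartheta_v^{\sharp}$ under $\{J(1,k'_v)\}$, which together with the mirabolic-type invariance and the Iwasawa decomposition gives full $\Hgp_{m,m}(F)$-invariance --- with no appeal to multiplicity one and no prior Eulerian factorisation of the period. This residue-comparison is the missing mechanism your outline would have to supply.
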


While we expect that the assumption that $(\pi,V_{\pi})$ appears in the local component of an automorphic representation is superfluous, the proof that we give under this assumption is valid at the very least for the case of our interest, which pertains to the global setting of period integrals of cuspidal automorphic forms. Our proof requires a local and global argument. We therefore defer the proof to \hyperref[sec:GlobalFJ]{Section \ref*{sec:GlobalFJ}} after we have introduced global automorphic forms. It behoves us to highlight the fact that this global approach is overkill and indirect; one may directly prove the desired result by using a theory of distributions \cite{Bar03,Kem15b}. For the sake of brevity, we only include this indirect approach in keeping the spirit of \cite{GJR01,Zha14}.

\section{Global Applications}
\label{sec:Global}

We now consider the global analogues of the problems investigated in \hyperref[sect:RSsect]{Sections \ref*{sect:RSsect}}, \ref{sect:Flickersect}, and \ref{sect:BFsect}. These pertain to period integrals of automorphic forms on $\GL_n(\A_F)$, where $F$ is a global number field of absolute discriminant $D_{F \slash \Q}$ and $\A_F$ denotes the ring of ad\`{e}les of $F$. The period integrals of interest involve integration over spaces of the form $\Zgp(\A_F) G(F) \backslash G(\A_F)$ for various reductive groups $G$, where $\Zgp$ denotes the centre of $G$. For the sake of notational brevity, we write
\[[G] \coloneqq \Zgp(\A_F) G(F) \backslash G(\A_F).\]
We also write $K_n$ to denote the maximal compact subgroup of $\GL_n(\A_F)$.

Let $\psi_{\A_{\Q}}$ denote the standard additive character of $\A_{\Q}$ that is unramified at every place of $\Q$.
We define the additive character $\psi_{\A_F}$ of $\A_F$ by $\psi_{\A_F}\coloneqq\psi_{\A_{\Q}} \circ \Tr_{\A_F \slash \A_{\Q}}$. The conductor of $\psi_{\A_F}$ is the inverse different $\mathfrak{d}^{-1}$ of $F$. We choose a finite id\`{e}le $d \in \A_F^{\times}$ representing $\mathfrak{d}^{-1}$ such that $\psi_{\A_F} = \bigotimes_v\psi_v^{d_v}$, where $\psi_v$ is an unramified additive character of $F_v$ and the twisted character $\psi_v^{d_v}(x) \coloneqq \psi_v(d_v x)$ is of conductor $\mathfrak{d}_v^{-1}$.

Let $(\pi,V_{\pi})$ be a cuspidal automorphic representation of $\GL_n(\A_F)$ for $n \geq 2$, where $V_{\pi}$ is a space of automorphic forms on $\GL_n(\A_F)$. We define a global Whittaker function $W_{\varphi_{\pi}}$ associated to $\varphi_{\pi} \in V_{\pi}$ by
\[
W_{\varphi_{\pi}}(g)\coloneqq\int\limits_{\Ngp_n(F) \backslash  \Ngp_n(\A_F) } \varphi_{\pi}(ug) \overline{\psi_{\A_F}(u)} \,du.
\]
If $W_{\varphi_{\pi}}$ is a pure tensor, it can be decomposed as $W_{\varphi_{\pi}}=\prod_v W_{\varphi_{\pi},v}$  with $W_{\pi_v}\coloneqq W_{\varphi_{\pi},v} \in \WW(\pi_v,\psi_v^{d_v})$, where the generic irreducible admissible smooth representation $\pi_v$ is the local component of the automorphic representation $\pi = \bigotimes_v \pi_v$.

\begin{definition}
Let $(\pi,V_{\pi})$ be a cuspidal automorphic representation of $\GL_n(\A_F)$ with $\pi = \bigotimes_v \pi_v$. 
At each place $v$ of $F$, let $W^{\circ}_{\pi_v} \in \WW(\pi_v,\psi_v)$ denote the local Whittaker newform.
We define the \emph{global newform} $\varphi_{\pi}^{\circ} \in V_{\pi}$ to be the decomposable vector such that
$W_{\varphi^{\circ}_{\pi}} = \prod_v W_{\varphi^{\circ}_{\pi},v}$ with $W_{\varphi^{\circ}_{\pi},v}(g_v) \coloneqq W^{\circ}_{\pi_v}(\diag(d^{n-1}_v,\dotsm,d_v,1)g_v) \in \WW(\pi_v,\psi_v^{d_v}) $.
\end{definition}

Let $(\pi,V_{\pi})$ and $(\sigma,V_{\sigma})$ be cuspidal automorphic representations of $\GL_n(\A_F)$ and $\GL_m(\A_F)$
with $\pi = \bigotimes_v \pi_v$ and $\sigma = \bigotimes_v \sigma_v$. Throughout, we will take $S$ to be a finite set of places such that $\pi_v$, $\sigma_v$, and $\psi_v$ are all unramified whenever $v \notin S$. The finite set $S$ can vary depending on $\pi$, $\sigma$, and $F$, but always satisfies these properties. We define the partial Rankin--Selberg $L$-function by
\[
\Lambda^S(s,\pi \times \sigma)\coloneqq\prod_{v \notin S} L(s,\pi_v \times \sigma_v)
\]
for $\Re(s)$ sufficiently large. We similarly define the partial Asai $L$-function $\Lambda^S(s,\pi, \As)$, the partial exterior square $L$-function $\Lambda^S(s,\pi, \wedge^2)$; we also define the partial Bump--Friedberg $L$-function $\Lambda^S(s,\pi, \BF)$ to be $\Lambda^S(s,\pi) \Lambda^S(2s,\pi,\wedge^2)$.

\subsection{Global Rankin--Selberg Periods and the Petersson Inner Product}

Given a Schwartz--Bruhat function $\Phi \in \Scr(\A_F^n)$, we may form the $\Theta$-series
\[
\Theta_{\Phi}(a,g) \coloneqq \sum_{\xi \in F^n} \Phi(a \xi g) \quad \text{for $a \in \A_F^{\times}$ and $g \in \GL_n(\A_F)$}.
\]
Associated to this $\Theta$-series is an Eisenstein series, which is essentially the Mellin transform of $\Theta$. To be more precise, for a (unitary) Hecke character 
$\eta : F^{\times} \backslash \A_F^{\times} \rightarrow \C$, we set 
\begin{equation}
\label{DEFeisenstein}
E(g,s;\Phi,\eta) \coloneqq \left|\det g\right|^s_{\A_F} \int_{F^{\times} \backslash \A_F^{\times}} \Theta'_{\Phi}(a,g) \eta(a)|a|^{ns}_{\A_F} \, d^{\times}a,
\end{equation}
where $\Theta'_{\Phi}(a,g) \coloneqq \Theta_{\Phi}(a,g) - \Phi(0)$. This is absolutely convergent for $\Re(s) > 1$ and extends to a meromorphic function of $s \in \C$. The Eisenstein series $E(g,s;\Phi,\eta)$ is entire unless $\eta$ is the trivial unitary Hecke character, in which case it has a simple pole at $s=1$ with residue \cite[Lemma 4.2]{JS81a} 
\begin{equation}
\label{EisenResidue}
\frac{\vol(F^{\times}\backslash \A^1_F)}{n} \widehat{\Phi}(0),
\end{equation}
where the volume is taken with regards to the Tamagawa measure. Notably, the implicit constant denoted by $c$ in \cite[Lemma 4.2]{JS81a} is determined to be the volume of $F^{\times}\backslash \A^1_F$ in \cite[Proof of Proposition 3.1]{Zha14}.

\begin{theorem}[{Cf.\ \cite[Proposition 5.7]{Hum20a}}]
\label{thm:globalRSint}
Let $(\pi,V_{\pi})$ and $(\sigma,V_{\sigma})$ be cuspidal automorphic representations of $\GL_n(\A_F)$ with global newforms $\varphi^{\circ}_{\pi} \in V_{\pi}$ and $\varphi^{\circ}_{\sigma} \in V_{\sigma}$. Then there exists a right $K_n$-finite Schwartz--Bruhat function $\Phi \in \Scr(\A_F^n)$ such that for $\Re(s)$ sufficiently large, the global $\GL_n \times \GL_n$ Rankin--Selberg integral
\begin{equation}
\label{eqn:globalRSint}
I(s,\varphi_{\pi}^{\circ},\varphi_{\sigma}^{\circ},\Phi) \coloneqq \int_{[\GL_n]}
\varphi^{\circ}_{\pi}(g) \varphi^{\circ}_{\sigma}(g)E(g,s;\Phi,\omega_{\pi}\omega_{\sigma}) \, dg
\end{equation}
is equal, up to multiplication by a positive constant dependent only on the normalisation of the measure $dg$, to the product of $D_{F \slash \Q}^{\frac{n(n-1)s}{2}}$ and of the global completed na\"{i}ve Rankin--Selberg $L$-function
$\Lambda(s,\pi_{\ur} \times \sigma_{\ur})\coloneqq\prod_vL(s,\pi_{v,\ur} \times \sigma_{v,\ur})$.
\end{theorem}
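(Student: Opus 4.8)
The plan is to unfold the global integral into an Euler product of local Rankin--Selberg integrals, to evaluate each local factor using the local test-vector theorems already at our disposal (\hyperref[thm:GLnxGLn]{Theorem \ref*{thm:GLnxGLn}} at the archimedean places and \cite[Theorem 1.1 (i)]{Jo21} at the ramified nonarchimedean places), and then to trace how the shift $\diag(d_v^{n - 1},\dotsc,d_v,1)$ built into the definition of the global newform produces the factor $D_{F \slash \Q}^{n(n - 1)s/2}$.

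First I would apply the standard Rankin--Selberg unfolding of \cite{JP-SS83}: for cuspidal $\pi$ and $\sigma$ and $\Re(s)$ sufficiently large, integrating the cusp form $\varphi_{\pi}^{\circ} \varphi_{\sigma}^{\circ}$ against the Eisenstein series \eqref{DEFeisenstein}, unfolding the series $\sum_{\xi \in F^n \setminus \{0\}} \Phi(\xi g)$ over the transitive $\GL_n(F)$-action with mirabolic stabiliser, and inserting the Fourier--Whittaker expansions of $\varphi_{\pi}^{\circ}$ and $\varphi_{\sigma}^{\circ}$, one identifies $I(s,\varphi_{\pi}^{\circ},\varphi_{\sigma}^{\circ},\Phi)$ with $\int_{\Ngp_n(\A_F) \backslash \GL_n(\A_F)} W_{\varphi_{\pi}^{\circ}}(g) W_{\varphi_{\sigma}^{\circ}}(g) \Phi(e_n g) \left|\det g\right|_{\A_F}^{s} \, dg$. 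Choosing $\Phi = \bigotimes_v \Phi_v$ a pure tensor and using the decompositions $W_{\varphi_{\pi}^{\circ}} = \prod_v W_{\varphi_{\pi}^{\circ},v}$ and $W_{\varphi_{\sigma}^{\circ}} = \prod_v W_{\varphi_{\sigma}^{\circ},v}$, this factorises, for $\Re(s)$ large, as $\prod_v \Psi(s,W_{\varphi_{\pi}^{\circ},v},W_{\varphi_{\sigma}^{\circ},v},\Phi_v)$, the partial product over $v \notin S$ being $\Lambda^S(s,\pi \times \sigma)$, which converges for $\Re(s)$ large.

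Next I would pin down the local data and evaluate each factor. At an archimedean place take $\Phi_v$ to be the right $K_n$-finite Schwartz--Bruhat function of \hyperref[prop:RSkey]{Proposition \ref*{prop:RSkey}}; here $d_v = 1$, so $W_{\varphi_{\pi}^{\circ},v} = W_{\pi_v}^{\circ}$, and \hyperref[thm:GLnxGLn]{Theorem \ref*{thm:GLnxGLn}} gives $\Psi(s,W_{\pi_v}^{\circ},W_{\sigma_v}^{\circ},\Phi_v) = L(s,\pi_{v,\ur} \times \sigma_{v,\ur})$. At a nonarchimedean $v \notin S$ take $\Phi_v = \mathbf{1}_{\OO_v^n}$, the characteristic function of $\OO_v^n$; again $d_v = 1$, and the classical unramified Rankin--Selberg computation gives $L(s,\pi_v \times \sigma_v) = L(s,\pi_{v,\ur} \times \sigma_{v,\ur})$. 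At a nonarchimedean $v \in S$ take $\Phi_v$ to be the Schwartz--Bruhat function of \eqref{RS-SchwartzBruhat}, which by \cite[Theorem 1.1 (i)]{Jo21} satisfies $\Psi(s,W_{\pi_v}^{\circ},W_{\sigma_v}^{\circ},\Phi_v) = L(s,\pi_{v,\ur} \times \sigma_{v,\ur})$; since $W_{\varphi_{\pi}^{\circ},v}(g) = W_{\pi_v}^{\circ}(t_v g)$ with $t_v \coloneqq \diag(d_v^{n - 1},\dotsc,d_v,1)$, the change of variables $g \mapsto t_v^{-1} g$ on $\Ngp_n(F_v) \backslash \GL_n(F_v)$ --- which fixes $e_n t_v^{-1} = e_n$, introduces a factor $|d_v|_v^{-n(n - 1)s/2}$ from $\left|\det g\right|_v^{s}$, and an $s$-independent factor $\delta_{\Bgp_n(F_v)}(t_v)$ from the quotient measure --- shows that $\Psi(s,W_{\varphi_{\pi}^{\circ},v},W_{\varphi_{\sigma}^{\circ},v},\Phi_v) = \delta_{\Bgp_n(F_v)}(t_v)\,|d_v|_v^{-n(n - 1)s/2}\,L(s,\pi_{v,\ur} \times \sigma_{v,\ur})$. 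The global $\Phi$ so constructed is right $K_n$-finite, since $\Phi_v = \mathbf{1}_{\OO_v^n}$ for all but finitely many $v$ while each $\Phi_v$ is finite under right translation by the local maximal compact.

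Finally I would assemble the product. Every correction factor $\delta_{\Bgp_n(F_v)}(t_v)$ and $|d_v|_v^{-n(n - 1)s/2}$ is trivial except at the finitely many $v \in S$ at which $d_v$ is a nonunit; multiplying these over all $v$ and invoking the relation between the id\`{e}le $d$ and the discriminant $D_{F \slash \Q}$ produces a net factor $D_{F \slash \Q}^{n(n - 1)s/2}$ times an $s$-independent positive constant. Combining this with the positive constant comparing the measure $dg$ on $[\GL_n]$ in \eqref{eqn:globalRSint} to the product of the local Iwasawa measures, we obtain, for $\Re(s)$ sufficiently large,
\[
I(s,\varphi_{\pi}^{\circ},\varphi_{\sigma}^{\circ},\Phi) = c \cdot D_{F \slash \Q}^{n(n - 1)s/2} \prod_v L(s,\pi_{v,\ur} \times \sigma_{v,\ur}) = c \cdot D_{F \slash \Q}^{n(n - 1)s/2}\, \Lambda(s,\pi_{\ur} \times \sigma_{\ur})
\]
for some positive constant $c$, as claimed. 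I expect the main obstacle to be precisely this last piece of bookkeeping: getting the exponent and sign of the power of $D_{F \slash \Q}$ exactly right requires close attention to the conventions for the id\`{e}le $d$ and the conductor of $\psi_{\A_F}$, to the direction of the change of variables, and to the normalisation of the quotient measure on $\Ngp_n \backslash \GL_n$; the unfolding step and the local evaluations are, by contrast, respectively standard and already established above and in \cite{Jo21}.
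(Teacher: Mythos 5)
Your proposal is correct and follows essentially the same route as the paper: unfold the global integral into the product of local Rankin--Selberg integrals, evaluate the archimedean factors via \hyperref[thm:GLnxGLn]{Theorem \ref*{thm:GLnxGLn}} and the nonarchimedean ones via the second author's results in \cite{Jo21}, and extract the power of $D_{F \slash \Q}$ from the change of variables $g_v \mapsto \diag(d_v^{1-n},\dotsc,d_v^{-1},1)g_v$. Your extra bookkeeping (the $s$-independent Jacobian factor $\delta_{\Bgp_n(F_v)}(t_v)$ and the sensitivity of the sign of the exponent to the convention for $d$) is sound and is simply absorbed into the positive constant in the statement, exactly as in the paper's own proof.
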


\begin{proof}
This integral is Eulerian: by unfolding, $I(s,\varphi_{\pi}^{\circ},\varphi_{\widetilde{\pi}}^{\circ},\Phi)$ is equal, up to multiplication by a positive constant dependent only on the normalisation of the measure $dg$, to
\[
\prod_v \Psi(s,W_{\varphi^{\circ}_{\pi},v},W_{\varphi^{\circ}_{\widetilde{\pi}},v},\Phi_v)
\]
provided that $\Re(s)$ is sufficiently large. Upon making the change of variables 
\[
g_v \mapsto \diag(d_v^{1-n},\cdots,d^{-1}_v,1)g_v,
\]
we arrive at
\[
D^{\frac{n(n-1)s}{2}}_{F \slash \Q} \prod_v \Psi(s,W^{\circ}_{\pi_v},W^{\circ}_{\widetilde{\pi_v}},\Phi_v). 
\]
The result now follows from \hyperref[thm:GLnxGLn]{Theorem \ref*{thm:GLnxGLn}} for archimedean places and from \cite[Theorem 3.2]{Jo21} for nonarchimedean places.
\end{proof}

We turn our attention to Rankin--Selberg periods. We say that a pair of cuspidal automorphic representations $(\pi,V_{\pi})$ and $(\sigma,V_{\sigma})$ of $\GL_n(\A_F)$ admits a \emph{Rankin--Selberg period} (or $\GL_n(\A_F)$-period) if there exist cuspidal automorphic forms $\varphi_{\pi} \in V_{\pi}$ and $\varphi_{\sigma} \in V_{\sigma}$ such that
\[
\int_{[\GL_n]} \varphi_{\pi}(g) \varphi_{\sigma}(g) \, dg \neq 0. 
\] 

\begin{remark}
Jacquet and Shalika \citelist{\cite{JS81a}*{Lemma 4.4}; \cite{JS81b}*{Proposition 3.6}} have shown that the partial Rankin--Selberg $L$-function $\Lambda^S(s,\pi \times \sigma)$ has a pole at $s=1$ if and only if the pair of cuspidal automorphic representations $(\pi,V_{\pi})$ and $(\sigma,V_{\sigma})$ of $\GL_n(\A_F)$ admits a Rankin--Selberg period and $\omega_{\pi} \omega_{\sigma}$ is trivial. The partial Rankin--Selberg $L$-function in the assertion can freely be replaced by the completed Rankin--Selberg $L$-function $\Lambda(s,\pi \times \sigma)$ by virtue of \cite[Theorem 1.2]{CPS04}.
\end{remark}

Based on the work of Jacquet and Shalika \cite[Proposition 3.6]{JS81b}, W.\ Zhang \cite[Proposition 3.1]{Zha14} compares the Petersson inner product with local Rankin--Selberg periods $\beta_v(W_{\pi_v},W_{\sigma_v})$ given by \eqref{eqn:beta}, which in turn have a connection with $\GL_n(F_v)$-distinguished representations $\pi_v \otimes \sigma_v$ of $\GL_n(F_v) \times \GL_n(F_v)$ in the context of \hyperref[rmk:InvarinatRSperiod]{Remark \ref*{rmk:InvarinatRSperiod}}. We further refine this formula to relate it to the special value of the local Rankin--Selberg $L$-function at $s=1$.

\begin{theorem}
\label{thm:RSGlobalSV}
Let $(\pi,V_{\pi})$ be a unitary cuspidal automorphic representation of $\GL_n(\A_F)$ with global newform $\varphi^{\circ}_{\pi} \in V_{\pi}$; let $\varphi^{\circ}_{\widetilde{\pi}} \in V_{\widetilde{\pi}}$ be the corresponding global newform of $(\widetilde{\pi},V_{\widetilde{\pi}})$. Then there exists a right $K_n$-finite Schwartz--Bruhat function $\Phi \in \Scr(\A_F^n)$ such that the global $\GL_n \times \GL_n$ Rankin--Selberg period
\begin{equation}
\label{eqn:RSGlobalSV}
\int_{[\GL_n]} \varphi^{\circ}_{\pi}(g) \varphi^{\circ}_{\widetilde{\pi}}(g) \, dg
\end{equation}
is equal, up to multiplication by a positive constant dependent only on the normalisation of the measure $dg$, to
\[
\frac{nD^{\frac{n(n-1)}{2}}_{F \slash \Q} }{\widehat{\Phi}(0) \vol(F^{\times} \backslash \A^1_F) } \Res_{s=1}\Lambda^S(s,\pi \times \widetilde{\pi})\prod_{v \in S} L(1,\pi_{v,\ur} \times \widetilde{\pi}_{v,\ur}).
\]
\end{theorem}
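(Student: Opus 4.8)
The plan is to obtain \hyperref[thm:RSGlobalSV]{Theorem \ref*{thm:RSGlobalSV}} as a residue computation applied to the Eulerian identity of \hyperref[thm:globalRSint]{Theorem \ref*{thm:globalRSint}} for the pair $(\pi,\widetilde{\pi})$. First I would fix the right $K_n$-finite Schwartz--Bruhat function $\Phi \in \Scr(\A_F^n)$ produced in the proof of \hyperref[thm:globalRSint]{Theorem \ref*{thm:globalRSint}} with $\sigma = \widetilde{\pi}$, so that for $\Re(s)$ large
\[
I(s,\varphi_{\pi}^{\circ},\varphi_{\widetilde{\pi}}^{\circ},\Phi) = c\, D_{F \slash \Q}^{\frac{n(n-1)s}{2}}\,\Lambda(s,\pi_{\ur} \times \widetilde{\pi}_{\ur})
\]
for a positive constant $c$ depending only on the normalisation of $dg$, both sides continuing meromorphically. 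Since $\pi$ is unitary, $\omega_{\widetilde{\pi}} = \omega_{\pi}^{-1}$, so $\omega_{\pi}\omega_{\widetilde{\pi}}$ is the trivial Hecke character and the Eisenstein series $E(g,s;\Phi,\omega_{\pi}\omega_{\widetilde{\pi}})$ has a simple pole at $s=1$ with $g$-independent residue $\tfrac{\vol(F^{\times}\backslash\A_F^1)}{n}\widehat{\Phi}(0)$ by \eqref{EisenResidue}. I would also record here that $\widehat{\Phi}(0) \neq 0$: because $\chi_{\widetilde{\pi}} = \overline{\chi_{\pi}}$ the central character $\chi_{\pi}\chi_{\widetilde{\pi}}$ is trivial, so the polynomial $P$ of \eqref{eqn:P(x)defeq} defining the archimedean components of $\Phi$ contains the nonzero summand $(x\prescript{t}{}{\overline{x}})^{c(\pi)}$, whose Gaussian integral does not vanish, while at the nonarchimedean places the local Fourier transforms at $0$ are manifestly nonzero.

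Next I would take residues at $s=1$ on both sides. On the automorphic side, the cuspidal forms $\varphi_{\pi}^{\circ}$ and $\varphi_{\widetilde{\pi}}^{\circ}$ are rapidly decreasing on $[\GL_n]$ and $E(g,s;\Phi,\mathbf{1})$ is of uniform moderate growth in a punctured neighbourhood of $s=1$, so the residue may be computed inside the integral \eqref{eqn:globalRSint}, giving
\[
\Res_{s=1} I(s,\varphi_{\pi}^{\circ},\varphi_{\widetilde{\pi}}^{\circ},\Phi) = \frac{\vol(F^{\times}\backslash\A_F^1)}{n}\,\widehat{\Phi}(0)\int_{[\GL_n]}\varphi_{\pi}^{\circ}(g)\varphi_{\widetilde{\pi}}^{\circ}(g)\,dg.
\]
On the $L$-function side, for $v \notin S$ the representations $\pi_v$ and $\widetilde{\pi}_v$ are unramified, hence $\pi_{v,\ur} = \pi_v$ and $\widetilde{\pi}_{v,\ur} = \widetilde{\pi}_v$, so that $\Lambda(s,\pi_{\ur}\times\widetilde{\pi}_{\ur}) = \Lambda^S(s,\pi\times\widetilde{\pi})\prod_{v\in S} L(s,\pi_{v,\ur}\times\widetilde{\pi}_{v,\ur})$. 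The finite product over $v\in S$ is holomorphic and nonvanishing at $s=1$: the Langlands parameters of the unitary cuspidal $\pi$ obey the Jacquet--Shalika bound, so every $\zeta_F$- or Euler factor of the na\"{i}ve Rankin--Selberg $L$-function has argument with positive real part at $s=1$ and thus neither a zero nor a pole there. Consequently
\[
\Res_{s=1}\left(c\, D_{F \slash \Q}^{\frac{n(n-1)s}{2}}\Lambda(s,\pi_{\ur}\times\widetilde{\pi}_{\ur})\right) = c\, D_{F \slash \Q}^{\frac{n(n-1)}{2}}\left(\Res_{s=1}\Lambda^S(s,\pi\times\widetilde{\pi})\right)\prod_{v\in S} L(1,\pi_{v,\ur}\times\widetilde{\pi}_{v,\ur}).
\]

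Equating the two expressions for $\Res_{s=1} I$ and solving for the Petersson pairing yields exactly the asserted formula, the overall positive constant being precisely the $c$ furnished by \hyperref[thm:globalRSint]{Theorem \ref*{thm:globalRSint}}. The identity is not vacuous: by the theorem of Jacquet and Shalika recalled after \hyperref[thm:globalRSint]{Theorem \ref*{thm:globalRSint}}, $\Lambda^S(s,\pi\times\widetilde{\pi})$ genuinely has a simple pole at $s=1$ (as $\pi$ and $\widetilde{\pi}$ are dual and $\omega_{\pi}\omega_{\widetilde{\pi}}$ is trivial), so both sides of the residue identity are nonzero. The only points requiring care are the interchange of residue and integration (standard, given cuspidality and moderate growth) and the verification $\widehat{\Phi}(0)\neq 0$ for this specific test vector; everything else is bookkeeping of constants already supplied by \hyperref[thm:globalRSint]{Theorem \ref*{thm:globalRSint}}, the residue formula \eqref{EisenResidue}, and the comparison of $\Lambda$ with $\Lambda^S$. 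In this sense the theorem is a corollary of \hyperref[thm:globalRSint]{Theorem \ref*{thm:globalRSint}} rather than an independent result.
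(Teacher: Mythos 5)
Your proposal is correct and takes essentially the same route as the paper: the paper's proof likewise computes $\Res_{s=1} I(s,\varphi_{\pi}^{\circ},\varphi_{\widetilde{\pi}}^{\circ},\Phi)$ via the Eisenstein residue \eqref{EisenResidue} and then invokes \hyperref[thm:globalRSint]{Theorem \ref*{thm:globalRSint}} to identify it with the residue of $D_{F\slash\Q}^{n(n-1)s/2}\Lambda(s,\pi_{\ur}\times\widetilde{\pi}_{\ur})$. The additional points you verify (nonvanishing of $\widehat{\Phi}(0)$, holomorphy and nonvanishing at $s=1$ of the factors over $v \in S$, and the interchange of residue and integration) are correct elaborations of details the paper leaves implicit.
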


\begin{proof}
From \eqref{EisenResidue}, the residue of $I(s,\varphi_{\pi}^{\circ},\varphi_{\widetilde{\pi}}^{\circ},\Phi)$ at $s=1$ is 
\[
\frac{\vol(F^{\times}\backslash \A^1_F)}{n} \widehat{\Phi}(0) \int_{[\GL_n]} \varphi^{\circ}_{\pi}(g) \varphi^{\circ}_{\widetilde{\pi}}(g) \, dg.
\]
The result then follows from \hyperref[thm:globalRSint]{Theorem \ref*{thm:globalRSint}}.
\end{proof}

\begin{remark}[Vanishing of periods]
The Rankin--Selberg period is known to vanish unless $\sigma \cong \widetilde{\pi}$ according to \cite[Lemma 4.4]{JS81a}.
\end{remark}

As before, let us now turn to the modified $\GL_n \times \GL_n$ Rankin--Selberg integral by Sakellaridis.

\begin{theorem}
\label{thm:Saknnglobal}
Let $(\pi,V_{\pi})$ and $(\sigma,V_{\sigma})$ be cuspidal automorphic representations of $\GL_n(\A_F)$ with global newforms $\varphi^{\circ}_{\pi} \in V_{\pi}$ and $\varphi^{\circ}_{\sigma} \in V_{\sigma}$. Then there exist a right $K_n$-finite Schwartz--Bruhat function $\Phi \in \Scr(\A_F^n)$ and a bi-$K_n$-finite Schwartz--Bruhat function $\Phi' \in \Scr(\Mat_{n \times n}(\A_F))$ such that for $\Re(s_1)$ and $\Re(s_2)$ sufficiently large, the global modified $\GL_n \times \GL_n$ Rankin--Selberg integral by Sakellaridis
\begin{equation}
\label{RS-Sakellaridis-equalrank}
\int\limits_{\Pgp^{\diag}_n(F) \backslash \GL_n(\A_F)\times \GL_n(\A_F)}  \varphi^{\circ}_{\pi}(g_1) \varphi^{\circ}_{\sigma}(g_2) \Phi'(g_1^{-1}g_2) \Phi(e_ng_1) \left| \frac{\det g_2}{\det g_1} \right|^{s_1}_{\A_F} \left|\det g_1\right|_{\A_F}^{s_2}  \, dg_2 \, dg_1
\end{equation}
is equal, up to multiplication by a positive constant dependent only on the normalisation of the measures $dg_1$ and $dg_2$, to the product of $D_{F \slash \Q}^{\frac{n(n-1)s_2}{2}}$ and of the global completed na\"{i}ve $L$-function
\[
\Lambda\left(s_2, \pi_{\ur} \times \sigma_{\ur} \right) \Lambda \left(s_1-\frac{n-1}{2}, \sigma \right).
\]
\end{theorem}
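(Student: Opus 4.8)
The plan is to imitate the proof of \hyperref[thm:globalRSint]{Theorem \ref*{thm:globalRSint}}: we show that the global integral \eqref{RS-Sakellaridis-equalrank} is Eulerian, factor it into a product over all places of the local modified Rankin--Selberg integrals \eqref{eqn:Saknn}, and then appeal to the local evaluations already established. First I would carry out the unfolding of Sakellaridis \cite[\S 5]{Sak12}: inserting the Whittaker expansions of the cuspidal automorphic forms $\varphi_{\pi}^{\circ}$ and $\varphi_{\sigma}^{\circ}$ and collapsing the resulting sums against the $\Pgp_n^{\diag}(F)$-quotient, the integral becomes, for $\Re(s_1)$ and $\Re(s_2)$ sufficiently large, an integral over $\Ngp_n^{\diag}(\A_F) \backslash \GL_n(\A_F) \times \GL_n(\A_F)$ whose integrand is $W_{\varphi_{\pi}^{\circ}}(g_1) W_{\varphi_{\sigma}^{\circ}}(g_2) \Phi'(g_1^{-1} g_2) \Phi(e_n g_1) \left|\det g_2/\det g_1\right|_{\A_F}^{s_1} \left|\det g_1\right|_{\A_F}^{s_2}$. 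Since $W_{\varphi_{\pi}^{\circ}}$, $W_{\varphi_{\sigma}^{\circ}}$, $\Phi$, and $\Phi'$ are all pure tensors, this factors as $\prod_v \Psi_v$, where each $\Psi_v$ is the local integral \eqref{eqn:Saknn} for $W_{\varphi_{\pi}^{\circ},v}$, $W_{\varphi_{\sigma}^{\circ},v}$, $\Phi_v$, and $\Phi'_v$ with respect to the local character $\psi_v^{d_v}$.

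Next I would specify the local data: at each archimedean place I take the $\Phi_v = \Phi$ and $\Phi'_v = \Phi^{\ast}$ of \hyperref[thm:Saknn]{Theorem \ref*{thm:Saknn}}, and at each nonarchimedean place I take the $\Phi_v = \Phi^{\circ}$ and $\Phi'_v = \Phi^{\ast}$ of \hyperref[cor:Saknn-nonarch]{Corollary \ref*{cor:Saknn-nonarch}}; at every $v \notin S$ these reduce to the standard unramified data (the Gaussian \eqref{eqn:Phiurdefeq}, resp.\ the characteristic functions of $\OO_v^n$ and $\Mat_{n \times n}(\OO_v)$), so they assemble into a right $K_n$-finite $\Phi \in \Scr(\A_F^n)$ and a bi-$K_n$-finite $\Phi' \in \Scr(\Mat_{n \times n}(\A_F))$, the global finiteness being immediate from the local finiteness since $\Phi_v$ and $\Phi'_v$ are $\GL_n(\OO_v)$-invariant for all but finitely many $v$. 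To pass from the global newform normalisation to the local Whittaker newforms, I make the change of variables $g_{1,v} \mapsto \diag(d_v^{1 - n}, \dotsc, d_v^{-1}, 1) g_{1,v}$ and $g_{2,v} \mapsto \diag(d_v^{1 - n}, \dotsc, d_v^{-1}, 1) g_{2,v}$ simultaneously at each finite place $v$. Because $g_1^{-1} g_2$, $e_n g_1$, and $\det g_2/\det g_1$ are all fixed by this substitution, its only effect is to multiply $\left|\det g_1\right|^{s_2}$ by $|d_v|_v^{-n(n - 1)s_2/2}$; taking the product over $v$ and using that $d$ represents $\mathfrak{d}^{-1}$ produces, exactly as in the proof of \hyperref[thm:globalRSint]{Theorem \ref*{thm:globalRSint}}, the overall factor $D_{F \slash \Q}^{n(n - 1)s_2/2}$. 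Each local integral is thereby reduced to $\Psi_v$ against $\psi_v$ with the local newforms $W_{\pi_v}^{\circ}$ and $W_{\sigma_v}^{\circ}$, which equals $L(s_2, \pi_{v,\ur} \times \sigma_{v,\ur}) L(s_1 - \tfrac{n - 1}{2}, \sigma_v)$ by \hyperref[thm:Saknn]{Theorem \ref*{thm:Saknn}} at archimedean places and by \hyperref[cor:Saknn-nonarch]{Corollary \ref*{cor:Saknn-nonarch}} at nonarchimedean places, the unramified places being the special case due to Sakellaridis. Multiplying over all $v$ yields $D_{F \slash \Q}^{n(n - 1)s_2/2} \Lambda(s_2, \pi_{\ur} \times \sigma_{\ur}) \Lambda(s_1 - \tfrac{n - 1}{2}, \sigma)$, with the positive constant absorbing the discrepancy between our choice of measures and the Tamagawa measure.

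The main obstacle is the first step: establishing rigorously that \eqref{RS-Sakellaridis-equalrank} unfolds to the claimed Euler product --- that is, justifying the absolute convergence for $\Re(s_1)$, $\Re(s_2)$ large, verifying that the manipulation of the double Whittaker expansion against the $\Pgp_n^{\diag}(F)$-quotient is legitimate, and checking that the local factors that emerge are precisely \eqref{eqn:Saknn}. This is where the cuspidality of $\pi$ and $\sigma$ is used, and it requires following \cite[\S 5]{Sak12} with care; once the factorisation is in hand, the remaining steps are routine bookkeeping identical to the proof of \hyperref[thm:globalRSint]{Theorem \ref*{thm:globalRSint}}.
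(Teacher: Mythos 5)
Your proposal is correct and follows essentially the same route as the paper: the paper likewise unfolds \eqref{RS-Sakellaridis-equalrank} (citing the technique of \cite[Theorem 5.2.2]{Sak12}, with an alternative via \cite[Remark 5.10]{Hum20a}) to the adelic integral over $\Ngp^{\diag}_n(\A_F) \backslash \GL_n(\A_F) \times \GL_n(\A_F)$ with integrand $W_{\varphi^{\circ}_{\pi}}(g_1) W_{\varphi^{\circ}_{\sigma}}(g_2) \Phi'(g_1^{-1}g_2) \Phi(e_n g_1)$, factors it, and concludes by \hyperref[thm:Saknn]{Theorem \ref*{thm:Saknn}} and \hyperref[cor:Saknn-nonarch]{Corollary \ref*{cor:Saknn-nonarch}}. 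Your explicit bookkeeping of the local data and of the change of variables $g_{i,v} \mapsto \diag(d_v^{1-n},\ldots,d_v^{-1},1)g_{i,v}$ producing the factor $D_{F\slash\Q}^{n(n-1)s_2/2}$ matches what the paper leaves implicit (and carries out in the proofs of the neighbouring global theorems).
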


\begin{proof}
The result is a direct consequence of \cite[Remark 5.10]{Hum20a} coupled with unfolding the standard global $\GL_n \times \GL_n$ Rankin--Selberg integral (cf.\ \cite[Theorem 2.1]{CPS04}) upon making the change of variables $g_2 \mapsto g_1g_2$. Another way to prove this is to apply the unfolding technique as in the proof of \cite[Theorem 5.2.2]{Sak12}. In the domain of convergence, \eqref{RS-Sakellaridis-equalrank} is equal to
\[
\int\limits_{\Ngp^{\diag}_n(\A_F)  \backslash \GL_n(\A_F) \times \GL_n(\A_F)} W_{\varphi^{\circ}_{\pi}}(g_1) W_{\varphi^{\circ}_{\sigma}}(g_2) \Phi'(g^{-1}_1g_2) \Phi(e_ng_1)  \left| \frac{\det g_2}{\det g_1} \right|^{s_1}_{\A_F} \left|\det g_1\right|_{\A_F}^{s_2} \, dg_2 \, dg_1.
\]
To conclude, we appeal to \hyperref[thm:Saknn]{Theorem \ref*{thm:Saknn}} along with \hyperref[cor:Saknn-nonarch]{Corollary \ref*{cor:Saknn-nonarch}}. 
\end{proof}

A similar result holds for the modified $\GL_n \times \GL_{n - 1}$ Rankin--Selberg integral by Sakellaridis.

\begin{theorem}
\label{thm:Saknn-1global}
Let $(\pi,V_{\pi})$ and $(\sigma,V_{\sigma})$ be cuspidal automorphic representations of $\GL_n(\A_F)$ and $\GL_{n-1}(\A_F)$ respectively with global newforms $\varphi^{\circ}_{\pi} \in V_{\pi}$ and $\varphi^{\circ}_{\sigma} \in V_{\sigma}$, and suppose that $\sigma$ is everywhere unramified. Then there exists a bi-$K_n$-finite Schwartz--Bruhat function $\Phi' \in \Scr(\Mat_{n \times n}(\A_F))$ such that for $\Re(s_1)$ and $\Re(s_2)$ sufficiently large, the global modified $\GL_n \times \GL_{n-1}$ Rankin--Selberg integral by Sakellaridis
\begin{equation}
\label{eqn:GLnXGLmRS}
\int\limits_{\GL^{\diag}_{n-1}(F) \backslash \GL_{n-1}(\A_F)\times \GL_{n-1}(\A_F)} \varphi^{\circ}_{\pi} \begin{pmatrix} g_1 &0 \\ 0& 1 \end{pmatrix} \varphi^{\circ}_{\sigma}(g_2) \Phi'(g_1^{-1}g_2) \left| \frac{\det g_2}{\det g_1} \right|^{s_1}_{\A_F} \left|\det g_1\right|_{\A_F}^{s_2}  \, dg_2 \, dg_1
\end{equation}
is equal, up to multiplication by a positive constant dependent only on the normalisation of the measures $dg_1$ and $dg_2$, to the product of $\omega^{-1}_{\sigma}(d) D_{F \slash \Q}^{\frac{n(n-1)s_2}{2}}$ and of the global completed $L$-function
\[
\Lambda \left(s_2+\frac{1}{2}, \pi \times \sigma \right) \Lambda \left(s_1-\frac{n-2}{2}, \sigma \right).
\]
\end{theorem}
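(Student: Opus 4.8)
The plan is to reduce \eqref{eqn:GLnXGLmRS} to an Euler product of the local modified Rankin--Selberg integrals evaluated in \hyperref[thm:Saknn-1]{Theorem \ref*{thm:Saknn-1}} and \hyperref[cor:Saknn-1-nonarch]{Corollary \ref*{cor:Saknn-1-nonarch}}, proceeding exactly as in the proof of \hyperref[thm:Saknnglobal]{Theorem \ref*{thm:Saknnglobal}}. First I would unfold: since $\pi$ and $\sigma$ are cuspidal, replacing $\varphi^{\circ}_{\pi}$ and $\varphi^{\circ}_{\sigma}$ by their Fourier--Whittaker expansions and running the unfolding argument for the standard global $\GL_n \times \GL_{n-1}$ Rankin--Selberg integral of Jacquet, Piatetski-Shapiro, and Shalika \cite{JP-SS83} (cf.\ \cite[Theorem 2.1]{CPS04}), together with Sakellaridis's modification (cf.\ \cite[Theorem 5.2.2]{Sak12}) and \cite[Remark 5.10]{Hum20a}, shows that in the region where $\Re(s_1)$ and $\Re(s_2)$ are sufficiently large the integral \eqref{eqn:GLnXGLmRS} equals, up to a positive constant depending only on the choice of Haar measures, the Eulerian integral
\[
\int\limits_{\Ngp^{\diag}_{n-1}(\A_F) \backslash \GL_{n-1}(\A_F) \times \GL_{n-1}(\A_F)} W_{\varphi^{\circ}_{\pi}}\begin{pmatrix} g_1 & 0 \\ 0 & 1 \end{pmatrix} W_{\varphi^{\circ}_{\sigma}}(g_2) \Phi'(g_1^{-1} g_2) \left| \frac{\det g_2}{\det g_1} \right|^{s_1}_{\A_F} \left|\det g_1\right|_{\A_F}^{s_2} \, dg_2 \, dg_1,
\]
which, upon factoring $W_{\varphi^{\circ}_{\pi}}$, $W_{\varphi^{\circ}_{\sigma}}$, and $\Phi'$ into local components, becomes a product over all places $v$ of local integrals of the shape \eqref{eqn:Saknn-1}.

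Next I would strip off the $d$-shift place by place. At each $v$ the factor $W_{\varphi^{\circ}_{\pi},v}$ (resp.\ $W_{\varphi^{\circ}_{\sigma},v}$) differs from the local Whittaker newform $W^{\circ}_{\pi_v}$ (resp.\ $W^{\circ}_{\sigma_v}$) by left translation by $\diag(d^{n-1}_v,\dotsm,d_v,1) \in \GL_n(F_v)$ (resp.\ $\diag(d^{n-2}_v,\dotsm,d_v,1) \in \GL_{n-1}(F_v)$). Writing these translations restricted to the relevant copies of $\GL_{n-1}(F_v)$ as left multiplication by diagonal matrices $a$ on the first factor and $b$ on the second, one checks that $b^{-1}a$ is a scalar, so the substitution $(g_1,g_2) \mapsto (a^{-1} g_1, b^{-1} g_2)$ descends to the quotient by $\Ngp^{\diag}_{n-1}(F_v)$ and converts $W_{\varphi^{\circ}_{\pi},v}$ and $W_{\varphi^{\circ}_{\sigma},v}$ into $W^{\circ}_{\pi_v}$ and $W^{\circ}_{\sigma_v}$; one chooses $\Phi'_v$ so that it becomes $\Phi_{\ur}$ of \eqref{eqn:Phiurdefeq} after this substitution (at the places where $d_v$ is a unit this is simply $\Phi'_v = \Phi_{\ur}$, and $\Phi'$ is then bi-$K_n$-finite). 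The substitution is trivial at the archimedean places, where $d_v = 1$; at the finite places it introduces powers of $|d_v|_v$---coming from the $\left|\det\right|$-factors and from the modulus character governing how the Haar measure on $\Ngp_{n-1}(F_v) \backslash \GL_{n-1}(F_v)$ transforms under diagonal translation---together with a power of the central character $\omega_{\sigma_v}$ evaluated at $d_v$. Collecting these contributions over all $v$ and using $\prod_v |d_v|_v = D_{F\slash\Q}$ and $\prod_v \omega_{\sigma_v}(d_v) = \omega_{\sigma}(d)$, they collapse to the claimed constant $\omega^{-1}_{\sigma}(d) D^{\frac{n(n-1)s_2}{2}}_{F\slash\Q}$.

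Finally I would invoke the local results. The hypothesis that $\sigma$ be everywhere unramified guarantees that $\sigma_v$ is a spherical induced representation of Langlands type at every place and that $W^{\circ}_{\sigma_v}$ is its spherical Whittaker function, which is precisely what \hyperref[thm:Saknn-1]{Theorem \ref*{thm:Saknn-1}} (at the archimedean places) and \hyperref[cor:Saknn-1-nonarch]{Corollary \ref*{cor:Saknn-1-nonarch}} (at the nonarchimedean places) require; each local integral is therefore equal to $L(s_2+\tfrac{1}{2},\pi_v \times \sigma_v) L(s_1-\tfrac{n-2}{2},\sigma_v)$, and taking the product over $v$ gives $\Lambda(s_2+\tfrac{1}{2},\pi \times \sigma) \Lambda(s_1-\tfrac{n-2}{2},\sigma)$, as desired. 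I expect the main obstacle to be the bookkeeping in the second step: tracking how the $\left|\det\right|$-factors, the conductor of $\psi_{\A_F}$, the central character $\omega_{\sigma}$, and the Haar measures all transform under the $d$-shift so as to produce exactly $\omega^{-1}_{\sigma}(d) D^{n(n-1)s_2/2}_{F\slash\Q}$---this is the part that does not appear (or appears only in simplified form, with no $\omega$-factor) in the proof of \hyperref[thm:Saknnglobal]{Theorem \ref*{thm:Saknnglobal}}, because there both representations live on $\GL_n$ and carry the same shift, whereas here the two shifts differ.
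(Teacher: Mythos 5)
Your overall route---unfold \eqref{eqn:GLnXGLmRS} to an Euler product of local integrals, strip the $d$-shift built into the global newforms place by place, then invoke \hyperref[thm:Saknn-1]{Theorem \ref*{thm:Saknn-1}} and \hyperref[cor:Saknn-1-nonarch]{Corollary \ref*{cor:Saknn-1-nonarch}}---is exactly the paper's, and your first and third steps are fine. The gap is in the middle step, and it is not mere bookkeeping: the substitution you propose does not yield the constant in the statement. If you translate the two variables by the \emph{different} matrices $a = \diag(d_v^{n-1},\ldots,d_v)$ and $b = \diag(d_v^{n-2},\ldots,d_v,1)$, then (as you yourself note) both Whittaker functions become exactly $W^{\circ}_{\pi_v}$ and $W^{\circ}_{\sigma_v}$, and after that no central character value can appear: the $\left|\det\right|$-factors and the modulus character of the quotient measure only ever produce powers of $|d_v|_v$, never $\omega_{\sigma_v}(d_v)$, so the factor $\omega_{\sigma}^{-1}(d)$ cannot arise from the sources you name. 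Worse, since $\det a$ and $\det b$ differ by $d_v^{\,n-1}$, the factor $\left|\det g_2 / \det g_1\right|^{s_1}$ acquires $|d_v|_v^{(n-1)s_1}$, and the argument of $\Phi'_v$ is scaled by the central scalar $d_v$ (forcing your rescaled choice of $\Phi'_v$). Carrying your substitution through therefore evaluates the global integral, for your $\Phi'$, as a positive constant times an $s_1$-dependent power of $D_{F\slash\Q}$ times $\Lambda(s_2+\tfrac{1}{2},\pi\times\sigma)\,\Lambda(s_1-\tfrac{n-2}{2},\sigma)$, with no $\omega_{\sigma}^{-1}(d)$ at all---which is not the asserted evaluation.

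The fix, which is what the paper does, is to translate both copies by the \emph{same} matrix $a$; equivalently, first make the change of variables $g_2 \mapsto g_1 g_2$ as in the local proofs and then shift only $g_{1,v} \mapsto \diag(d_v^{1-n},\ldots,d_v^{-1}) g_{1,v}$. Then $g_1^{-1}g_2$, hence the argument of $\Phi'$, and the $s_1$-factor are untouched, so one may simply take $\Phi'_v$ to be $\Phi_{\ur}$ of \eqref{eqn:Phiurdefeq} at archimedean places and the characteristic function of $\Mat_{(n-1)\times(n-1)}(\OO_v)$ at nonarchimedean places (note that \eqref{eqn:Phiurdefeq} itself is only the archimedean Gaussian). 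The mismatch between $a$ and the $\sigma$-newform shift $b$ is then the central element $b a^{-1} = d_v^{-1} 1_{n-1}$, and $W_{\varphi^{\circ}_{\sigma},v}(a^{-1}g) = W^{\circ}_{\sigma_v}(b a^{-1} g) = \omega_{\sigma_v}^{-1}(d_v)\, W^{\circ}_{\sigma_v}(g)$ is precisely where $\omega_{\sigma}^{-1}(d)$ comes from; the $s_2$-power of $D_{F\slash\Q}$ comes solely from $\left|\det g_1\right|^{s_2}$, exactly as in \hyperref[thm:globalRSint]{Theorem \ref*{thm:globalRSint}}, while the $s$-independent modulus-character factor coming from translating the quotient measure is absorbed into the unspecified positive constant, as elsewhere in the paper.
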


\begin{proof}
We make the change of variables $g_2 \mapsto g_2g_1$, then combine \cite[Remark 5.10]{Hum20a} with \cite[Proposition 5.5]{Hum20a}. Alternatively, as we have seen it in \cite[Theorem 5.2.5]{Sak12}, we factorise \eqref{eqn:GLnXGLmRS} from
\[
\int\limits_{\Ngp^{\diag}_{n-1}(\A_F)  \backslash \GL_{n-1}(\A_F) \times \GL_{n-1}(\A_F)} W_{\varphi^{\circ}_{\pi}}  \begin{pmatrix} g_1 &0 \\ 0& 1 \end{pmatrix}  W_{\varphi^{\circ}_{\sigma}}(g_2) \Phi'(g^{-1}_1g_2)   \left| \frac{\det g_2}{\det g_1} \right|^{s_1}_{\A_F} \left|\det g_1\right|_{\A_F}^{s_2} \, dg_2 \, dg_1.
\]
Upon making the change of variables $g_{1,v} \mapsto \diag(d_v^{1-n},\cdots,d^{-1}_v,1)g_{1,v}$, the desired identity follows from \hyperref[thm:Saknn-1]{Theorem \ref*{thm:Saknn-1}} aligned with \hyperref[cor:Saknn-1-nonarch]{Corollary \ref*{cor:Saknn-1-nonarch}}.
\end{proof}

\subsection{Global Flicker--Rallis Periods}

We start with the existence of a weak test vector for the global Flicker integral.

\begin{theorem}
\label{thm:globalFlickerint}
Let $E$ be a quadratic extension of $F$, and let $(\pi,V_{\pi})$ be a cuspidal automorphic representation of $\GL_n(\A_E)$ with global newform $\varphi^{\circ}_{\pi} \in V_{\pi}$. Then there exists a right $K_n$-finite Schwartz--Bruhat function $\Phi \in \Scr(\A_F^n)$ such that for $\Re(s)$ sufficiently large, the global $\GL_n$ Flicker integral
\begin{equation}
\label{eqn:Flickerintglobal}
I(s,\varphi^{\circ}_{\pi},\Phi)\coloneqq \int_{[\GL_n]} \varphi^{\circ}_{\pi}(g) E\left(g,s;\Phi,\omega_{\pi}|_{\A_F^{\times}}\right) \, dg
\end{equation}
is equal, up to multiplication by a positive constant dependent only on the normalisation of the measures $dg$, to the product of $D_{F \slash \Q}^{\frac{n(n-1)s}{2}}$ and the global completed na\"{i}ve Asai $L$-function
$\Lambda(s,\pi_{\ur}, \As)\coloneqq\prod_vL(s,\pi_{v,\ur}, \As)$.
\end{theorem}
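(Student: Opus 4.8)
The plan is to follow the same Eulerian strategy used in the proof of \hyperref[thm:globalRSint]{Theorem \ref*{thm:globalRSint}}, but now over the quadratic extension $E/F$. First I would recall that the global Flicker integral $I(s,\varphi^{\circ}_{\pi},\Phi)$ is Eulerian: upon inserting the Mellin transform definition \eqref{DEFeisenstein} of the Eisenstein series $E(g,s;\Phi,\omega_{\pi}|_{\A_F^{\times}})$ and unfolding the $\Theta$-series against the cuspidal form $\varphi^{\circ}_{\pi}$ along the lines of Flicker \cite{Fli93}, the integral over $[\GL_n]$ collapses to an integral over $\Ngp_n(F) \backslash \GL_n(\A_F)$ of the global Whittaker function $W_{\varphi^{\circ}_{\pi}}$ against $\Phi$; since $W_{\varphi^{\circ}_{\pi}} = \prod_v W_{\varphi^{\circ}_{\pi},v}$ is a pure tensor and $\Phi = \bigotimes_v \Phi_v$ is chosen to be decomposable and right $K_n$-finite, this factors as $\prod_v \Psi(s,W_{\varphi^{\circ}_{\pi},v},\Phi_v)$ for $\Re(s)$ sufficiently large, where the product runs over places $v$ of $F$ and at each $v$ one has the local Flicker integral associated to the places of $E$ above $v$. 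I would then make the change of variables $g_v \mapsto \diag(d_v^{1-n},\ldots,d_v^{-1},1) g_v$ at each place, which rescales $W_{\varphi^{\circ}_{\pi},v}$ to the normalised local Whittaker newform $W^{\circ}_{\pi_v}$ with respect to the unramified additive character and contributes a total factor of $D_{F/\Q}^{\frac{n(n-1)s}{2}}$ from the discriminant, exactly as in \hyperref[thm:globalRSint]{Theorem \ref*{thm:globalRSint}}.

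The core of the argument is then purely local and splits according to whether $v$ splits, is inert, or ramifies in $E$. At a split place the local Asai integral reduces to a $\GL_n \times \GL_n$ Rankin--Selberg integral, so \hyperref[thm:GLnxGLn]{Theorem \ref*{thm:GLnxGLn}} (archimedean) and \cite[Theorem 3.2]{Jo21} (nonarchimedean) apply. At an archimedean place $v$ with $E_v = \C$ and $F_v = \R$, one invokes \hyperref[thm:Flicker]{Theorem \ref*{thm:Flicker}}, which gives $\Psi(s,W^{\circ}_{\pi_v},\Phi_v) = L(s,\pi_{v,\ur},\As)$ for the distinguished Schwartz--Bruhat function $\Phi_v$ of \hyperref[prop:FlickerKey]{Proposition \ref*{prop:FlickerKey}}; at a finite inert or ramified place one invokes the nonarchimedean weak test vector result for the Flicker integral, namely \cite[Theorem 5.1]{Jo21} (the nonarchimedean analogue of \hyperref[prop:FlickerKey]{Proposition \ref*{prop:FlickerKey}} plus the spherical computation, which holds by the same proof as in the archimedean case, cf.\ the remark following \hyperref[prop:FlickerKey]{Proposition \ref*{prop:FlickerKey}}). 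At the finitely many places $v \notin S$, where everything is unramified, $\Phi_v$ is taken to be the standard characteristic function of $\OO_{F_v}^n$ and the local Flicker integral evaluates to $L(s,\pi_{v,\ur},\As) = L(s,\pi_v,\As)$ by Flicker's unramified computation. Multiplying these local identities over all places $v$ of $F$ yields $\prod_v L(s,\pi_{v,\ur},\As) = \Lambda(s,\pi_{\ur},\As)$, and combining with the discriminant factor completes the proof.

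The main obstacle I anticipate is the bookkeeping at the ramified places of $E/F$: one must verify that the nonarchimedean Flicker integral identity of \cite{Jo21} is stated and applies uniformly at inert and ramified finite places (including handling the conductor of $\psi_v^{d_v}$ correctly under the change of variables), and that the chosen $\Phi_v$ is globally a well-defined right $K_n$-finite Schwartz--Bruhat function on $\A_F^n$ — i.e.\ that it equals the unramified characteristic function at all but finitely many places. This is routine given the local results already in hand, but it is the step where the cross-referencing between the archimedean and nonarchimedean halves has to be made precise. Everything else is a direct transcription of the proof of \hyperref[thm:globalRSint]{Theorem \ref*{thm:globalRSint}}.
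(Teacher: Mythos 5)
Your proposal is correct and follows essentially the same route as the paper: unfold the Eisenstein series and the Fourier--Whittaker expansion to obtain the Euler product $\prod_v \Psi(s,W_{\varphi^{\circ}_{\pi},v},\Phi_v)$, rescale by $\diag(d_v^{1-n},\ldots,d_v^{-1},1)$ to produce the factor $D_{F/\Q}^{\frac{n(n-1)s}{2}}$, and conclude with \hyperref[thm:Flicker]{Theorem \ref*{thm:Flicker}} at archimedean places and the nonarchimedean weak test vector result of \cite{Jo21} elsewhere. Your explicit case split according to whether $v$ splits in $E$ (where the Asai integral degenerates to a $\GL_n \times \GL_n$ Rankin--Selberg integral, handled by \hyperref[thm:GLnxGLn]{Theorem \ref*{thm:GLnxGLn}} and \cite{Jo21}) is a point the paper leaves implicit, but it is the same argument, not a different one.
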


\begin{proof}
This integral is Eulerian: by inserting the definition of Eisenstein series (cf.\ \eqref{DEFeisenstein}) and the Fourier--Whittaker expansion of $\varphi_{\pi}$, and then unfolding \cite[p.\ 303]{Fli88}, we see that $I(s,\varphi^{\circ}_{\pi},\Phi)$ is equal, up to multiplication by a positive constant dependent only on the normalisation of the measure $dg$, to
\[
\prod_v \Psi(s,W_{\varphi^{\circ}_{\pi},v},\Phi_v)
\]
provided that $\Re(s)$ is sufficiently large. Upon making the change of variables 
\[
g_v \mapsto \diag(d_v^{1-n},\cdots,d^{-1}_v,1)g_v,
\]
we arrive at
\[
D^{\frac{n(n-1)s}{2}}_{F \slash \Q} \prod_v \Psi(s,W^{\circ}_{\pi_v},\Phi_v). 
\]
The result now follows from \hyperref[thm:Flicker]{Theorem \ref*{thm:Flicker}} for archimedean places and from \cite[Theorem 4.2]{Jo21} for nonarchimedean places.
\end{proof}

We turn our attention toward global Flicker--Rallis periods. We say that a cuspidal automorphic representation $(\pi,V_{\pi})$ of $\GL_n(\A_E)$ admits a \emph{Flicker--Rallis period} (or  $\GL_n(\A_F)$-period) if there exists a cuspidal automorphic form $\varphi_{\pi}$ in the space $V_{\pi}$ such that
\[
\int_{[\GL_n]} \varphi_{\pi}(g) \, dg \neq 0.
\]
Such a representation is referred to be \emph{$\GL_n(\A_F)$-distinguished}.

\begin{remark}
It is a result of \cite{Fli88} and \cite{FZ95} that the global partial Asai $L$-function $\Lambda^S(s,\pi,\As)$ has a pole at $s=1$ if and only if $\omega_{\pi}|_{\A^{\times}_F} = 1$ and $\pi$ is 
$\GL_n(\A_F)$-distinguished.
\end{remark}

W. Zhang \cite[Proposition 3.2]{Zha14} expresses the global Flicker--Rallis period on the space $V_{\pi}$ as a product of local $\GL_n(F_v)$-distinguished linear functionals $\vartheta_v^{\flat}$ given by \eqref{eqn:varthetaflat}; the underlying idea originates from the work of Gelbart, Jacquet, and Rogawski \cite[p.\ 184--186]{GJR01}. In particular, the global Flicker--Rallis period attached to a global newform $\varphi^{\circ}_{\pi}$ is related to the product of the special value of the local Asai $L$-function at $s=1$.

\begin{theorem}
\label{thm:globalFRperiod}
Let $E$ be a quadratic extension of $F$, and let $(\pi,V_{\pi})$ be a unitary cuspidal automorphic representation of $\GL_n(\A_E)$ with global newform $\varphi^{\circ}_{\pi} \in V_{\pi}$ for which the central character $\omega_{\pi}$ satisfies $\omega_{\pi}|_{\A^{\times}_F} = 1$. Then there exists a right $K_n$-finite Schwartz--Bruhat function $\Phi \in \Scr(\A_F^n)$ such that the global Flicker--Rallis period
\begin{equation}
\label{eqn:globalFRperiod}
\int_{[\GL_n]} \varphi^{\circ}_{\pi}(g) \, dg
\end{equation}
is equal, up to multiplication by a positive constant dependent only on the normalisation of the measure $dg$, to
\[
\frac{nD^{\frac{n(n-1)}{2}}_{F \slash \Q}  }{\widehat{\Phi}(0)\vol(F^{\times} \backslash \A^1_F)} \Res_{s=1}\Lambda^S(s,\pi,\As) \prod_{v \in S}L(1,\pi_{v,\ur},\As).
\]
\end{theorem}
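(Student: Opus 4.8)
The plan is to mirror the proof of \hyperref[thm:RSGlobalSV]{Theorem \ref*{thm:RSGlobalSV}}, extracting the global Flicker--Rallis period as a residue of the global Flicker integral. Since the hypothesis $\omega_{\pi}|_{\A_F^{\times}} = 1$ means that the unitary Hecke character $\omega_{\pi}|_{\A_F^{\times}}$ appearing in the Eisenstein series of \eqref{eqn:Flickerintglobal} is trivial, \eqref{EisenResidue} shows that $E(g,s;\Phi,\omega_{\pi}|_{\A_F^{\times}})$ has a simple pole at $s = 1$ whose residue is the \emph{constant} $\frac{\vol(F^{\times} \backslash \A^1_F)}{n} \widehat{\Phi}(0)$, independent of $g$. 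First I would record that the global Flicker integral $I(s,\varphi_{\pi}^{\circ},\Phi)$ converges absolutely and locally uniformly for $\Re(s)$ in a punctured neighbourhood of $1$ --- the cusp form $\varphi_{\pi}^{\circ}$ is rapidly decreasing on the finite-volume space $[\GL_n]$, while $E(g,s;\Phi,\omega_{\pi}|_{\A_F^{\times}})$ is of moderate growth away from its poles --- so that the residue at $s = 1$ may be taken inside the integration over $[\GL_n]$, giving
\[
\Res_{s = 1} I(s,\varphi_{\pi}^{\circ},\Phi) = \frac{\vol(F^{\times} \backslash \A^1_F)}{n} \widehat{\Phi}(0) \int_{[\GL_n]} \varphi_{\pi}^{\circ}(g) \, dg.
\]

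Next I would invoke \hyperref[thm:globalFlickerint]{Theorem \ref*{thm:globalFlickerint}}, which supplies a right $K_n$-finite $\Phi \in \Scr(\A_F^n)$ for which $I(s,\varphi_{\pi}^{\circ},\Phi)$ equals, up to a positive constant depending only on the normalisation of $dg$, the product of $D_{F \slash \Q}^{n(n-1)s/2}$ and the completed na\"{i}ve Asai $L$-function $\Lambda(s,\pi_{\ur},\As) = \prod_v L(s,\pi_{v,\ur},\As)$. Because $\pi_{v,\ur} = \pi_v$ for $v \notin S$, one factors $\Lambda(s,\pi_{\ur},\As) = \Lambda^S(s,\pi,\As) \prod_{v \in S} L(s,\pi_{v,\ur},\As)$. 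The finitely many factors $L(s,\pi_{v,\ur},\As)$ with $v \in S$ are holomorphic and non-vanishing at $s = 1$: each is an explicit product of $\zeta_{F_v}$-factors whose arguments are shifts of the Langlands parameters $\alpha_{\pi_v}$, and these shifts avoid the poles of $\zeta_{F_v}$ along $\Re(s) = 1$ because $\pi$ unitary cuspidal forces $|\Re(\alpha_{\pi_v,j})| < \frac{1}{2}$ by the Jacquet--Shalika bounds. Consequently $\Res_{s=1} \Lambda(s,\pi_{\ur},\As) = \bigl(\Res_{s=1} \Lambda^S(s,\pi,\As)\bigr) \prod_{v \in S} L(1,\pi_{v,\ur},\As)$, while $D_{F \slash \Q}^{n(n-1)s/2}$ specialises to $D_{F \slash \Q}^{n(n-1)/2}$ at $s = 1$. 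Equating the two computations of $\Res_{s=1} I(s,\varphi_{\pi}^{\circ},\Phi)$ and solving for the Flicker--Rallis period then yields the asserted identity, with the positive constant absorbing the measure normalisation.

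To legitimise dividing by $\widehat{\Phi}(0)$, I would verify that $\widehat{\Phi}(0) \neq 0$ for the test vector of \hyperref[thm:globalFlickerint]{Theorem \ref*{thm:globalFlickerint}}: as $\widehat{\Phi}(0) = \prod_v \widehat{\Phi_v}(0)$, it suffices to treat each place. Nonarchimedean places are handled by the explicit local Flicker test vector of \cite{Jo21}, and at an archimedean place $v$ the hypothesis $\omega_{\pi}|_{\A_F^{\times}} = 1$ forces the relevant central character to restrict trivially to $\Ogp(1)$, whence $c(\chi_{\pi_v}|_{\Ogp(1)}) = 0$; then the polynomial $P$ defining $\Phi_v$ in \hyperref[prop:FlickerKey]{Proposition \ref*{prop:FlickerKey}} contains the rotationally invariant summand $(x \prescript{t}{}{x})^{c(\pi_v)/2}$, which is the unique summand of $P$ whose integral against the Gaussian $\exp(-\pi x \prescript{t}{}{x})$ does not vanish (the higher summands being products of a positive power of $x \prescript{t}{}{x}$ with a harmonic polynomial of positive degree).

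The main obstacle, exactly as in the Rankin--Selberg case, is not the formal residue manipulation but the two analytic inputs it rests on: the absolute, locally uniform convergence of the global Flicker integral near $s = 1$ that licenses the interchange of residue and integral, and the non-vanishing of $\widehat{\Phi}(0)$, which is precisely where both the unitarity of $\pi$ and the hypothesis $\omega_{\pi}|_{\A_F^{\times}} = 1$ are genuinely used. Everything else is bookkeeping of the local $L$-factors and of the constants $D_{F \slash \Q}$, $\vol(F^{\times} \backslash \A^1_F)$, and $n$.
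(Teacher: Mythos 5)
Your proposal is correct and takes essentially the same route as the paper: the paper's proof likewise extracts the Flicker--Rallis period from the residue at $s=1$ of the global Flicker integral via \eqref{EisenResidue} and then invokes \hyperref[thm:globalFlickerint]{Theorem \ref*{thm:globalFlickerint}}, factoring $\Lambda(s,\pi_{\ur},\As)$ as $\Lambda^S(s,\pi,\As)\prod_{v \in S} L(s,\pi_{v,\ur},\As)$. The extra justifications you supply (interchanging residue and integration using cuspidal rapid decay, holomorphy of the local factors at $s=1$, and non-vanishing of $\widehat{\Phi}(0)$) are details the paper leaves implicit rather than a different argument.
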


Here we have that $L(1,\pi_{v,\ur},\As)=L(1,\pi_{v,\ur} \times \pi_{v,\ur})$ and $L(1,\pi_{v},\As) = L(1,\pi_{v} \times \pi_{v})$ if $v$ splits in $E$, so that $E_v = F_{v} \oplus F_{v}$.

\begin{proof}
From \eqref{EisenResidue}, the residue of $I(s,\varphi_{\pi}^{\circ},\varphi_{\widetilde{\pi}}^{\circ},\Phi)$ at $s=1$ is 
\[
\frac{\vol(F^{\times}\backslash \A^1_F)}{n} \widehat{\Phi}(0) \int_{[\GL_n]} \varphi^{\circ}_{\pi}(g) \varphi^{\circ}_{\widetilde{\pi}}(g) \, dg.
\]
The result then follows from \hyperref[thm:globalFlickerint]{Theorem \ref*{thm:globalFlickerint}}.
\end{proof}

\subsection{Global Friedberg--Jacquet Periods}
\label{sec:GlobalFJ}

Our immediate goal is to show the existence of a weak test vector for the global Bump--Friedberg integral.

\begin{theorem}
\label{thm:globalBF}
Let $(\pi,V_{\pi})$ be a cuspidal automorphic representation of $\GL_n(\A_F)$ with global newform $\varphi^{\circ}_{\pi} \in V_{\pi}$. For $m = \lfloor \frac{n}{2}\rfloor$, there exists a right $K_n$-finite Schwartz--Bruhat function $\Phi \in \Scr(\A_F^m)$ such that for $\Re(s_1)$ and $\Re(s_2)$ sufficiently large, 
\begin{enumerate}[leftmargin=*,label=\emph{(\roman*)}]
\item\label{globalBF1} for $n=2m$, the global Bump--Friedberg integral
\begin{equation}
\label{eqn:globalBFinteven}
Z(s_1,s_2,\varphi^{\circ}_{\pi},\Phi) \coloneqq \int\limits_{[\GL_m \times \GL_m]} \varphi^{\circ}_{\pi}(J(g,g')) E(g',s_2;\Phi,\omega_{\pi}) \left| \frac{\det g}{\det g'} \right|^{s_1-\frac{1}{2}}_{\A_F}  \, dg \, dg'
\end{equation}
is equal, up to multiplication by a positive constant dependent only on the normalisation of the measures $dg$ and $dg'$, to the product of $D^{m(s_1-1/2)+m(m-1)s_2}_{F \slash \Q}$ and of the global completed na\"{i}ve Bump--Friedberg $L$-function $\Lambda(s_1,s_2,\pi_{\ur}, \BF) \coloneqq \prod_v L(s_1,\pi_{v,\ur}) L(s_2,\pi_{v,\ur},\wedge^2)$;
\item for $n = 2m + 1$, the global Bump--Friedberg integral
\begin{multline}
\label{eqn:globalBFintodd}
Z(s_1,s_2,\varphi^{\circ}_{\pi},\Phi)\\
\coloneqq \int\limits_{[\GL_{m + 1} \times \GL_m]} \varphi^{\circ}_{\pi}(J(g,g')) \, E\left(g,\frac{s_1 + ms_2}{m + 1};\Phi,\omega_{\pi}\right) \left( \frac{|\det g'|_{\A_F}}{|\det g|^{m/(m+1)}_{\A_F}} \right)^{-s_1+s_2} \, dg \, dg'
\end{multline}
is equal, up to multiplication by a positive constant dependent only on the normalisation of the measures $dg$ and $dg'$, to the product of $D^{ms_1+m^2s_2}_{F \slash \Q}$ and of the global completed na\"{i}ve Bump--Friedberg $L$-function $\Lambda(s_1,s_2,\pi_{\ur}, \BF) \coloneqq \prod_v L(s_1,\pi_{v,\ur})L(s_2,\pi_{v,\ur},\wedge^2)$.
\end{enumerate}
\end{theorem}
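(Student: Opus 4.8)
The plan is to follow the template of \hyperref[thm:globalRSint]{Theorem \ref*{thm:globalRSint}} and \hyperref[thm:globalFlickerint]{Theorem \ref*{thm:globalFlickerint}}: exhibit the global Bump--Friedberg integral as an Euler product of local Bump--Friedberg integrals, renormalise the local Whittaker functions to Whittaker newforms at the cost of a power of $D_{F \slash \Q}$, and conclude via \hyperref[thm:BF-reduction]{Theorem \ref*{thm:BF-reduction}} at the archimedean places, via its nonarchimedean analogue from \cite{Jo21} at the ramified finite places, and via \hyperref[BFspherical]{Theorem \ref*{BFspherical}} at the unramified places. I would treat $n = 2m$ in detail; the odd case $n = 2m + 1$ is entirely parallel, using the odd-case statements of \hyperref[prop:BFKey]{Proposition \ref*{prop:BFKey}} and \hyperref[thm:BF-reduction]{Theorem \ref*{thm:BF-reduction}} together with the Eisenstein series on $\GL_{m + 1}$, the shift of its spectral parameter to $(s_1 + ms_2)/(m + 1)$ being precisely what reconciles the weighting $|\det g|^{s_1} |\det g'|^{s_2 - s_1}$ of the local integral with the normalisation \eqref{DEFeisenstein} of the Eisenstein series.

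First I would unfold. Inserting the definition \eqref{DEFeisenstein} of the Eisenstein series $E(g',s_2;\Phi,\omega_{\pi})$ and the Fourier--Whittaker expansion of the newform $\varphi_{\pi}^{\circ}$ into \eqref{eqn:globalBFinteven}, and then unfolding the integral over $[\GL_m \times \GL_m]$ as in \cite{BF90} (cf.\ \cite{Mat15,Mat17}), one obtains, for $\Re(s_1)$ and $\Re(s_2)$ sufficiently large and up to a positive constant depending only on the choice of measures,
\[
Z(s_1,s_2,\varphi_{\pi}^{\circ},\Phi) = \prod_v B(s_1,s_2,W_{\varphi_{\pi}^{\circ},v},\Phi_v),
\]
with $W_{\varphi_{\pi}^{\circ},v} \in \WW(\pi_v,\psi_v^{d_v})$ and $\Phi = \bigotimes_v \Phi_v$. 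One chooses $\Phi$ so that $\Phi_v = \Phi_{\ur,v}$ for $v \notin S$ --- so that those factors equal the local Bump--Friedberg $L$-functions by \hyperref[BFspherical]{Theorem \ref*{BFspherical}} and the product converges --- while at $v \in S$ one takes $\Phi_v$ to be the ramified Schwartz--Bruhat function of \hyperref[prop:BFKey]{Proposition \ref*{prop:BFKey}} at the archimedean places and its nonarchimedean counterpart at the finite places.

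Next I would renormalise each local factor. By definition of the global newform, $W_{\varphi_{\pi}^{\circ},v}(h) = W_{\pi_v}^{\circ}(\diag(d_v^{n - 1},\ldots,d_v,1)h)$, and the diagonal matrix here is of the form $J(\delta_v,\delta_v')$ for diagonal $\delta_v,\delta_v'$ dictated by the interleaving in the embedding $J$, with the last diagonal entry of $\delta_v'$ equal to $1$. Performing the change of variables $g \mapsto \delta_v^{-1} g$ and $g' \mapsto \delta_v^{\prime -1} g'$ in $B(s_1,s_2,W_{\varphi_{\pi}^{\circ},v},\Phi_v)$ --- the last entry of $\delta_v'$ being $1$ keeps $\Phi_v(e_m g')$ unchanged --- and collecting the powers of $|d_v|_v$ arising from $|\det g|^{s_1 - \frac{1}{2}}$, $|\det g'|^{s_2 - s_1 + \frac{1}{2}}$ and from the modulus characters of $\Bgp_m(F_v)$ (since the quotient measure on $\Ngp_m(F_v) \backslash \GL_m(F_v)$ is not left-invariant under the torus) yields $B(s_1,s_2,W_{\varphi_{\pi}^{\circ},v},\Phi_v) = |d_v|_v^{c_v(s_1,s_2)} B(s_1,s_2,W_{\pi_v}^{\circ},\Phi_v)$ with $c_v$ affine in $s_1,s_2$. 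Taking the product over all $v$ and using $\prod_v |d_v|_v^{-1} = D_{F \slash \Q}$, the $s$-dependent part of $\sum_v c_v$ assembles into the exponent $m(s_1 - \frac{1}{2}) + m(m - 1)s_2$, while the $s$-independent part contributes a further positive constant.

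Finally I would put the local evaluations together: \hyperref[thm:BF-reduction]{Theorem \ref*{thm:BF-reduction}} and its nonarchimedean analogue give $B(s_1,s_2,W_{\pi_v}^{\circ},\Phi_v) = L(s_1,\pi_v)L(s_2,\pi_{v,\ur},\wedge^2)$ for $v \in S$, and \hyperref[BFspherical]{Theorem \ref*{BFspherical}} gives the same for $v \notin S$ with $\pi_{v,\ur} = \pi_v$; multiplying over $v$ produces $\Lambda(s_1,s_2,\pi_{\ur},\BF)$ and proves part \ref{globalBF1}, with $n = 2m + 1$ giving instead the exponent $ms_1 + m^2s_2$. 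I expect the main obstacle to be the bookkeeping in the renormalisation step: correctly decomposing $\diag(d_v^{n - 1},\ldots,d_v,1)$ through the interleaved $J$, separating the $s$-dependent from the $s$-independent part of $c_v(s_1,s_2)$ (including the $\delta_{\Bgp_m(F_v)}$ factors), and checking that the even and odd cases give exactly $m(s_1 - \frac{1}{2}) + m(m - 1)s_2$ and $ms_1 + m^2 s_2$; a secondary point is spelling out the adelic unfolding, in particular the interplay between the $\Theta$-series in \eqref{DEFeisenstein} and the choice of $\Phi$.
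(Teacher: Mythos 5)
Your proposal is correct and follows essentially the same route as the paper: unfold the global integral via Matringe's result into an Euler product of local Bump--Friedberg integrals, shift by $\diag(d_v^{n-1},\ldots,d_v,1) = J(\delta_v,\delta_v')$ to replace the $\psi_v^{d_v}$-Whittaker functions by the Whittaker newforms (producing the power of $D_{F/\Q}$ up to an $s$-independent constant), and evaluate the local factors by Theorem~\ref{thm:BF-reduction} at archimedean places and the nonarchimedean weak test vector result at finite places. The only slip is bibliographic: the nonarchimedean input the paper uses is \cite[Theorem 5.1]{MY13} rather than \cite{Jo21}, and the unramified nonarchimedean places are covered by that result rather than by the (archimedean) Theorem~\ref{BFspherical}.
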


\begin{proof}
This integral is Eulerian: appealing to the standard unfolding technique due to Matringe \cite[Theorem 4.4]{Mat15} (cf.\ \cite{BF90}), we see that $Z(s_1,s_2,\varphi_{\pi}^{\circ},\Phi)$ is equal, up to multiplication by a positive constant dependent only on the normalisation of the measures $dg$ and $dg'$, to
\[
\prod_v B(s_1,s_2,W_{\varphi_{\pi}^{\circ},v},\Phi_v)
\]
provided that $\Re(s_1)$ and $\Re(s_2)$ are sufficiently large. Upon making the change of variables $g_v \mapsto \diag(d_v^{1-n},\cdots,d^{-3}_v,d_v^{-1})g_v$ and $g_v' \mapsto \diag(d_v^{2-n},\cdots,d^{-2}_v,1)g_v'$ for $n = 2m$ and $g_v \mapsto \diag(d_v^{1-n},\cdots,d^{-2}_v,1)g_v$ and $g_v' \mapsto \diag(d_v^{2-n},\cdots,d^{-3}_v,d_v^{-1})g_v'$ for $n = 2m + 1$, we arrive at
\[\begin{dcases*}
D^{m(s_1-1/2)+m(m-1)s_2}_{F \slash \Q} \prod_v B(s_1,s_2,W_{\pi_v}^{\circ},\Phi_v) & for $n = 2m$,	\\
D^{ms_1+m^2s_2}_{F \slash \Q} \prod_v B(s_1,s_2,W_{\pi_v}^{\circ},\Phi_v) & for $n = 2m + 1$,
\end{dcases*}\]
The result now follows from \hyperref[thm:BF-reduction]{Theorem \ref*{thm:BF-reduction}} for archimedean places and from \cite[Theorem 5.1]{MY13} for nonarchimedean places.
\end{proof}

We switch our attention to global Friedberg--Jacquet periods. We say that a cuspidal automorphic representation $(\pi,V_{\pi})$ of $\GL_{2m}(\A_F)$ admits a \emph{Friedberg--Jacquet period} (or $\Hgp_{m,m}(\A_F)$-period) if there exists a cuspidal automorphic form $\varphi_{\pi} \in V_{\pi}$ such that
\[
\int\limits_{[\GL_m \times \GL_m]} \varphi_{\pi}(J(g,g')) \, dg \, dg' \neq 0.
\]
Such a representation is referred to be \emph{$\Hgp_{m,m}(\A_F)$-distinguished}.

\begin{remark}
The main result of Matringe \cite[Theorem 4.7]{Mat15} tells us that the partial global Bump--Friedberg $L$-function $\Lambda^S(s,\pi,\BF)$ has a pole at $s=1/2$ if and only if $\omega_{\pi}$ is trivial and $\pi$ is $\Hgp_{m,m}(\A_F)$-distinguished.
\end{remark} 

We establish results analogous to \cite[Propositions 3.1 and 3.2]{Zha14}, which describes the explicit decomposition of the global Friedberg--Jacquet period in terms of local $\Hgp_{m,m}(F_v)$-distinguished linear functionals $\vartheta_v^{\sharp}$ given by \eqref{eqn:varthetasharp}.

\begin{proposition}
\label{prop:GlobalFJdistinction}
Let $(\pi,V_{\pi})$ be a unitary cuspidal automorphic representation of $\GL_{2m}(\A_F)$ with trivial central character. Then for every pure tensor $\varphi_{\pi} \in V_{\pi}$, the global Friedberg--Jacquet period
\[
\int\limits_{[\GL_m \times \GL_m]} \varphi_{\pi}(J(g,g')) \, dg \, dg'
\]
is equal, up to multiplication by a positive constant dependent only on the normalisation of the measures $dg$ and $dg'$, to
\[
\frac{m}{\vol(F^{\times} \backslash \A^1_F)} \Res_{s=1/2}\Lambda^S(s,\pi,\BF) \prod_{v \in S} \vartheta_v^{\sharp}(W_{\varphi_{\pi},v}).
\]
\end{proposition}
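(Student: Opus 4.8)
The plan is to read off the asserted decomposition from the global Bump--Friedberg integral of \hyperref[thm:globalBF]{Theorem \ref*{thm:globalBF}} for $n = 2m$, specialising the two complex variables to $(s_1,s_2) = (s,2s)$ and comparing the residue at $s = \tfrac{1}{2}$ computed in two different ways; this follows the strategy of W.\ Zhang \cite[Propositions 3.1 and 3.2]{Zha14} for the Rankin--Selberg and Flicker--Rallis periods.

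First I would compute the residue globally. Since $\omega_{\pi}$ is trivial, the $\GL_m$-Eisenstein series $E(g',s_2;\Phi,\omega_{\pi})$ appearing in \eqref{eqn:globalBFinteven} has a simple pole at $s_2 = 1$ whose residue is the \emph{constant} $\tfrac{\vol(F^{\times} \backslash \A^1_F)}{m}\widehat{\Phi}(0)$ by \eqref{EisenResidue}, so, as $s_2 = 2s$, its residue in $s$ at $s = \tfrac{1}{2}$ is half of this. The remaining factor $\left|\det g / \det g'\right|_{\A_F}^{s - 1/2}$ is holomorphic and equals $1$ at $s = \tfrac{1}{2}$, while cuspidality of $\varphi_{\pi}$ makes the integral over $[\GL_m \times \GL_m]$ absolutely convergent there; interchanging the residue with the integral therefore gives
\[
\Res_{s = 1/2} Z(s,2s,\varphi_{\pi},\Phi) = \frac{\vol(F^{\times} \backslash \A^1_F)}{2m}\widehat{\Phi}(0) \int\limits_{[\GL_m \times \GL_m]} \varphi_{\pi}(J(g,g')) \, dg \, dg'.
\]

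Next I would compute the same residue from the Euler product. After enlarging the finite set $S$ so that $\varphi_{\pi,v}$ is the spherical vector and $\Phi_v = \Phi_{\ur,v}$ for $v \notin S$, the unfolding underlying \hyperref[thm:globalBF]{Theorem \ref*{thm:globalBF}} (cf.\ \cite[Theorem 4.4]{Mat15}) gives, for a positive constant $c$ depending only on the measure normalisations,
\[
Z(s,2s,\varphi_{\pi},\Phi) = c\, D_{F/\Q}^{m(m-1)} \prod_v B(s,2s,\widetilde{W}_{\varphi_{\pi},v},\Phi_v)
\]
in the domain of convergence, where $\widetilde{W}_{\varphi_{\pi},v}(g) \coloneqq W_{\varphi_{\pi},v}(\diag(d_v^{1-n},\dots,d_v^{-1},1)g) \in \WW(\pi_v,\psi_v)$, and $B(s,2s,\widetilde{W}_{\varphi_{\pi},v},\Phi_{\ur,v}) = L(s,\pi_v)L(2s,\pi_v,\wedge^2)$ for $v \notin S$ by \hyperref[BFspherical]{Theorem \ref*{BFspherical}}, so that $\prod_{v \notin S} B_v(s) = \Lambda^S(s,\pi,\BF)$. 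The Jacquet--Shalika bounds towards Ramanujan for local components of cuspidal representations force $L(s,\pi_v)L(2s,\pi_v,\wedge^2)$ to be holomorphic at $s = \tfrac{1}{2}$, so each $B_v(s)$ --- an entire multiple of $L(s,\pi_v,\BF)$ --- is holomorphic at $s = \tfrac{1}{2}$, and the finite product over $v \in S$ contributes no pole; hence
\[
\Res_{s = 1/2} Z(s,2s,\varphi_{\pi},\Phi) = c\, D_{F/\Q}^{m(m-1)}\left(\Res_{s = 1/2} \Lambda^S(s,\pi,\BF)\right) \prod_{v \in S} B(\tfrac{1}{2},1,\widetilde{W}_{\varphi_{\pi},v},\Phi_v).
\]
Equating the two expressions, and using $\widehat{\Phi}(0) = \prod_{v \in S}\widehat{\Phi_v}(0)$ (the factors at $v \notin S$ being $1$), reduces the proposition to the purely local claim that, for a suitable $\Phi_v \in \Scr(F_v^m)$ with $\widehat{\Phi_v}(0) \neq 0$, the functional $W \mapsto \widehat{\Phi_v}(0)^{-1} B(\tfrac{1}{2},1,W,\Phi_v)$ on $\WW(\pi_v,\psi_v)$ is a nonzero scalar multiple of $\vartheta_v^{\sharp}$.

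This local claim is the main obstacle, and it is also where \hyperref[prop:FJdistinction]{Proposition \ref*{prop:FJdistinction}} is obtained. By the local theory of the Bump--Friedberg integral of Matringe \cite{Mat15,Mat17}, the value $B(\tfrac{1}{2},1,\cdot,\Phi_v)$ realises the Friedberg--Jacquet linear form on $\pi_v$ --- in particular it is $\Hgp_{m,m}(F_v)$-invariant --- and the space of $\Hgp_{m,m}(F_v)$-invariant functionals on $\pi_v$ is at most one-dimensional. Taking $\Phi_v$ to be the Schwartz--Bruhat function of \hyperref[prop:BFKey]{Proposition \ref*{prop:BFKey}} (adjusted, when $\omega_{\pi_v}$ is ramified, so that $\widehat{\Phi_v}(0) \neq 0$) and $W = W_{\pi_v}^{\circ}$ the Whittaker newform, the $n = 2m$ case of \hyperref[prop:BFKey]{Proposition \ref*{prop:BFKey}} evaluated at $(s_1,s_2) = (\tfrac{1}{2},1)$, together with \eqref{eqn:FJperiod1}, yields $B(\tfrac{1}{2},1,W_{\pi_v}^{\circ},\Phi_v) = L(m,\omega_{\pi_{v,\ur}})\, \vartheta_v^{\sharp}(W_{\pi_v}^{\circ})$, which is nonzero since $\vartheta_v^{\sharp}(W_{\pi_v}^{\circ})$ is a ratio of nonvanishing archimedean $L$-factors by \hyperref[thm:FJperiod]{Theorem \ref*{thm:FJperiod}}; one-dimensionality then forces $B(\tfrac{1}{2},1,\cdot,\Phi_v)$ to be proportional to $\vartheta_v^{\sharp}$ on all of $\WW(\pi_v,\psi_v)$. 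The apparent circularity with \hyperref[prop:FJdistinction]{Proposition \ref*{prop:FJdistinction}} is only that: the global Friedberg--Jacquet period is manifestly $\Hgp_{m,m}(\A_F)$-invariant as a functional on $V_{\pi}$ --- translate $\varphi_{\pi}$ and change variables in the integral over $[\GL_m \times \GL_m]$ --- so its restriction to each local factor $\pi_v$ is $\Hgp_{m,m}(F_v)$-invariant, and comparing with the decomposition just obtained (in which, for globally distinguished $\pi$ and suitable choices at the other places, the $v$-component is a nonzero multiple of $\vartheta_v^{\sharp}$) shows that $\vartheta_v^{\sharp}$ itself is $\Hgp_{m,m}(F_v)$-invariant and nontrivial, giving \hyperref[prop:FJdistinction]{Proposition \ref*{prop:FJdistinction}}. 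What then remains is the book-keeping of constants --- the power of $D_{F/\Q}$ introduced by the change of variables $g_v \mapsto \diag(d_v^{1-n},\dots)g_v$, the factors $\widehat{\Phi_v}(0)$, and the measure normalisations --- which, exactly as in \cite[Propositions 3.1 and 3.2]{Zha14} and using the unitarity of $\pi$, collapses to the stated $\tfrac{m}{\vol(F^{\times} \backslash \A^1_F)}$ times a positive constant.
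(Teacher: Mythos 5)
Your global skeleton --- compute $\Res_{s=1/2} Z(s,2s,\varphi_{\pi},\Phi)$ once via the pole of the Eisenstein series and once via the Euler factorisation, then reduce to a local identity at the places $v \in S$ --- is exactly the paper's, and that part is sound (the extra factor $\tfrac{1}{2}$ coming from $s_2 = 2s$ and the holomorphy of the local factors at $s = \tfrac{1}{2}$ are harmless, being absorbed into the unspecified positive constant). The gap is in the local step. You assert that $W \mapsto B(\tfrac{1}{2},1,W,\Phi_v)$ ``realises the Friedberg--Jacquet linear form, in particular it is $\Hgp_{m,m}(F_v)$-invariant'' by appeal to \cites{Mat15,Mat17}. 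No such invariance is available there: for a \emph{fixed} Schwartz--Bruhat function $\Phi_v$ and the fixed point $(s_1,s_2) = (\tfrac{1}{2},1)$, right translation by $J(1,h')$ followed by a change of variables replaces $\Phi_v(e_m g')\left|\det g'\right|$ by $\Phi_v(e_m g' h'^{-1})\left|\det g' h'^{-1}\right|$, so the functional is not visibly invariant, and in fact its invariance amounts (via the Iwasawa computation $B(\tfrac{1}{2},W,\Phi_v) = \int_{\Pgp_m(F_v) \backslash \GL_m(F_v)} \vartheta_v^{\sharp}\bigl(\pi_v(J(1,g'_v))W\bigr)\Phi_v(e_m g'_v)\,dg'_v$) to the $\Hgp_{m,m}(F_v)$-invariance of $\vartheta_v^{\sharp}$, i.e.\ to \hyperref[prop:FJdistinction]{Proposition \ref*{prop:FJdistinction}}. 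Likewise, your appeal to multiplicity one to conclude $B(\tfrac{1}{2},1,\cdot,\Phi_v) \propto \vartheta_v^{\sharp}$ presupposes that $\vartheta_v^{\sharp}$ itself lies in the (at most one-dimensional) space of $\Hgp_{m,m}(F_v)$-invariant functionals --- again \hyperref[prop:FJdistinction]{Proposition \ref*{prop:FJdistinction}} --- while your closing paragraph derives that invariance ``by comparing with the decomposition just obtained''; since the decomposition was obtained from the invariance you are trying to prove, the argument is circular. (Two smaller points: the newform evaluation via \hyperref[prop:BFKey]{Proposition \ref*{prop:BFKey}} only pins down the constant of proportionality once proportionality is known, and it, together with \hyperref[thm:FJperiod]{Theorem \ref*{thm:FJperiod}}, covers only the archimedean places; the nonarchimedean $v \in S$ require the results of \cite{Jo21}.)

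The paper breaks this circle by proving \hyperref[prop:FJdistinction]{Proposition \ref*{prop:FJdistinction}} \emph{first}, and globally: in the residue identity \eqref{BFResidue} one keeps $W_{\varphi_{\pi},v}$ and $\Phi_v$ free at $v \in S$, rewrites $B(\tfrac{1}{2},W_{\varphi_{\pi},v},\Phi_v)$ by the Iwasawa decomposition as an integral of $\vartheta_v^{\sharp}\bigl(\pi_v(J(1,k'_v))W_{\varphi_{\pi},v}\bigr)$ against $\Phi_v(z' e_m k')\left|z'\right|^m$, compares with the analogous expression for $\widehat{\Phi_v}(0)$ as $\Phi_v$ varies, and deduces --- since every smooth function on $K_{m,v}$ invariant under $\Pgp_{(m-1,1)}(F_v) \cap K_{m,v}$ arises this way, and $\vartheta_v^{\sharp}$ is already $\Pgp_{2m}(F_v) \cap \Hgp_{m,m}(F_v)$-invariant --- that $\vartheta_v^{\sharp}$ is invariant under $J(1,K_{m,v})$, hence under all of $\Hgp_{m,m}(F_v)$. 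Only then does the identity $B(\tfrac{1}{2},W,\Phi_v) = \vartheta_v^{\sharp}(W)\,\widehat{\Phi_v}(0)$ hold for \emph{all} $W \in \WW(\pi_v,\psi_v)$, which is what converts \eqref{BFResidue} into the stated formula for every pure tensor. If you wish to keep a multiplicity-one route, you must first extract the $\Hgp_{m,m}(F_v)$-invariance of the relevant local functionals from the global invariance of the period (or from a residue of the family $s \mapsto B(s,\cdot,\Phi_v)$ in the spirit of Matringe's distinction criterion), not from the value of $B$ at a single point with a fixed Schwartz--Bruhat function.
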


The proof requires \hyperref[prop:FJdistinction]{Proposition \ref*{prop:FJdistinction}}, which we first prove.

\begin{proof}[Proof of \text{\hyperref[prop:FJdistinction]{Proposition \ref*{prop:FJdistinction}}}]
The proof is inspired by work of Gelbart, Jacquet, and Rogawski \cite[p.\ 185--186]{GJR01}. The result follows from the global theory. To be more precise, we first observe that $\omega_{\pi_v}$ must be trivial, as $(\pi_v,V_{\pi_v})$ is $\Hgp_{m,m}(F_v)$-distinguished. The global Bump--Friedberg integral $Z(s,2s,\varphi_{\pi},\Phi)$ is Eulerian (cf. \cite[Theorem 4.4]{Mat15}), so that it is equal, up to multiplication by a positive constant dependent only on the normalisation of the measures $dg$ and $dg'$, to
\[
\prod_v B(s,W_{\varphi_{\pi},v},\Phi_v).
\]
We may choose a factorisable Schwartz--Bruhat function $\Phi \in \Scr(\A_F^n)$ such that $\Phi_v$ is the characteristic function of $\OO_v^m$ for all nonarchimedean places $v \notin S$, and a factorisable cuspidal automorphic form $\varphi_{\pi} \in V_{\pi}$ such that $W_{\varphi_{\pi},v}=W^{\circ}_{\pi_v}$ with respect to the unramified character $\psi_v$ for all nonarchimedean places $v \notin S$. Taking \eqref{EisenResidue} into account, we see that by taking the residue of $Z(s,2s,\varphi_{\pi},\Phi)$ at $s=1/2$,
\begin{multline}
\label{BFResidue}
\frac{\vol(F^{\times}\backslash \A^1_F)}{m} \widehat{\Phi}(0)\int\limits_{[\GL_m \times \GL_m]} \varphi_{\pi}(J(g,g')) \, dg \, dg' \\
=\Res_{s=1/2}\Lambda^S(s,\pi,\BF) \prod_{v \in S} B\left(\frac{1}{2},W_{\varphi_{\pi},v},\Phi_v\right).  
\end{multline}
We now consider $B(1/2,W_{\varphi_{\pi},v},\Phi_v)$ for each nonarchimedean place $v \in S$ and each archimedean place $v$. On the one hand, we use the Iwasawa decomposition, and then evaluate the resulting integral over $\Ngp_m(F_v) \backslash \GL_m(F_v) \ni g_v$ and $\Ngp_m(F_v) \backslash \Pgp_m(F_v) \ni p'_v$, leading to the identity
\[
B\left(\frac{1}{2},W_{\varphi_{\pi},v},\Phi_v\right) = \int_{K_{m,v}}\int_{F_v^{\times}} \vartheta_v^{\sharp}(\pi_v(J(1,k'_v))W_{\varphi_{\pi},v}) \Phi_v(z'_v e_m k'_v)  \left| z'_v \right|^m_v \, d^{\times}z'_v \, dk'_v.
\]
On the other had, via the Iwasawa decomposition once more,
\[\widehat{\Phi_v}(0) = \int_{F_v^n} \Phi_v(x_v) \, dx_v =\int\limits_{\Pgp_m(F_v)  \backslash \GL_m(F_v)}  \Phi_v(e_m g'_v) \, dg'_v = \int_{K_{m,v}} \int_{F_v^{\times}} \Phi_v(z'_v e_m k'_v) \left| z'_v \right|^m_v \, d^{\times}z'_v \, dk'_v.
\]
Comparing the integrals over $K_{m,v} \ni k'_v$ and $F_v^{\times} \ni z'_v$ of both sides of \eqref{BFResidue}, there should be a linear functional $\gamma^{\sharp}$ on the Whittaker model $\WW(\pi_v,\psi_v)$ such that
\begin{multline*}
\int_{K_{m,v}}\int_{F_v^{\times}} \vartheta_v^{\sharp}(\pi_v(J(1,k'_v))W_{\varphi_{\pi},v}) \Phi_v(z'_v e_m k'_v)  \left| z'_v \right|^m_v \, d^{\times}z'_v \, dk'_v \\
= \gamma_v^{\sharp}(W_{\varphi_{\pi},v})\int_{K_{v}}\int_{F_v^{\times}} \Phi_v(z'_v e_m k'_v) \left| z'_v \right|^m_v \, d^{\times}z'_v \, dk'_v
\end{multline*}
for any Whittaker function $W_{\varphi_{\pi},v} \in \WW(\pi_v,\psi_v)$ and any Schwartz--Bruhat function $\Phi_v \in \Scr(F_v^m)$. Each $\Phi_v \in \Scr(F^m_v)$ defines a smooth function on $\GL_n(F_v)$, left invariant by $\Pgp_m(F_v)$, via $g \mapsto \Phi(e_mg)$. This implies that any smooth function $f$ on $K_{m,v}$ that is invariant under $\Pgp_{(m-1,1)}(F_v) \cap K_{m,v}$ satisfies
\[
\int_{K_{m,v}} \int_{F_v^{\times}} \vartheta_v^{\sharp}(\pi_v(J(1,k'_v))W_{\varphi_{\pi},v}) f(k'_v)  \left| z'_v \right|^m_v \, d^{\times}z'_v \, dk'_v = \gamma_v^{\sharp}(W_{\varphi_{\pi},v})\int_{K_{m,v}} f(k'_v) \, dk'_v.
\]
We can move one step further to claim that the same relation holds for all smooth functions $f$ on $K_{m,v}$, as $\vartheta_v^{\sharp}$ is a $\Pgp_{2m}(F_v) \cap \Hgp_{m,m}(F_v)$-invariant form. It follows that $\gamma^{\sharp}_v = \theta^{\sharp}_v$ and that $\vartheta_v^{\sharp}$ is invariant under $\{ J(1,k'_v) : k'_v \in K_{m,v} \}$. In summary,  $\vartheta_v^{\sharp}$ is invariant under $\Hgp_{m,m}(F_v)$.
\end{proof}

\begin{proof}[Proof of  \text{\hyperref[prop:GlobalFJdistinction]{Proposition \ref*{prop:GlobalFJdistinction}}}]
We only deal with the case when $(\pi,V_{\pi})$ affords a $\Hgp_{m,m}(\A_F)$-period, for otherwise the desired identity is trivially true as both sides are equal to zero. Evidently, $(\pi_v,V_{\pi_v})$ is $\Hgp_{m,m}(F_v)$-distinguished, which can only occur provided that $\omega_{\pi_v}$ is trivial. We may choose a factorisable Schwartz--Bruhat function $\Phi \in \Scr(\A_F^n)$ such that $\Phi_v$ is the characteristic function of $\OO_v^m$ for all nonarchimedean places $v \notin S$, so that $\widehat{\Phi_v}(0)=1$, and a factorisable cuspidal automorphic form $\varphi_{\pi} \in V_{\pi}$ such that $W_{\varphi_{\pi},v}=W^{\circ}_{\pi_v}$ with respect to the unramified character $\psi_v$ for all $v \notin S$. Recalling \eqref{BFResidue}, we deduce that it is sufficient to show that $B(1/2,W_{\varphi_{\pi},v},\Phi_v)=\vartheta_v^{\sharp}(W_{\varphi_{\pi},v}) \widehat{\Phi_v}(0)$ for each nonarchimedean place $v \in S$ and each archimedean place $v$. To that end, we expand
\[
B\left(\frac{1}{2},W_{\varphi_{\pi},v},\Phi_v\right) = \int\limits_{\Ngp_m(F_v) \backslash \GL_m(F_v)}  \int\limits_{\Ngp_m(F_v) \backslash \GL_m(F_v)} W_{\varphi_{\pi},v}(J(g_v,g'_v)) \Phi_v(e_m g'_v)  \left|\det g'_v\right|_v \, dg_v \, dg'_v 
\]
as a triple integral, and then rewrite it in terms of $\vartheta_v^{\sharp}$ (cf.\ \cite[\S 4]{AKT04}), yielding
\begin{multline*}
\int\limits_{\Pgp_m(F_v) \backslash \GL_m(F_v)} \int\limits_{\Ngp_m(F_v) \backslash \Pgp_m(F_v)}  \int\limits_{\Ngp_m(F_v) \backslash \GL_m(F_v)} W_{\varphi_{\pi},v}(J(g_v,p'_vg'_v)) \Phi_v(e_m g'_v)  \, dg_v \, dp'_v \, dg'_v \\
= \int\limits_{\Pgp_m(F_v)  \backslash \GL_m(F_v)}   \vartheta_v^{\sharp}(\pi_v(J(1,g'_v))W_{\varphi_{\pi},v})  \Phi_v(e_m g'_v) \, dg'_v.
\end{multline*}
But we know from \hyperref[prop:FJdistinction]{Proposition \ref*{prop:FJdistinction}} that $ \vartheta_v^{\sharp}$ is $\Hgp_{m,m}(F_v)$-invariant. Then
\[B\left(\frac{1}{2},W_{\varphi_{\pi},v},\Phi_v\right) = \vartheta_v^{\sharp}(W_{\varphi_{\pi},v}) \int\limits_{\Pgp_m(F_v)  \backslash \GL_m(F_v)}  \Phi_v(e_m g'_v) \, dg'_v = \vartheta_v^{\sharp}(W_{\varphi_{\pi},v}) \widehat{\Phi_v}(0).\qedhere\]
\end{proof}

\begin{remark}[Vanishing of periods]
For $n=2m+1$ odd, it is straightforward that the residue of $Z(s,2s,\varphi^{\circ}_{\pi},\Phi)$ at $s=1$ is
\[
\frac{\vol(F^{\times}\backslash \A^1_F)}{m+1} \widehat{\Phi}(0)\int\limits_{[\GL_{m + 1} \times \GL_m]} \varphi^{\circ}_{\pi}(J(g,g')) \, dg \, dg'.
\]
Unfortunately, this period integral is known to vanish as a consequence of \cite[\S 2.1 and Proposition 2.1]{FJ93} (cf.\ \cite[p.\ 594]{Mat15}).
\end{remark}

For $\varphi^{\circ}_{\pi}$ the global newform, the global Friedberg--Jacquet period can be explicitly evaluated so that it is \emph{Eulerian}.

\begin{theorem}
\label{thm:globalFJperiod}
Let $(\pi,V_{\pi})$ be a unitary cuspidal automorphic representation of $\GL_{2m}(\A_F)$ with global newform $\varphi^{\circ}_{\pi} \in V_{\pi}$ for which the central character is trivial. Then the global Friedberg--Jacquet period
\begin{equation}
\label{eqn:globalFJperiod}
\int\limits_{[\GL_m \times \GL_m]} \varphi^{\circ}_{\pi}(J(g,g')) \, dg \, dg'
\end{equation}
is equal, up to multiplication by a positive constant dependent only on the normalisation of the measures $dg$ and $dg'$, to
\[
\frac{mD^{m(m-1)}_{F \slash \Q}}{\widehat{\Phi}(0) \vol(F^{\times} \backslash \A^1_F)}  \Res_{s=1/2}\Lambda^S(s,\pi,\BF)  \prod_{v\in S} L\left(\frac{1}{2},\pi_{v,\ur}, \BF\right).
\]
\end{theorem}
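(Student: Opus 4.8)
The plan is to realise the global Friedberg--Jacquet period as a residue of the global Bump--Friedberg integral, in the same spirit as the deductions of Theorems \ref{thm:RSGlobalSV} and \ref{thm:globalFRperiod} from their global integrals. Since the period vanishes for $n$ odd, restrict to $n = 2m$, let $\Phi \in \Scr(\A_F^m)$ be the weak test vector produced by Theorem \ref{thm:globalBF}, and consider $Z(s,2s,\varphi_\pi^\circ,\Phi)$ along the line $s_2 = 2s_1$. Because the central character $\omega_\pi$ is trivial, the Eisenstein series $E(g',2s;\Phi,\omega_\pi)$ in \eqref{eqn:globalBFinteven} has a simple pole at $s = \tfrac12$ with \emph{constant} residue $\tfrac12\cdot\tfrac{\vol(F^\times \backslash \A_F^1)}{m}\widehat{\Phi}(0)$ by \eqref{EisenResidue}, while the factor $\left|\det g/\det g'\right|_{\A_F}^{s - 1/2}$ becomes $1$ at $s = \tfrac12$; hence $\Res_{s = 1/2} Z(s,2s,\varphi_\pi^\circ,\Phi)$ is this constant times the Friedberg--Jacquet period \eqref{eqn:globalFJperiod}. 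The interchange of the residue with the conditionally convergent double integral over $[\GL_m \times \GL_m]$ is precisely the manipulation underlying \eqref{BFResidue} in the proof of Proposition \ref{prop:GlobalFJdistinction}, and is legitimate because $\varphi_\pi^\circ$ is a rapidly decreasing cusp form.

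On the other side, Theorem \ref{thm:globalBF} identifies $Z(s,2s,\varphi_\pi^\circ,\Phi)$, up to a positive constant depending only on the measure normalisations, with $D_{F/\Q}^{m(s - 1/2) + 2m(m-1)s}$ times the completed na\"ive Bump--Friedberg $L$-function $\Lambda(s,\pi_{\ur})\,\Lambda(2s,\pi_{\ur},\wedge^2)$. At $s = \tfrac12$ the exponent of $D_{F/\Q}$ equals $m(m-1)$; the factor $\Lambda(s,\pi_{\ur}) = \Lambda(s,\pi)$ (Proposition \ref{prop:piurexists}) is holomorphic and nonvanishing there, since $\pi$ is cuspidal on $\GL_{2m}$ with $2m \geq 2$; and the pole originates solely from $\Lambda(2s,\pi_{\ur},\wedge^2)$. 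Equating the two expressions for $\Res_{s = 1/2} Z(s,2s,\varphi_\pi^\circ,\Phi)$ and cancelling the harmless factors of $\tfrac12$, one obtains the Friedberg--Jacquet period as a positive-constant multiple of $\frac{m D_{F/\Q}^{m(m-1)}}{\widehat{\Phi}(0)\,\vol(F^\times \backslash \A_F^1)}\,\Lambda(\tfrac12,\pi_{\ur})\,\Res_{w = 1}\Lambda(w,\pi_{\ur},\wedge^2)$.

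Finally I would rewrite the completed $L$-values in the partial-times-local form of the statement. Splitting off the finite set $S$ and using $\pi_{v,\ur} = \pi_v$ together with $L(s,\pi_v) = L(s,\pi_{v,\ur})$ for $v \notin S$ gives $\Lambda(\tfrac12,\pi_{\ur}) = \Lambda^S(\tfrac12,\pi)\prod_{v \in S} L(\tfrac12,\pi_{v,\ur})$ and $\Lambda(w,\pi_{\ur},\wedge^2) = \Lambda^S(w,\pi,\wedge^2)\prod_{v \in S} L(w,\pi_{v,\ur},\wedge^2)$; the Jacquet--Shalika bounds for the unitary cuspidal $\pi$ guarantee that $L(1,\pi_{v,\ur},\wedge^2)$ is finite and nonzero for each $v \in S$, so taking the residue at $w = 1$, recombining $L(\tfrac12,\pi_{v,\ur}) L(1,\pi_{v,\ur},\wedge^2) = L(\tfrac12,\pi_{v,\ur},\BF)$, and using $\Res_{s=1/2}\Lambda^S(s,\pi,\BF) = \Lambda^S(\tfrac12,\pi)\cdot\tfrac12\Res_{w=1}\Lambda^S(w,\pi,\wedge^2)$ turns the above quantity into $\Res_{s=1/2}\Lambda^S(s,\pi,\BF)\prod_{v \in S} L(\tfrac12,\pi_{v,\ur},\BF)$, as required. (Alternatively, one could apply Proposition \ref{prop:GlobalFJdistinction} to $\varphi_\pi^\circ$ and evaluate each $\vartheta_v^\sharp$ via Theorem \ref{thm:FJperiod}, but this route incurs the extra local denominators $L(\tfrac n2,\omega_{\pi_{v,\ur}})$, so the residue computation above is cleaner.) I do not foresee a genuine obstacle: the argument is the same residue extraction already used for the Rankin--Selberg and Flicker--Rallis periods, and the only points demanding care are the tracking of the $D_{F/\Q}$-powers coming from the global newform normalisation — already encapsulated in Theorem \ref{thm:globalBF} — and the bookkeeping between partial and completed $L$-functions, with conditional convergence at the Friedberg--Jacquet point handled exactly as in Proposition \ref{prop:GlobalFJdistinction}.
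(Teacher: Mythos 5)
Your proposal is correct and takes essentially the same route as the paper: the Friedberg--Jacquet period is extracted as the residue at $s=\tfrac12$ of $Z(s,2s,\varphi_{\pi}^{\circ},\Phi)$ via \eqref{EisenResidue}, and the other side is identified through Theorem \ref{thm:globalBF}~(i), with your tracking of the factors of $\tfrac12$ and of the partial-versus-completed $L$-functions being harmless since the identity is asserted only up to a positive constant. The one overstatement is the claimed nonvanishing of $\Lambda(s,\pi_{\ur})$ at $s=\tfrac12$, which is neither known nor needed: holomorphy of $\Lambda(s,\pi_{\ur})$ at $s=\tfrac12$ and of the local factors $L(\tfrac12,\pi_{v,\ur})$, $L(1,\pi_{v,\ur},\wedge^2)$ for $v\in S$ suffices for the residue factorisation, and if $\Lambda(\tfrac12,\pi)=0$ both sides simply vanish.
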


\begin{proof}
From \eqref{EisenResidue}, the residue of $Z(s,2s,\varphi_{\pi}^{\circ},\Phi)$ at $s=1/2$ is 
\[
\frac{\vol(F^{\times}\backslash \A^1_F)}{m} \widehat{\Phi}(0) \int\limits_{[\GL_m \times \GL_m]} \varphi^{\circ}_{\pi}(J(g,g')) \, dg \, dg'.
\]
The result then follows from \hyperref[thm:globalBF]{Proposition \ref*{thm:globalBF}} \emph{\ref{globalBF1}}.
\end{proof}

\begin{remark}[Jacquet--Shalika Periods]
We let $\MM_m(F)$ denote the set of $m \times m$ matrices and $\NN_m(F)$ denote the subset of upper triangular matrices. The Shalika subgroup $\Sgp_{2m}(F)$ is defined to be
 \[
\Sgp_{2m}(F)\coloneqq   \left\{ \begin{pmatrix} 1_m & X \\0 & 1_m \end{pmatrix} \begin{pmatrix} g &0 \\ 0& g \end{pmatrix} :  X \in \MM_m(F), \ g \in \GL_m(F) \right\}.
 \]
We fix a nontrivial Shalika character $\Theta$ on $\Sgp_{2m}(F)$ such that
\[
\Theta \left( \begin{pmatrix} 1_m & X \\0 & 1_m \end{pmatrix} \begin{pmatrix} g &0 \\ 0& g \end{pmatrix}  \right)\coloneqq\psi(\Tr X).
\]
We say that a cuspidal automorphic representation $(\pi,V_{\pi})$ of $\GL_{2m}(\A_F)$ admits a \emph{Jacquet--Shalika period} (or $(\Sgp_{2m}(\A_F),\Theta)$-period) \cite[(11)]{Duh19} if there exists a cuspidal automorphic form $\varphi_{\pi} \in V_{\pi}$ for which
\[
 \int_{[\GL_m]} \int\limits_{\MM_m(F) \backslash \MM_m(\A_F)}   \varphi_{\pi}  \left( \begin{pmatrix} 1_m & X \\0 & 1_m \end{pmatrix} \begin{pmatrix} g &0 \\ 0& g \end{pmatrix} \right) \overline{\psi}(\Tr X)  \, dX \, dg \neq 0.
\] 
Such a representation is referred to be \emph{$(\Sgp_{2m}(\A_F),\Theta)$-distinguished}. The celebrated result of Jacquet and Shalika \cite{JS90} shows that the partial global exterior square $L$-function $\Lambda^S(s,\pi,\wedge^2)$
has a pole at $s=1$ if and only if $\omega_{\pi}$ is trivial and $(\pi,V_{\pi})$ admits a $(\Sgp_{2m}(\A_F),\Theta)$-period. For the rest of the discussion, we assume that $(\pi,V_{\pi})$ is unitary such that $(\pi,V_{\pi})$ affords the $(\Sgp_{2m}(\A_F),\Theta)$-period. We analogously define a local Shalika functional $ \vartheta_v^{\dagger} :\WW(\pi_v,\psi_v) \rightarrow \C$ by
\[
\vartheta_v^{\dagger}(W_{\pi_v}) \coloneqq \int\limits_{\Ngp_m(F_v) \backslash \Pgp_m(F_v)} \int\limits_{\NN_m(F_v) \backslash \MM_m(F_v)} W_{\pi_v}  \left( w_{m,m} \begin{pmatrix} 1_m & X_v \\ 0 & 1_m  \end{pmatrix} \begin{pmatrix} p_v & 0 \\ 0 & p_v \end{pmatrix} \right) \overline{\psi}(\Tr X_v) \, dX_v \, dp_v.
\]
The linear functional $\vartheta_v^{\dagger}$ is \emph{a priori} ($\Pgp_{2m}(F_v) \cap  \Sgp_{2m}(F_v),\Theta_v)$-quasi-invariant. Thankfully, it can be extended to a $(\Sgp_{2m}(F_v),\Theta_v)$-quasi-invariant functional (cf.\ the proof of \hyperref[prop:FJdistinction]{Proposition \ref*{prop:FJdistinction}} and \cite[Lemma 5.4]{Duh19}). Via unfolding \cite[Proposition 6.5]{JS90}, the Eulerian factorisation of the Jacquet--Shalika period \cite[(18)]{Duh19} can be deduced, with some changes of notations, just as in the proof of
\hyperref[prop:GlobalFJdistinction]{Proposition \ref*{prop:GlobalFJdistinction}}:
\[\int_{[\GL_m]} \int\limits_{\MM_m(F) \backslash \MM_m(\A_F)}  \varphi_{\pi}  \left( \begin{pmatrix} 1_m & X \\ 0& 1_m \end{pmatrix} \begin{pmatrix} g &0 \\0 & g \end{pmatrix} \right) \overline{\psi}(\Tr X) \, dX \, dg\]
is equal, up to multiplication by a positive constant dependent only on the normalisation of the measures $dg$ and $dX$, to
\[
\frac{m}{ \vol(F^{\times} \backslash \A^1_F)} \Res_{s = 1} \Lambda^S(s,\pi,\wedge^2) \prod_{v\in S} \vartheta_v^{\dagger}(W_{\pi_v}),
\]
where $S$ contains the archimedean places. When $v$ is an archimedean place, so that $F_v \in \{\R,\C\}$ is an archimedean local field, it is our belief that the Jacquet--Shalika period integral involving unramified data is \emph{no longer} a nonzero polynomial multiple of the exterior square $L$-function $L(s,\pi_v,\wedge^2)$, as mentioned in \hyperref[sec:Intro-Comparision]{Section \ref*{sec:Intro-Comparision}}. This phenomenon can even be observed for a spherical induced representation of Langlands type $\pi_v$ of $\GL_2(F_v)$ from the identity
\[
\int_{F_v^{\times}}  W^{\circ}_{\pi_v} \begin{pmatrix} z_v &0 \\ 0& z_v \end{pmatrix} \Phi_{\ur}(z_v) |z_v|_v^s \,  d^{\times} z_v = L(s,\pi_v,\wedge^2 ) \frac{1}{4\pi i} \int_{\sigma - i\infty}^{\sigma + i\infty} L(w,\pi_v) \, dw.
\]
\end{remark}

\begin{bibdiv}
\begin{biblist}

\bib{AKT04}{article}{
author={Anandavardhanan, U. K.},
author={Kable, Anthony C.},
author={Tandon, R.},
title={\href{https://doi.org/10.1090/S0002-9939-04-07424-6}{Distinguished representations and poles of twisted tensor
		$L$-functions}},
journal={Proc. Amer. Math. Soc.},
volume={132},
date={2004},
number={10},
pages={2875--2883},
}

\bib{AM17}{article}{
author={Anandavardhanan, U. K.},
author={Matringe, Nadir},
title={\href{https://doi.org/10.1515/forum-2016-0169}{Test vectors for local periods}},
journal={Forum Math.},
volume={29},
date={2013},
number={6},
pages={1245--1260},
}

\bib{AH12}{book}{
author={Atkinson, Kendall},
author={Han, Weimin},
title={\href{https://doi.org/10.1007/978-3-642-25983-8}{Spherical harmonics and approximations on the unit sphere: an
introduction}},
series={Lecture Notes in Mathematics},
volume={2044},
publisher={Springer, Heidelberg},
date={2012},
pages={x+244},
}

\bib{Bar03}{article}{
author={Baruch, Ehud Moshe},
title={\href{https://doi.org/10.4007/annals.2003.158.207}{A proof of Kirillov's conjecture}},
journal={Ann. of Math. (2)},
volume={158},
date={2003},
number={1},
pages={207--252},
}

\bib{B-P21}{article}{
author={Beuzart-Plessis, Rapha\"{e}l},
title={\href{https://doi.org/10.1007/978-3-030-68506-5_1}{Archimedean theory and $\e$-factors for the Asai Rankin--Selberg integrals}},
conference={
title={Relative Trace Formulas},
},
book={
series={Simons Symposia},
publisher={Springer, Cham},
},
date={2021},
pages={1--50},
}

\bib{BKL20}{article}{
author={Booker, Andrew R.},
author={Krishnamurthy, M.},
author={Lee, Min},
title={\href{https://doi.org/10.1016/j.jnt.2019.08.010}{Test vectors for Rankin--Selberg $L$-functions}},
journal={J. Number Theory},
volume={209},
date={2020},
pages={37--48},
}

\bib{Bum89}{article}{
author = {Bump, Daniel},
title = {\href{https://doi.org/10.1016/B978-0-12-067570-8.50012-3}{The Rankin--Selberg method: a survey}},
conference={
title = {Number theory, trace formulas and discrete groups},
address = {Oslo},
date = {1987},
},
book = {
publisher = {Academic Press, Boston, MA},
},
date = {1989},
pages = {49--109},
}

\bib{BF90}{article}{
   author={Bump, Daniel},
   author={Friedberg, Solomon},
   title={The exterior square automorphic $L$-functions on $\GL(n)$},
   conference={
      title={Festschrift in honor of I. I. Piatetski-Shapiro on the occasion
      of his sixtieth birthday, Part II},
      address={Ramat Aviv},
      date={1989},
   },
   book={
      series={Israel Math. Conf. Proc.},
      volume={3},
      publisher={Weizmann, Jerusalem},
   },
   date={1990},
   pages={47--65},
  }

\bib{Cas89}{article}{
author={Casselman, W.},
title={\href{https://doi.org/10.4153/CJM-1989-019-5}{Canonical extensions of Harish-Chandra modules to representations of $G$}},
journal={Canad. J. Math.},
volume={41},
date={1989},
number={3},
pages={385--438},
}

\bib{CS80}{article}{
author={Casselman, W.},
author={Shalika, J.},
title={\href{http://www.numdam.org/item/?id=CM_1980__41_2_207_0}{The unramified principal series of p-adic groups. II. The Whittaker function}},
journal={Compositio Math.},
volume={41},
date={1980},
number={2},
pages={207--231},
}

\bib{CPS04}{article}{
   author={Cogdell, James W.},
   author={Piatetski-Shapiro, Ilya I.},
   title={Remarks on Rankin-Selberg convolutions},
   conference={
      title={Contributions to automorphic forms, geometry, and number
      theory},
   },
   book={
      publisher={Johns Hopkins Univ. Press, Baltimore, MD},
   },
   date={2004},
   pages={255--278},
}

\bib{Duh19}{article}{
author={Duhamel,Nicolas},
      year={2019},
      title={\href{https://arxiv.org/abs/1912.08497}{Formule de Plancherel sur $GL_n \times GL_n \backslash GL_{2n}$}}, 
      journal={preprint},
date={2019},
pages={44 pages},
}

\bib{FLO}{article}{
   author={Feigon, Brooke},
   author={Lapid, Erez},
   author={Offen, Omer},
   title={\href{https://doi.org/10.1007/s10240-012-0040-z}{On representations distinguished by unitary groups}},
   journal={Publ. Math. Inst. Hautes \'{E}tudes Sci.},
   volume={115},
   date={2012},
   pages={185--323},
 }

\bib{Fli88}{article}{
   author={Flicker, Yuval Z.},
   title={\href{https://doi.org/10.24033/bsmf.2099}{Twisted tensors and Euler products}},
   journal={Bull. Soc. Math. France},
   volume={116},
   date={1988},
   number={3},
   pages={295--313},
}

\bib{Fli93}{article}{
   author={Flicker, Yuval Z.},
   title={\href{https://doi.org/10.1007/BF01459497}{On zeroes of the twisted tensor $L$-function}},
   journal={Math. Ann.},
   volume={297},
   date={1993},
   number={2},
   pages={199--219},
  }
  
  \bib{FZ95}{article}{
   author={Flicker, Yuval Z.},
   author={Zinoviev, Dmitrii},
   title={\href{http://projecteuclid.org/euclid.pja/1195510651}{On poles of twisted tensor $L$-functions}},
   journal={Proc. Japan Acad. Ser. A Math. Sci.},
   volume={71},
   date={1995},
   number={6},
   pages={114--116},
}

\bib{FJ93}{article}{
author={Friedberg, Solomon},
author={Jacquet, Herv\'{e}},
title={\href{https://doi.org/10.1515/crll.1993.443.91}{Linear periods}},
journal={J. Reine Angew. Math.},
volume={443},
date={1993},
pages={91--139},
}

\bib{GJR01}{article}{
author={Gelbart, Stephen},
author={Jacquet, Herv\'{e}},
author={Rogawski, Jonathan},
title={\href{https://doi.org/10.1007/BF02784154}{Generic representations for the unitary group in three variables}},
journal={Israel J. Math.},
volume={126},
date={2001},
pages={173--237},
}

\bib{GLO08}{article}{
author={Gerasimov, A.},
author={Lebedev, D.},
author={Oblezin, S.},
title={\href{https://doi.org/10.1007/s00220-008-0547-9}{Baxter operator and archimedean Hecke algebra}},
journal={Comm. Math. Phys.},
volume={284},
date={2008},
pages={867--896},
}

\bib{GJ72}{book}{
author={Godement, Roger},
author={Jacquet, Herv\'{e}},
title={\href{https://doi.org/10.1007/BFb0070263}{Zeta functions of simple algebras}},
series={Lecture Notes in Mathematics, Vol. 260},
publisher={Springer-Verlag, Berlin-New York},
date={1972},
pages={ix+188},
}

\bib{HIM21}{article}{
author={Hirano, Miki},
author={Ishii, Taku},
author={Miyazaki, Tadashi},
title={\href{https://arxiv.org/abs/2104.05042v1}{Archimedean zeta integrals for $\GL(3) \times \GL(2)$}},
journal={to appear in Memoirs of the American Mathematical Society},
date={2021},
pages={126 pages}
}

\bib{How89}{article}{
author={Howe, Roger},
title={\href{https://doi.org/10.2307/1990942}{Transcending classical invariant theory}},
journal={J. Amer. Math. Soc.},
volume={2},
date={1989},
number={3},
pages={535--552},
}

\bib{Hum20a}{article}{
author={Humphries, Peter},
title={\href{https://arxiv.org/abs/2008.12406v1}{Archimedean newform theory for $\GL_n$}},
journal={preprint},
date={2020},
pages={56 pages},
}

\bib{Hum20b}{article}{
author={Humphries, Peter},
title={\href{https://arxiv.org/abs/2009.08571v1}{The newform $K$-type and $p$-adic spherical harmonics}},
journal={preprint},
date={2020},
pages={22 pages},
}

\bib{Hum21}{article}{
author={Humphries, Peter},
title={\href{https://doi.org/10.1112/blms.12401}{Test vectors for non-Archimedean Godement--Jacquet zeta integrals}},
journal = {Bull. Lond. Math. Soc.},
volume = {53},
date = {2021},
number = {1},
pages = {92--99},
}

\bib{Ish18}{article}{
author={Ishii, Taku},
title={\href{https://doi.org/10.1016/j.jnt.2017.10.007}{Archimedean zeta integrals for the exterior square $L$-functions on $\GL_n$}},
journal={J. Number Theory},
volume={186},
date={2018},
pages={304--345},
}

\bib{IM20}{article}{
author={Ishii, Taku},
author={Miyazaki, Tadashi},
title={\href{https://arxiv.org/abs/2006.04095v1}{Calculus of archimedean Rankin--Selberg integrals with recurrence relations}},
journal={preprint},
date={2020},
pages={52 pages}
}

\bib{IS13}{article}{
author={Ishii, Taku},
author={Stade, Eric},
title={\href{https://doi.org/10.1007/s00229-012-0581-y}{Archimedean zeta integrals on $\GL_n \times \GL_m$ and $\SO_{2n + 1} \times \GL_m$}},
journal={Manuscripta Math.},
volume={141},
date={2013},
number={3},
pages={485--536},
}

\bib{Jac72}{book}{
author={Jacquet, Herv\'{e}},
title={\href{https://doi.org/10.1007/BFb0058503}{Automorphic forms on $\GL(2)$. Part II}},
series = {Lecture Notes in Mathematics, Vol. 278},
publisher = {Springer-Verlag, Berlin-New York},
date = {1972},
pages = {xiii+142},
}

\bib{Jac09}{article}{
author={Jacquet, Herv\'{e}},
title={\href{https://doi.org/10.1090/conm/489/09547}{Archimedean Rankin--Selberg integrals}},
conference={
title={Automorphic forms and $L$-functions II. Local aspects},
},
book={
series={Contemp. Math.},
volume={489},
publisher={Amer. Math. Soc., Providence, RI},
},
date={2009},
pages={57--172},
}

\bib{JP-SS81}{article}{
author={Jacquet, H.},
author={Piatetski-Shapiro, I. I.},
author={Shalika, J.},
title={\href{https://doi.org/10.1007/BF01450798}{Conducteur des repr\'{e}sentations du groupe lin\'{e}aire}},
language={French},
journal={Math. Ann.},
volume={256},
date={1981},
number={2},
pages={199--214},
}

\bib{JP-SS83}{article}{
author={Jacquet, H.},
author={Piatetski-Shapiro, I. I.},
author={Shalika, J.},
title={\href{https://doi.org/10.2307/2374264}{Rankin--Selberg convolutions}},
journal={Amer. J. Math.},
volume={105},
date={1983},
number={2},
pages={367--464},
}

\bib{JS81a}{article}{
author={Jacquet, H.},
author={Shalika, J. A.},
title={\href{https://doi.org/10.2307/2374103}{On Euler products and the classification of automorphic
representations. I}},
journal={Amer. J. Math.},
volume={103},
date={1981},
number={3},
pages={499--558},
}

\bib{JS81b}{article}{
   author={Jacquet, H.},
   author={Shalika, J. A.},
   title={\href{https://doi.org/10.2307/2374050}{On Euler products and the classification of automorphic forms. II}},
   journal={Amer. J. Math.},
   volume={103},
   date={1981},
   number={4},
   pages={777--815},
   }

\bib{JS85}{article}{
author={Jacquet, Herv\'{e}},
author={Shalika, Joseph},
title={\href{https://doi.org/10.1007/BF01456070}{A lemma on highly ramified $\varepsilon$-factors}},
journal={Math. Ann.},
volume={271},
date={1985},
number={3},
pages={319--332},
}

\bib{JS90}{article}{
author={Jacquet, Herv\'{e}},
author={Shalika, Joseph},
title={Exterior square $L$-functions},
conference={
title={Automorphic forms, Shimura varieties, and $L$-functions, Vol.
II},
address={Ann Arbor, MI},
date={1988},
},
book={
series={Perspect. Math.},
volume={11},
publisher={Academic Press, Boston, MA},
},
date={1990},
pages={143--226},
}

\bib{Jo21}{article}{
author={Jo, Yeongseong},
title={\href{https://arxiv.org/abs/2106.08234v4}{The local period integrals and essential vectors}},
journal={preprint},
date={2021},
pages={28 pages}
}

\bib{Kem15a}{article}{
author={Kemarsky, Alexander},
title={\href{https://doi.org/10.1007/s11856-015-1179-3}{Distinguished representations of $\GL_n(\C)$}},
journal={Israel J. Math.},
volume={207},
date={2015},
number={1},
pages={435--448},
}

\bib{Kem15b}{article}{
author={Kemarsky, Alexander},
title={\href{https://doi.org/10.2140/pjm.2015.278.137}{Gamma factors of distinguished representations of $\GL_n(\C)$}},
journal={Pacific J. Math.},
volume={278},
date={2015},
number={1},
pages={137--172},
}

\bib{Kim10}{thesis}{
author = {Kim, Kyung-Mi},
title = {\href{https://search.proquest.com/docview/762744123/}{Test vectors of Rankin--Selberg convolutions for general linear groups}},
note = {Thesis (Ph.D.)--The Ohio State University},
publisher = {ProQuest LLC, Ann Arbor, MI},
year = {2010},
pages = {125 pages},
}

\bib{Kna94}{article}{
author={Knapp, A. W.},
title={\href{https://doi.org/10.1090/pspum/055.2}{Local Langlands correspondence: the Archimedean case}},
conference={
title={Motives},
address={Seattle, WA},
date={1991},
},
book={
series={Proc. Sympos. Pure Math.},
volume={55},
publisher={Amer. Math. Soc., Providence, RI},
},
date={1994},
pages={393--410},
}

\bib{Mat13}{article}{
author={Matringe, Nadir},
title={\href{https://www.math.uni-bielefeld.de/documenta/vol-18/37.pdf}{Essential Whittaker functions for $\GL(n)$}},
journal={Doc. Math.},
volume={18},
date={2013},
pages={1191--1214},
}

\bib{Mat15}{article}{
author={Matringe, Nadir},
title={\href{https://doi.org/10.4153/CMB-2015-014-1}{A specialisation of the Bump--Friedberg $L$-function}},
journal={Canad. Math. Bull.},
volume={58},
date={2015},
number={3},
pages={580--595},
}

\bib{Mat17}{article}{
author={Matringe, Nadir},
title={\href{https://doi.org/10.1007/s00229-016-0860-0}{On the local Bump--Friedberg $L$-function II}},
journal={Manuscripta Math.},
volume={152},
date={2017},
pages={223--240},
}

\bib{Miy14}{article}{
author = {Miyauchi, Michitaka},
title = {\href{https://doi.org/10.2969/jmsj/06610017}{Whittaker functions associated to newforms for $\GL(n)$ over $p$-adic fields}},
journal = {J. Math. Soc. Japan},
volume = {66},
date = {2014},
number = {1},
pages = {17--24},
}

\bib{MY13}{article}{
author = {Miyauchi, Michitaka},
author = {Yamauchi, Takuya},
title = {\href{https://doi.org/10.1142/S179304211350070X}{Local newforms and formal exterior square $L$-functions}},
journal = {Int. J. Number Theory},
volume = {9},
date = {2013},
number = {8},
pages = {1995--2010},
}

\bib{Miy18}{article}{
author = {Miyazaki, Tadashi},
title = {\href{https://doi.org/10.3792/pjaa.94.1}{The local zeta integrals for $\GL(2,\C) \times \GL(2,\C)$}},
journal = {Proc. Japan Acad. Ser. A Math. Sci.},
volume = {94},
date = {2018},
number = {1},
pages = {1--6},
}

\bib{Rud08}{book}{
author={Rudin, Walter},
title={\href{https://doi.org/10.1007/978-3-540-68276-9}{Function theory in the unit ball of $\C^n$}},
series={Classics in Mathematics},
publisher={Springer-Verlag, Berlin},
date={2008},
pages={xiv+436},
}

\bib{Sak12}{article}{
author={Sakellaridis, Yiannis},
title={\href{https://doi.org/10.2140/ant.2012.6.611}{Spherical varieties and integral representations of $L$-functions}},
journal={Algebra Number Theory},
volume={6},
date={2012},
number={4},
pages={611--667},
}

\bib{Shi76}{article}{
author={Shintani, Takuro},
title={\href{https://doi.org/10.3792/pja/1195518347}{On an explicit formula for class-1 ``Whittaker functions'' on $\GL_n$ over $\mathfrak{P}$-adic fields}},
journal={Proc. Japan Acad.},
volume={52},
date={1976},
number={4},
pages={180--182},
}

\bib{Sta95}{article}{
author={Stade, Eric},
title={\href{https://doi.org/10.1007/BF02570491}{Mellin transforms of Whittaker functions on $\GL(4,\R)$ and $\GL(4,\C)$}},
journal={Manuscripta Math.},
volume={87},
date={1995},
number={4},
pages={511--526},
}

\bib{Sta01}{article}{
author={Stade, Eric},
title={\href{https://doi.org/10.1353/ajm.2001.0004}{Mellin transforms of $\GL(n,\R)$ Whittaker functions}},
journal={Amer. J. Math.},
volume={123},
date={2001},
number={1},
pages={121--161},
}

\bib{Sta02}{article}{
author={Stade, Eric},
title={\href{https://doi.org/10.1007/BF02784531}{Archimedean $L$-factors on $\GL(n) \times \GL(n)$ and generalized Barnes integrals}},
journal={Israel. J. Math.},
volume={127},
date={2002},
number={1},
pages={201--219},
}

\bib{Ven06}{article}{
author={Venkatesh, Akshay},
title={\href{https://doi.org/10.1016/j.aim.2005.11.001}{Large sieve inequalities for $\GL(n)$-forms in the conductor aspect}},
journal={Adv. Math.},
volume={200},
date={2006},
number={2},
pages={336--356},
}

\bib{Wal92}{book}{
author={Wallach, Nolan R.},
title={\href{https://doi.org/10.1016/S0079-8169(08)62831-7}{Real reductive groups. II}},
series={Pure and Applied Mathematics},
volume={132},
publisher={Academic Press, Inc., Boston, MA},
date={1992},
pages={xiv+454},
}

\bib{Zha18}{article}{
author={Zhang, Qing},
title={\href{https://doi.org/10.1007/s00208-017-1623-2}{A local converse theorem for $\Sp_{2r}$}},
journal={Math. Ann.},
volume={372},
date={2018},
number={1-2},
pages={451--488},
}

\bib{Zha01}{article}{
author={Zhang, Shou-Wu},
title={\href{https://doi.org/10.4310/AJM.2001.v5.n2.a1}{Gross--Zagier formula for $\GL_2$}},
journal={Asian J. Math.},
volume={5},
date={2001},
number={2},
pages={183--290},
}

\bib{Zha14}{article}{
author={Zhang, Wei},
title={\href{https://doi.org/10.1090/S0894-0347-2014-00784-0}{Automorphic period and the central value of Rankin--Selberg $L$-function}},
journal={J. Amer. Math. Soc.},
volume={27},
date={2014},
number={2},
pages={541--612},
}

\end{biblist}
\end{bibdiv}

\end{document}